\numberwithin{equation}{section}
\newtheorem{thm}[equation]{Theorem}
\newtheorem{lem}[equation]{Lemma}
\newtheorem{prop}[equation]{Proposition}
\newtheorem{cor}[equation]{Corollary}
\newtheorem{ques}[equation]{Question}
\newtheorem*{thm*}{Theorem}
\newtheorem*{bordgpsThm}{\cref{bordism_ring}}
\newtheorem*{Wseven}{\cref{prop:strinhtoW7}}
\newtheorem*{Wsconverse}{\cref{prop:obstructlift,prop:dim9}}
\newtheorem*{surjective}{\cref{prop:surjhomotopy}}
\newtheorem*{noloopspinc}{\cref{no_loop_spinc}}
\newtheorem*{orithm}{\cref{all_n_stringh_or}}
\theoremstyle{definition}
\newtheorem{defn}[equation]{Definition}
\newtheorem{example}[equation]{Example}
\newtheorem{ex}[equation]{Exercise}
\theoremstyle{remark}
\newtheorem{rem}[equation]{Remark}
\crefname{thm}{Theorem}{Theorems}
\crefname{prop}{Proposition}{Propositions}
\crefname{lem}{Lemma}{Lemmas}
\crefname{cor}{Corollary}{Corollaries}
\crefname{defn}{Definition}{Definitions}
\DeclarePairedDelimiter{\set}{\{}{\}}
\def\instring#1#2{TT\fi\begingroup
  \edef\x{\endgroup\noexpand\in@{#1}{#2}}\x\ifin@}
\def\isuppercase#1{%
  \instring{#1}{ABCDEFGHIJKLMNOPQRSTUVWXYZ}%
}%
\newcommand{\C@lIfUpper}[1]{
 \if\isuppercase{#1}\mathscr{#1}%
 \else #1%
 \fi
}
\newcommand{\cat}[1]{\mathit{\@tfor\next:=#1\do{\C@lIfUpper{\next}}}}
\newcommand{\Z}{\mathbb Z}
\newcommand{\C}{\mathbb C}
\newcommand{\U}{\mathrm U}
\newcommand{\Spin}{\mathrm{Spin}}
\newcommand{\MTSpin}{\mathit{MSpin}}
\newcommand{\MSO}{\mathit{MSO}}
\renewcommand{\MR}{\mathit{MR}}
\newcommand{\ko}{\mathit{ko}}
\newcommand{\Sq}{\mathrm{Sq}}
\newcommand{\abs}[1]{\lvert #1 \rvert}
\newcommand{\SU}{\mathrm{SU}}
\newcommand{\SO}{\mathrm{SO}}
\newcommand{\SL}{\mathrm{SL}}
\renewcommand{\O}{\mathrm O}
\newcommand{\id}{\mathrm{id}}
\newcommand{\CP}{\mathbb{CP}}
\newcommand{\bl}{\text{--}}
\newcommand{\R}{\mathbb R}
\newcommand{\Det}{\mathrm{Det}}
\newcommand{\SH}{\mathit{SH}}
\newcommand{\Map}{\mathrm{Map}}
\DeclareRobustCommand*{\RaiseBoxByDepth}{%
    \raisebox{\depth}%
}
\newcommand{\uQ}{\RaiseBoxByDepth{\protect\rotatebox{180}{$Q$}}}
\newcommand{\String}{\mathrm{String}}
\newcommand{\KO}{\mathit{KO}}
\newcommand{\KU}{\mathit{KU}}
\newcommand{\TMF}{\mathit{TMF}}
\newcommand{\Tmf}{\mathit{Tmf}}
\newcommand{\Sph}{\mathbb S}
\newcommand{\KR}{\mathit{KR}}
\newcommand{\cP}{\mathcal P}
\newcommand{\Q}{\mathbb Q}
\newcommand{\term}{\emph}
\newcommand{\cA}{\mathcal A}
\newcommand{\Ext}{\mathrm{Ext}}
\newcommand{\Hom}{\mathrm{Hom}}
\newcommand{\MTString}{\mathit{MString}}
\newcommand{\MString}{\mathit{MString}}
\newcommand{\MSpin}{\MTSpin}
\newcommand{\MU}{\mathit{MU}}
\newcommand{\ku}{\mathit{ku}}
\newcommand{\tmf}{\mathit{tmf}}
\DeclareMathOperator{\MF}{MF}
\newcommand{\BP}{\mathit{BP}}
\newcommand{\ang}[1]{\langle #1 \rangle}
\newcommand{\cE}{\mathcal E}
\newcommand\MAILTO[1]{\href{mailto:#1}{\nolinkurl{#1}}}
\DeclareDocumentCommand{\shortexact}{s O{} O{} mmmm}{
\IfBooleanTF{#1}{ 
\begin{tikzcd}[ampersand replacement=\&]
	{1} \& {#4} \& {#5} \& {#6} \& {1#7}
	\arrow[from=1-1, to=1-2]
	\arrow["#2", from=1-2, to=1-3]
	\arrow["#3", from=1-3, to=1-4]
	\arrow[from=1-4, to=1-5]
\end{tikzcd}
}{ 
\begin{tikzcd}[ampersand replacement=\&]
	{0} \& {#4} \& {#5} \& {#6} \& {0#7}
	\arrow[from=1-1, to=1-2]
	\arrow["#2", from=1-2, to=1-3]
	\arrow["#3", from=1-3, to=1-4]
	\arrow[from=1-4, to=1-5]
\end{tikzcd}
}}
\newcommand{\stringh}{string\textsuperscript{$h$}\xspace}
\newcommand{\spinc}{spin\textsuperscript{$c$}\xspace}
\newcommand{\Aut}{\mathrm{Aut}}
\title{Type IIA String Theory and tmf with level structure}
\date{\today}
\author{Arun Debray}
\address{Department of Mathematics, University of Kentucky,
719 Patterson Office Tower,
Lexington, KY 40506-0027}
\email{\href{mailto:a.debray@uky.edu}{a.debray@uky.edu}}
\author{Matthew Yu}
\address{Mathematical Institute, University of Oxford,  Woodstock Road, Oxford, UK}
\email{\href{mailto:yumatthew70@gmail.com}{yumatthew70@gmail.com}}
\thanks{It is a pleasure to thank Sanath Devalapurkar for helpful discussions on the content of this paper, including bringing to our attention his definition of \stringh and its application to $\tmf_1(3)$. We also thank Ivano Basile, Markus Dierigl, Theo Johnson-Freyd, Greg Moore, and Urs Schreiber for helpful conversations. The author MY is supported by the EPSRC Open Fellowship EP/X01276X/1. The authors declare that we have no competing interests
that are relevant to the content of this article.}
\begin{document}
\maketitle

\begin{abstract}
We look at a new \stringh tangential structure first introduced by Devalapurkar and relate it to the $W_7=0$ condition of Diaconescu-Moore-Witten for type IIA string theory and M-theory. We show that a \stringh structure on the target space automatically satisfies the $W_7=0$ condition and we also explain when the $W_7=0$ condition lifts to a \stringh structure. 
Devalapurkar initially constructed $\MTString^h$ in such a way that it orients $\tmf_1(3)$; we extend Devalapurkar's result, showing that $\MTString^h$ orients $\tmf_1(n)$.  
We compute the homotopy groups of $\MTString^h$ in the dimensions relevant for physical applications, and apply them to anomaly cancellation applications for certain compactifications of type IIA string theory. 
\end{abstract}

\tableofcontents

\section{Introduction}
Green-Schwarz in their seminal paper \cite{Green:1984sg} showed that ten-dimensional heterotic string theory is free of perturbative anomalies if the spacetime manifold $M$ comes with a spin structure and a trivialization of the fractional first Pontryagin form $-\tfrac{1}{8\pi^2}F\wedge F$. Later work due to Killingback \cite{Killingback:1986rd} refines this constraint: if the fractional Pontrjagin class $\tfrac 12 p_1(M)\in H^4(M;\Z)$ vanishes, the corresponding global anomaly cancels, which is a stronger result.\footnote{Though Killingback correctly observed that one should lift to integral cohomology, he incorrectly stated the condition to be a trivialization of $p_1(M)$. Early references including the correct $1/2$ factor include Freed~\cite[\S 4]{Fre86}, Freed-Vafa~\cite{FV87}, and Pilch-Warner~\cite[\S 2]{PW88}.}\textsuperscript{,}\footnote{Strictly speaking, these two perspectives of differential forms and integral cohomology should be taken together: the Green-Schwarz mechanism involves trivializing a refinement of $\tfrac 12p_1$ in \term{differential cohomology}. This will not play a role in this paper; see~\cite{Fre00, FSS12, SSS12} for more information.} Killingback defined a \term{string structure} to be a spin structure together with this trivialization data.

More generally, turning on a nontrivial gauge bundle $P\to M$ leads to a refined anomaly cancellation condition for the 10d heterotic string: a trivialization of $\frac{1}{2}p_1(M)-c(P)$, where $c$ is a certain degree-$4$ integral characteristic class of the gauge bundle~\cite{Killingback:1986rd}.
Such a target space is is said to have a \textit{twisted} string structure. Later work by Witten \cite{Witten:1987f} showed that it was possible to assign a modular form to a manifold with a string structure by studying the partition function of the heterotic string worldsheet. 
More specifically he showed that the Witten genus, a natural invariant valued in a generalized cohomology theory called elliptic cohomology, can be reconstructed from the index of the supercharge on the worldsheet.

 Ando-Hopkins-Rezk \cite{AHR10} then provided a highly structured refinement of the Witten genus: Hopkins-Miller~\cite{HopkinsMiller2014,Hopkins1995,hopkins2002algebraic} constructed an $E_\infty$-ring spectrum $\TMF$, called ``topological modular forms,'' and Ando-Hopkins-Rezk~\cite{AHR10} produced a map of $E_\infty$-ring spectra
 \begin{equation}\label{AHR_or}
    \sigma\colon \MString \longrightarrow \TMF
 \end{equation}
 together with a comparison map from $\pi_*\TMF$ to the ring of $\mathrm{MF}_*$ modular forms, such that upon taking homotopy groups and tensoring with $\Q$, $\sigma$ becomes the map sending a string manifold to its Witten genus.


Subsequently, the Ando-Hopkins-Rezk map, as well as its physical incarnation due to Stolz-Teichner \cite{stolz2011supersymmetric}, have been used to prove anomaly cancellation results in heterotic string theory by Tachikawa, Yamashita, Yonekura, and Zhang~\cite{Tachikawa:2021mvw,TY:2021mby,Yon22, Tachikawa:2023nne, TY23, TZ24}.


 One can reasonably wonder if there exist relationships between the other superstring theories and $\TMF$, hence strengthening the string theory and elliptic cohomology correspondence. A natural first question to consider is if there is a different ``string-like'' structure which can be related to elliptic cohomology, and which enforces specific symmetry constraints on the target space. 
 
 The goal of this work is to explore this question in the context of type IIA string theory, and to address the homotopical questions this raises. We will propose 
 a ``string-like'' structure that enforces the  Diaconescu-Moore-Witten~\cite[\S 6.1]{DMW:E8} symmetry constraint given by $W_7(TM) = 0$ on a 10d manifold $M$, where $W_7(TM)\in H^7(M;\Z)$ is an integral Stiefel-Whitney class. This constraint is needed to resolve a sign ambiguity in the 11-dimensional supergravity partition function on $M\times S^1$.

More specifically our solution for the ``string-like'' structure  is a \term{\stringh structure}, a variant of string structure recently defined by Devalapurkar~\cite{Dev}.
\begin{defn}[Devalapurkar~\cite{Dev}]
Let $V\to X$ be a \spinc vector bundle with determinant line bundle $L\to X$. A \term{\stringh structure} on $V$ is the data of a trivialization of $\square_\ku(\lambda(V\oplus L))$, where $\lambda$ is the spin characteristic class with $2\lambda = p_1$ and $\square_\ku\colon H^4(X;\Z)\to \ku^7(X)$ is the Bockstein for the cofiber sequence $\Sigma^2 \ku\overset\beta\to \ku\to H\Z$.
\end{defn}
We in fact give four definitions of \stringh structures (\cref{triv_stringh_defn,,lift_stringh_defn,,ancillary,,4th_defn}) and show they are equivalent in \cref{defns_are_equiv}. Then we show that \stringh structures answer the call we raised above.
\begin{itemize}
    \item String$^h$ structures are indeed twisted string structures, and if $V$ has a \stringh structure then $W_7(V)$ is canonically trivialized.
    \item Moreover, a \spinc structure and a trivialization of $W_7$ is a \emph{good approximation} of a \stringh structure in a range of dimensions relevant to string theory, as we explain further below.
    \item  There is a map of spectra $\MString^h[\frac{1}{n}] \to \tmf_1(n)$, where the latter is the spectrum of \term{(connective) topological modular forms with level structure} for the subgroup $\Gamma_1(n)\subset \SL_2(\Z)$.\footnote{For $n = 3$, this was first shown a different way by Devalapurkar~\cite{Dev}.}
\end{itemize}

\subsection*{Main Results}


The first of our results makes precise the first bullet above. Namely it facilitates the connection between string$^h$ structures 
and the structure that corresponds to the $W_7=0$ condition.

\begin{Wseven}
If $V$ is a \stringh vector bundle, then $V$ has a canonical \spinc structure and trivialization of $W_7(V)$.
\end{Wseven}
\begin{Wsconverse}
    Let $X$ be a \spinc manifold of dimension $n\le 8$. Every trivialization of $W_7(X)$ lifts to a \stringh structure. If $X$ is closed, this is also true in dimension $9$.
\end{Wsconverse}
This means that for the purpose of studying compactifications of type IIA string theory in dimensions $8$ and below, there is no loss of generality in upgrading the \spinc structure and trivialization of $W_7$ to a \stringh structure. We do not claim that this extra structure is present in the physics -- only that the additional structure helps answer mathematical questions arising from these theories. For example, we will use this to study anomalies of these theories in \cref{ex:spin,ex:simplyconnectedLie}.

To prove those anomaly cancellation results, we need to calculate groups of reflection-positive invertible field theories on \stringh manifolds (possibly with extra data), which by work of Freed-Hopkins~\cite{Freed:2016rqq} and Grady~\cite{Grady:2023sav} reduces to computing \stringh bordism groups of spaces. The germ of this calculation is a collection of \stringh orientations of the spectra $\tmf_1(n)$, the spectra of connective topological modular forms with level structure for $\Gamma_1(n)\subset\SL_2(\Z)$, as constructed by Meier~\cite{Mei23}, which we explain in \S\ref{section:orientation}.

\begin{thm*}[{Devalapurkar~\cite[Theorem 5]{Dev}}]
There is a map of $E_\infty$-ring spectra $\sigma_D\colon \MString^h_{(2)}\to\tmf_1(3)_{(2)}$.
\end{thm*}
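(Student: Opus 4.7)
The plan is to construct $\sigma_D$ using Thom spectrum obstruction theory for $E_\infty$-ring spectra, in the spirit of the Ando-Hopkins-Rezk construction of $\sigma\colon\MString\to\tmf$: the \stringh condition is engineered to trivialize precisely the characteristic class that obstructs such an orientation.

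\emph{Thom spectrum setup.} By construction, $B\String^h$ fits into a fiber sequence of infinite loop spaces
\[
B\String^h \longrightarrow B\Spin^c \xrightarrow{\square_\ku\circ\lambda(V\oplus L)} \Omega^\infty \Sigma^7 \ku,
\]
so $B\String^h$ inherits an infinite-loop structure and $\MString^h$ is a genuine $E_\infty$-ring via the Thom spectrum construction applied to the composite $B\String^h\to BO\to BGL_1(\Sph)$. By the theorem of Ando-Blumberg-Gepner-Hopkins-Rezk, an $E_\infty$-ring map $\MString^h_{(2)}\to\tmf_1(3)_{(2)}$ is the same data as an $E_\infty$-trivialization of the composite $B\String^h\to BGL_1(\Sph)\to BGL_1(\tmf_1(3)_{(2)})$, so the task becomes to exhibit such a coherent trivialization.

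\emph{Identification of the obstruction.} At the prime $2$, $\tmf_1(3)$ is a complex-orientable $E_\infty$-ring (by work of Lawson-Naumann, Hill-Lawson, and Mathew-Meier), producing a baseline \spinc-orientation, i.e., a trivialization of $B\Spin^c\to BGL_1(\tmf_1(3)_{(2)})$ compatible with the complex orientation and, via Joachim's $E_\infty$ refinement of Atiyah-Bott-Shapiro, with the \spinc-orientation of $\ku_{(2)}$. Pulling this trivialization back along $B\String^h\to B\Spin^c$ yields the desired one up to an error whose class lives in $\tmf_1(3)_{(2)}$-cohomology of $\Omega^\infty\Sigma^7\ku$; by naturality of $\square_\ku$ and the factorization through $\ku$, this class is identified with the image of $\square_\ku(\lambda(V\oplus L))$, which the \stringh data trivializes by definition.

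\emph{Main obstacle.} The crux is upgrading this pointwise trivialization to $E_\infty$-coherence. Here one applies the obstruction theory of Ando-Hopkins-Rezk in the moduli of $E_\infty$-orientations, reducing to the vanishing of certain $\tmf_1(3)_{(2)}$-cohomology groups organized along the Postnikov tower of $B\String^h$. The sparsity of $\pi_*\tmf_1(3)_{(2)}\cong \Z_{(2)}[a_1,a_3]$ (concentrated in even degrees, polynomial on generators of degrees $2$ and $6$), combined with the explicit fibration description of $B\String^h$ relative to $B\Spin^c$, makes the required vanishing tractable and promotes the set-level trivialization to the desired $E_\infty$-ring map $\sigma_D$.
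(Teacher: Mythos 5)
The statement you are proving is cited in the paper as Devalapurkar's result, and the paper does not itself reprove it: the authors explicitly attribute the construction of $\sigma_D$ to Devalapurkar and note it relies on forthcoming work of Hahn-Senger. What the paper does prove is \cref{all_n_stringh_or}, which for $n = 3$ and $2$-localized yields a map $\sigma_1(3)\colon\MString^h_{(2)}\to\tmf_1(3)_{(2)}$ of the same type, with the explicit caveat that it is open whether $\sigma_D\simeq\sigma_1(3)$. The paper's method is compositional rather than obstruction-theoretic: it invokes the $E_\infty$-equivalence $\MString^h\simeq\MString\wedge\MU$ of \cref{thm:sanath}, smashes together the Ando-Hopkins-Rezk orientation $\sigma\colon\MString\to\tmf$ with Senger's complex orientation $\MU\to\tmf_1(n)$ and the Hill-Lawson $\tmf$-algebra structure on $\Tmf_1(n)$, and then descends from $\Tmf_1(n)$ to $\tmf_1(n)$ via Meier's pullback square and \'{e}tale rigidity (\cref{connective_lifting}). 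Your proposal---running ABGHR obstruction theory directly on $B\String^h\to B\GL_1(\tmf_1(3)_{(2)})$---is therefore a genuinely different route.

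Unfortunately, your proposal has a gap that is fatal as written. You assert that the $E_\infty$ complex orientation of $\tmf_1(3)_{(2)}$ ``produces a baseline \spinc-orientation, i.e.\ a trivialization of $B\Spin^c\to B\GL_1(\tmf_1(3)_{(2)})$.'' Complex orientability trivializes $B\U\to B\GL_1(\tmf_1(3)_{(2)})$, but this does not extend automatically along $B\U\to B\Spin^c$; the forgetful ring map goes $\MU\to\MSpin^c$, which is the wrong direction for such an extension to be formal. In fact, were there an $E_\infty$ \spinc-orientation of $\tmf_1(3)_{(2)}$, the theorem would follow instantly by precomposing with $\MString^h\to\MSpin^c$ and the \stringh refinement would be unnecessary---which contradicts the entire point of the paper: $\square_\ku(\lambda^c)$ is a nonzero class on $B\Spin^c$ that only becomes trivializable on $B\String^h$. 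So there is no baseline trivialization over $B\Spin^c$ to pull back, and your ``error class in $\tmf_1(3)_{(2)}$-cohomology of $\Omega^\infty\Sigma^7\ku$'' is not well-defined as described. The final paragraph also does not carry its weight: asserting that sparsity of $\pi_*\tmf_1(3)_{(2)}$ makes the $E_\infty$-coherence ``tractable'' is not a computation; the AHR obstruction theory for $E_\infty$ orientations requires actually identifying and killing the relevant obstruction groups. The cleanest way to repair your framing is to replace the factorization through $B\Spin^c$ with the $E_\infty$-splitting $B\String^h\simeq B\String\times B\U$: the needed trivialization is then the sum of the Ando-Hopkins-Rezk trivialization over $B\String$ and the complex-orientation trivialization over $B\U$, which is exactly the paper's proof of \cref{all_n_stringh_or} recast in the language of units.
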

We generalize this to arbitrary $n$:
\begin{orithm}
For all $n\ge 2$, there are maps of $E_\infty$-ring spectra
\begin{equation}
	\sigma_1(n)\colon \MTString^h[1/n] \longrightarrow \tmf_1(n).
\end{equation}
\end{orithm}
In \cref{real_lift}, we lift the induced map $\MString^h[1/n]\to \Tmf_1(n)$ on mixed $\Tmf$ to a map of $\Z/2$-equivariant ring spectra.

These theorems partially address an open question dating back to Hill-Lawson~\cite[\S 1]{HL16}.
It is not obvious whether $\sigma_D\simeq\sigma_1(3)$, and we would be interested in knowing whether this is the case.

Using \cref{all_n_stringh_or}, we computed \stringh bordism groups in degrees relevant for physics applications.
\begin{surjective}
For $n = 2,3$, the map on homotopy groups $\sigma_1(n)\colon \Omega_*^{\String^h}[1/n]\to \tmf_1(n)_*$ is surjective.
\end{surjective}
\begin{bordgpsThm}There is a ring isomorphism
\begin{equation}
    \Omega_*^{\String^h} \overset\cong\longrightarrow \Z[x_2, x_4, x_6, x_8, y_8, x_{10}, x_{12}, y_{12}, x_{14}, \dotsc]/(\dotsb)
\end{equation}
where $\abs{x_i} = \abs{y_i} = i$ and all generators and relations not listed are in degrees $16$ and above.
\end{bordgpsThm}

%

%

Similar to how a string structure on a manifold $M$ induces a spin structure on its free loop space $LM$, we could wonder if \stringh has the analogous property for \spinc structures on $LM$. Huang-Han-Duan~\cite{DHH:19} showed that, because the groups $\Spin^c_n$ are not simply connected, there are multiple, inequivalent notions of a \spinc structure on the loop space of a manifold, parametrized by an integer $k$ called the \term{level}. Unfortunately, a \stringh structure does not induce any of these structures!

\begin{noloopspinc}
There is a closed \stringh manifold $M$ such that $LM$ is not \spinc for \emph{any} choice of level.
\end{noloopspinc}

\subsection*{Outline}
The structure of the paper is as follows: In \S\ref{section:definingstringh} we introduce four equivalent definitions of \stringh structure, and prove a number of useful properties of \stringh structures and the corresponding Thom spectrum $\MString^h$. In \S\ref{subsection:stringc} we explore the relationship between \stringh structures and \spinc structures on loop spaces, proving \cref{no_loop_spinc}. In \S\ref{section:orientation} we show how $\MTString^h$ orients $\tmf_1(n)$ and compute its homotopy groups in degrees less than $16$. For those mainly interested in the relations to physics, \S\ref{section:orientation} can be skipped and one can proceed to \S\ref{section:stringhDMW} where we review the Diaconescu-Moore-Witten anomaly. In \S\ref{subsection:DMWStringh} we summarize how the $W_7=0$ anomaly cancellation is related to \stringh and vice versa. We then give examples in \S\ref{subsection:applications} of how the \stringh structure can be used to more easily compute the anomalies of theories where a $W_7=0$ structure is equivalent to a \stringh structure. In Appendix~\ref{appendix:computations}, we prove \cref{appthm}, which is a computation needed for the anomaly cancellation result in \cref{ex:simplyconnectedLie}.

\section{The String$^h$ tangential structure}\label{section:definingstringh}
In this section we review work of Devalapurkar~\cite{Dev} on the definitions and basic properties of \stringh structures. In \S\ref{subsection:charclasses} we define and explore several aspects of \stringh structures mostly in terms of characteristic classes; then, in \S\ref{subsection:Einfty} we refine these constructions to $E_\infty$-structures on classifying spaces and $E_\infty$-maps between them. We made this choice for expository reasons: we feel that the more abstract constructions in \S\ref{subsection:Einfty} are easier to digest once one has already seen their lower-tech versions. And we will need these $E_\infty$ refinements in later sections.

For our first encounter with \stringh structures, in \S\ref{subsection:charclasses}, we first give four definitions of \stringh structures (\cref{triv_stringh_defn,,lift_stringh_defn,,ancillary,4th_defn}). We show these definitions are equivlaent in \cref{defns_are_equiv}.
String\textsuperscript{$h$} structures are analogues of \spinc structures, and we will frequently make this comparison to provide context for a definition or construction. We then define a canonical \stringh structure on a direct sum of \stringh vector bundles in \cref{direct_sum_stringh}, describe how a string or complex structure on a vector bundle induces a \stringh structure in\cref{string_to_stringh,,cpx_to_stringh}, and compute some low degree homotopy groups of $B\String^h$ in \cref{lem:lowdeg}.

In \S\ref{subsection:Einfty} we rigidify these facts into facts about $E_\infty$-spaces and maps. In \cref{defnphi}, we provide a model for the map $B\String^h\to B\O$ that is a map of $E_\infty$-spaces, refining \cref{direct_sum_stringh}. In \cref{str_to_strh,strh_to_spinc,bu_to_strh}, we produce $E_\infty$ models for the maps $B\String\to B\String^h$, $B\String^h\to B\Spin^c$, and $B\U\to B\String^h$, rigidifying \cref{triv_stringh_defn,string_to_stringh,cpx_to_stringh}. By a theorem of Lewis, these $E_\infty$-structures and maps induce $E_\infty$-ring structures and maps on the corresponding Thom spectra. We summarize these ring structure results in \cref{multiplicativity}, a theorem originally due to Devalapurkar~\cite{Dev} with a different proof. Then, in \cref{thm:sanath}, we give a new proof of another theorem of Devalapurkar (\textit{ibid}.), an equivalence of $E_\infty$-ring spectra $\MTString^h\simeq\MString\wedge\MU$.


\subsection{String\textsuperscript{$h$} structures and characteristic classes}\label{subsection:charclasses}
As a lead up to the \stringh definitions we start off with several equivalent ways to define \spinc structures.
\begin{description}
    \item[Trivialization of a class] A \spinc structure on an oriented vector bundle $V\to X$ is a trivialization of $\square_\Z(w_2(V))$, where $\square_\Z\colon H^2(X;\Z/2)\to H^3(X;\Z)$ is the Bockstein.
    \item[Lift of a class] A \spinc structure on $V$ is a class $c_1\in H^2(X;\Z)$ and an identification of $c_1\bmod 2 = w_2(V)$.
    \item[Twisted spin structure] A \spinc structure on $V$ is data of a complex line bundle $L$ and a spin structure on $V\oplus L$. $L$ is called the \term{determinant line bundle} of the \spinc structure.
    \item[Structure group] A \spinc structure on $V$, where $V$ has rank $n$, is a lift of the principal $\SO_n$-bundle of frames $\mathcal B_\SO(V)\to X$ of $V$ to a principal \spinc bundle $\mathcal B_{\Spin^c}(V)\to X$, i.e.\ a $G$-structure for $G = \Spin^c_n$ with its usual map to $\O_n$.
\end{description}
We will give \stringh analogues of each of the first three definitions: trivializing a class in \cref{triv_stringh_defn}, lifting a class in \cref{lift_stringh_defn}, and in terms of a twisted string structure in \cref{ancillary}. We will also give a fourth definition, which does not have a direct analogue for \spinc structures, in \cref{4th_defn}.
These definitions are equivalent, which we prove in \cref{defns_are_equiv}.
%
\begin{defn}
For $n\ge 5$, $\Spin_n$ is a compact, simple, simply connected Lie group, so there is a canonical\footnote{As $\Aut(\Z)\cong\set{\pm 1}$, we need to disambiguate $1$ and $-1$. We choose the isomorphism $H^4(B\Spin_n;\Z)\to \Z$ to be the one such that the induced isomorphism $H^4(B\Spin_n;\R)\to\R$ sends the Chern-Weil class of the Killing form to a positive number.} isomorphism $\varphi\colon H^4(B\Spin_n;\Z)\overset\cong\to\Z$. We will let $\lambda\coloneqq\varphi^{-1}(1)$.
\end{defn}

As usual, $\lambda$ defines a characteristic class of spin vector bundles by pulling back by the classifying map. 
\begin{rem}
For all $n\ge 5$, pulling back by the inclusion $\Spin_n\to\Spin_{n+1}$ sends $\lambda\mapsto\lambda$. Therefore we may define $\lambda\in H^4(B\Spin_n;\Z)$ for $n<5$ by pulling back from $B\Spin_5$, and by passing to the colimit over all $B\Spin_n$, we obtain $\lambda\in H^4(B\Spin;\Z)$.
\end{rem}

\begin{lem}[{\cite[Lemma 1.6]{Deb24}}]\label{lem:whitneysum}
    Let $V_1$ and $V_2$ be two vector bundles over a topological space $X$ each with a spin structure. Then $\lambda(V_1 \oplus V_2) = \lambda(V_1) + \lambda(V_2)$.
\end{lem}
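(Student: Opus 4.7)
The plan is to reduce the claim to a universal identity on the product $B\Spin \times B\Spin$, and then pin down the coefficients by restricting to the two factors. After stabilizing by trivial summands if needed, the pair of spin structures on $V_1, V_2$ gives a map $f = (f_1, f_2)\circ \Delta\colon X \to B\Spin \times B\Spin$, and $V_1\oplus V_2$ is classified by $\mu\circ f$, where $\mu\colon B\Spin\times B\Spin \to B\Spin$ is induced by the block-sum homomorphism $\Spin\times\Spin\to\Spin$. So it suffices to establish the universal identity
\begin{equation*}
\mu^*\lambda \;=\; \lambda\times 1 + 1\times\lambda \;\in\; H^4(B\Spin\times B\Spin;\Z).
\end{equation*}

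Next I would compute the target. Since $\Spin$ is $2$-connected, $B\Spin$ is $3$-connected, and so by the Künneth formula we get $H^4(B\Spin\times B\Spin;\Z)\cong \Z\{\lambda\times 1\}\oplus \Z\{1\times\lambda\}$. Writing $\mu^*\lambda = a(\lambda\times 1) + b(1\times\lambda)$, I pull back along the inclusion $i_1\colon B\Spin \hookrightarrow B\Spin\times B\Spin$, $x\mapsto (x,*)$. The composition $\mu\circ i_1$ is induced by $\Spin \to \Spin\times \Spin \to \Spin$, sending $g\mapsto (g,e)\mapsto g\oplus e$, i.e.\ the stabilization map adding a trivial summand; after passing to the colimit this is the identity on $B\Spin$. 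Hence $i_1^*\mu^*\lambda = \lambda$, forcing $a=1$, and symmetrically $b=1$. This gives the desired identity, and then pulling back along $(f_1,f_2)\circ\Delta$ yields $\lambda(V_1\oplus V_2) = \lambda(V_1) + \lambda(V_2)$.

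The finite-rank cases are easy to fold in. For $n,m\ge 5$, the same argument works on $B\Spin_n\times B\Spin_m\to B\Spin_{n+m}$, using that $\lambda$ generates $H^4$ of each factor and that the stabilization maps $B\Spin_n\to B\Spin_{n+1}$ preserve $\lambda$ (as remarked just before the lemma). For smaller ranks, the conclusion is inherited by naturality from the definition of $\lambda$ on $B\Spin_n$ (for $n<5$) as the pullback from $B\Spin_5$.

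The main potential obstacle is ensuring that the Künneth identification is clean in degree $4$ — in particular, that no hidden torsion or cross terms appear. The $3$-connectivity of $B\Spin$ dispatches this immediately, so the proof is essentially a one-line universal computation dressed up in classifying-space formalism.
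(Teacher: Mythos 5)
Your proof is correct. Note that the paper does not actually give a proof of this lemma — it cites \cite[Lemma 1.6]{Deb24} — so there is no in-paper argument to compare against directly. The closest internal analogue is the proof of \cref{lambda_c_whitney}, the Whitney sum formula for $\lambda^c$, where the paper uses a slightly different tactic: it first checks that $H^4$ of the relevant product of classifying spaces has no $2$-torsion, then multiplies the desired identity by $2$ and appeals to the Whitney formula for $p_1 = 2\lambda$. Your argument skips the detour through $p_1$: $3$-connectivity of $B\Spin$ plus Künneth shows $H^4(B\Spin\times B\Spin;\Z)$ is free of rank $2$ on $\lambda\times 1$ and $1\times\lambda$ (all cross and Tor terms vanish for degree reasons), and the coefficients are pinned down by restricting to the two factors, where $\mu\circ i_k \simeq \id$. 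Both routes are standard and correct; yours is marginally more direct for $\lambda$ itself, while the torsion-freeness reduction used in \cref{lambda_c_whitney} is needed in the $\lambda^c$ setting, where $H^4(B\Spin^c_{n_1}\times B\Spin^c_{n_2};\Z)$ cannot be read off from connectivity alone. Your handling of the finite-rank and small-rank ($n<5$) cases via the stabilization remark preceding the lemma is also fine.
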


\begin{lem}\label{pont}
If $p_1\in H^4(B\Spin_n;\Z)$ denotes the first Pontrjagin class, then $2\lambda = p_1$.
\end{lem}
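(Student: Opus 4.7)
The plan is to reduce by naturality to the case $n=5$ and then perform a direct Chern-roots calculation. Since both $\lambda$ (by construction, per the remark just above) and $p_1$ (standard) are compatible with the stabilization maps $B\Spin_n \to B\Spin_{n+1}$, it suffices to verify $2\lambda = p_1$ in $H^4(B\Spin_5;\Z)$; the statement for smaller $n$ then follows by pulling back from $B\Spin_5$, and for larger $n$ by pulling back from $B\Spin$.

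At $n=5$, I would exploit the exceptional isomorphism $\Spin_5 \cong \mathrm{Sp}(2)$, under which $H^*(B\Spin_5;\Z) \cong H^*(B\mathrm{Sp}(2);\Z) = \Z[q_1, q_2]$ with $\abs{q_i} = 4i$; in particular $H^4(B\Spin_5;\Z) = \Z\cdot q_1$. Under this identification the $5$-dimensional oriented real representation underlying $\Spin_5 \to \SO_5$ corresponds to the (essentially unique) real form of the primitive summand $\Lambda^2_0 W$ of $\Lambda^2 W$, where $W$ is the defining $4$-dimensional quaternionic representation of $\mathrm{Sp}(2)$. This identification is forced, since $\Lambda^2_0 W$ is the only nontrivial $5$-dimensional complex irrep of $\mathrm{Sp}(2)$, and it inherits a real structure from the quaternionic structure on $W$.

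A short Chern-roots calculation now suffices. Writing the Chern roots of $W$ as $\pm a, \pm b$, so that $c_2(W) = -(a^2+b^2)$, the Chern roots of $\Lambda^2 W$ are the pairwise sums; two of them vanish (corresponding to the invariant symplectic form), and the remaining five, namely $0,\pm(a+b),\pm(a-b)$, are the Chern roots of $\Lambda^2_0 W$. A direct expansion yields $c_2(\Lambda^2_0 W) = -2(a^2+b^2) = 2c_2(W)$. Applying the identity $p_1(E) = -c_2(E \otimes \C)$ for a real bundle $E$ then gives $p_1 = -2c_2(W) = 2q_1$ in $H^4(B\Spin_5;\Z)$, with $q_1 := -c_2(W)$. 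It remains to check that $q_1 = \lambda$ rather than $-\lambda$: both are generators of the infinite cyclic group, and they agree on the nose by comparing Chern-Weil representatives, as $q_1$ and $\lambda$ are each positive multiples of the same positive-definite invariant bilinear form on $\mathfrak{sp}(2)$ (a positive multiple of $-K$, since the Killing form $K$ is negative definite on any compact semisimple Lie algebra). The main obstacle is precisely this last step of keeping several sign conventions — Chern-Weil, Pontryagin, and Killing — consistent, but this reduces to pure bookkeeping.
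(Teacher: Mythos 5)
Your proof is correct, and its route is genuinely different from the paper's. The paper (in the paragraph immediately following the lemma) argues via complexification: since $V\otimes\C\cong V\oplus V$ as real bundles, the Whitney sum formula \cref{lem:whitneysum} gives $\lambda(V\otimes\C) = 2\lambda(V)$; on the other hand, for spin $V$ the complexification carries an $\SU$-structure, and the complex-spin identity $\lambda(E) = -c_2(E)$ (recorded in \cref{cpx_lambdac}, cited to~\cite{CN19}) together with $p_1(V) = -c_2(V\otimes\C)$ gives $\lambda(V\otimes\C) = p_1(V)$, whence $2\lambda = p_1$. You instead reduce by naturality to $n=5$ and compute directly in $H^4(B\mathrm{Sp}(2);\Z) = \Z\cdot q_1$ using the exceptional isomorphism $\Spin_5\cong\mathrm{Sp}(2)$, the identification of the vector representation as the real form of $\Lambda^2_0 W$, and a Chern-roots expansion. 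Your computation $c_2(\Lambda^2_0 W) = 2c_2(W)$ is correct, and $p_1 = -c_2(\bl\otimes\C)$ then gives $p_1(V_5) = 2q_1$.

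The trade-off between the two is this: the paper's argument is bundle-theoretic and never needs to identify $\lambda$ with any explicit generator of $H^4(B\Spin_5;\Z)$, so it sidesteps the sign convention entirely, at the cost of invoking the cited complex-spin formula for $\lambda$. Your argument is self-contained at the level of Chern roots, but it must shoulder the closing sign verification that the paper's Chern--Weil normalization picks out $+q_1$ rather than $-q_1$. This step is more delicate than ``pure bookkeeping'' suggests: the sign of a Chern--Weil class is not simply inherited from the sign of the underlying invariant bilinear form (for instance $c_2 = \tfrac{1}{8\pi^2}[\mathrm{tr}(F\wedge F)]$ is positive on anti-self-dual $\SU_2$-instantons even though $\mathrm{tr}(X^2)$ is negative definite on $\su_2$), so comparing $q_1$ and $\lambda$ as ``positive multiples of $-K$'' requires fixing the normalization of the Chern--Weil homomorphism before the comparison is meaningful. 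The idea is sound and does close the argument, but this is exactly the step one would want to nail down rather than defer. One further point worth making explicit: the reduction from general $n$ to $n=5$ uses, for $n > 5$, that the restriction $H^4(B\Spin_n;\Z)\to H^4(B\Spin_5;\Z)$ is an isomorphism carrying $\lambda\mapsto\lambda$ (the remark preceding the lemma) and $p_1\mapsto p_1$ (Whitney sum, as the inclusion adds a trivial summand), so the identity between two classes propagates upward, not merely downward.
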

Thus if  $V$ is a spin vector bundle then  $2\lambda(V) = p_1(V)$. For this reason, $\lambda$ is often denoted $\tfrac 12 p_1$.
\begin{defn}
A \term{string structure} on a spin vector bundle $V$ is a trivialization of $\lambda(V)$.
\end{defn}
Sometimes string structures are referred to as \term{$\O\ang 8$-structures} or \term{$\ang 8$-structures} (e.g.\ \cite{Gia71}).
\begin{defn}\label{defn:lambdac}
Let $V\to X$ be a vector bundle with \spinc structure $\mathfrak s$ and determinant line bundle $L$. We define
\begin{equation}
    \lambda^c(V, \mathfrak s)\coloneqq \lambda(V \oplus L)\in H^4(X;\Z).
\end{equation}
Often $\mathfrak s$ will be implicit, in which case we will write $\lambda^c(V)$ instead.
\end{defn}
The class $\lambda^c$ is called $q_2$ in~\cite{Dua18} and $\widehat p$ in~\cite[\S 2.7]{CN19}. The classes $p_c$ from~\cite[(3.10)]{CY20} and $\tfrac{c_1^2 - p_1}{2}$ from~\cite[Construction 2]{Dev} both equal $c_1(L)^2 - \lambda^c$.
\begin{rem}\label{cpx_lambdac}
If the \spinc structure on $V$ is induced from a complex structure, then~\cite[Lemma 2.39]{CN19}
\begin{equation}
    \lambda^c(V) = -c_2(V) - c_1(V)^2.
\end{equation}
If the \spinc structure on $V$ is induced from a spin structure, so that $L$ is trivial, then $\lambda^c(V) = \lambda(V)$. Thus if $V$ is both complex and spin, $\lambda(V) = -c_2(V)$.
\end{rem}
\begin{defn}
Recall the Bott map $\beta\colon\Sigma^2\ku\to\ku$ in connective $K$-theory. Its cofiber is the Postnikov $0$-truncation $\tau_0\colon \ku\to H\Z$, which is in particular a morphism of ring spectra. Thus, associated to the cofiber sequence
\begin{equation}\label{Bott_cofiber}
    \Sigma^2 \ku\overset\beta\longrightarrow
    \ku\overset{\tau_0}{\longrightarrow}
    H\Z
\end{equation}
there is a long exact sequence in cohomology; let $\square_\ku\colon H^n(\bl;\Z)\to \ku^{n+3}(\bl)$ denote the connecting morphism in this long exact sequence, which is called the \term{$\ku$-theoretic Bockstein homomorphism}.
\end{defn}
We will let $\square_\Z\colon H^n(\bl;\Z/2)\to H^{n+1}(\bl;\Z)$ denote the Bockstein homomorphism associated to the short exact sequence $0\to\Z\to\Z\to\Z/2\to 0$.
\begin{defn}[{Devalapurkar~\cite{Dev}}]
\label{triv_stringh_defn}
A \term{\stringh structure} on a \spinc vector bundle $V\to X$ is a trivialization of $\square_\ku(\lambda^c(V))\in\ku^7(X)$.
\end{defn}
As always, we say a manifold $M$ is \stringh if $TM$ is \stringh.
\begin{defn}
\label{lift_stringh_defn}
A \stringh structure on a \spinc vector bundle $V\to X$ is equivalent to a class $c_2^\ku(V)\in\ku^4(X)$ and data identifying $\tau_0(c_2^\ku(V)) = \lambda^c(V)$.
\end{defn}
Equivalence of these definitions follows immediately from the long exact sequence induced from~\eqref{Bott_cofiber}. The third definition, which we give in \cref{ancillary}, is not as obviously equivalent.
\begin{defn}\label{vbtwist}
Let $V\to X$ be a virtual vector bundle. A \term{$(X, V)$-twisted string structure} on a vector bundle $E\to M$ is data of a map $f\colon M\to X$ and a string structure on $E\oplus f^*V$.

Given an $(X, V)$-twisted string structure on $E$, the bundle $f^*V\to M$ is called the \term{ancillary bundle}.
\end{defn}
\begin{defn}\label{ancillary}
Let $S\to B\U$ denote the tautological bundle.
A \stringh structure on a vector bundle $V\to X$ is a $(B\U, S)$-twisted string structure.
\end{defn}
The data of a $(B\U, S)$-twisted string structure on a bundle $E\to M$ induces a \spinc structure on $E$ as follows: the two-out-of-three data for \spinc structures implies \spinc structures on $E\oplus f^*(S)$ and on $f^*(S)$ induce one on $E$. But $E\oplus f^*(S)$ is string, hence spin, hence \spinc, and $S$ is complex, hence \spinc, so $E$ acquires a canonical \spinc structure.
\begin{rem}
\Cref{vbtwist} is not the standard way to define twisted string structures, though it appears implicitly in~\cite[\S 3]{BDDM24} and is inspired by a related definition of twisted spin structure due to Hason-Komargodski-Thorngren~\cite[\S 4.1]{HKT20}. A more conventional definition, which goes back to Wang~\cite[Definition 8.4]{Wan08}, chooses $d\in H^4(X;\Z)$ and defines an $(X, d)$-twisted string structure on a spin vector bundle $E\to M$ to be a map $f\colon M\to X$ and a trivialization of $\lambda(E) - f^*(d)$; see also Sati-Schreiber-Stasheff~\cite[\S 2.2]{SSS12}. This definition cannot apply to our situation: by construction, any $(X, d)$-twisted string vector bundle is spin, but the tangent bundle to $\CP^2$ admits a \stringh structure, where the ancillary bundle is $-T\CP^2$, and $T\CP^2$ is not spin.

\Cref{vbtwist} is one way to remedy this issue, though there are twists of string bordism according to Wang's definition that \cref{vbtwist} cannot describe, including the twists studied in~\cite{Deb24}. To include these twists and \stringh structures in a single framework, we need a more general notion of twisted string structure.
\end{rem}
\begin{defn}
\label{rSH}
Let $\SH$ be the \term{(restricted) supercohomology} spectrum\footnote{Different authors mean different things by ``supercohomology;'' we use ``restricted'' to contrast with \term{extended supercohomology} as introduced by Kapustin-Thorngren~\cite{KT17} and Wang-Gu~\cite{WG18, WG20}. See also~\cite[\S 5.3, \S 5.4]{GJF19}.} introduced by Freed~\cite[\S 1]{Fre08} and Gu-Wen~\cite{GW14}, which is uniquely specified up to homotopy equivalence by $\pi_0(\SH)\cong\Z$, $\pi_{-2}(\SH) \cong\Z/2$, and the $k$-invariant $\square_\Z\circ\Sq^2\colon H^{-2}(\bl;\Z/2)\to H^1(\bl;\Z)$.
\end{defn}
\begin{lem}[{Freed~\cite[Proposition 1.9(i)]{Fre08}}]
\label{defntild}
There is a unique class $\widetilde \lambda\in\SH^4(B\SO)$ whose pullback to $B\Spin$ is the image of the usual $\lambda\in H^4(B\Spin;\Z)$ under the connective cover map $H^k(\bl;\Z)\to\SH^k(\bl)$
\end{lem}
The Whitney sum formula for $\lambda$ (\cref{lem:whitneysum}) also refines to supercohomology.
\begin{lem}[{Jenquin~\cite[Corollary 4.9]{Jen05}}]
\label{super_whitney}
If $E,F\to X$ are oriented virtual vector bundles, $\widetilde\lambda(E\oplus F) = \widetilde\lambda(E) + \widetilde\lambda(F)\in\SH^4(X)$.
\end{lem}
One can use $\widetilde\lambda$ to give another characterization of string structures.
\begin{lem}[{\cite[\S 1.4]{JFT20}}]
\label{double_jump}
A string structure on an oriented vector bundle $E\to M$ is precisely a trivialization of $\widetilde \lambda(E)\in\SH^4(M)$.
\end{lem}
This motivates the following definition.
\begin{defn}[{\cite[Definition 1.62]{Debray:2023tdd}}]
\label{SHtwist}
Let $X$ be a space and $\widetilde d\in\SH^4(X)$. An \term{$(X, \widetilde d)$-twisted string structure} on an oriented vector bundle $E\to M$ is a map $f\colon M\to X$ and a trivialization of $\widetilde \lambda(E) - f^*(\widetilde d)\in \SH^4(M)$. 
\end{defn}
This or closely related definitions also appear in~\cite{Freed:2007vy,JF20, BLM23, TY23, TY:2021mby, TY25}. Since pulling $\widetilde\lambda$ back to $B\Spin$ recovers $\lambda$, \cref{SHtwist} encompasses Wang's definition, but it is more general.
\begin{defn}
\label{4th_defn}
Let $r\colon B\U\to B\SO$ be the map defined by forgetting the complex structure; then a \stringh structure is a $(B\U, -r^*\widetilde \lambda)$-twisted string structure.
\end{defn}

\begin{thm}\label{defns_are_equiv}
\Cref{triv_stringh_defn,,lift_stringh_defn,,ancillary,,4th_defn} are equivalent.
\end{thm}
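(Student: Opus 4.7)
Proof Plan:

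The equivalence of \cref{triv_stringh_defn,lift_stringh_defn} is formal: the cofiber sequence $\Sigma^2\ku \xrightarrow{\beta} \ku \xrightarrow{\tau_0} H\Z$ extends via the Puppe construction to a fiber sequence of infinite loop spaces
\[
\Omega^\infty\Sigma^4\ku \xrightarrow{\tau_0} K(\Z,4) \xrightarrow{\square_\ku} \Omega^\infty\Sigma^7\ku,
\]
so pulling back along $\lambda^c(V) \colon X \to K(\Z,4)$ naturally identifies the space of nullhomotopies of $\square_\ku\lambda^c(V)$ with the space of lifts of $\lambda^c(V)$ through $\tau_0$ to a class $c_2^\ku(V) \in \ku^4(X)$.

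For the equivalence with \cref{ancillary}, I would proceed in two steps. First, identify $\Omega^\infty\Sigma^4\ku \simeq B\SU$ (the $3$-connected cover of $B\U$, since both spaces are simply connected infinite loop spaces with $\pi_n = \Z$ in each positive even degree $n \ge 4$ and $0$ elsewhere). Under this identification, a class $c_2^\ku \in \ku^4(X)$ is equivalent to a virtual complex bundle $W'$ of virtual rank zero on $X$ together with a trivialization of $c_1(W')$, and $\tau_0 \colon \ku^4(X) \to H^4(X;\Z)$ sends $[W']$ to $c_2(W') \in H^4(X;\Z)$. Second, translate \cref{ancillary} data $(f\colon X \to B\U,\,\sigma)$ into this form: setting $W = f^*S$ and $L = (\det W)^{-1}$ (the induced \spinc determinant on $V$), the virtual complex bundle $W' \coloneqq W \oplus L$ has $c_1(W') = c_1(W) + c_1(L) = 0$ with a canonical trivialization.

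The identity $\tau_0(c_2^\ku(V)) = \lambda^c(V)$ must then be extracted from the string trivialization $\sigma$. The key inputs are additivity of $\widetilde\lambda$ (applied to $\widetilde\lambda(V\oplus W) = \widetilde\lambda(V) + \widetilde\lambda(W)$), the identity $\widetilde\lambda(V) + \widetilde\lambda(L) = \lambda^c(V)$ in $H^4$ coming from the fact that $V \oplus L$ is spin, and the complex-bundle formula $\lambda^c(U) = -c_2(U) - c_1(U)^2$ from \cref{cpx_lambdac}. Some care is required because $\widetilde\lambda$ and $\lambda^c$ disagree on complex bundles by $c_1$-quadratic terms, so matching $c_2(W')$ against $\lambda^c(V)$ may require absorbing a term $(c_1^\ku(L))^2 \in \ku^4(X)$, using the canonical lift $c_1^\ku(L) \in \ku^2(X)$ of a line bundle's first Chern class. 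The reverse direction recovers the ancillary data from $(L, c_2^\ku)$ by the inverse bookkeeping: interpret $c_2^\ku$ as a virtual complex bundle $W'$ with trivialized $c_1$, subtract $L$ (with the appropriate correction) to obtain $W$, and read off the string trivialization of $\widetilde\lambda(V \oplus W)$ from the lifting data.

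The main obstacle is upgrading these constructions from bijections on isomorphism classes to equivalences of classifying spaces. The cleanest route is to exhibit all three definitions as homotopy pullbacks over $B\Spin^c$—\cref{triv_stringh_defn} as the fiber of $\square_\ku\circ\lambda^c$, \cref{lift_stringh_defn} as the pullback of $\lambda^c$ along $\tau_0$, and \cref{ancillary} as a pullback built from $B\U$ and $B\String$ coordinated by the \spinc determinant map $B\Spin^c \to K(\Z,2)$—and to compare these pullback diagrams using the Puppe fiber sequence above together with the universal $\ku$-theoretic Chern classes on $B\U$.
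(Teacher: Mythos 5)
Your plan is essentially the paper's proof: both arguments pass from $\ku^4(X)$ to $[X, B\SU]$, translate the ancillary bundle of \cref{ancillary} into an $\SU$-structured bundle plus a line bundle, and check the identity $\tau_0(c_2^\ku) = \lambda^c$ via the additivity of $\lambda$ and the complex formula $\lambda = -c_2$ for $\SU$-bundles. The treatment of the equivalence of \cref{triv_stringh_defn} and \cref{lift_stringh_defn} via the Puppe sequence is the same as the paper's use of the Bockstein long exact sequence.

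However, your proposed dictionary from the ancillary data to the $\SU$-bundle is off by a $c_1^2$ term, and this is a genuine gap rather than just a place where ``care is required.'' You set $W = f^*S$, $L = (\det W)^{-1}$, and $W' = W \oplus L$; then $c_1(W') = 0$ as you say, but
\[
c_2(W') = c_2\bigl(W \oplus (\det W)^{-1}\bigr) = c_2(W) - c_1(W)^2,
\]
whereas $\lambda^c(V) = c_2(W)$. (To see the latter: $\lambda(V \oplus W) = 0$ forces $\lambda(V \oplus \Det(W)) = -\lambda\bigl(W - \Det(W)\bigr) = c_2\bigl(W - \Det(W)\bigr) = c_2(W)$, since $W - \Det(W)$ is the $\SU$-bundle with the same $c_2$ as $W$.) So the correct virtual $\SU$-bundle to use is $\widetilde E \coloneqq W - \Det(W)$, not $W \oplus (\det W)^{-1}$; the two differ by the virtual bundle $\Det(W) \oplus \Det(W)^{-1}$, whose $c_2$ is exactly $-c_1(W)^2$. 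You do flag that ``absorbing a term $(c_1^\ku(L))^2$'' might be needed, and indeed $c_2(W') + c_1(L)^2 = c_2(W) = \lambda^c(V)$, so the fix you gesture at is the right one; but as written the construction $W \mapsto W'$ does not land on the class $\lambda^c(V)$, and the correction should be built in from the start by taking $\widetilde E = W - \Det(W)$. Relatedly, your \spinc determinant line bundle $L = (\det W)^{-1}$ differs from the paper's choice $L = \Det(W)$; both give valid \spinc structures with the same $\lambda^c$ (since $2\lambda^c = p_1(V) + c_1(L)^2$ sees only $c_1(L)^2$), but the paper's convention makes the bookkeeping with $\widetilde E = E - \Det(E)$ exactly inverse to $E = \widetilde E \oplus L$. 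Finally, the direction from \cref{lift_stringh_defn} to \cref{ancillary} requires checking that $V \oplus \widetilde E \oplus L$ carries a string structure, i.e.\ trivializing $w_1$, $w_2$, and $\lambda$ of this sum; your plan only addresses the $\lambda$ computation, and the $w_1$ and $w_2$ steps should be spelled out (they are short, using the Whitney sum formula and the spin structures on $V \oplus L$ and $\widetilde E$).
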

\begin{proof}
Exactness of the Bockstein long exact sequence associated to~\eqref{Bott_cofiber} implies \cref{triv_stringh_defn,lift_stringh_defn} are equivalent. \Cref{super_whitney,double_jump} jointly imply \cref{ancillary,4th_defn} are equivalent.
Thus we will focus on relating \cref{lift_stringh_defn,ancillary}.

For topological spaces $X$, there is an isomorphism $\rho\colon [X, B\SU]\overset\cong\to\ku^4(X)$, 
and the composition
\begin{equation}
    [X, B\SU] \overset\rho\longrightarrow \ku^4(X)\overset{\tau_0}{\longrightarrow} H^4(X;\Z)
\end{equation}
sends a map $f\colon X\to B\SU$ to $f^*(c_2)$. Thus a class $x\in H^4(X;\Z)$ lifts to $\ku^4(X)$ if and only if $x$ is the second Chern class of an $\SU$-structured vector bundle.

Now we show a \stringh structure in the sense of \cref{lift_stringh_defn} induces one in the sense of \cref{ancillary}. By assumption, we have lifted $\lambda^c(V)$ to a class $c_2^\ku(V)\in\ku^4(X)$, which as above is equivalent data to a vector bundle $\widetilde E\to X$ with $\SU$-structure and an identification $c_2(\widetilde E) = \lambda^c(V)$. Let $L$ be the determinant bundle of $V$ and let $E\coloneqq \widetilde E \oplus L$; we will show $V\oplus E$ has a string structure, which means checking that we have data of trivializations
\begin{itemize}
    \item $w_1(V\oplus E) = 0\,,$
    \item $w_2(V \oplus E)=0\,,$ and
    \item $\lambda(V\oplus E) = 0$\,.
\end{itemize}
Because $\widetilde E$ and $L$ are oriented, $E$ is oriented, and because $V$ and $E$ are oriented, $V\oplus E$ is oriented, and therefore $w_1(V\oplus E)$ is trivialized.

Since $V$ is \spinc with determinant bundle $L$, we have data of a trivialization of $w_2(E) + w_2(L)$ coming from the spin structure on $V\oplus L$, and the $\SU$-structure on $\widetilde E$ induces a spin structure on $\widetilde E$ (see, e.g.,~\cite{Sto67}), hence also a trivialization of $w_2(\widetilde E)$. The Whitney sum formula provides for us an identification
\begin{equation}
    w_2(V\oplus E) = w_2(V) + w_2(L) + 0 = 0.
\end{equation}
On to $\lambda$. As described above, $V\oplus L$ and $\widetilde E$ are spin, so we have an identification
\begin{equation}
    \lambda(V\oplus E) = \lambda(V\oplus L\oplus\widetilde E) = \lambda(V\oplus L) + \lambda(\widetilde E)
\end{equation}
using the Whitney sum formula in \cref{lem:whitneysum}. By \cref{cpx_lambdac}, $\lambda(\widetilde E) = -c_2(\widetilde E) = -\lambda(V\oplus L)$, providing the required trivialization of $\lambda(V\oplus E)$ and therefore a string structure.
%

Finally, we will show that a \stringh structure in the sense of \cref{ancillary} induces a \stringh structure in the sense of \cref{lift_stringh_defn}. Let $E$ denote the ancillary bundle. Because $V\oplus E$ is string, it is in particular spin, so $w_2(V) + w_2(E)$ is trivialized. We therefore have a \spinc structure on $V$ with determinant bundle $L\coloneqq\Det(E)$, because $w_2(E) = w_2(\Det(E))$ canonically.

Let $\widetilde E\coloneqq E - \Det(E)$. Then we have canonical isomorphisms of complex line bundles
\begin{equation}
    \Det(\widetilde E)\cong \Det(E)\otimes \Det(-\Det(E))\cong\Det(E)\otimes (\Det(E))^\vee\cong\underline\C,
\end{equation}
giving us data of an $\SU$-structure on $\widetilde E$, and therefore a class $c_2^\ku\in\ku^4(X)$. We are done if we can show that $\tau_0(c_2^\ku) = \lambda^c(V)$, i.e.\ that $c_2(\widetilde E) = \lambda(V\oplus L)$. As in the previous part of this proof, the string structure on $V\oplus E$ furnishes an identification $\lambda(V\oplus L) + 
\lambda(\widetilde E) = \lambda(V\oplus E) = 0$, so $\lambda(V\oplus L) = -\lambda(\widetilde E)$; applying \cref{cpx_lambdac} allows us to conclude $c_2(\widetilde E) = \lambda(V\oplus L) = \lambda^c(V)$.
\end{proof}

We now derive a few basic properties of \stringh structures. We start by establishing in \cref{direct_sum_stringh} the \stringh analogue of the fact that a direct sum of \spinc bundles is also \spinc.
\begin{lem}[Whitney sum formula for $\lambda^c$]
\label{lambda_c_whitney}
Let $V,W\to X$ be \spinc vector bundles. Then in $H^4(X;\Z)$,
\begin{equation}
    \lambda^c(V\oplus W) = \lambda^c(V) + c_1(V)c_1(W) + \lambda^c(W).
\end{equation}
\end{lem}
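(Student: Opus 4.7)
The plan is to verify the identity after multiplying both sides by $2$ using familiar formulas, and then to cancel the factor of $2$ by working universally where the relevant cohomology group is torsion-free.

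For the first step, combining \cref{pont} applied to the spin bundle $V \oplus L_V$ with $p_1(L_V) = c_1(L_V)^2$ (the Pontrjagin class of the underlying real bundle of a complex line bundle) gives
\begin{equation}
    2\lambda^c(V) = p_1(V \oplus L_V) = p_1(V) + c_1(V)^2
\end{equation}
in $H^4(X;\Z)$, where I write $c_1(V) \coloneqq c_1(L_V)$. The Whitney-sum spin$^c$ structure on $V \oplus W$ has determinant line bundle $L_V \otimes L_W$, hence $c_1(V \oplus W) = c_1(V) + c_1(W)$. Also, $p_1(V \oplus W) = p_1(V) + p_1(W)$ holds integrally for spin$^c$ bundles (the a~priori correction $-c_1(V\otimes_{\R}\C)\,c_1(W\otimes_{\R}\C)$ vanishes: each factor is $2$-torsion, while universally $H^2(B\Spin^c;\Z) \cong \Z$ has no $2$-torsion, so the factors themselves are zero). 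Expanding $(c_1(V) + c_1(W))^2$ yields
\begin{equation}
    2\lambda^c(V \oplus W) = 2\bigl(\lambda^c(V) + c_1(V)c_1(W) + \lambda^c(W)\bigr).
\end{equation}

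The hard part is to cancel the leading factor of $2$, since $H^4(X;\Z)$ may well contain $2$-torsion. I would resolve this by naturality: both sides of the desired identity are pulled back from the corresponding universal classes on $Y \coloneqq B\Spin^c \times B\Spin^c$ along the classifying map for the pair $(V, W)$, so it suffices to prove the identity in $H^4(Y;\Z)$. Using the Serre spectral sequence of the fibration $B\Spin \to B\Spin^c \to K(\Z,2)$, together with the $3$-connectivity of $B\Spin$ and $H^4(B\Spin;\Z) \cong \Z$, one computes $H^2(B\Spin^c;\Z) \cong \Z$ and $H^4(B\Spin^c;\Z) \cong \Z^2$, both torsion-free. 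The Künneth formula then ensures that $H^4(Y;\Z)$ is torsion-free, so multiplication by $2$ is injective on it, and the factor-of-$2$ identity above promotes to the stated identity universally, and hence on every $X$ by naturality.
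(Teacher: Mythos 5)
Your proof is correct and follows essentially the same strategy as the paper's: establish the identity after multiplying by $2$ via the Whitney sum formula for $p_1$ applied to $V\oplus L_V$, etc., then cancel the factor of $2$ by naturality from a universal case where $H^4$ is torsion-free. The minor differences are that you work stably over $B\Spin^c\times B\Spin^c$ and establish torsion-freeness via the Serre spectral sequence for $B\Spin\to B\Spin^c\to K(\Z,2)$ and re-derive the integral Whitney sum for $p_1$ of oriented bundles from the $2$-torsion of odd Chern classes of complexifications, whereas the paper works over $B\Spin^c_{n_1}\times B\Spin^c_{n_2}$, cites Duan's computation for the lack of $2$-torsion, and cites Brown and Thomas for the $p_1$ Whitney sum formula; these choices are interchangeable and do not change the shape of the argument.
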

\begin{proof}
By naturality, it suffices to prove this for $V$ and $W$ the tautological bundles over $B\Spin^c_{n_1}$, resp.\ $B\Spin^c_{n_2}$ for $n_1,n_2\gg 0$. Using Duan's calculation~\cite[Theorem D]{Dua18} of $H^*(B\Spin^c_n;\Z)$ and the Künneth formula, we learn that $H^4(B\Spin^c_{n_1}\times B\Spin^c_{n_2};\Z)$ lacks $2$-torsion, so if we can show $2\lambda^c(V\oplus W) = 2\lambda^c(V) + 2c_1(V)c_1(W) + 2\lambda^c(W)$, that would suffice to prove the lemma. That is, we want to prove
\begin{equation}\label{rephrase_via_p1}
    p_1(V\oplus  W\oplus (L_V \otimes L_W)) = p_1(V\oplus L_V) + 2c_1(V)c_1(W) + p_1(W\oplus L_W),
\end{equation}
where $L_V,L_W$ denote the determinant line bundles of $V$ and $W$, respectively. Here we used the fact that the determinant line bundle for a direct sum of \spinc vector bundles is the tensor product of their determinant line bundles.

The first Pontrjagin class satisfies a Whitney sum formula $p_1(E\oplus F) = p_1(E) + p_1(F)$ if $E$ and $F$ are oriented~\cite[Theorem 1.6]{Bro82} (see also~\cite{Tho62}), and using that formula, we can reduce~\eqref{rephrase_via_p1}: to prove the lemma, it suffices to prove that for complex line bundles $L_1$ and $L_2$,
\begin{equation}\label{p1_final}
    p_1(L_1\otimes L_2) = p_1(L_1) + 2c_1(L_1)c_1(L_2) + p_1(L_2).
\end{equation}
For any rank-$2$ oriented real vector bundle $E$, $p_1(E) = e(E)^2$, and the Euler class of a complex line bundle is additive in tensor products, from which~\eqref{p1_final} follows, and then the lemma too.
\end{proof}
\begin{lem}[{Conner-Floyd~\cite[\S 8]{conner:1966}}]
For $n\ge 1$ the classes $c_1,\dotsc,c_n\in H^*(B\U_n;\Z)$ have canonical preimages $c_1^\MU,\dotsc,c_n^\MU\in\MU^*(B\U_n)$. Therefore the same is true with $\MU$ replaced with any complex-oriented ring spectrum $E$.
\end{lem}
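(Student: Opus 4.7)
The plan is to run the splitting principle for complex-oriented cohomology theories in the universal case $\MU$, then observe that the entire argument is formal in the existence of a complex orientation and a Postnikov truncation to $H\Z$; thus the second sentence of the lemma follows from the first with no additional work.

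First I would handle $n = 1$. The canonical complex orientation of $\MU$ (coming from the Thom class of the tautological line bundle) is a class $c_1^\MU \in \MU^2(\CP^\infty) = \MU^2(B\U_1)$, and the Postnikov truncation $\tau_0 \colon \MU \to H\Z$ exists as a map of ring spectra because $\pi_0(\MU) \cong \Z$ and $\MU$ is connective. By the standard normalization of complex orientations, $\tau_0$ sends $c_1^\MU$ to the ordinary first Chern class $c_1 \in H^2(B\U_1;\Z)$, giving the canonical preimage in the $n=1$ case.

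For $n \ge 2$, I would use the splitting principle. Let $T \subset \U_n$ be the maximal torus so that $BT \simeq (B\U_1)^n$, and let $\iota\colon BT \to B\U_n$ be the induced map. Because the fiber $\U_n/T$ of $BT \to B\U_n$ is a flag variety with a cell decomposition in only even real dimensions, the Serre spectral sequence for this fibration collapses for any complex-oriented cohomology theory, and the standard argument identifies $\iota^*\colon \MU^*(B\U_n) \hookrightarrow \MU^*(BT) \cong \MU^*[[x_1,\dotsc,x_n]]$ with the inclusion of $\Sigma_n$-symmetric formal power series, where $x_i$ is the pullback of $c_1^\MU$ along the $i$-th projection. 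Define $c_k^\MU \in \MU^{2k}(B\U_n)$ to be the unique class whose image is the elementary symmetric polynomial $e_k(x_1,\dotsc,x_n)$. Applying $\tau_0$ and using the splitting principle for ordinary cohomology, $c_k^\MU$ maps to $e_k(c_1(L_1),\dotsc,c_1(L_n)) = c_k$, confirming that $c_k^\MU$ is a canonical preimage of $c_k$.

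For the second sentence, the same argument runs verbatim with $\MU$ replaced by any complex-oriented ring spectrum $E$ with $\pi_0(E) \cong \Z$: the chosen orientation supplies $x_i^E \in E^2(BT)$, the splitting principle identifies $E^*(B\U_n)$ with the symmetric power series in the $x_i^E$, and we set $c_k^E \coloneqq e_k(x_1^E,\dotsc,x_n^E)$. The Postnikov truncation $E \to H\Z$ (again using $\pi_0(E) \cong \Z$) sends $c_k^E$ to $c_k$ by the same normalization argument. The only nontrivial ingredient is the collapse of the Serre spectral sequence and the resulting identification of $E^*(B\U_n)$ with symmetric power series, which I would cite from Conner-Floyd or Adams rather than reprove; the rest of the proof is then formal.
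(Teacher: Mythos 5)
Your proof is correct, but for the second sentence you take a more laborious route than the paper. For the $\MU$ case you and the paper agree (both cite Conner--Floyd, and the splitting-principle computation you sketch is what underlies their theorem). For the extension to a general complex-oriented $E$ with $\pi_0(E)\cong\Z$, however, you rerun the entire splitting-principle argument over $E$; the paper instead exploits that a complex orientation on $E$ is the same data as a ring spectrum map $\MU\to E$, and the isomorphism $\pi_0(E)\cong\Z$ (plus connectivity considerations) supplies a ring map $E\to H\Z$ with composite $\MU\to E\to H\Z$ equal to the usual $\MU\to H\Z$. One then simply \emph{defines} $c_k^E$ to be the image of $c_k^\MU$ under $\MU\to E$, and compatibility of the composite with $\tau_0$ forces $c_k^E\mapsto c_k$ under $E\to H\Z$. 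This is the truly ``formal'' one-liner you allude to in your opening sentence, and it makes the canonicity of $c_k^E$ transparent without re-establishing the structure of $E^*(BT)$ and $E^*(B\U_n)$. One incidental caveat in your favor: the paper calls these maps ``$E_\infty$-ring'' maps, which overstates what a complex orientation provides (a map of homotopy ring spectra suffices and is all one gets in general); your more careful phrasing avoids this.
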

These classes are called the \term{Conner-Floyd Chern classes}; see Adams~\cite[\S I.4]{Ada74} for more information.
\begin{prop}\label{direct_sum_stringh}
If $V,W\to X$ are \stringh vector bundles, there is a canonical \stringh structure on $V\oplus W$ extending the usual direct-sum \spinc structure, characterized in the following equivalent ways.
\begin{description}
    \item[Lift of a class] The Whitney sum formula \cref{lambda_c_whitney} implies that if $c_2^\ku(V)$, resp.\ $c_2^\ku(W)$ are lifts of $\lambda^c(V)$, resp.\ $\lambda^c(W)$ across $\tau_0$, then $c_2^\ku(V) +c^{\ku}_1(V)c^{\ku}_1(W)+ c_2^\ku(W)$ is a lift of $\lambda^c(V\oplus W)$ and thus defines a \stringh structure on $V\oplus W$.
    \item[Trivialization of a class]
    We will show the equality
    \begin{equation}\label{box_whitney}
        \square_\ku(\lambda^c(V \oplus W)) = \square_\ku(\lambda^c(V)) + \square_\ku(\lambda^c(W)),
    \end{equation}
    so the trivializations of $\square_\ku(\lambda^c(V))$ and $\square_\ku(\lambda^c(W))$ induce a trivialization of $\square_\ku(\lambda^c(V\oplus W))$, hence a \stringh structure on $V\oplus W$.
    \item[Twisted string structure]
    Let $E$, resp.\ $F$ be the ancillary bundles to $V$, resp.\ $W$. Then $V\oplus E\oplus W\oplus F$ is a direct sum of two string vector bundles, hence acquires a string structure; switching $E$ and $W$, we have produced a \stringh structure on $V\oplus W$ with ancillary bundle $E\oplus F$.
\end{description}
\end{prop}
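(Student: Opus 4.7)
The plan is to construct a \stringh structure in each of the three formulations listed in the proposition, and then verify that all three produce the same underlying structure under the equivalences of \cref{defns_are_equiv}. The only real inputs are the Whitney sum formula for $\lambda^c$ from \cref{lambda_c_whitney}, exactness of the Bockstein long exact sequence associated to~\eqref{Bott_cofiber}, and the fact that the Conner-Floyd Chern classes $c_k^\ku$ lift the ordinary Chern classes $c_k$ along $\tau_0$.

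First I would handle the lift-of-a-class formulation: applying $\tau_0$ to the proposed class $c_2^\ku(V) + c_1^\ku(V)c_1^\ku(W) + c_2^\ku(W)$ and using that $\tau_0$ is a ring map and $c_k^\ku$ lifts $c_k$, the image equals
\begin{equation}
    \lambda^c(V) + c_1(V) c_1(W) + \lambda^c(W),
\end{equation}
which by \cref{lambda_c_whitney} equals $\lambda^c(V\oplus W)$. So this sum is indeed a lift in the sense of \cref{lift_stringh_defn}.

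Next, for the trivialization-of-a-class formulation I would first establish~\eqref{box_whitney}. By \cref{lambda_c_whitney} and additivity of $\square_\ku$, this reduces to showing $\square_\ku(c_1(V) c_1(W)) = 0$; but $c_1(V) c_1(W)$ is the image under $\tau_0$ of $c_1^\ku(V) c_1^\ku(W)\in\ku^4(X)$, and $\square_\ku$ annihilates the image of $\tau_0$ by exactness of the long exact sequence associated to~\eqref{Bott_cofiber}. Given~\eqref{box_whitney}, the two trivializations of $\square_\ku(\lambda^c(V))$ and $\square_\ku(\lambda^c(W))$ simply add to a trivialization of $\square_\ku(\lambda^c(V\oplus W))$ in the sense of \cref{triv_stringh_defn}. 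For the twisted string structure formulation, since $V\oplus E$ and $W\oplus F$ are both string, their direct sum carries the direct-sum string structure, and reordering summands realizes $V\oplus W$ as \stringh with ancillary bundle $E\oplus F$ per \cref{ancillary}; here \cref{lem:whitneysum} is used to see that $\lambda$ is insensitive to the reordering.

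The last step is to check that these three constructions define the same \stringh structure under the equivalences in \cref{defns_are_equiv}. I expect this to be the main, though essentially routine, obstacle: it amounts to retracing the passage between the three definitions given in the proof of \cref{defns_are_equiv} with $V\oplus W$ in place of $V$, and verifying that the $\SU$-bundle $\widetilde E$ built from a lift $c_2^\ku(V\oplus W)$ agrees up to the canonical identifications with the direct sum of the $\SU$-bundles produced from $c_2^\ku(V)$ and $c_2^\ku(W)$, modulo the $c_1^\ku(V)c_1^\ku(W)$ correction — the bookkeeping is delicate but purely formal.
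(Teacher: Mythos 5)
Your proposal is correct and follows essentially the same outline as the paper's proof. The one place where you deviate—and it is a small but genuinely cleaner move—is in establishing $\square_\ku(c_1(V)c_1(W)) = 0$: you simply observe that $c_1(V)c_1(W)$ lies in the image of $\tau_0$, being $\tau_0(c_1^\ku(V)c_1^\ku(W))$, so $\square_\ku$ kills it by exactness. The paper instead argues universally that $\square_\ku(c_1c_1)\in\ku^7(B\U_1\times B\U_1)$, which is the zero group by a citation to Bruner–Greenlees, and then afterwards observes that $c_1^\ku(V)c_1^\ku(W)$ is the canonical lift. Both approaches single out the same lift and hence the same canonical trivialization, but yours does not need the external computation of $\ku^7(B\U_1\times B\U_1)$, which is a modest simplification.

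The one real gap is that you defer the reconciliation of the three constructions as ``delicate but purely formal'' bookkeeping, and this is in fact where the paper spends most of its effort. The paper's argument shows that the direct-sum twisted string structure on $V\oplus E\oplus W\oplus F$, passed through the equivalence of \cref{defns_are_equiv}, produces a virtual $\SU$-bundle whose $c_2^\ku$ is precisely $c_2^\ku(V)+c_1^\ku(V)c_1^\ku(W)+c_2^\ku(W)$, using that $c_2(\widetilde E)=\lambda^c(V)$ and similarly for $W$ from the earlier proof. You should actually carry this step out: in particular you will need to account for the fact that the ancillary bundle of $V\oplus W$ is $E\oplus F$, whose $\SU$-reduction $(E\oplus F)\ominus\Det(E\oplus F)$ is a different virtual bundle from $(E\ominus\Det(E))\oplus(F\ominus\Det(F))$, and that the discrepancy is exactly what produces the cross-term $c_1^\ku(V)c_1^\ku(W)$. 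This is not quite automatic and is the heart of the proposition.
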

\begin{proof}
The Whitney sum formula and linearity of the Bockstein do not quite prove~\eqref{box_whitney}: they tell us that
     \begin{equation}\label{box_whitney_not}
        \square_\ku(\lambda^c(V \oplus W)) = \square_\ku(\lambda^c(V)) + \square_\ku(c_1(V)c_1(W)) + \square_\ku(\lambda^c(W)).
    \end{equation}
So we will show $\square_\ku(c_1(V)c_1(W)) = 0$. It suffices to do this for the universal case, which is a class in $\ku^7(B\U_1\times B\U_1)$, and this is the zero group~\cite[Theorem 5.2.1]{BG10}. Thus we have a canonical trivialization of $\square_\ku(c_1(V)c_1(W))$, or equivalently a canonical lift to $\ku^4$, namely $c_1^\ku(V)c_1^\ku(W)$. Therefore the trivialization and lifting pieces of the proposition are equivalent by using exactness like in the proof of \cref{defns_are_equiv}.

Therefore we are done if we can show that under the process we described in the proof of \cref{defns_are_equiv}, the direct-sum string structure on $V\oplus E\oplus W\oplus F$ produces the ``obvious'' lift of $\lambda^c(V\oplus W)$, namely
\begin{equation}
    c_2^\ku(V) + c_1^\ku(V)c_1^\ku(W) + c_2^\ku(W).
\end{equation}
Since $c_1(V)c_2(W)$ has a canonical lift we just need to show that the direct sum string structure provides lifts to $c_2(V)$ and $c_2(W)$.
The lift we get from the direct-sum string structure is the $\ku^4$-class corresponding to the virtual $\SU$-structured vector bundle $E \ominus \Det(E) \oplus  F \ominus \Det(F)$. 
Taking $c_2$ of this gives $c_2(E - \Det(E)) + c_2(F - \Det(F))$ but we have already proven in \cref{defns_are_equiv} that $c_2(E - \Det(E)) = c_2(\widetilde E) = \lambda^c(V)$ and the lift of this class is $c^{\ku}_2(V)$; a similar statement holds for $c_2(F-\Det(F))$ where the lift is given by $c^{\ku}_2(W)$.
\end{proof}

\begin{prop}[String implies \stringh]\label{string_to_stringh}
If $V$ is a vector bundle with a string structure, there is a canonical \stringh structure on $V$ characterized in the following equivalent ways.
\begin{description}
    \item[Trivialization of a class] Since $V$ is string, $\lambda^c(V) = \lambda(V) = 0$, and $\square_\ku(0)$ has a canonical trivialization.
    \item[Lift of a class] There is a canonical lift of $0\in H^4(X;\Z)$ to $\ku$-cohomology, namely $0\in\ku^4(X)$.
    \item[Twisted string structure] If $E = 0$, the string structure on $V$ induces a string structure on $V\oplus E$, so we obtain a \stringh structure with ancillary bundle $0$.
\end{description}
\end{prop}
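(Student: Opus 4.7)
The plan is to verify that each of the three constructions produces a valid \stringh structure, and then invoke \cref{defns_are_equiv} together with a short unwinding to confirm the three structures agree. The underlying observation is that a string structure on $V$ canonically equips $V$ with a \spinc structure whose determinant line bundle $L$ is trivialized (a spin structure is the same as a \spinc structure with trivialized determinant). Under this trivialization, $\lambda(L)$ acquires a canonical trivialization, and so by the Whitney sum formula of \cref{lem:whitneysum} we obtain a canonical identification of trivializations $\lambda^c(V) = \lambda(V \oplus L) = \lambda(V) + \lambda(L) = \lambda(V)$. The hypothesized string structure trivializes $\lambda(V)$, and therefore also $\lambda^c(V)$.

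With this in hand, the three bullets are realized as follows. For the \textbf{trivialization of a class}, the trivialization of $\lambda^c(V)$ maps under $\square_\ku$ to the canonical trivialization of $\square_\ku(\lambda^c(V))$; that is, $\square_\ku$ takes the zero class to the zero class and preserves trivializations. For the \textbf{lift of a class}, $0 \in \ku^4(X)$ visibly lifts $0 \in H^4(X;\Z)$, and the identification with $\lambda^c(V)$ is the trivialization produced above. For the \textbf{twisted string structure}, taking the ancillary bundle $E = 0$ gives $V \oplus E = V$, whose string structure is the given one.

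To finish, I will trace through the equivalences established in \cref{defns_are_equiv} and confirm the three structures correspond. From the twisted string structure with $E = 0$, the proof of \cref{defns_are_equiv} forms $\widetilde E \coloneqq E \ominus \Det(E) = 0$, whose induced $\SU$-structure yields $c_2^\ku = 0 \in \ku^4(X)$; this matches the lift in the second bullet. The equivalence between the lift and the trivialization of a class, which is exactness of the long exact sequence associated to~\eqref{Bott_cofiber}, then identifies this lift with the canonical trivialization of $\square_\ku(\lambda^c(V))$ in the first bullet.

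The main obstacle is entirely bookkeeping: distinguishing the class $0$ from a specific trivialization of a class equal to $0$, and threading the canonical identifications through \cref{defns_are_equiv}. No new geometric or homotopical input is required beyond \cref{lem:whitneysum} and the trivialization of the determinant line bundle induced by a spin structure.
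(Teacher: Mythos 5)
Your proof is correct and follows essentially the same approach as the paper, which simply records that passing the zero characteristic class and the zero ancillary bundle through the identifications of \cref{defns_are_equiv} yields the zero class again, leaving the verification to the reader. You actually carry out that verification, which is worthwhile. One small imprecision: with $E = 0$, the proof of \cref{defns_are_equiv} sets $\widetilde E = E - \Det(E) = 0 - \underline\C = -\underline\C$, which is a rank-$(-1)$ virtual bundle, not the zero bundle as you claim; nevertheless its $\SU$-structure is trivialized since $\underline\C$ is trivial, so the associated class in $\ku^4(X)$ is indeed $0$ and your conclusion stands.
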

\begin{proof}
This amounts to the assertion that if you take the $0$ characteristic class or vector bundle and pass it through the identifications we constructed in the proof of \cref{defns_are_equiv}, you still end up with $0$, which is straightforward to verify.
\end{proof}
\begin{prop}[Complex implies \stringh]\label{cpx_to_stringh}
If $V$ is a complex vector bundle, there is a canonical \stringh structure on $V$ characterized in the following equivalent ways.
\begin{description}
    \item[Trivialization of a class]
    The $\ku$-Bockstein of the universal class $\lambda^c\in H^4(B\U;\Z)$ lands in $\ku^7(B\U)$, which is the zero group~\cite[Theorem 5.5.1]{BG10}.
    \item[Lift of a class] The map $\tau_0\colon \ku^4(B\U)\to H^4(B\U;\Z)$ is surjective, and in the notation of~\cite[Theorem 5.5.1]{BG10}, the class $-c_2 - c_1^2$ is a preimage of $\lambda^c$.
    \item[Twisted string structure] If $E = -V$, then $V\oplus E = 0$ has a canonical string structure, endowing $V$ with a \stringh structure with ancillary bundle $-V$.
\end{description}
\end{prop}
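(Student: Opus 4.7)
The approach is analogous to \cref{string_to_stringh}: since \cref{defns_are_equiv} establishes equivalence of the three definitions, it suffices to construct a \stringh structure via any one of the bullets and then verify the other two descriptions under the recipes from that theorem. All three constructions are universal on $B\U$ and pull back via the classifying map of $V$, so I would work at the level of the tautological bundle throughout.

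I would start with the lift description. By \cref{cpx_lambdac}, $\lambda^c = -c_2 - c_1^2 \in H^4(B\U;\Z)$ for the tautological complex bundle, and the Conner-Floyd Chern classes $c_1^\ku, c_2^\ku \in \ku^*(B\U)$ provide a canonical lift $-c_2^\ku - (c_1^\ku)^2 \in \ku^4(B\U)$ across $\tau_0$. This is a \stringh structure in the sense of \cref{lift_stringh_defn}, and the explicit formula witnesses its naturality.

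The trivialization description then reduces immediately: the Bockstein $\square_\ku(\lambda^c) \in \ku^7(B\U)$ lies in the zero group by~\cite[Theorem 5.5.1]{BG10}, so it admits a unique trivialization, which must agree with the one induced by the lift above via the exactness of the Bockstein sequence associated to~\eqref{Bott_cofiber}.

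For the twisted string description, I would take the ancillary bundle to be $E = -V$ as a virtual vector bundle; then $V \oplus E = 0$ is canonically string (since the zero bundle carries a tautological string structure), defining a $(B\U, S)$-twisted string structure as in \cref{ancillary}. The main obstacle is verifying that this agrees with the lift above: one runs the recipe from the proof of \cref{defns_are_equiv} by forming $\widetilde E = E - \Det(E)$, checking that this virtual bundle is $\SU$, and then computing that $c_2(\widetilde E) = \lambda^c(V)$ in $H^4$ and that the induced $\ku^4$-lift from the $\SU$-structure matches $-c_2^\ku - (c_1^\ku)^2$ on the nose. This is a bookkeeping exercise with Conner-Floyd Chern classes using the Whitney sum formula and the identity $c_1(\Det V) = c_1(V)$, and apart from the care required to handle virtual Chern class operations, it requires nothing genuinely new.
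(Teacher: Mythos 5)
Your proposal tracks the paper's proof closely: it equates the first two descriptions via the long exact sequence from the Bott cofiber sequence and brings in the third by running the ancillary-bundle recipe from \cref{defns_are_equiv}, just as the paper does. One caution, which you inherit from the paper's own statement and from \cref{cpx_lambdac}: if you actually carry out the ``bookkeeping exercise'' you defer, you will find a sign discrepancy. With $E = -V$ and $\widetilde E = E - \Det(E)$, the recipe gives $c_2(\widetilde E) = c_2(E) = c_2(-V) = c_1(V)^2 - c_2(V)$, so the lift produced is $(c_1^\ku)^2 - c_2^\ku$ rather than $-c_2^\ku - (c_1^\ku)^2$. This is what consistency with \cref{lambda_c_whitney} demands (setting $W = V$ complex gives $\lambda^c(V\oplus V) = 2\lambda^c(V) + c_1(V)^2$, which forces $\lambda^c = c_1^2 - c_2$ for complex bundles), but it conflicts with the formula $\lambda^c = -c_2 - c_1^2$ quoted from Chen--Nagy in \cref{cpx_lambdac}. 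The overall argument structure is correct and matches the paper's; just be aware that the explicit formula in the ``lift of a class'' bullet carries a sign error that is already present in the paper, so your claim that the lift ``matches $-c_2^\ku - (c_1^\ku)^2$ on the nose'' would not survive the bookkeeping as written.
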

See~\cite[Remark 7]{Dev} for a fourth perspective on \cref{cpx_to_stringh}.
\begin{proof}
As usual, the equivalence of the first two perspectives follows from the long exact sequence coming from~\eqref{Bott_cofiber}. To bring in the third perspective, recall from the proof of \cref{defns_are_equiv} that the preimage of $\lambda^c$ in the second perspective is $-c^\ku_2 - (c^\ku_1)^2$ of the ancillary bundle (the $c_2^\ku$ came  from the $\SU$-bundle, and $(c_1^\ku)^2$ from the determinant line bundle), which matches the second perspective.
\end{proof}
\begin{prop}
\label{product_compatibility}
The construction in \cref{string_to_stringh} sends the canonical string structure on a direct sum of string vector bundles to the canonical \stringh structure from \cref{direct_sum_stringh}. The same is true for \cref{cpx_to_stringh} with ``string'' replaced with ``complex.''
\end{prop}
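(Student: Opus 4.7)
My plan is to use the twisted-string-structure characterization from \cref{ancillary}, under which a \stringh structure is the data of an ancillary bundle together with a string structure on the direct sum with that bundle. This reduces both compatibilities to immediate bookkeeping rather than a calculation in $\ku^4$. For the string case, when $V$ and $W$ are string, \cref{string_to_stringh} assigns to each the ancillary bundle $0$ and takes the given string structure as the trivialization of $V \oplus 0 \cong V$ (respectively $W \oplus 0 \cong W$). I would then observe that both orders of composition produce the same \stringh structure on $V \oplus W$: ancillary bundle $0 \oplus 0 = 0$, with accompanying string structure the direct-sum string structure on $V \oplus W$. There is nothing further to check.

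For the complex case, I would run the same argument, now with ancillary bundles $-V$ and $-W$ as assigned by \cref{cpx_to_stringh}. The direct-sum construction from \cref{direct_sum_stringh} then produces a \stringh structure on $V \oplus W$ whose ancillary bundle is $(-V) \oplus (-W)$ and whose accompanying string structure comes from the braiding of $V \oplus (-V) \oplus W \oplus (-W)$ into $V \oplus W \oplus (-V) \oplus (-W)$, where the former carries the direct sum of the two canonical trivializations of zero. Applying \cref{cpx_to_stringh} directly to the complex bundle $V \oplus W$ instead produces ancillary bundle $-(V \oplus W)$ together with the canonical trivialization of $(V \oplus W) \oplus (-(V \oplus W)) \cong 0$. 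I would then identify the two ancillaries using the canonical virtual-bundle isomorphism $-(V \oplus W) \cong (-V) \oplus (-W)$ and note that, under the resulting identification of the underlying fourfold sums, both accompanying string structures are the canonical trivialization of the zero bundle.

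The only point that requires explicit justification is that the canonical string structure on the zero bundle is preserved under the direct-sum braidings and virtual-bundle identifications used above. This is elementary, following from naturality of $\lambda$ under bundle isomorphisms together with uniqueness of the trivial string structure on the zero bundle, but it should be stated. If a more calculational route is preferred, the same compatibility can be verified via the lift-of-a-class characterization by comparing the $\ku^4$-lifts produced by each construction: in the string case both lifts are zero, and in the complex case the verification reduces to the Whitney sum formulas for the Conner-Floyd classes $c_1^\ku$ and $c_2^\ku$ together with the formula given in \cref{cpx_to_stringh}. I expect the twisted-string route to be by far the cleaner writeup, and the main pitfall to avoid is being dragged into sign-convention checks that the ancillary-bundle picture sidesteps entirely.
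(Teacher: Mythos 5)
Your proposal uses the twisted-string-structure/ancillary-bundle perspective from \cref{ancillary}, which is exactly the approach the paper takes; both proofs reduce the claim to identifying the ancillary bundle and accompanying string structure arising from each construction, with the string case coming from the identification $V\oplus W\cong V\oplus 0\oplus W\oplus 0$ and the complex case argued analogously. Your write-up is a somewhat more explicit unpacking of the same argument (in particular making the $-(V\oplus W)\cong(-V)\oplus(-W)$ identification explicit), but it is not a genuinely different route.
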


\begin{proof}
Both parts of this proposition follow quickly using the twisted string structure/ancillary bundle perspective. For example, if $V$ and $W$ are string, the canonical identification $V\oplus W\overset\cong\to V\oplus 0\oplus W\oplus 0$ identifies the \stringh structure on $V\oplus W$ from \cref{direct_sum_stringh} with the \stringh structure induced from the direct-sum string structure on $V\oplus W$. The proof for complex vector bundles is analogous.
%
\end{proof}
We denote by $B\String^h$ the space which classifies \stringh bundles. We describe the properties of $B\String^h$ by first considering it in the context of \cref{triv_stringh_defn,,ancillary}. Starting with \cref{triv_stringh_defn}, the maps $[X, \Sigma^7 \ku]$ represent classes in $\Sigma^{7}\ku$, which through suspension is equivalent to $[\Sigma X, \Sigma^8 \ku]$. By the loops-suspension adjunction, this gives $[ X, \Omega B\U \ang 8]$, and we take $\Omega B\U \ang 8$ as the space that classifies $\ku^7(X)$. 
\begin{defn}[{Devalapurkar~\cite[Construction 2]{Dev}}]
    The space $B\String^h$  is the fiber of the map 
\begin{equation}
    \Box_{\ku}(\lambda^c)\colon B\Spin^c \longrightarrow \Omega B\U \ang 8.
\end{equation}
\end{defn}
This space arises from considering all those \spinc vector bundles such that the image of $\lambda^c$ is trivialized in $\ku^7(X)$. 
\begin{rem}
\label{4_ways_to_BStringh}
\Cref{defns_are_equiv} provides several canonically equivalent characterizations of $B\String^h$ -- for example, using \cref{4th_defn}, if $\mathit{SK}(4)$ denotes the classifying space for degree-$4$ supercohomology (see~\cite[(1.53)]{Debray:2023tdd}), then
$B\String^h$ is the fiber of either composition in
\begin{equation}
\begin{tikzcd}
	{B\SO\times B\U} & {B\SO} \\
	{\mathit{SK}(4)\times\mathit{SK}(4)} & {\mathit{SK}(4).}
	\arrow["\oplus", from=1-1, to=1-2]
	\arrow["{(\widetilde\lambda, r)}"', from=1-1, to=2-1]
	\arrow["{\widetilde\lambda}", from=1-2, to=2-2]
	\arrow["{+}"', from=2-1, to=2-2]
\end{tikzcd}
\end{equation}
(\Cref{super_whitney} implies this diagram commutes up to homotopy.)
\end{rem}
The map $B\String^h \rightarrow B\Spin^c$ can also be deduced from \cref{ancillary}; for a manifold $X$ a string structure on $TX \oplus \widetilde{E} \oplus L$ in particular gives a spin structure on  $TX \oplus L$, since $\widetilde{E}$ is spin as it has an $\SU$-structure. Therefore $w_2(TM)=w_2(L)$ and $X$ is spin$^c$. The string structure on $TX\oplus \widetilde E \oplus L$ implies $\lambda(TX\oplus L)= -\lambda(\widetilde E) = c_2(\widetilde E)$ where the last equality is due to \cref{cpx_lambdac}. Hence $B\String^h$ fits into the following pullback square:
\begin{equation}\label{eq:stringhpullback}
\begin{gathered}
\begin{tikzcd}
	{B\String^h} & {B\SU} \\
	{B\Spin^c} & {K(\Z,4)\,.}
	\arrow[,from=1-1, to=1-2]
	\arrow["\lambda^c", from=2-1, to=2-2]
	\arrow["{}", from=1-1, to=2-1]
	\arrow["{c_2}", from=1-2, to=2-2]
	\arrow["\lrcorner"{anchor=center, pos=0.125}, draw=none, from=1-1, to=2-2]
\end{tikzcd}
\end{gathered}
\end{equation}
The diagram in \cref{eq:stringhpullback} then implies that a \stringh structure on a \spinc vector bundle $V$ with determinant bundle $L$ is equivalent data to a $(B\SU, c_2)$-twisted string structure on $V\oplus L$.

By \cref{string_to_stringh} we can relate $B\String^h$ to $B\String$, giving the following diagram whose rows are fiber sequences~\cite[Lemma 3]{Dev}.
\begin{equation}\label{eq:stringhstring}
    \begin{tikzcd}
    B\String^h\arrow[r] & B\Spin^c \arrow[r] & \Omega B\U \ang 8 \\
    B\String \arrow[r]\arrow[u] & B\Spin \arrow[r, "\lambda"]\arrow[u] & K(\Z,4) \arrow[u, "\Box_{\ku}", swap]\,. \\
\end{tikzcd}
\end{equation}




\begin{lem}\label{lem:lowdeg}
    The low-degree homotopy groups of $B\String^h$ are
    \begin{equation}
    \begin{alignedat}{2}
        \pi_0(B\String^h) &\cong 0 \qquad\qquad &\pi_5(B\String^h) &\cong 0\\
        \pi_1(B\String^h) &\cong 0 \qquad\qquad &\pi_6(B\String^h) &\cong \Z\\
        \pi_2(B\String^h) &\cong \Z \qquad\qquad &\pi_7(B\String^h) &\cong 0\\
        \pi_3(B\String^h) &\cong 0 \qquad\qquad &\pi_8(B\String^h) &\cong \Z^2\\
        \pi_4(B\String^h) &\cong \Z \qquad\qquad &\pi_9(B\String^h) &\cong \Z/2,
    \end{alignedat}
    \end{equation}
    and $\pi_{10}(B\String^h)$ is an extension of $\Z/2$ by $\Z$.
\end{lem}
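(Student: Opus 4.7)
The plan is to apply the long exact sequence of homotopy groups to the fiber sequence
\begin{equation*}
    B\String^h \longrightarrow B\Spin^c \xrightarrow{\square_\ku\circ\lambda^c} \Omega B\U\ang 8
\end{equation*}
coming from the top row of diagram~\eqref{eq:stringhstring}. What I need as inputs are the low-degree homotopy groups of $B\Spin^c$ and of $Y \coloneqq \Omega B\U\ang 8$, together with enough information about the induced map between them.

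For $B\Spin^c$ I would use the fibration $\U_1 \to \Spin^c \to \SO$ coming from the standard presentation $\Spin^c = (\Spin \times \U_1)/(\Z/2)$; combined with Bott periodicity this produces
\begin{equation*}
    \bigl(\pi_i(B\Spin^c)\bigr)_{i=0,\dots,11} = 0, 0, \Z, 0, \Z, 0, 0, 0, \Z, \Z/2, \Z/2, 0.
\end{equation*}
For $Y$ one has $\pi_i(Y) = \pi_{i+1}(B\U\ang 8)$, which vanishes for $i \leq 6$ and equals $\Z$ for $i \in \{7, 9, 11\}$, with $\pi_8(Y) = \pi_{10}(Y) = 0$.

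Running the long exact sequence is then essentially mechanical. In degrees $i \leq 5$ both $\pi_i(Y)$ and $\pi_{i+1}(Y)$ vanish, so $\pi_i(B\String^h) \cong \pi_i(B\Spin^c)$ directly. In higher degrees, the crucial observation is that every map $\pi_i(B\Spin^c) \to \pi_i(Y)$ appearing in the relevant range is forced to vanish: either the source itself is zero (for $\pi_7$ and $\pi_{11}$ of $B\Spin^c$), or the source is torsion while the target is free abelian (for the map $\Z/2 \to \Z$ in degree $9$). Consequently the long exact sequence breaks into short exact pieces
\begin{equation*}
    0 \to \pi_{i+1}(Y) \to \pi_i(B\String^h) \to \pi_i(B\Spin^c) \to 0
\end{equation*}
for $i = 6, 8, 10$.

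These pieces immediately yield $\pi_6(B\String^h) \cong \Z$, $\pi_7(B\String^h) = 0$, $\pi_9(B\String^h) \cong \Z/2$, the extension $\pi_8(B\String^h) \cong \Z^2$ (which splits because the quotient $\Z$ is free, i.e.\ $\mathrm{Ext}^1(\Z,\Z) = 0$), and the extension $0 \to \Z \to \pi_{10}(B\String^h) \to \Z/2 \to 0$, which is all the statement asserts. There is no substantive obstacle in the argument; the only point that might have required genuine work — computing $\square_\ku\circ\lambda^c$ on $\pi_7$ and $\pi_9$ — is bypassed entirely because the relevant maps are forced to be zero by rank and torsion considerations alone.
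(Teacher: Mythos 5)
Your proposal is correct and takes essentially the same route as the paper: the long exact sequence of the fibration $B\String^h \to B\Spin^c \to \Omega B\U\ang 8$, resolving boundary maps by observing that they all must vanish (either the source is zero, or the source is torsion and the target is torsion-free). The only minor difference is that the paper computes $\pi_*(B\Spin^c)$ from the splitting $B\Spin^c \simeq B\Spin \times B\U_1$ rather than your fibration $\U_1 \to \Spin^c \to \SO$, which sidesteps the small extension ambiguity at $\pi_1(\Spin^c)$ that your approach would otherwise need to resolve by hand.
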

\begin{proof}
    We apply the long exact sequence in homotopy groups for the top fiber sequence in~\eqref{eq:stringhstring}.
    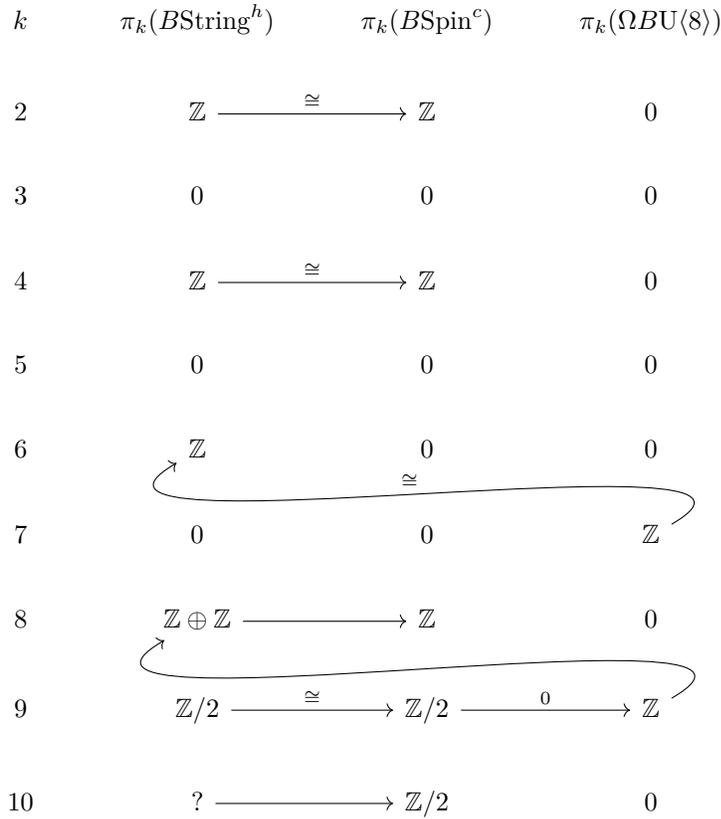
\begin{figure}[h!]
    \centering
\begin{tikzcd}
	{k} & {\pi_k(B\String^h)} & {\pi_k(B\Spin^c)} & {\pi_k(\Omega B\U\ang 8)} \\
	2 & \Z \arrow[r,"\cong"]& {\Z} & 0 \\
	3 & 0 & 0 & 0 \\
	4 & {\Z} \arrow[r,"\cong"] & \Z & {0} \\
	5 & {0} & 0 & 0 \\
	6 & {\Z} & {0} & {0} \\
	7 & {0} & {0} & \Z \arrow[from=7-4,to=6-2,in=-150, out=30,"\cong",swap]\\
	8 & {\Z\oplus \Z} \arrow[r] & {\Z} & 0\\
    9 &  \Z/2 \arrow[r, "\cong"]&\Z/2  \arrow[r,"0"] &\Z  \arrow[from=9-4,to=8-2,in=-150, out=30]\\
    10 & ? \arrow[r] & \Z/2 &0 
\end{tikzcd}
    \caption{Homotopy Long Exact Sequence for computing the homotopy groups of $B\String^h$ in degrees up to 10.}
    \label{fig:homotopyBStringh}
\end{figure}
There is a homotopy equivalence $B\Spin^c\simeq B\Spin\times B\U_1$ (see~\cite[\S 10]{Freed:2016rqq} for this and similar equivalences), which allows us to compute $\pi_*(B\Spin^c)$. The homotopy groups of $\Omega B\U\ang 8$ come from Bott periodicity. Using these, we work out the long exact sequence on homotopy groups in \cref{fig:homotopyBStringh}, which proves the claim.
\end{proof}
Let $\MTString^h$ denote the Thom spectrum of the map $V\colon B\String^h\to B\O$; by the Pontrjagin-Thom theorem, the homotopy groups of this spectrum are isomorphic to the bordism groups of manifolds with \stringh structures on their stable normal bundles.
\begin{rem}
Sometimes in this paper, we will consider manifolds with \stringh structures on their stable tangent bundles, rather than stable normal bundles. A priori this is a different tangential structure classified by the Madsen-Tillmann spectrum $\mathit{MTString}^h$, the Thom spectrum of $-V\colon B\String^h\to B\O$, but one can show using \cref{direct_sum_stringh} that a \stringh structure on $V\to X$ is equivalent data to a \stringh structure on $-V\to X$, like for orientations, spin structures, \spinc structures, etc. This furnishes a canonical equivalence $\MTString^h\simeq\mathit{MTString}^h$. We will therefore pass between tangential and normal \stringh structures, and tangential and normal \stringh bordism and Thom spectra, without comment, and likewise for spin, \spinc, string, and stably almost complex structures.
\end{rem}
\subsection{Rigidifying to $E_\infty$-structures}\label{subsection:Einfty}
In the previous section, we provided several results in which a characteristic class or tangential structure behaves well with respect to the direct sum of vector bundles. Here we rigidify these facts, lifting them to $E_\infty$-structures on spaces or maps between spaces. Our primary goal is to construct and study an $E_\infty$-ring spectrum structure on $\MString^h$ refining \cref{direct_sum_stringh}.

To do this, we will make repeated use of the following facts.
\begin{thm}[{Lurie~\cite[Remark 5.2.6.26]{HA}}]
\label{deloop}
$\Omega^\infty$ is an equivalence of $\infty$-categories from connective spectra to grouplike $E_\infty$-spaces.
\end{thm}
At its heart, \cref{deloop} is a combination of May's recognition principle~\cite[\S 14]{GILS} (see also Boardman-Vogt~\cite{BV68}) and the equivalence between connective spectra and grouplike $E_\infty$-spaces (e.g.\ implicit in~\cite{May74}); the novelty in~\cite{HA} is the framework of $\infty$-categories.
\begin{prop}\label{postloop}
$\Omega^\infty$ commutes with the Postnikov cover and quotient functors $\tau_{\le n}$, resp.\ $\tau_{\ge n}$.
\end{prop}
This is because the Postnikov cover and quotient functors satisfy universal properties defined entirely in terms of homotopy groups and $\Omega^\infty$ preserves homotopy groups. These universal properties also imply:
\begin{prop}
\label{loopcommute}
There are natural isomorphisms $\tau_{\le m}\tau_{\ge n}\simeq\tau_{\ge n}\tau_{\le m}$, $\tau_{\ge m}\tau_{\ge n}\simeq\tau_{\ge\max(m, n)}$, and $\tau_{\le m}\tau_{\le n}\simeq\tau_{\le\min(m, n)}$.
\end{prop}
\newcommand{\bo}{\mathit{bo}}
\newcommand{\bu}{\mathit{bu}}
\newcommand{\bso}{\mathit{bso}}
\newcommand{\bspin}{\mathit{bspin}}
\newcommand{\bstring}{\mathit{bstring}}
Given an $E_\infty$-space $B$, we will sometimes lowercase it and denote the spectrum corresponding to $B$, in the sense of \cref{deloop}, as $b$.
\begin{defn}
\label{bspin_defn}
Let $\bo\coloneqq \tau_{\ge 1}\ko$, $\bu\coloneqq\tau_{\ge 2}\ku$, $\bso\coloneqq\tau_{\ge 2}\bo$, $\bspin\coloneqq\tau_{\ge 4}\bso$, and $\bstring\coloneqq\tau_{\ge 8}\bspin$.
\end{defn}
Since $\Omega^\infty\ko\simeq \Z\times B\O$, \cref{postloop} implies $\Omega^\infty\bo \simeq B\O$, $\Omega^\infty \bu = B\U$, and so on.\footnote{Warning: sometimes $\bo$ is used to denote $\ko$, resp.\ $\bu$ for $\ku$. We sacrifice consistency with the literature for internal consistency.}
\begin{example}
\label{w2_Einf}
We begin with a warmup example, constructing an $E_\infty$-structure on $w_2\colon B\SO\to K(\Z/2, 2)$ and its fiber, $B\Spin$.
\begin{lem}
The map $w_2\colon B\SO\to K(\Z/2, 2)$ is the Postnikov $2$-truncation. 
\end{lem}
\begin{proof}
The map is vacuously an isomorphism on $\pi_0$ and $\pi_1$, and vanishes on $\pi_k$ for $k\ge 3$, so it suffices to show that it induces an isomorphism on $\pi_2$. The Hurewicz and universal coefficient theorems reduce this question to showing an isomorphism on $H^2(\bl;\Z/2)$, which is true.
\end{proof}
Thus by \cref{postloop}, $w_2\colon B\SO\to K(\Z/2, 2)$ is $\Omega^\infty$ of $\tau_{\le 2}\colon \bso\to \Sigma^2 H\Z/2$. Thus $w_2$ is a map of $E_\infty$-spaces, so its fiber, $B\Spin\to B\SO$, is a map of $E_\infty$-spaces.\footnote{In fact, we already knew this, as $B\Spin\to B\SO$ is $\Omega^\infty$ of $\bspin\to\bso$.} Lewis~\cite[\S IX.7.4]{LMSM86} showed that the Thom spectrum $Mf$ of a map $f\colon B\to B\O$ of $E_\infty$-spaces naturally acquires an $E_\infty$-ring spectrum structure, lifting the product structure on $B$-bordism induced by the $E_\infty$-structure on $B$. Thus, we obtain an $E_\infty$-ring spectrum $\MSpin$ and an $E_\infty$-ring map $\MSpin\to\MSO$.

This refines the following standard facts:
\begin{enumerate}
    \item $w_2$ is additive on oriented vector bundles.
    \item The product of spin manifolds is spin, making $\Omega_*^\Spin$ into a graded commutative ring.
    \item The forgetful map $\Omega_*^\Spin\to\Omega_*^\SO$ is a ring homomorphism.
\end{enumerate}
\end{example}
\begin{ex}
\label{lam_Einf}
Work out the analogous story for $\lambda\colon B\Spin\to K(\Z, 4)$ and its fiber $B\String\to B\Spin$, refining the Whitney sum formula for $\lambda$ (\cref{lem:whitneysum}) to an $E_\infty$-map.
\end{ex}
Recall the supercohomology spectrum $\SH$ defined in \cref{rSH}.
\begin{lem}\label{lem:bsoSH}
There is an equivalence of spectra $\tau_{\le 4}\bso\simeq\Sigma^4\SH$ such that $\Omega^\infty(\tau_{\le 4}\colon\bso\to\Sigma^4\SH)$ can be identified with the supercohomology characteristic class $\widetilde\lambda\colon B\SO\to \mathit{SK}(4)$.
\end{lem}
\begin{proof}
Both spectra have exactly two nonzero homotopy groups, so it suffices to check that those two homotopy groups match (which they do) and that their $k$-invariants are equal. The group $[\Sigma^2 H\Z/2, \Sigma^5 H\Z]\cong\Z/2$~\cite[Corollary 1.57]{Debray:2023tdd}, so it suffices to show both $k$-invariants are nonzero. \Cref{rSH} gives the $k$-invariant of $\SH$ as $\square_\Z\circ\Sq^2$, and the $k$-invariant of $\tau_{\le 4}\bso$ is shown to be nonzero in~\cite[\S 1.2.4]{Debray:2023tdd}.

It remains to identify $\Omega^\infty(\tau_{\le 4})$ with $\widetilde\lambda$. To do this, recall that $\widetilde\lambda$ was shown in \cref{defntild} to be the unique class in $\SH^4(B\SO)$ which pulled back to $\lambda\in H^4(B\Spin;\Z)$ (or, more precisely, the image of $\lambda$ in supercohomology), so it suffices to show $\Omega^\infty(\tau_{\le 4})$ has the same property. Since $\bspin\to\bso$ is the $3$-connected cover and $\lambda \simeq \Omega^\infty(\tau_{\le 4}\colon \bspin\to \tau_{\le 4}\bspin)$, the pullback of $\tau_{\le 4}\colon\bso\to\tau_{\le 4}\bso$ to $\bspin$ is indeed the homotopy class of maps looping to $\lambda$ by naturality of Postnikov covers (specifically \cref{loopcommute}).
\end{proof}
\begin{example}
\label{tilde_lam_fib}
Thus one can combine \cref{w2_Einf,lam_Einf} as follows: by \cref{lem:bsoSH}, the $4$-truncation map $\tau_{\le 4}\colon\bso\to \tau_{\le 4}\bso$ loops to $\widetilde\lambda$, providing it with an $E_\infty$-structure and refining its Whitney sum formula (\cref{super_whitney}). By definition, the fiber of $\widetilde\lambda$ is its $5$-connected cover $\bstring\to\bso$, refining \cref{double_jump}.
%
\end{example}
\begin{example}
\label{spinc_exm}
Let $\rho_2\colon H\Z\to H\Z/2$ denote reduction modulo $2$, and let $\bspin^c$ be the fiber of the composition
\begin{equation}\label{Psieqn}
    \Psi\colon \bso\vee \Sigma^2 H\Z\xrightarrow{(\tau_{\le 2}, \rho_2)} \Sigma^2 H\Z/2\vee \Sigma^2 H\Z/2 \overset{+}{\longrightarrow} \Sigma^2 H\Z/2.
\end{equation}
By construction, this is a $\ko$-module map, where $\bso$ has the usual $\ko$-module structure on $\tau_{\ge 2}\ko$ and $H\Z$ and $H\Z/2$ have the $\ko$-module structures induced from the $E_\infty$-ring maps $\ko\to H\Z\to H\Z/2$ which are Postnikov $0$-truncation, resp.\ reduction modulo $2$.

By \cref{deloop}, $\Omega^\infty(\bspin^c)$ is the fiber of the map $B\SO\times B\U_1\to K(\Z/2, 2)$ sending an oriented virtual bundle $E$ and a complex line bundle $L$ to $w_2(E) + w_2(L)$, so $\Omega^\infty(\bspin^c)\simeq B\Spin^c$. Thus we have placed an $E_\infty$-structure on $B\Spin^c$ and its map to $B\SO\times B\U_1$.

Since $\bspin$ is $3$-connected, the composition
\begin{equation}
    \bspin \overset{\tau_{\ge 4}}{\longrightarrow} \bso\vee\Sigma^2 H\Z\overset\Psi\longrightarrow \Sigma^2 H\Z/2
\end{equation}
vanishes, giving rise to a map of $\ko$-module spectra $\bspin\to\bspin^c$ and hence $E_\infty$-structures on $B\Spin\to B\Spin^c$ and $\MSpin\to\MSpin^c$.
\end{example}
Let $\tilde r\colon\bu\to\bso$ be the $1$-connected cover of the realification map $r\colon\ku\to\ko$, which is a $\ko$-module map. Bruner~\cite[Theorem 1, Proposition 7]{Bru12} studies $\Sigma^2 \tilde r$, which he calls $r_2$, along with some related maps.
\begin{defn}
\label{defnphi}
Let $\bstring^h$ be the fiber of the composition
\begin{equation}
\label{Phieqn}
    \Phi\colon \bso\vee\bu \xrightarrow{(\id, \tilde r)} \bso\vee\bso
        \xrightarrow{\oplus} \bso \xrightarrow{\widetilde\lambda} \Sigma^4\SH.
\end{equation}
By construction, $\Phi$ is a $\ko$-module map, so $\bstring^h$ comes with a $\ko$-module structure.
\end{defn}
\begin{rem}
\Cref{4_ways_to_BStringh} implies that $B\String^h\coloneqq\Omega^\infty(\bstring^h)$. We thus learn that $B\String^h$ has the structure of an $E_\infty$-space, with an $E_\infty$-map to $B\SO\times B\U$, hence also to $B\SO$ by projecting onto the first component.
\end{rem}
\Cref{defnphi} lifts \cref{4th_defn} to the level of spectra.
\begin{prop}
\label{str_to_strh}
There is a unique homotopy class of maps of spectra $\gamma\colon \bstring\to\bstring^h$ such that the following diagram commutes:
\begin{equation}\label{strtohdiag}
\begin{tikzcd}
	\bstring & {\bstring^h} \\
	\bso & {\bso\vee\bu.}
	\arrow["\gamma", from=1-1, to=1-2]
	\arrow["{\tau_{\ge 8}}"', from=1-1, to=2-1]
	\arrow["{\mathrm{fib}(\Phi)}", from=1-2, to=2-2]
	\arrow["{(\id, 0)}"', from=2-1, to=2-2]
\end{tikzcd}\end{equation}
Moreover, \eqref{strtohdiag} canonically upgrades to a homotopy commutative diagram of $\ko$-module spectra.
\end{prop}
This refines \cref{string_to_stringh} and part of \cref{product_compatibility}.
\begin{proof}
Unspool these maps into the following diagram:
\begin{equation}
\label{string_to_stringh_dia}
\begin{tikzcd}
	& \bstring \\
	& \bspin \\
	& \bso && {\Sigma^4 H\Z} \\
	{\bstring^h} & {\bso\vee\bu} && {\Sigma^4 \SH}
	\arrow["{\mathrm{fib}(\lambda)}", from=1-2, to=2-2]
	\arrow["{\exists!}"', "\gamma", dashed, from=1-2, to=4-1]
	\arrow["{\tau_{\ge 4}}", from=2-2, to=3-2]
	\arrow["{\lambda = \tau_{\le 4}}", from=2-2, to=3-4]
	\arrow["{(\id, 0)}"', from=3-2, to=4-2]
	\arrow["{\widetilde\lambda = \tau_{\le 4}}", from=3-2, to=4-4]
	\arrow["{\tau_{\ge 4}}", from=3-4, to=4-4]
	\arrow["{\mathrm{fib}(\Phi)}"', from=4-1, to=4-2]
	\arrow["\Phi"', from=4-2, to=4-4]
\end{tikzcd}\end{equation}
Ignoring the dotted arrow, which we have not constructed yet, we claim this diagram commutes. The square commutes by the universal properties of Postnikov covers/quotients (see e.g.\ \cref{loopcommute}); the triangle commutes by the definition of $\Phi$ (\cref{defnphi}). Moreover, all of these maps are canonically $\ko$-module maps, and the commutativity argument is compatible with this extra structure.

In~\eqref{string_to_stringh_dia}, the composition $\bstring\to\Sigma^4\SH$ vanishes, because $\bstring$ is $7$-connected and $\Sigma^4\SH$ is $4$-truncated. Thus there is a lift $\gamma$ of $\bstring\to\bso\vee\bu$ to the fiber of $\Phi$, which is $\bstring^h$ by definition. Homotopy classes of such lifts are a torsor over $[\bstring, \Sigma^3\SH]$, which vanishes by the same connectivity argument. Moreover, since $\bstring\to\bso\vee\bu$ and $\bso\vee\bu\to\Sigma^4\SH$ are $\ko$-module maps, the lift to $\bstring^h$ is also a $\ko$-module map.
\end{proof}
\begin{lem}[{Hopkins~\cite{Hop84}, Baker~\cite[Corollary 4.2]{Bak18}}]
\label{joker_factor}
There is a finite CW spectrum $J$ such that $\bso = \tau_{\ge 2}\ko\simeq \ko\wedge J$ and $H^*(J;\Z/2)\cong\Sigma^2\cA(1)/(\Sq^3)$ as $\cA(1)$-modules.
\end{lem}
The identification $\ko\wedge J\simeq\tau_{\ge 2}\ko$ appears to have been a folklore theorem; Baker (\textit{loc.\ cit.}) gives a proof.
\begin{cor}
\label{joker_cohomology}
$H^*(J;\Z/2)$ is one-dimensional in degrees $2,\dotsc,6$ and $0$ otherwise. $H^*(J;\Z)$ is isomorphic to $\Z$ in degree $4$, $\Z/2$ in degrees $3$ and $6$, and vanishes otherwise.
\end{cor}
\begin{proof}
The $\Z/2$-cohomology directly follows from a description of $\cA(1)/(\Sq^3)$, for example in~\cite[(5.2)]{Bak20}. Applying the Bockstein long exact sequence for $\Z\to\Z\to\Z/2$ computes the $\Z_{(2)}$-cohomology. The absence of odd-primary torsion follows from Baker's description of $J$ in~\cite[\S 4]{Bak18}.
\end{proof}
\begin{lem}
\label{spinc_string_lem}
The following diagram commutes up to homotopy through $\ko$-module maps:
\begin{equation}\label{pentagon!}\begin{tikzcd}
	{\bso\vee\bu} & {\Sigma^4\SH} & {} \\
	{\bso\vee\Sigma^2 H\Z} & {\Sigma^2 H\Z/2}
	\arrow["\Phi", "{\eqref{Phieqn}}"', from=1-1, to=1-2]
	\arrow["{(\id, \tau_{\le 2})}"', from=1-1, to=2-1]
	\arrow["{\tau_{\le 2}}", from=1-2, to=2-2]
	\arrow["{\Psi}", "{\eqref{Psieqn}}"', from=2-1, to=2-2]
\end{tikzcd}\end{equation}
\end{lem}
\begin{proof}[Proof sketch]
We will produce a canonical isomorphism $\pi_0(\Map_\ko(\bso\vee\bu, \Sigma^2 H\Z/2))\cong \Z/2\oplus\Z/2$, with one $\Z/2$ coming from $\bso$ and the other coming from $\bu$; it then suffices to show that both compositions in~\eqref{pentagon!} represent the diagonal class, i.e.\ that they are nontrivial on both $\bso$ and $\bu$. This is straightforward.

So we return to the $\pi_0\Map_\ko$ computation. \cref{joker_factor} shows $\bso\simeq\ko\wedge J$, and by Wood's theorem~\cite{wood} (see also~\cite[Theorem 0.1]{Yas87}), $\bu\simeq\Sigma^2\ku\simeq \Sigma^2 \ko\wedge \Sigma^2\CP^2$. Thus there are canonical isomorphisms of abelian groups
\begin{equation}
    \begin{aligned}
        \pi_0\Map_\ko(\bso\vee\bu, \Sigma^2 H\Z/2) &\cong
            \pi_0\Map_\ko(\ko\wedge (J\vee \Sigma^2\CP^2), \Sigma^2 H\Z/2)\\
            &\cong\pi_0\Map_\Sph(J\vee \Sigma^2\CP^2, \Sigma^2 H\Z/2)\\
            &\cong H^2(J\vee\CP^2; \Z/2)\\
            &\cong H^2(J;\Z/2)\oplus \widetilde H^2(\CP^2;\Z/2)\cong\Z/2\oplus\Z/2.
            \qedhere
    \end{aligned}
\end{equation}
\end{proof}
\begin{prop}
\label{strh_to_spinc}
There is a unique homotopy class of maps $\beta\colon \bstring^h\to \bspin^c$ such that the following diagram commutes:
\begin{equation}\label{spinc_string_diagram}
\begin{tikzcd}
	{\bstring^h} & {\bso\vee\bu} \\
	{\bspin^c} & {\bso\vee\Sigma^2 H\Z.}
	\arrow["{\mathrm{fib}(\Phi)}", from=1-1, to=1-2]
	\arrow["\beta"', from=1-1, to=2-1]
	\arrow["{(\id, \tau_{\le 2})}", from=1-2, to=2-2]
	\arrow["{\mathrm{fib}(\Psi)}", from=2-1, to=2-2]
\end{tikzcd}
\end{equation}
Moreover, \eqref{spinc_string_diagram} canonically upgrades to a homotopy commutative diagram of $\ko$-module spectra.
\end{prop}
\Cref{strh_to_spinc} refines \cref{direct_sum_stringh}.
\begin{proof}
By definition of $\bstring^h$, the composition
\begin{equation}
    \bstring^h \overset{\mathrm{fib}(\Phi)}{\longrightarrow} \bso\vee\bu \overset\Psi\longrightarrow \Sigma^4\SH
\end{equation}
is null-homotopic, so by \cref{spinc_string_lem}, the same is true of the composition
\begin{equation}
    \bstring^h \overset{\mathrm{fib}(\Phi)}{\longrightarrow} \bso\vee\bu \overset{(\id, \tau_{\le 2})}{\longrightarrow} \bso\vee\Sigma^2 H\Z \overset{\Psi}{\longrightarrow} \Sigma^2 H\Z/2.
\end{equation}
Thus we can lift to the fiber of $\Psi$, guaranteeing the existence of a map making~\eqref{spinc_string_diagram} commute. Like in the proof of \cref{str_to_strh}, the lift is a $\ko$-module map, and moreover homotopy classes of such maps (as spectra, not as $\ko$-modules) are a torsor over $[\bstring^h, \Sigma H\Z/2]$, which vanishes by a connectivity argument, proving uniqueness.
\end{proof}
\begin{prop}\label{bu_to_strh}
There is a unique homotopy class of maps $\zeta\colon \bu\to\bstring^h$ such that the following diagram commutes up to homotopy.
\begin{equation}\label{utostr}\begin{tikzcd}
	\bu & {\bstring^h} \\
	{\bu\vee\bu} & {\bso\vee\bu}
	\arrow["\zeta", from=1-1, to=1-2]
	\arrow["{(\id, -\id)}"', from=1-1, to=2-1]
	\arrow["{\mathrm{fib}(\Phi)}", from=1-2, to=2-2]
	\arrow["{(\tilde r, \id)}"', from=2-1, to=2-2]
\end{tikzcd}\end{equation}
Moreover, \eqref{utostr} canonically upgrades to a homotopy commutative diagram of $\ko$-module spectra.
\end{prop}
This refines \cref{cpx_to_stringh} and half of \cref{product_compatibility}.
\begin{proof}
The proof of existence and $\ko$-module structure is not that different from the analogous parts of \cref{str_to_strh,strh_to_spinc}: we can write the composition $\Phi\circ (\tilde r, \id)\circ (\id, -\id)$ as
\begin{equation}
    \bu\overset{(\id, -\id)}{\longrightarrow}
        \bu\vee\bu \overset{(\tilde r, \tilde r)}{\longrightarrow}
        \bso\vee\bso \overset{\oplus}{\longrightarrow}
        \bso\overset{\tau_{\le 4}}{\longrightarrow}
        \Sigma^4\SH.
\end{equation}
Since $\oplus$ is the addition map for the spectra $\bu$ and $\bso$, it commutes with $(\tilde r, \tilde r)$ and $+\circ (\id,-\id)\simeq 0$. Thus we may lift to the fiber of $\Phi$ to obtain the desired map $\bu\to\bstring^h$. Different choices of lift are a torsor over $[\bu, \Sigma^3 \SH]$, which vanishes by the Atiyah-Hirzebruch spectral sequence.
\end{proof}
\begin{lem}\hfill
\label{few_maps}
\begin{enumerate}
    \item\label{conn_spinc} The $7$-connected cover of $\bspin^c$ is the composition $\bstring\to\bspin\to\bspin^c$, where the first map is $\tau_{\ge 8}$ and the second map is the one we defined in \cref{spinc_exm}. Hence there are canonical isomorphisms $\Z\overset\cong\to \pi_8(\bstring)\overset\cong\to\pi_8(\bspin^c)$.
    \item\label{not_many_mod_maps} There is a canonical isomorphism $\varrho\colon \pi_0(\Map_\ko(\bstring, \bspin^c))\cong\Z$, and if $f\colon\bstring\to\bspin^c$ is a $\ko$-module map, then $\varrho(f) = 1$ if and only if, under the identifications in part~\eqref{conn_spinc}, $\pi_8(f)$ is the map $1\colon\Z\to\Z$.
\end{enumerate}
\end{lem}
\begin{proof}
First part~\eqref{conn_spinc}.  By definition of $\bspin^c$, the map $\bspin\to\bspin^c$ from \cref{spinc_exm} is the $3$-connected cover: one can show that the cofiber of the map $h\colon \bspin^c\to\bso$ is $\Sigma^3 H\Z$ (representing the integral Stiefel-Whitney class $W_3$), so applying $\tau_{\ge 4}$ to that cofiber sequence, the cofiber vanishes and $\tau_{\ge 4}h$ is an equivalence. Since $\tau_{\ge 8}\tau_{\ge 4}\simeq\tau_{\ge 8}$ (\cref{loopcommute}), the composition $\bstring\to\bspin\to\bspin^c$ is the $7$-connected cover as claimed. Thus we have a canonical isomorphism $\pi_8(\bstring)\to\pi_8(\bspin^c)$; to further identify these with $\Z$, use Bott periodicity: since $\ko$ is a ring spectrum with $\pi_0\ko\cong\Z$, there is a canonical isomorphism $\pi_0(\ko)\overset\cong\to\Z$, namely the unique one sending $1\mapsto 1$. The suspension isomorphism then identifies $\pi_0(\ko)$ and $\pi_8(\Sigma^8\ko)$, and Bott periodicity canonically identifies $\Sigma^8\ko$ and $\tau_{\ge 8}\ko = \bstring$.

On to part~\eqref{not_many_mod_maps}. As we mentioned above, there is a $\ko$-module equivalence $\bstring\simeq\Sigma^8\ko$, and $\Sigma^8\ko\simeq\ko\wedge \Sigma^8\Sph$. Thus for any $\ko$-module $X$, there are natural isomorphisms
\begin{equation}
    \varrho\colon \pi_0(\Map_\ko(\ko\wedge\Sigma^8\Sph, X)) \overset\cong\longrightarrow \pi_0(\Map_{\Sph}(\Sigma^8\Sph, X)) = \pi_8(X).
\end{equation}
Plugging in $X = \bspin^c$, and using the canonical isomorphism $\pi_8(\bspin^c)\overset\cong\to\Z$ from part~\eqref{conn_spinc}, we obtain the map $\varrho$ claimed in the statement of part~\eqref{not_many_mod_maps}. The rest of the statement of part~\eqref{not_many_mod_maps} is evident from our construction of $\varrho$.
\end{proof}
\begin{prop}\label{str_to_spinc_prop}
The following diagram of $\ko$-modules commutes (up to homotopy):
\begin{equation}\label{bstring_to_bspinc}
\begin{tikzcd}
	\bstring & {\bstring^h} \\
	\bspin & {\bspin^c.}
	\arrow["{\gamma}", from=1-1, to=1-2]
	\arrow["{\tau_{\ge 8}}"', from=1-1, to=2-1]
	\arrow["{\beta}", from=1-2, to=2-2]
	\arrow["{\eqref{spinc_exm}}"', from=2-1, to=2-2]
\end{tikzcd}\end{equation}
\end{prop}
\begin{proof}
By \cref{few_maps}, part~\eqref{not_many_mod_maps}, it suffices to show that $\varrho$ applied to either of the paths through the diagram~\eqref{bstring_to_bspinc} is equal to $1$. For the map through $\bspin$, this follows from part~\eqref{conn_spinc} of \cref{few_maps}. In the rest of the proof, we address the map through $\bstring^h$.

There is a commutative diagram
\begin{equation}\label{big_diagram}
\begin{tikzcd}
	{} & \bstring \\
	{\bstring^h} & {\bso\vee\bu} & {\Sigma^4\SH} \\
	{\bspin^c} & {\bso\vee\Sigma^2 H\Z} & {\Sigma^2 H\Z/2,}
	\arrow["{\gamma}"', from=1-2, to=2-1]
	\arrow["{\alpha}", from=1-2, to=2-2]
	\arrow["{\mathrm{fib}(\Phi)}"{description}, from=2-1, to=2-2]
	\arrow["{\beta}"', from=2-1, to=3-1]
	\arrow["{\eqref{spinc_string_diagram}}"{description}, draw=none, from=2-1, to=3-2]
	\arrow["\Phi", from=2-2, to=2-3]
	\arrow["{(\id, \tau_{\le 2})}"{description}, from=2-2, to=3-2]
	\arrow["{\eqref{pentagon!}}"{description}, draw=none, from=2-2, to=3-3]
	\arrow["{\tau_{\le 2}}", from=2-3, to=3-3]
	\arrow["{\mathrm{fib}(\Psi)}"', from=3-1, to=3-2]
	\arrow["\Psi"', from=3-2, to=3-3]
    \arrow["{\eqref{string_to_stringh_dia}}"{description, pos=0.85}, draw=none, from=1-1, to=2-2]
\end{tikzcd}\end{equation}
which we obtain by gluing together the commutative diagrams from \cref{str_to_strh,spinc_string_lem,strh_to_spinc}. Moreover, by construction of these diagrams, the two horizontal sequences in~\eqref{big_diagram} are fiber sequences. Explicitly, the map $\alpha$ in~\eqref{big_diagram} is the composition
\begin{equation}\label{bstring_bso_bu}
    \alpha\colon \bstring\overset{\mathrm{fib}(\lambda)}{\longrightarrow}
        \bspin\overset{\tau_{\ge 4}}{\longrightarrow}
        \bso \overset{(\id, 0)}{\longrightarrow} \bso\vee\bu
\end{equation}
of the respective maps in the diagram~\eqref{strtohdiag}.

Because $\Sigma^2 H\Z/2$ is $4$-truncated, $\mathrm{fib}(\Psi)$ in~\eqref{big_diagram} is an isomorphism on $\pi_8$. Thus using the isomorphism $\pi_8(\bspin^c)\cong\Z$ from \cref{few_maps}, we get an induced isomorphism $\pi_8(\bso\vee\Sigma^2 H\Z)\overset\cong\to \Z$. Using this isomorphism and \cref{few_maps}, the composition
\begin{equation}\label{vertical_big}
    \pi_8((\id, \tau_{\le 2})\circ\alpha)\colon \pi_8(\bstring) \longrightarrow \pi_8(\bso\vee\Sigma^2 H\Z)
\end{equation}
is identified with a map $\Z\to\Z$; we will show that it is $1$. Commutativity of~\eqref{big_diagram} will then imply $\pi_8(\beta\circ\gamma)$ is also identified with $1\colon\Z\overset\cong\to\Z$ under $\varrho$, since we chose the isomorphisms $\pi_8(\bspin^c)\cong\Z$ and $\pi_8(\bso\vee\Sigma^2 H\Z)\cong\Z$ to be compatible across $\mathrm{fib}(\Psi)$. And $\varrho(\pi_8(\beta\circ\gamma)) = 1$ is what we wanted to show.

So the last step of the proof is to verify that~\eqref{vertical_big} maps to $1$ under our identifications of $\pi_8$ of these spectra with $\Z$. By construction, $(\id, \tau_{\le 2}\circ\alpha)$ is the Postnikov $7$-connected cover, so it induces an isomorphism $\Z\to\Z$ on $\pi_8$ -- and this isomorphism commutes with the isomorphism $\pi_8(\bstring)\to\pi_8(\bspin^c)$ from \cref{few_maps} and the isomorphism $\pi_8(\mathrm{fib}(\Psi))$, as the former map is also induced from the $7$-connected cover and the latter map is an isomorphism on $\pi_k$ for $k\ge 3$. Thus, with respect to the identifications we chose on $\pi_8$, the composition~\eqref{vertical_big} is indeed $1\colon\Z\to\Z$.
%
%
\end{proof}
By applying $\Omega^\infty$ to the diagrams in \cref{str_to_strh,strh_to_spinc,bu_to_strh,str_to_spinc_prop}, the reader can obtain commutative diagrams of the respective classifying spaces $B\String$, $B\String^h$, $B\Spin^c$, etc., as $E_\infty$-spaces over $B\O$.
Thus we can apply Lewis' theorem~\cite[\S IX.7.4]{LMSM86} again: the Thom spectrum functor from spaces over $B\O$ to spectra refines to send $E_\infty$-spaces with an $E_\infty$-map to $B\O$ to $E_\infty$-ring spectra.
This proves the following omnibus theorem on the  multiplicative properties of $\MTString^h$.
\begin{thm}
\label{multiplicativity}
\hfill
\begin{enumerate}
    \item\label{just_ring} $\MTString^h$ has a canonical $E_\infty$-ring structure whose induced map on bordism groups is the direct product $M,N\mapsto M\times N$ with \stringh structure as in \cref{direct_sum_stringh}.
    \item\label{stralg} $\MTString^h$ has a canonical $E_\infty$-$\MTString$-algebra structure refining the construction in \cref{string_to_stringh}.
    \item\label{mualg} $\MTString^h$ has a canonical $E_\infty$-$\MU$-algebra structure refining the construction in \cref{cpx_to_stringh}; in particular, $\MTString^h$ is a complex-oriented ring spectrum.
    \item\label{stringh_spinc_alg} Forgetting from a \stringh structure to a \spinc structure refines to the data of a canonical $E_\infty$-$\MTString^h$-algebra structure on $\MTSpin^c$.
    \item The algebra structures described above are compatible in the sense that the following diagram of $E_\infty$-ring spectra commutes: 
\begin{equation}\begin{tikzcd}
	{} & \MTString & \MTSpin \\
	\MU & {\MTString^h} & {\MTSpin^c}\,.
	\arrow[from=1-2, to=1-3]
	\arrow[from=1-2, to=2-2]
	\arrow[from=1-3, to=2-3]
	\arrow[from=2-1, to=2-2]
	\arrow[from=2-2, to=2-3]
\end{tikzcd}\end{equation}
\end{enumerate}
\end{thm}
Parts~\eqref{just_ring} and~\eqref{mualg} are originally due to Devalapurkar~\cite[Construction 2, Corollary 4]{Dev}, proven in a different way. The rest of \cref{multiplicativity} is implicit in~\cite{Dev}. Concretely, all of this means that products of string, complex, \stringh, spin, and \spinc manifolds are compatible with all of the forgetful maps between these structures.

We will next use this to factor $\MString^h$ as a smash product. Before we do that, though, we must record a couple of computations. 
\begin{lem}
\label{U_SO_htpy}
There are isomorphisms $\pi_0\Map_\ko(\bstring\vee\bu, \bo)\cong\Z^2$ and $\pi_0\Map_\ko(\bstring\vee\bu, \bstring\vee\bu)\cong\Z^4$. Thus any pair of maps $\bstring\vee\bu\rightrightarrows\bo$ or $\bstring\vee\bu\rightrightarrows \bstring\vee\bu$ whose induced maps on $\pi_*$ are distinct are not homotopy equivalent as $\ko$-module maps.
\end{lem}
\begin{proof}
For $\bstring\vee\bu\to\bo$, we may compose with the $0$-connected cover $\bo\to\ko$ without losing information, since $\bstring^h$ and $\bstring\vee\bu$ are $0$-connected, so any map from them to $\ko$ lifts uniquely to $\bo$.

By combining Bott periodicity with \cref{bspin_defn}, we obtain equivalences $\bstring\simeq\Sigma^8 \ko$ and $\bu\simeq\Sigma^2\ku$. We have
\begin{equation}\label{ko_to_ko}
    \pi_k\Map(\Sigma^a \ko, \Sigma^b\ko) = \pi_{k+b-a}\Map_\Sph(\Sph, \ko) = \pi_{k+b-a}\ko,
\end{equation}
so $\pi_0\Map(\bstring, \ko)\cong\pi_0\Map(\bstring, \bstring)\cong\Z$. For $[\bstring, \bu]$ and $[\bu, \bstring]$, i.e.\ $[\Sigma^8\ko, \Sigma^2\ku]$, resp.\ $[\Sigma^2\ku, \Sigma^8\ko]$, apply $\pi_*(\Map_\ko(\ko, \bl))$, resp.\ $\pi_*(\Map(\bl,\ko))$ to the Wood cofiber sequence
\begin{equation}
\label{wood}
    \Sigma\ko\overset\eta\longrightarrow \ko\overset c\longrightarrow\\ku
\end{equation}
to obtain a long exact sequence of homotopy groups that proves $[\bstring, \bu]$ and $[\bu, \bstring]$ are both isomorphic to $\Z$. Assembling all of this information, $[\bstring\vee\bu, \bstring\vee\bu]\cong\Z^4$ and $[\bstring\vee\bu, \bo]\cong\Z^2$. By Bott periodicity, $\pi_8(\bstring\vee\bu)\cong\Z^2$ and $\pi_8(\bo)\cong\Z$; by a Postnikov argument similar to the one in the proof of \cref{str_to_spinc_prop}, the homomorphism from homotopy classes of $\ko$-module maps to their induced maps of abelian groups on $\pi_8$ is injective.
\end{proof}
\begin{lem}[{Baker~\cite[\S 5]{Bak20}}]
\label{seagull}
$\pi_*(\Map_\ko(H\Z, H\Z/2))\cong\cA(1)\otimes_{\cA(0)}\Z/2$; thus $\pi_k(\Map_\ko(H\Z, H\Z/2))$ vanishes except in degrees $0$, $2$, $3$, and $5$, where it is isomorphic to $\Z/2$.
\end{lem}
If $M$ is a $\ko$-module, we will let $M^\vee\coloneqq\Map_\ko(M, \ko)$. This is a $\ko$-module version of Spanier-Whitehead duality; see Baker~\cite[\S 3]{Bak20} for more information. By definition $(\bl)^\vee$ commutes with pairwise wedge sums and $(\Sigma M)^\vee\simeq \Sigma^{-1}M^\vee$. If $M$ is a finite CW $\ko$-module, there is a natural isomorphism $M^{\vee\vee}\simeq M$ (\textit{ibid.}), and if $M$ and $N$ are finite CW $\ko$-modules, $\Map_\ko(M, N)\simeq\Map_\ko(N^\vee, M^\vee)$.
\begin{lem}
\label{EM_duals}
There are $\ko$-module equivalences $(H\Z)^\vee\simeq\Sigma^{-5}H\Z$, $(H\Z/2)^\vee\simeq\Sigma^{-6}H\Z/2$, $\ku^\vee\simeq \Sigma^{-2}\ku$, and $\bso^\vee\simeq\Sigma^{-8}\bso$.
\end{lem}
See Gepner-Meier~\cite[Example 1.5]{GM17} for $H\Z$, Greenlees-Stojanoska~\cite[Example 6.1]{GS18} for $\ku$ and $H\Z$ (again), Baker~\cite[Theorem 4.1, Corollary 4.2]{Bak18} for $\bso$, and Baker~\cite[\S 5]{Bak20} for a unified discussion of all these $\ko$-modules, and others. \Cref{EM_duals} can also be interpreted as the property that $\ko$ is \term{Gorenstein} with respect to the maps $\ko\to H\Z$ and $\ko\to H\Z/2$~\cite[\S 8]{DGI06}; see Greenlees-Meier~\cite[Example 1.5]{GM17} and Greenlees-Stojanoska~\cite[Example 6.1]{GS18}.
\begin{cor}
\label{dual_SH}
There is a $\ko$-module equivalence $\SH^\vee\simeq\Sigma^{-5}(\tau_{\le 1}\ko)$.
\end{cor}
\begin{proof}
Apply $(\bl)^\vee$ to the cofiber sequence $\SH\to\Sigma^{-2}H\Z/2\overset{\square_\Z\circ\Sq^2}\to \Sigma H\Z$ from \cref{rSH}. By \cref{EM_duals}, the result is a cofiber sequence
\begin{equation}\label{SH_dual_cofiber}
    \Sigma^{-6}H\Z \overset{k^\vee}{\longrightarrow} \Sigma^{-4}H\Z/2 \longrightarrow \SH^\vee.
\end{equation}
Because $H\Z$ and $H\Z/2$ are finite CW $\ko$-modules~\cite[\S 5]{Bak20}, applying $(\bl)^\vee$ to them twice is the identity, up to natural isomorphism. Thus, since the $k$-invariant $\square_\Z\circ\Sq^2$ is nonzero, its dual $k$-invariant $k^\vee$ in~\eqref{SH_dual_cofiber} is also nonzero. By \cref{seagull}, $\pi_0\Map_\ko(\Sigma^{-6}H\Z, \Sigma^{-4}H\Z/2)\cong \Z/2$. Thus, any two $\ko$-modules with $\pi_{-6}\cong\Z$, $\pi_{-5}\cong\Z/2$, and nontrivial $k$-invariant must be equivalent, and this in particular applies to $\SH^\vee$ and $\Sigma^{-5}(\tau_{\le 1}\ko)$.
\end{proof}
\begin{lem}
\label{SH_of_bsobu}
For $M = \bu$ and $M = \bso$, $\pi_0(\Map_\ko(M, \Sigma^3\SH)) = 0$.
\end{lem}
\begin{proof}
We discuss $\bu$ first. Wood's theorem \cite{wood}  implies $\bu\simeq \ko\wedge \CP^2$ as $\ko$-modules, so
\begin{equation}
    \pi_0\Map_\ko(\bu, \Sigma^3\SH) = \pi_0\Map_\ko(\ko\wedge\CP^2, \Sigma^3\SH)\cong \pi_0\Map_\Sph(\CP^2, \Sigma^3\SH) \cong\widetilde{\SH}{}^3(\CP^2),
\end{equation}
which vanishes by a quick Atiyah-Hirzebruch spectral sequence argument.
The argument for $\bso$ is nearly the same, using \cref{joker_factor} to identify $\pi_0(\Map_\ko(\bso, \Sigma^3J))\cong \SH^3(J)$ and \cref{joker_cohomology} to compute the input to the Atiyah-Hirzebruch spectral sequence.
\end{proof}
\begin{lem}
\label{bso_to_bu}
$\pi_0(\Map_\ko(\bso, \bu))\cong\Z$.
\end{lem}
\begin{proof}
A consequence of the Spanier-Whitehead duality for $\bso$ (\cref{EM_duals}) is that for any $\ko$-module $M$, $\Map_\ko(\bso, M)\simeq \Sigma^{-8}\bso\wedge_\ko M$. Thus, we can prove the lemma by applying $\pi_*(\Sigma^{-8}\bso\wedge_\ko\bl)$ to the Wood fiber sequence~\eqref{wood} to obtain a long exact sequence
\begin{equation}
    \pi_0(\Sigma^{-7}\bso) \to \pi_0(\Sigma^{-8}\bso) \to
        \pi_0(\Sigma^{-8}\bso\wedge_\ko\ku)\cong \pi_0(\Map_\ko(\bso, \bu)) \to \pi_0(\Sigma^{-6}\bso).
\end{equation}
Filling in $\pi_0(\Sigma^{-7}\bso)\cong\pi_7(\bso)\cong 0$, $\pi_0(\Sigma^{-8}\bso)\cong\pi_8(\bso)\cong\Z$, and $\pi_0(\Sigma^{-6}\bso)\cong\pi_6(\bso)\cong 0$ finishes the proof.
\end{proof}
\begin{lem}
\label{stuff_to_bso}
Let $M$ be $\bso$ or $\bu$. Then
$\pi_0(\Map_\ko(M, \bso))\cong\Z$.
\end{lem}
\begin{proof}
Apply $\pi_*(\Map_\ko(M, \bl))$ to the Postnikov sequence $\bso\to\ko\to\tau_{\le 1}\ko$ to obtain a long exact sequence
\begin{equation}\label{trunc_LES}
    \pi_0(\Map_\ko(M, \Sigma^{-1}\tau_{\le 1}\ko))\to    \pi_0(\Map_\ko(M, \bso))\to
    \pi_0(M^\vee)\to
    \pi_0(\Map_\ko(M, \tau_{\le 1}\ko)).
\end{equation}
The first and last terms vanish because $M$ is $1$-connected and $\tau_{\le 1}\ko$ is $1$-truncated. Using the identification of $M^\vee$ in \cref{EM_duals}, we get the lemma statement by exactness of~\eqref{trunc_LES}.
\end{proof}
\begin{cor}
\label{first_big_step}
$\pi_0(\Map(\bso\vee\bu, \bstring^h))$ is a free abelian group of rank at most $4$.
\end{cor}
\begin{proof}
Apply $\pi_*\Map_\ko(\bso\vee\bu, \bl)$ to the cofiber sequence from \cref{defnphi},
\begin{equation}\label{strh_as_fiber}
    \bstring^h \longrightarrow \bso\vee\bu\overset{\Phi}{\longrightarrow} \Sigma^4\SH,
\end{equation}
to obtain a long exact sequence on homotopy groups:
\begin{equation}
    \pi_0(\Map_\ko(\bso\vee\bu, \Sigma^3\SH)) \longrightarrow \pi_0(\Map_\ko(\bso\vee\bu, \bstring^h)) \longrightarrow
    \pi_0(\Map_\ko(\bso\vee\bu, \bso\vee\bu)).
\end{equation}
\Cref{SH_of_bsobu} implies the first term vanishes, and \cref{bso_to_bu,stuff_to_bso} imply that the third term is isomorphic to $\Z^4$. Exactness finishes the proof.
\end{proof}
\begin{lem}\label{HZ_cooperations}
$\pi_0\Map_\ko(H\Z, H\Z)\cong\Z$.
\end{lem}
\begin{proof}
This group is isomorphic to $\pi_0(H\Z\wedge_\ko (H\Z)^\vee)$, i.e.\ $\pi_0(H\Z\wedge_\ko \Sigma^{-5}H\Z)\cong\pi_5(H\Z\wedge_\ko H\Z)$ by \cref{EM_duals}. Baker~\cite[Example 5.5]{Bak20} shows $H\Z\wedge_\ko H\Z\simeq H\Z\vee\Sigma^2 H\Z/2\vee \Sigma^5 H\Z$, giving a $\Z$ in degree $5$.
\end{proof}
\begin{lem}
\label{SH_to_HZ}
$\pi_0\Map_\ko(\SH, H\Z)\cong\Z$.
\end{lem}
\begin{proof}
Apply $\pi_*(\Map_\ko(\bl,H\Z))$ to the cofiber sequence from~\cref{rSH} to obtain a long exact sequence
\begin{equation}\label{use_seagull}
    \pi_0(\Map_\ko(\Sigma^{-2}H\Z/2, H\Z)) \to
    \pi_0(\Map_\ko(\SH, H\Z)) \to
    \underbracket{\pi_0(\Map_\ko(H\Z, H\Z))}_{\Z~\eqref{HZ_cooperations}} \to
    \pi_0(\Map_\ko(\Sigma^{-3}H\Z/2, H\Z)).
\end{equation}
Invoking Spanier-Whitehead duality as in \cref{EM_duals}, there is a $\ko$-module equivalence
\begin{equation}
    \Map_\ko(\Sigma^k H\Z/2, H\Z) \simeq \Map_\ko(H\Z, \Sigma^{-1-k}H\Z/2).
\end{equation}
\Cref{seagull} shows $\pi_0(\Map_\ko(\Sigma^{-2}H\Z/2, H\Z)) \cong 0$ and $\pi_0(\Map_\ko(\Sigma^{-3}H\Z/2, H\Z))\cong\Z/2$. Plug this and \cref{HZ_cooperations} into~\eqref{use_seagull} to finish the proof.
\end{proof}
\begin{lem}
\label{SH_to_SH}
$\pi_0(\Map_\ko(\SH, \SH)) \cong\Z$.
\end{lem}
\begin{proof}
Applying $\pi_*(\Map_\ko(\SH, \bl))$ to the cofiber sequence from \cref{rSH}, we get a long exact sequence
\begin{equation}
    \pi_0(\Map_\ko(\SH, \Sigma^{-3}H\Z/2)) \to
    \pi_0(\Map_\ko(\SH, H\Z)) \to
    \pi_0(\Map_\ko(\SH, \SH)) \to
    \pi_0(\Map_\ko(\SH, \Sigma^{-2}H\Z/2)).
\end{equation}
The first and last terms vanish by a connectivity argument, and the third term is \cref{SH_to_HZ}, so exactness finishes the proof as usual.
\end{proof}
\begin{lem}
\label{SH_to_bso}
Let $k\ge 0$ and $M$ be either $\bso$ or $\bu$; then $\pi_0(\Map_\ko(\Sigma^3\SH, \Sigma^{-k}M))\cong 0$.
\end{lem}
\begin{proof}
Using Spanier-Whitehead duality as in \cref{EM_duals,dual_SH},
\begin{equation}
    \Map_\ko(\Sigma^3\SH, \Sigma^{-k}M)\simeq \Map_\ko(\Sigma^kM^\vee, \Sigma^{-8}\tau_{\le 1}\ko).
\end{equation}
By \cref{EM_duals}, if $M$ is either $\bso$ or $\bu$, $M^\vee$ is $(-7)$-connected, so $\Sigma^kM^\vee$ is also $(-7)$-connected. However, $\Sigma^{-8}\tau_{\le 1}\ko$ is $(-7)$-truncated, so $\pi_0\Map_\ko(\Sigma^kM^\vee, \Sigma^{-8}\tau_{\le 1}\ko)$ vanishes.
\end{proof}
\begin{lem}
\label{SH_to_stringh}
$\pi_0(\Map_\ko(\Sigma^3\SH, \bstring^h))\cong\Z$.
\end{lem}
\begin{proof}
Apply $\pi_*(\Map_\ko(\Sigma^3\SH, \bl))$ to the cofiber sequence~\eqref{strh_as_fiber} and consider the induced long exact sequence in homotopy groups. \Cref{SH_to_bso} (with $k=0,1$) and \cref{SH_to_SH} show that this long exact sequence simplifies to $0\to \Z\to \pi_0(\Map_\ko(\Sigma^3\SH, \bstring^h))\to 0$.
\end{proof}
\begin{lem}
\label{is_tors_calc}
Let $M$ be $\bso$ or $\bu$. Then $\pi_0(\Map_\ko(M, \Sigma^{-1}\bstring^h))$ is torsion.
\end{lem}
\begin{proof}
For both values of $M$, apply $\pi_*\Map_\ko(M, \bl)$ to the cofiber sequence~\eqref{strh_as_fiber}, then rationalize. For $M = \bso$, we thus have an exact sequence.
\begin{equation}
    \pi_0\Map_\ko(\bso, \Sigma^2\SH) \longrightarrow \pi_0\Map_\ko(\bso, \Sigma^{-1}\bstring^h) \longrightarrow
    \pi_0\Map_\ko(\bso, \Sigma^{-1}(\bso\vee\bu)).
\end{equation}
Since the cofiber of $\bspin\to\bso$ is $\Sigma^2 H\Z/2$,  $\bspin\to\bso$ is a rational equivalence. Thus
\begin{equation}
\pi_0\Map_\ko(\bspin, \Sigma^2\SH)\otimes\Q \longrightarrow\pi_0\Map_\ko(\bso, \Sigma^2\SH)\otimes\Q
\end{equation}
is an isomorphism, but $\bspin$ is $3$-connected and $\Sigma^2\SH$ is $2$-truncated, so $\pi_0\Map_\ko(\bspin, \Sigma^2\SH)\cong 0$.

It remains to study $\Map_\ko(\bso, \Sigma^{-1}\bso\vee\bu)$. As before, we can replace $\bso$ with $\bspin$. Localized away from $2$, $\eta\mapsto 0$, so the Wood sequence~\eqref{wood} splits, implying $\ku[1/2]\simeq\ko[1/2]\vee\Sigma^2\ko[1/2]$. Since $\tau_{\ge 2}\ku\simeq\Sigma^2\ku$, this forces $\bspin[1/2] = \tau_{\ge 4}\ko[1/2]\simeq\Sigma^4\ko[1/2]$. Thus  there is a rational equivalence $\bspin\simeq_\Q \Sigma^4\ko$, so
\begin{equation}
\begin{aligned}
    \pi_0(\Map_\ko(\bso, \Sigma^{-1}(\bso\vee\bu)))\otimes\Q &\cong \pi_0\Map_\ko(\Sigma^4\ko, \Sigma^3\ko\vee \Sigma^1\ku)\otimes\Q\\
    &\cong (\pi_1(\ko)\oplus \pi_1(\ku))\otimes\Q\cong 0.
\end{aligned}
\end{equation}
The proof that $\pi_0\Map_\ko(\bu, \Sigma^{-1}(\bso\vee\bu))$ is torsion is similar, except that, using Wood's theorem, one ends up with $(\widetilde\ko{}^1(\CP^2)\oplus\widetilde\ku{}^1(\CP^2))\otimes\Q$, which vanishes by an Atiyah-Hirzebruch spectral sequence argument. However, $\pi_0\Map_\ko(\bu, \Sigma^2\SH)$ is not torsion, because $\Sigma^2\SH$ is rationally equivalent to $\Sigma^2 H\Z$ and $\pi_0\Map_\ko(\bu, \Sigma^2 H\Z)\cong \Z$ (e.g.\ since $\Sigma^2 H\Z$ is $2$-truncated, this factors through the $2$-truncation of $\bu$, which is $H\Z$; then use \cref{HZ_cooperations}). However, if we extend the cofiber sequence~\eqref{strh_as_fiber} to the left, we have a long exact sequence
\begin{equation}
    \pi_0\Map_\ko(\bu, \Sigma^{-2}(\bso\vee\bu)) \overset\partial\longrightarrow \pi_0\Map_\ko(\bu, \Sigma^2\SH) \longrightarrow \pi_0\Map_\ko(\bu, \Sigma^{-1}\bstring^h) \longrightarrow 0.
\end{equation}
Therefore it suffices to show that any nontorsion element of $\pi_0\Map_\ko(\bu, \Sigma^2\SH)$ is in the image of $\partial$. This is insensitive to rational equivalence, so we may replace $\Sigma^2\SH$ with $\Sigma^2 H\Z$, since the fiber of $\Sigma^2 \SH\to\Sigma^2 H\Z$ is $H\Z/2$, which is rationally trivial. Thus we want to argue that a nontorsion $\ko$-module map $\bu\to \Sigma^2 H\Z$ -- such as any multiple of the Postnikov $2$-truncation -- is induced from a map $\bu\to\Sigma^{-2}(\bso\vee\bu)$ across $\partial$. This is true, and can be proven with an Atiyah-Hirzebruch argument similar to the ones above; to simplify, one can even assume the map is $0$ on the $\bso$ summand.
\end{proof}
\begin{lem}
\label{no_rk_4}
$\pi_0\Map_\ko(\Sigma^2\SH, \bstring^h)\otimes\Q$ has dimension at most $1$.
\end{lem}
\begin{proof}
Rationalize. As noted above, $\Sigma^2\SH\to\Sigma^2 H\Z$ is a rational equivalence; then, invoking Spanier-Whitehead duality (\cref{EM_duals}), we want to compute the rank of
\begin{equation}
    \pi_0\Map_\ko(\ko, \Sigma^{-7}H\Z\wedge\bstring^h))\otimes\Q \cong \pi_5(H\Z\wedge_\ko \bstring^h)\otimes\Q \cong \pi_7(H\Q\wedge_{\ko\wedge H\Q} \bstring^h).
\end{equation}
This homotopy group can be computed by a Künneth spectral sequence (see~\cite[Theorem IV.4.1]{EKMM97} and~\cite{Til16})
\begin{equation}
    E^2 = \mathrm{Tor}^{\pi_*(\ko)\otimes\Q}_{s,t}(\Q, \pi_*(\bstring^h)\otimes\Q)\Longrightarrow \pi_{s+t}(H\Q\wedge_{\ko\wedge H\Q} \bstring^h).
\end{equation}
In our case $\pi_*(\ko)\otimes\Q\cong\Q[v]$ with $\abs v = 4$.
Recall the Koszul duality isomorphism $\mathrm{Tor}_{*,*}^{\Q[x]}(\Q, \Q)\cong\Q[y]/(y^2)$, where if $x$ has degree $d$, $y$ has bidegree $(s, t) = (1,d)$. Using this, the fact that Tor commutes with direct sums, and our computation of $\pi_*(B\String^h)$ in \cref{fig:homotopyBStringh} (which also works for $\bstring^h$ by \cref{postloop}), we see that there is exactly one $\Q$ summand in total degree $7$ on the $E^2$-page, namely $E_2^{1,6}$. Thus there is at most one $\Q$ summand on the $E^\infty$-page, and the result follows.
\end{proof}
\begin{lem}
\label{strh_to_strh}
$\pi_0\Map_\ko(\bstring^h, \bstring^h)$ is a free abelian group of rank $4$.
\end{lem}
\begin{proof}
Apply $\pi_*\Map_\ko(\bstring^h, \bl)$ to the cofiber sequence~\eqref{strh_as_fiber} to obtain a long exact sequence
\begin{subequations}
\begin{equation}
\begin{tikzcd}[column sep=1.5ex, row sep=1.5ex]
	&& {\pi_0\Map_\ko(\bso\vee\bu, \Sigma^{-1}\bstring^h)} & {} \\
	{\pi_0\Map_\ko(\Sigma^3\SH, \bstring^h)} & {\pi_0\Map_\ko(\bstring^h, \bstring^h)} & {\pi_0\Map_\ko(\bso\vee\bu, \bstring^h)}
	\arrow[from=1-3, to=2-1]
	\arrow[from=2-1, to=2-2]
	\arrow[from=2-2, to=2-3]
\end{tikzcd}
\end{equation}
which, by the calculations in \cref{first_big_step,SH_to_stringh,is_tors_calc}, is isomorphic to
\begin{equation}
\text{(torsion)} \longrightarrow
\Z \longrightarrow
\pi_0\Map_\ko(\bstring^h, \bstring^h) \longrightarrow
\Z^{\le 4}.
\end{equation}
\end{subequations}
Thus $\pi_0\Map_\ko(\bstring^h, \bstring^h)$ is torsion-free, of rank between $1$ and $5$, inclusive. As this is $\pi_0$ of an endomorphism algebra spectrum, upon rationalization it is an endomorphism $\Q$-algebra, hence has square dimension; thus the two possibilities are $1$ and $4$. If it were one-dimensional, then by continuing the exact sequence we would conclude that $\pi_0\Map_\ko(\Sigma^2\SH, \bstring^h)$ would have a rank-four free summand, which by \cref{no_rk_4} cannot happen.
\end{proof}
\begin{lem}\label{strh_to}
There are isomorphisms $\pi_0\Map_\ko(\bstring^h, \bo)\cong\Z^2$ and $\pi_0\Map_\ko(\bstring^h, \bstring^h)\cong\Z^k$ for $1\le k\le 5$. Thus any pair of maps $\bstring^h\rightrightarrows\bo$ or $\bstring^h\rightrightarrows \bstring^h$ whose induced maps on $\pi_*$ are distinct are not homotopy equivalent as $\ko$-module maps.
\end{lem}
\begin{proof}
As in the proof of \cref{U_SO_htpy}, we may replace $\bo$ with $\ko$. Thus, for maps $\bstring^h\to\bo$, apply $\pi_*((\bl)^\vee)$ to the cofiber sequence from~\eqref{strh_as_fiber}
to obtain a long exact sequence on homotopy groups. By \cref{dual_SH}, the homotopy groups of $\SH^\vee$ vanish in degrees $-1$ and $0$, so the pullback $\pi_0((\bso\vee\bu)^\vee)\to\pi_0((\bstring^h)^\vee)$ is an isomorphism. Using \cref{EM_duals},
\begin{equation}
    \pi_0((\bso\vee\bu)^\vee)\cong \pi_0(\bso^\vee)\oplus\pi_0(\bu^\vee) \cong \pi_0(\Sigma^{-6}\bso) \oplus \pi_0(\Sigma^{-4}\ku)\cong\Z^2,
\end{equation}
because $\bu^\vee\simeq (\Sigma^2\ku)^\vee\simeq \Sigma^{-4}\ku$. The second half of the first sentence of the lemma statement is \cref{strh_to_strh}.

To finish we need to argue that nontriviality of a $\ko$-module map $\bstring^h\to\bo$ or $\bstring^h\to\bstring^h$ is detected on homotopy groups. For $\bstring^h\to\bo$, this is similar to the analogous part of the proof of \cref{U_SO_htpy}. For $\bstring^h\to\bstring^h$, we have $\pi_8(\bstring^h)\cong\Z^2$ by \cref{fig:homotopyBStringh} (and \cref{postloop}), so $\mathrm{End}(\pi_8(\bstring^h))\cong\Z^4$. Since $\pi_0\Map_\ko(\bstring^h, \bstring^h)\cong\Z^4$ by \cref{strh_to_strh}, it is a priori possible for the map from homotopy classes of $\ko$-module endomorphisms to endomorphisms of $\pi_8$ to be injective; to see that it actually is so, which finishes the proof, rationalize (which detects injectivity between finite-rank free abelian groups) and use an argument similar to that of \cref{U_SO_htpy}.
\end{proof}
\begin{thm}[Devalapurkar~\cite{Dev}]\label{thm:sanath}
The composition
\begin{equation}\label{eq:devalapukar}
    \MTString\wedge\MU
        \xrightarrow{\eqref{multiplicativity},\, \#\ref{stralg}\text{ and }\ref{mualg}}
    \MTString^h\wedge\MTString^h \xrightarrow\mu
    \MTString^h
\end{equation}
is an equivalence of $E_\infty$-ring spectra.
\end{thm}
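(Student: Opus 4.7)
The strategy is to identify $\eta$ with the Thomification of a map of classifying spaces and show that map is a weak equivalence. By \cref{multiplicativity}, $\eta$ is an $E_\infty$-ring map (being a composition of algebra-structure maps followed by multiplication), so it suffices to prove it is an equivalence of underlying spectra. Let $\alpha\colon B\String\to B\String^h$ and $\beta\colon B\U\to B\String^h$ denote the infinite loop maps of \cref{string_to_stringh,cpx_to_stringh}, and let $\mu$ be the direct-sum $H$-space structure from \cref{direct_sum_stringh}. Set $\phi\coloneqq \mu\circ(\alpha\times\beta)\colon B\String\times B\U\to B\String^h$. By \cref{product_compatibility}, the tautological bundle on $B\String^h$ pulls back along $\phi$ to the exterior direct sum of the tautological bundles on $B\String$ and $B\U$, so Thomifying $\phi$ reproduces $\eta$; the claim thus reduces to showing $\phi$ is a weak equivalence.

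To prove $\phi$ is a weak equivalence, I would use the pullback description $B\String^h\simeq B\Spin^c\times_{K(\Z,4)} B\SU$ from~\eqref{eq:stringhpullback} (reading the lower-left corner as $B\Spin^c$), together with the standard homotopy splittings $B\Spin^c\simeq B\Spin\times B\U_1$ and $B\U\simeq B\SU\times B\U_1$. The essential computational input, from \cref{cpx_lambdac}, is the identity $\lambda(W)=-c_2(W)$ for any $W\in B\SU$: it lets us rearrange the pullback condition $\lambda^c=c_2$ into trivializing $\lambda$ on a summed spin bundle, which matches the defining condition for the $B\String$ factor on the left-hand side. Alternatively, one computes $\pi_*$ of both sides from the Künneth formula (for $B\String\times B\U$) and from the long exact sequence associated to the pullback (for $B\String^h$), and then checks that $\phi_*$ carries generators to generators; \cref{fig:homotopyBStringh} already confirms the abstract agreement in low degrees, with the $B\U$ factor of $\pi_*(B\String\times B\U)$ hitting the $B\Spin^c$- and $B\SU$-contributions to $\pi_*(B\String^h)$ via $\beta$ and the $B\String$ factor hitting the string-contribution via $\alpha$.

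The main obstacle is the bookkeeping needed to make the classifying-space identification explicit: one must track how $\lambda^c\colon B\Spin^c\to K(\Z,4)$ decomposes under $B\Spin^c\simeq B\Spin\times B\U_1$ (including cross-terms involving $c_1^2$ of the determinant line bundle), and verify that the equivalence one builds is really homotopic to $\phi$ rather than to some variant of it. Once that compatibility is pinned down, applying the Thom spectrum functor to $\phi$ produces an equivalence of underlying spectra, which combined with the already-established $E_\infty$-ring map property gives the desired equivalence of $E_\infty$-ring spectra.
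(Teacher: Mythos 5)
Your overall strategy — reduce to a map of classifying spaces over $B\O$ that respects the $E_\infty$ structures, then Thomify — is the same as the paper's. Where you diverge is in how you propose to prove $\phi\colon B\String\times B\U\to B\String^h$ is a weak equivalence. The paper never touches the pullback square~\eqref{eq:stringhpullback} or the splittings $B\Spin^c\simeq B\Spin\times B\U_1$, $B\U\simeq B\SU\times B\U_1$. Instead it simply writes down an explicit homotopy inverse in terms of ancillary-bundle data: letting $W$ be the tautological bundle on $B\String^h$ with ancillary $F$, the maps $(W,F)\mapsto(W\oplus F,-F)$ and $(V,E)\mapsto(V\oplus E,-E)$ are mutually inverse $E_\infty$-maps over $B\O$, directly, with no reduction to homotopy groups and no need to decompose $\lambda^c$ under the \spinc splitting. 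This sidesteps the ``bookkeeping'' issue you flag at the end.

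Your pullback-square route does work, and it is instructive to see why. The key step is that the self-equivalence of $B\Spin\times B\SU$ given by $(V,E)\mapsto(V\ominus E,E)$ carries the class $\lambda\circ\mathrm{pr}_1$ to $\lambda-c_2$ (using $\lambda(E)=-c_2(E)$ on $B\SU$), hence identifies the homotopy fiber of $\lambda-c_2$, namely $B\Spin\times_{K(\Z,4)}B\SU$, with $B\String\times B\SU$; tensoring with the $B\U_1$ factor finishes the identification. But notice that the self-equivalence you need here is exactly the paper's explicit inverse map in disguise, so you end up doing the same computation with extra scaffolding. If you want to pursue your version, you should spell out that self-equivalence explicitly; and for the alternative homotopy-group argument you mention, checking low degrees against \cref{fig:homotopyBStringh} is a sanity check but not a proof — you would need to identify $\phi_*$ in all degrees, which in practice again requires writing down the inverse map. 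One small point: the pullback square~\eqref{eq:stringhpullback} as printed has $B\String$ in the lower-left; your reading of that vertex as $B\Spin^c$ is what the surrounding prose supports, but be aware the diagram and the text are not fully consistent and you are relying on your own correction of it.
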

\begin{proof}
In \cref{string_to_stringh,cpx_to_stringh}, we produced the following $\ko$-module-level lift of \eqref{eq:devalapukar}:
\begin{equation}
\label{e_infty_equiv}
    \bstring\vee\bu \overset{(\gamma, \zeta)}{\longrightarrow} \bstring^h\vee\bstring^h\overset{+}{\longrightarrow} \bstring^h.
\end{equation}
Thus, using Lewis' theorem~\cite[\S IX.7.4]{LMSM86} once again, it suffices to prove that~\eqref{e_infty_equiv} is an equivalence of spectra over $\bo$; then, $\Omega^\infty$ of~\eqref{e_infty_equiv} will be an equivalence of $E_\infty$-spaces over $B\O$. So we need an inverse.

Recall from \cref{tilde_lam_fib} that the fiber of $\widetilde\lambda\colon\bso\to \Sigma^4\SH$ is $\tau_{\ge 8}\colon\bstring\to\bso$. By the definition of $\Phi$ in \cref{defnphi}, the composition
\begin{equation}\label{trivially_0}
    \bstring^h\xrightarrow{+\circ(\id,\widetilde r)\circ\mathrm{fib}(\Phi)} \bso\overset{\widetilde\lambda}{\longrightarrow} \Sigma^4 \SH
\end{equation}
is homotopic to $\Phi\circ\mathrm{fib}(\Phi)\simeq 0$, so~\eqref{trivially_0} lifts to the fiber of $\widetilde\lambda$ as a map $\kappa\colon\bstring^h\to\bstring$.\footnote{We do not check whether $\kappa$ is unique, unlike before, as we will not need that fact in this paper.} This refines \cref{ancillary}: both say that a \stringh structure on $V$ with ancillary complex virtual bundle $E$ gives rise to a string structure on $V + E$. Also, let $\theta$ be the composition
\begin{equation}
    \bstring^h\overset{\mathrm{fib}(\Phi)}{\longrightarrow}\bso\vee\bu\longrightarrow \bu,
\end{equation}
where the last map is projection onto the second summand.

We claim that
\begin{equation}\label{einf_inv}
    (\kappa, -\theta)\colon \bstring^h \longrightarrow \bstring\vee\bu,
\end{equation}
which by construction is a $\ko$-module map,
is an inverse of~\eqref{e_infty_equiv} and commutes with the forgetful maps down to $\bo$, up to homotopy and as a $\ko$-module map in both cases. As stated above, this will finish the proof. At the level of spaces (i.e.\ after applying $\Omega^\infty$), the maps $\gamma$, $\zeta$, $\kappa$, and $\theta$ can be interpreted as operations on vector bundles:
\begin{itemize}
    \item $\gamma$ sends a string vector bundle $V$ to the \stringh structure on $V$ with ancillary bundle $0$.
    \item $\zeta$ sends a complex vector bundle $E$ to the \stringh structure on $E$ with ancillary bundle $-E$.
    \item $\kappa$ sends a \stringh vector bundle $V$ with ancillary bundle $E$ to the string bundle $V\oplus E$.
    \item $\theta$ sends a \stringh bundle to its ancillary bundle.
\end{itemize}
Thus one learns that $\Omega^\infty$ of the following diagram commutes:
\begin{equation}
\label{stringMU}
\begin{gathered}
\begin{tikzcd}
	{\bstring^h} && {\bstring\vee\bu} \\
	& {\bo}.
	\arrow["{(\kappa, -\theta)~\eqref{einf_inv}}", shift left, from=1-1, to=1-3]
	\arrow[from=1-1, to=2-2]
	\arrow["{(\gamma, \zeta)~\eqref{e_infty_equiv}}", shift left, from=1-3, to=1-1]
	\arrow[from=1-3, to=2-2]
\end{tikzcd}
\end{gathered}
\end{equation}
We are not finished yet: it is a priori possible that $\Omega^\infty$ sends a noncommutative diagram to a commutative one. But we are close: for example, by \cref{postloop}, we have shown that~\eqref{stringMU} commutes on the level of homotopy groups. \Cref{U_SO_htpy,strh_to} show that this suffices to prove that the diagram commutes up to homotopy, as a diagram of $\ko$-module spectra, and by Lewis' theorem we are done.
\end{proof}
\section{Relation between \stringh structures and \spinc structures on loop spaces}\label{subsection:stringc}

Thus far we have seen in multiple ways that \stringh structures are to string structures as \spinc structures are to spin structures. How far does the analogy go? In this subsection, we give an example where the analogy fails to hold: string structures are closely related to spin structures on loop spaces, but this is not true for \stringh structures and \spinc structures on loop spaces -- instead, \spinc structures on loop spaces are governed by a different structure called a string\textsuperscript{$c$} structure (see~\cite{DHH:19}). We will review the definitions of string$^c$ structures and show that string$^c$ structures induce \stringh structures, but not vice versa.
\begin{rem}
We studied this question with applications to string theory in mind. Witten~\cite{Witten:1987cg} showed at a physics level of rigor that the index of the supercharge in a (1+1)d nonlinear sigma model with target space a string manifold $M$ equals Ochanine's elliptic genus~\cite{Ochanine}. This elliptic genus can be recovered as the $S^1$-equivariant index of a Dirac operator on $LM$, using the string structure on $M$ to define a spin structure on $LM$. The results in this subsection suggest that string$^c$ structures, rather than \stringh structures are the right way to generalize this for the \spinc Dirac operator.
\end{rem}

\begin{defn}[{Huang-Han-Duan~\cite{DHH:19}}]\label{strong_stringc}
Let $k\in\Z$. A \term{strong string$^c$ structure of level $(2k+1)$} is a $(B\U_1, L^{\otimes (2k+1)})$-twisted string structure, where $L\to B\U_1$ denotes the tautological bundle.
\end{defn}
\begin{rem}
The definition in~\cite{DHH:19} is phrased differently, as a \spinc structure and a trivialization of a characteristic class, but one can show the two are equivalent by an argument similar to that of \cref{defns_are_equiv}.
\end{rem}
\begin{rem}
\Cref{strong_stringc} for $k = 0$ was introduced earlier, by Chen-Han-Zhang~\cite[Definition 3.1]{CHZ}, and is sometimes just called a string$^c$ structure. On a \spinc vector bundle $V\to X$, this structure is obstructed by $\lambda^c$ from \cref{defn:lambdac}. Thus at least a priori this structure is stronger than a string$^h$ structure, which only requires $\Box_{ku}(\lambda^c)$ to vanish. In particular, a string$^c$ structure induces a string$^h$ structure.

See Sati~\cite{Sati:2010dc,Sati:2010ip} for some applications of this structure in physics.
\end{rem}
\begin{rem}
There exist \spinc vector bundles $V\to X$ such that $\lambda^c(-V)\ne -\lambda^c(V)$, which means that a string$^c$ structure on $V$ is not equivalent data to a string$^c$ structure on $-V$. Thus tangential and normal string$^c$ structures are not equivalent. This in particular implies the Thom spectra classifying tangential and normal string$^c$ structures do not have $E_\infty$-ring spectrum structures corresponding on bordism groups to direct product.
\end{rem}
We will compare \cref{strong_stringc} with notions of \spinc structures on loop spaces. Before doing so, we recall from \cite{Mclaughlin} the analogous story for string manifolds and spin structures on their loop spaces.

Let $\mathcal G$ be a \term{Fréchet Lie group} -- that is, a group that is a Fréchet manifold, such that multiplication and inversion are smooth. Brylinski~\cite[Proposition 1.6]{Bry00} showed that the group of Fréchet Lie group central extensions
\begin{equation}
    \shortexact*{\U_1}{\widetilde{\mathcal G}}{\mathcal G},
\end{equation}
such that $\widetilde{\mathcal G}\to\mathcal G$ is a principal $\U_1$-bundle,
is naturally isomorphic to the \term{Segal-Mitchison cohomology group}~\cite{Seg70, Seg75} $H_{\mathrm{SM}}^2(\mathcal G;\U_1)$. If $G$ is a (finite-dimensional) Lie group, then $LG$ is a Fréchet Lie group, and if $G$ is compact, then there is a canonical isomorphism due to Brylinski-McLaughlin~\cite{BM94}
\begin{equation}\label{SM_H4}
    H_{\mathrm{SM}}^2(LG;\U_1) \overset\cong\longrightarrow H^4(BG;\Z).
\end{equation}
See also~\cite[Chapter 23]{ADH21}.

In particular, if $G$ is connected, simple, and simply connected, there is a canonical isomorphism $H^4(BG;\Z)\overset\cong\to\Z$. The central extension of $LG$ classified by $1\in\Z$ is denoted $\widehat{LG}$, and is the \term{universal central extension}: for any abelian Lie group $A$, any Fréchet Lie group central $A$-extension $\widetilde{\mathcal G}\to LG$ which is a principal $A$-bundle is isomorphic to an associated bundle
\begin{equation}
    \widetilde{\mathcal G} \cong \widehat{LG} \times_{\U_1} A
\end{equation}
for some Lie group homomorphism $\U_1\to A$~\cite[Chapter 4]{pressley1986loop}.

Now let $G = \Spin_n$, and assume $n\ge 5$ so that $G$ is simple and simply connected, and we have the universal central extension $\widehat{L\Spin}_n$ of $L\Spin_n$ by $\U_1$. For any spin manifold $M$, the frame bundle of $LM$ lifts canonically to an $L\Spin_n$-bundle $LP\to LM$.
\begin{defn}[{McLaughlin~\cite[\S 1]{Mclaughlin}}]
A \term{spin structure on $LM$} is a lift of $LP$ to a principal $\widehat{L\Spin}_n$-bundle $\widehat{LP}\to LM$.
\end{defn}
See also Killingback~\cite{Killingback:1986rd} and Witten~\cite[\S 3]{Witten:1987cg}.
\begin{thm}[{McLaughlin~\cite{Mclaughlin}}]\label{str_to_loop_spin}
If $M$ has a string structure, then $LM$ has a spin structure.
\end{thm}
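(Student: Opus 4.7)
The plan is to reduce the statement to obstruction theory and then kill the obstruction using the string structure. A spin structure on $LM$ in McLaughlin's sense is a lift of the classifying map $LM\to BL\Spin_n$ of $LP$ along $B\widehat{L\Spin}_n\to BL\Spin_n$. Since $\widehat{L\Spin}_n\to L\Spin_n$ is a principal $\U_1$-bundle, the homotopy fiber of $B\widehat{L\Spin}_n\to BL\Spin_n$ is $B\U_1\simeq K(\Z,2)$, so standard obstruction theory produces a single obstruction class $o(LM)\in H^3(LM;\Z)$, namely the pullback along the classifying map of the universal class $c\in H^3(BL\Spin_n;\Z)$ classifying the central extension $\widehat{L\Spin}_n\to L\Spin_n$.

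The heart of the proof is to identify $o(LM)$ with a transgression of $\lambda(TM)$. Let $\mathrm{ev}\colon S^1\times LM\to M$ be the evaluation and $\tau_M\colon H^4(M;\Z)\to H^3(LM;\Z)$ the associated fiber integration along $S^1$. I would claim that the Brylinski-McLaughlin isomorphism~\eqref{SM_H4} is realized geometrically by transgression in the following strong form: for any spin manifold $X$ with frame bundle $P\to X$ classified by $f\colon X\to B\Spin_n$, the obstruction to lifting $LP\to LX$ to a principal $\widehat{L\Spin}_n$-bundle equals $\tau_X(f^*\lambda)=\tau_X(\lambda(TX))\in H^3(LX;\Z)$. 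Specializing to $X=M$ this gives $o(LM)=\tau_M(\lambda(TM))$.

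Granted this identification, a string structure on $M$ is by definition a nullhomotopy of the map $\lambda(TM)\colon M\to K(\Z,4)$, which in particular trivializes $\lambda(TM)\in H^4(M;\Z)$. By naturality of fiber integration, this trivializes $o(LM)=\tau_M(\lambda(TM))$; the lift therefore exists, producing the desired spin structure on $LM$.

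The main obstacle is justifying the identification $o(LM)=\tau_M(\lambda(TM))$ in the second paragraph, which is the geometric content of the Brylinski-McLaughlin construction~\cite{BM94}. One clean route goes through bundle 2-gerbes: realize $\lambda\in H^4(B\Spin_n;\Z)$ by a bundle 2-gerbe on $B\Spin_n$, fiber integrate its characteristic class along $\mathrm{ev}\colon S^1\times LX\to X$ to obtain the $H^3(LX;\Z)$-class of a bundle gerbe on $LX$, and check that this gerbe is trivializable precisely when $LP\to LX$ admits an $\widehat{L\Spin}_n$-lift. Functoriality in $X$ then reduces the claim to the universal case $X=B\Spin_n$, $P=E\Spin_n$, which is exactly where~\eqref{SM_H4} identifies $c$ with $\tau(\lambda)$.
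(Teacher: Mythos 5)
The paper does not prove this theorem --- it is stated as a cited result from McLaughlin~\cite{Mclaughlin} --- so there is no in-paper argument to compare against. That said, your strategy is exactly McLaughlin's own: identify the obstruction to the $\widehat{L\Spin}_n$-lift as a single class in $H^3(LM;\Z)$ (since the fiber of $B\widehat{L\Spin}_n\to BL\Spin_n$ is $K(\Z,2)$), identify this class with the loop transgression of $\lambda(TM)$, and then use the string structure to trivialize $\lambda(TM)$ and hence the obstruction. The structure of the argument is correct, and the reduction from a nullhomotopy of $\lambda(TM)$ to vanishing of the transgressed class is fine.

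The weak point is the one you yourself flag. The identification $o(LM)=\nu(\lambda(TM))$ (note the paper's notation reserves $\tau$ for the Serre transgression $H^4(BG)\to H^3(G)$ and uses $\nu$ for the loop transgression $H^*(M)\to H^{*-1}(LM)$; your $\tau_M$ is their $\nu$) is not a routine step --- it is the entire nontrivial content of McLaughlin's theorem. Your proposed reduction via bundle 2-gerbes and functoriality pushes the claim to the universal statement that $c\in H^3(BL\Spin_n;\Z)$, the class of the extension $\widehat{L\Spin}_n\to L\Spin_n$, equals $\nu(\lambda)$; but that is precisely the Brylinski--McLaughlin identification~\eqref{SM_H4}, so as written the sketch comes close to assuming what it needs to prove. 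McLaughlin's actual argument establishes this by a spectral sequence / transgression comparison for $L\Spin_n\to LE\Spin_n\to LB\Spin_n$; the proposal should either cite this or carry it out, rather than leave it at the level of ``I would claim.'' With that step supplied, the proof is complete.
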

\begin{rem}
Pilch-Warner~\cite[\S 3]{PW88} showed the converse to \cref{str_to_loop_spin} is not true, but versions of the converse with additional hypotheses do hold; see McLaughlin (\textit{ibid.}), Kuribayashi~\cite{Kur96}, Kuribayashi-Yamaguchi~\cite{KY98},  
Stolz-Teichner~\cite{ST04, ST05},
Waldorf~\cite{Wal10, Wal12a, Wal12b, Wal15, Wal16, Wal16a}, Kottke-Melrose~\cite{KM13},
Capotosti~\cite{Cap16},
and
Ludewig~\cite{Lud23}. See also Waldorf~\cite{Wal23} for an overview.
\end{rem}
In a parallel manner, we would expect that for $M$ a \stringh manifold, there is a \spinc structure on $LM$. Since $\Spin_n^c$ is neither simple nor simply connected, the story is more complicated -- there is not a universal central extension by $\U_1$, and we will have to care about a $\Z$ worth of central extensions, corresponding to the level of the string$^c$ structure in \cref{strong_stringc}.
\begin{defn}
Let $k\in\Z$.
The Fréchet Lie group $\widehat{L_k\Spin_n^c}$ is the central extension of $L\Spin_n^c$ by $\U_1$ which, under the isomorphism $H_{\mathrm{SM}}^2(L\Spin_n^c;\U_1)\cong H^4(B\Spin_n^c;\Z)$ from~\eqref{SM_H4}, is identified with the class $\lambda^c - k c_1^2$.
\end{defn}
Huang-Han-Duan~\cite{DHH:19} define the groups $\widehat{L_k\Spin_n^c}$ in a different but equivalent way.

Naturality of~\eqref{SM_H4} implies that the pullback of the central extension $\widehat{L_k\Spin}{}^c_n\to L\Spin_n^c$ along the inclusion $L\Spin_n\to L\Spin_n^c$ is the universal central extension of $L\Spin_n$. So even though we do not have a universal central extension in the \spinc setting, we favor these central extensions out of the group of all possible central extensions.

For any \spinc manifold $M$, the frame bundle on $M$ canonically lifts to an $L\Spin_n^c$-bundle $LQ\to LM$.
\begin{defn}\label{def:spincLoopSpace}
     Let $M$ be a \spinc manifold. A \term{level $(2k+1)$ \spinc structure on the loop space $LM$} is a lift of $LQ\to LM$ to a principal $\widehat{L_k\Spin}{}_n^c$-bundle $\widehat{LQ}\to LM$.
\end{defn}
Again, this definition is different but equivalent to Huang-Han-Duan's notion of a \term{weak string$^c$ structure of level $(2k+1)$}~\cite[Definition 4.1]{DHH:19}.

Now we can refine our earlier question: if \stringh is to string as \spinc is to spin, does the loop space of a \stringh manifold have a level $2k+1$ \spinc structure for some $k$? We were surprised to obtain a negative answer.
\begin{thm}\label{no_loop_spinc}
There are closed \stringh manifolds $M$ such that $LM$ is not \spinc for \emph{any} choice of level.
\end{thm}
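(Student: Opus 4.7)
The plan is to take $M = \HP^2$ and verify it satisfies the requirements. First I would check $\HP^2$ is \stringh: it is simply connected with $H^2(\HP^2;\Z/2) = 0$, hence spin, hence \spinc with trivial determinant line bundle. By \cref{triv_stringh_defn}, a \stringh refinement exists iff $\square_\ku(\lambda^c)$ can be trivialized in $\ku^7(\HP^2)$, and the Atiyah--Hirzebruch spectral sequence forces $\ku^7(\HP^2) = 0$: $H^*(\HP^2;\Z)$ is concentrated in the even degrees $0$, $4$, $8$ and $\pi_*\ku$ is concentrated in even degrees, so every $E_2^{p,q}$ with $p+q$ odd vanishes. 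Moreover, since the \spinc structure comes from a spin structure, $c_1 = 0$ and $\lambda^c(\HP^2) = \lambda(\HP^2) = u$, where $u$ generates $H^4(\HP^2;\Z) \cong \Z$ (using $p_1(T\HP^2) = 2u$, which is a standard consequence of the Borel--Hirzebruch formula $p(T\HP^2) \equiv 1 + 2u \pmod{u^2}$). Thus $\lambda^c - k c_1^2 = u \ne 0$ for every $k \in \Z$.

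By naturality of the Brylinski--McLaughlin isomorphism~\eqref{SM_H4}, the obstruction to lifting $LQ \to L\HP^2$ to a principal $\widehat{L_k\Spin_n^c}$-bundle is the transgression $\tau(\lambda^c - kc_1^2) = \tau(u) \in H^3(L\HP^2;\Z)$, computed by fiber integration along the evaluation $\mathrm{ev}\colon S^1 \times L\HP^2 \to \HP^2$. To show $\tau(u) \ne 0$, I would test against the adjoint $\phi\colon S^3 \to L\HP^2$ of the composite $\iota \circ \pi\colon S^1 \times S^3 \to S^1 \wedge S^3 = S^4 \xrightarrow{\iota} \HP^2$, where $\iota$ is the bottom-cell inclusion (which pulls $u$ back to a generator of $H^4(S^4;\Z)$ since the $4$-skeleton of $\HP^2$ is $S^4$). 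A Künneth check shows $\pi^*\iota^*u$ is the cross product of the generators of $H^1(S^1)$ and $H^3(S^3)$, so $\phi^*\tau(u) = \int_{S^1}(\iota\circ\pi)^*u$ is the fundamental class of $S^3$, giving $\tau(u) \ne 0$. Hence no level-$(2k+1)$ \spinc structure on $L\HP^2$ exists for any $k$.

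The main technical obstacle will be the identification of the level-$(2k+1)$ obstruction on $LM$ with the transgression of $\lambda^c - kc_1^2$; this amounts to combining the naturality of the Brylinski--McLaughlin correspondence between $H^4(B\Spin_n^c;\Z)$ and central extensions of $L\Spin_n^c$ under pullback along $LM \to L B\Spin_n^c$ with the standard dictionary between central extensions of a topological group and obstruction classes for principal bundles. Once this identification is in place, everything else reduces to the elementary characteristic-class computation on $\HP^2$ and the single test of $\tau(u)$ against $S^3$ described above.
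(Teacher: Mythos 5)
Your proof is correct, but it goes via a genuinely different example than the paper's. The paper takes $M = \CP^m \times \CP^n$ for $m,n \ge 3$, which is \stringh for free by \cref{cpx_to_stringh} (complex $\Rightarrow$ \stringh), and then pushes the transgression of $\lambda^c(M) - kc_1(L)^2$ through comparison maps to $\CP^\infty$; because $c_1 \neq 0$ there, the argument must split into the cases $k \neq 0$ and $k = 0$, treated by pulling back along an axis inclusion and the diagonal respectively. Your choice $M = \HP^2$ trades in one way and gains in another. You must first verify $\HP^2$ is \stringh by hand, which you do correctly by observing $\ku^7(\HP^2) = 0$ from the Atiyah--Hirzebruch spectral sequence (cohomology and $\pi_*\ku$ both in even degrees), since $\HP^2$ is emphatically not string. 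The payoff is that $\HP^2$ is spin, so the determinant line $L$ is trivial, $c_1(L) = 0$, and the level-dependent term $2k\,c(LM)\,c_1(LM)$ in the Huang--Han--Duan criterion (\cref{lspinc_char}) vanishes identically; the obstruction collapses to $\nu(\lambda^c(\HP^2)) = \nu(u)$ independent of $k$, with no case analysis. Your detection of $\nu(u) \neq 0$ by pulling back along the adjoint $S^3 \to L\HP^2$ of the composite $S^1 \times S^3 \to S^4 \hookrightarrow \HP^2$ is a clean alternative to the paper's comparison with $\CP^\infty$: the pullback of $u$ to $S^1 \times S^3$ is the generator $a \times b$, and integrating over $S^1$ gives the fundamental class of $S^3$. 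The Borel--Hirzebruch computation $p_1(T\HP^2) = 2u$, so $\lambda^c(\HP^2) = u$, is also correct. Your final paragraph worries about the identification of the lifting obstruction with $\nu(\lambda^c - kc_1^2)$, but that is exactly what \cref{lspinc_char} together with \cref{transgr_char_class} provides, so you can cite these rather than reproving them. Overall yours is an equally valid and arguably more economical example, at the cost of a short extra argument that $\HP^2$ is \stringh.
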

To prove this we will use a characteristic-class criterion for a loop space having a \spinc structure of a given level.
\begin{lem}\label{abelian_loops_split}
Let $A$ be an $E_1$-space. Then there is a natural homotopy equivalence $LA\simeq A\times\Omega A$.
\end{lem}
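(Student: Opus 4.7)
The plan is to construct an explicit comparison map $\mu \colon A \times \Omega A \to LA$ using the $E_1$-multiplication on $A$ and show it is a homotopy equivalence via a fiberwise argument. Let $m \colon A \times A \to A$ denote the $E_1$-multiplication with unit $e \in A$, and $\mathrm{ev}_0 \colon LA \to A$ be evaluation at the basepoint of $S^1$, so that $\mathrm{ev}_0$ is a Serre fibration with fiber $\Omega A$, naturally split by the constant loops $A \to LA$. I would first define
\[
\mu(a, \omega)(t) \coloneqq m(a, \omega(t)).
\]
Since $\omega$ is a based loop at $e$ and $m(a, e) \simeq a$ by the unit axiom of the $E_1$-structure, $\mu(a, \omega)$ is (canonically equivalent to) a loop based at $a$; by construction $\mathrm{ev}_0 \circ \mu = \mathrm{pr}_A$, so $\mu$ is a map of fibrations over $A$, each with fiber $\Omega A$.

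The remaining step is to verify that $\mu$ is a fiberwise homotopy equivalence, after which the five lemma applied to the long exact sequences in homotopy of the two fibrations over $A$ gives that $\mu$ is a weak equivalence, and hence an equivalence since both sides have the homotopy type of CW complexes. The restriction of $\mu$ to the fiber over $a \in A$ is left multiplication $m(a, -) \colon \Omega A \to L_a A$ (the space of loops based at $a$). In our setting $A$ is group-like -- in the intended applications $A$ is a topological (indeed Fréchet Lie) group, and more generally any path-connected $E_1$-space is automatically group-like -- so there is a homotopy inverse $\iota \colon A \to A$, and $m(\iota(a), -)$ is a homotopy inverse to $m(a, -)$ by the associativity and unit axioms. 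Naturality of $\mu$ in $E_1$-maps $A \to A'$ is immediate from the definition.

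The only delicate point is the group-like hypothesis, which is what lets us invert left-translation on fibers; for all applications of this lemma in the paper $A$ is a Fréchet Lie group, so this hypothesis is automatic and no further work is needed.
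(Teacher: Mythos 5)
The paper does not give a proof of this lemma; it merely cites Ziller, Aguad\'e, and Haine for ``proofs and generalizations.'' Your argument---build the comparison map $\mu(a,\omega)(t) = m(a,\omega(t))$, observe it is a map of fibrations over $A$ via $\mathrm{ev}_0$, and check it is a fiberwise equivalence using left translation by an inverse---is the standard one, and it is correct for group-like $E_1$-spaces, which is what the paper's one application (to $A = \Spin^c$, a connected Lie group) requires.

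Your remark that the group-like hypothesis is essential is not a mere convenience but a genuine correction to the statement as written: the lemma is false for arbitrary $E_1$-spaces. For instance take $A = \coprod_{n\ge 0} B\Sigma_n$, the free $E_\infty$-space on a point, with unit the point of $B\Sigma_0$. Then $\Omega A$ is contractible, so $\pi_0(A\times\Omega A)\cong\mathbb N$, while $\pi_0(LA) = \coprod_n\{\text{conjugacy classes in }\Sigma_n\}$, which already has an extra element at $n=2$. So the hypothesis should read ``group-like $E_1$-space''; your observation that a path-connected $E_1$-space is automatically group-like is correct and covers the intended application.

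Two smaller points. The informal phrases ``$m(a,e)\simeq a$'' and ``canonically equivalent to a loop based at $a$'' are cleanest to discharge by first rectifying the $E_1$-structure to a strict topological monoid (always possible up to weak equivalence), so that $\mathrm{ev}_0\circ\mu = \mathrm{pr}_A$ holds on the nose and the fiberwise-equivalence-of-fibrations argument applies literally. Separately---and this is a caveat about the paper's downstream use rather than your proof---the way the lemma is invoked in the text, applying $B$ to both sides to split $BL\Spin^c$, implicitly requires the equivalence to be compatible with the $E_1$-structures, which your $\mu$ is not when $A$ is noncommutative (check $\mu((a_1,\omega_1)(a_2,\omega_2))$ versus $\mu(a_1,\omega_1)\mu(a_2,\omega_2)$); one needs in addition that for connected $G$ the conjugation action on $\Omega G$ is homotopically trivial. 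That extra fact is beyond the scope of the lemma itself.
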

Here $\Omega A$ is the space of loops in $A$ based at the identity. See~\cite{Zil77, Agu81, Hai21} for proofs and generalizations of \cref{abelian_loops_split}.
\begin{defn}\label{other_transgression}
Recall that the Serre spectral sequence for the fibration $G\to EG\to BG$ defines a transgression map $\tau\colon H^4(BG;\Z)\to H^3(G;\Z)$.
\begin{enumerate}
    \item Let $c\coloneqq \tau(c_1)\in H^1(\Spin^c;\Z)$.
    \item Let $\mu^c\coloneqq \tau(\lambda^c)\in H^3(\Spin^c;\Z)$.
\end{enumerate}
\end{defn}
\Cref{abelian_loops_split} implies a homotopy equivalence
\begin{equation}
    B L\Spin^c \simeq B\Spin^c\times B\Omega\Spin^c \simeq B\Spin^c\times \Spin^c,
\end{equation}
so the Künneth formula tells us that the classes $c$ and $\mu^c$, as well as their products with classes in $H^*(B\Spin^c;\Z)$, define integer-valued cohomology classes for $BL\Spin^c$, and therefore by pullback for $BL\Spin_n^c$ for all $n$. In particular, for any $n$, the class $cc_1$ has infinite order in $H^3(BL\Spin_n^c;\Z)$.
\begin{prop}[{Huang-Han-Duan~\cite[Remark 4.3]{DHH:19}}]\label{lspinc_char}
Let $M$ be a \spinc manifold. Then $LM$ has a \spinc structure of level $2k+1$ if and only if $\mu^c(LM) - 2k c(LM)c_1(LM) = 0$ in $H^3(LM;\Z)$.
\end{prop}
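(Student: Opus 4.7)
The plan is to realize a level-$(2k+1)$ \spinc structure on $LM$ as a lift of the principal $L\Spin^c_n$-bundle $LQ\to LM$ to a principal $\widehat{L_k\Spin^c_n}$-bundle, and to express the sole obstruction as a class in $H^3(LM;\Z)$ which I will identify with $\mu^c(LM) - 2k\,c(LM)\,c_1(LM)$. Delooping the central $\U_1$-extension $\widehat{L_k\Spin^c_n}\to L\Spin^c_n$ yields a principal $B\U_1\simeq K(\Z,2)$-fibration $B\widehat{L_k\Spin^c_n}\to BL\Spin^c_n$, classified by some $\omega_k\in H^3(BL\Spin^c_n;\Z)$; if $\phi\colon LM\to BL\Spin^c_n$ classifies $LQ$, then this lift exists if and only if $\phi^*\omega_k = 0$, so the proof reduces to identifying $\omega_k$ in cohomology.

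Next I would exploit the natural equivalence $BL\Spin^c_n\simeq LB\Spin^c_n$ together with the infinite loop space structure on $B\Spin^c_n$ from direct sum of \spinc bundles: by \cref{abelian_loops_split}, $LB\Spin^c_n\simeq B\Spin^c_n\times\Spin^c_n$. K\"unneth then identifies $H^*(BL\Spin^c_n;\Z)$ with $H^*(B\Spin^c_n;\Z)\otimes H^*(\Spin^c_n;\Z)$, inside which the classes $c_1$, $c$, $\mu^c$ of \cref{other_transgression} and products such as $c\cdot c_1$ are well-defined elements. Under the Brylinski-McLaughlin isomorphism~\eqref{SM_H4}, $\widehat{L_k\Spin^c_n}$ corresponds to $\lambda^c - kc_1^2\in H^4(B\Spin^c_n;\Z)$, and a comparison of delooped classifying data identifies $\omega_k$ with the transgression of $\lambda^c - kc_1^2$ along the evaluation map $\mathrm{ev}\colon S^1\times LB\Spin^c_n\to B\Spin^c_n$, i.e.\ the coefficient of $\sigma\in H^1(S^1;\Z)$ in $\mathrm{ev}^*(\lambda^c - kc_1^2)$.

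The core computation then factors $\mathrm{ev} = \mu\circ(\id\times\mathrm{ev}_0)$ through the $E_\infty$-multiplication $\mu$ on $B\Spin^c_n$ and the based-loop evaluation $\mathrm{ev}_0\colon S^1\times\Spin^c_n\to B\Spin^c_n$ (which satisfies $\mathrm{ev}_0^*(c_1) = \sigma\cdot c$ and $\mathrm{ev}_0^*(\lambda^c) = \sigma\cdot\mu^c$). Feeding in $\mu^*(\lambda^c) = \lambda^c\otimes 1 + c_1\otimes c_1 + 1\otimes\lambda^c$ from \cref{lambda_c_whitney} together with primitivity of $c_1$, the $\sigma$-coefficient of $\mathrm{ev}^*(\lambda^c - kc_1^2)$ collapses to $\mu^c - 2k\,c\,c_1$, once the non-primitive cross term $c_1\otimes c_1$ in $\mu^*(\lambda^c)$ is combined with the contribution from $-kc_1^2$ (whose $\sigma$-coefficient is $2k\,c\,c_1$). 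Pulling back along $\phi$ then delivers the claimed characterization. The main obstacle lies in this final bookkeeping: carefully matching the Brylinski-McLaughlin isomorphism with the delooped classifying class so that normalizations line up and the coefficient of $c\cdot c_1$ is exactly $-2k$ rather than a neighboring integer. Since \cref{def:spincLoopSpace} is equivalent to the Huang-Han-Duan definition, one could alternatively reduce the entire computation to their argument in \cite{DHH:19}.
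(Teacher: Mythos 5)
The paper offers no proof of \cref{lspinc_char}: it is imported verbatim from Huang--Han--Duan~\cite[Remark 4.3]{DHH:19}, so there is no in-paper argument to measure you against. Your route --- obstruction theory for the central $\U_1$-extension, producing $\omega_k\in H^3(BL\Spin^c_n;\Z)$ whose pullback along the classifying map of $LQ$ is the complete obstruction, then identifying $\omega_k$ with a loop transgression via \cref{abelian_loops_split} and the K\"unneth decomposition of $H^*(BL\Spin^c_n;\Z)$ --- is the standard one and is essentially how the result is proved in \cite{DHH:19}. The first soft spot is that the identification $\omega_k = \nu(\lambda^c - kc_1^2)$ is the real content of the proposition, and you only gesture at it (``a comparison of delooped classifying data''). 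What is needed is that the Brylinski--McLaughlin isomorphism~\eqref{SM_H4} intertwines the map sending a central extension to the classifying class of the fibration $B\widehat{L_k\Spin^c_n}\to BL\Spin^c_n$ with the loop transgression $\nu\colon H^4(B\Spin^c_n;\Z)\to H^3(LB\Spin^c_n;\Z)$ under $BL\Spin^c_n\simeq LB\Spin^c_n$. That is a genuine theorem (it is the engine behind McLaughlin's criterion in the string case) and must be cited or argued, not folded into ``normalizations line up.''

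The second soft spot is that your cross-term bookkeeping does not close as written. With $\mu^*(\lambda^c) = \lambda^c\otimes 1 + c_1\otimes c_1 + 1\otimes\lambda^c$ from \cref{lambda_c_whitney} and $\mathrm{ev}_0^*(c_1) = \sigma\cdot c$, the $\sigma$-coefficient of $\mathrm{ev}^*(\lambda^c)$ is $1\otimes\mu^c$ \emph{plus} a cross term $\pm c_1\cdot c$, while the $\sigma$-coefficient of $\mathrm{ev}^*(-kc_1^2)$ is $-2k\,c\,c_1$ (not $2k\,c\,c_1$ as in your parenthetical); their sum is $\mu^c + (1-2k)\,c\,c_1$ up to sign, not $\mu^c - 2k\,c\,c_1$, \emph{unless} the class named $\mu^c$ on $BL\Spin^c_n$ is taken to be $\nu(\lambda^c)$ itself with the cross term absorbed --- which is exactly the convention \cref{transgr_char_class} encodes, and is presumably what \cref{other_transgression} is meant to produce. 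The clean repair is to bypass your hand computation with $\mu^*$ entirely and use \cref{transgr_char_class} as the paper does: $\nu(\lambda^c - kc_1^2) = \nu(\lambda^c) - k\nu(c_1^2) = \mu^c - 2k\,c\,c_1$ by the derivation property. As written, your argument risks landing on precisely the ``neighboring integer'' you warn about.
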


\begin{defn}
The \term{loop transgression} map $\nu\colon H^*(M;\Z)\to H^{*-1}(LM;\Z)$ is the composition $\pi_!\circ \mathrm{ev}^*$, where $\mathrm{ev}\colon S^1\times LM\to M$ is the evaluation $(x,\gamma)\mapsto\gamma(x)$ and $\pi_!$ is integration over $S^1$.
\end{defn}
Though $\nu$ and $\tau$ (from \cref{other_transgression}) are both called ``transgression,'' they are not directly related.
\begin{prop}[{Huang-Han-Duan~\cite[\S 2.4]{DHH:19}}]
\label{transgr_char_class}
In $H^*(BL\Spin^c;\Z)$, $\nu(\lambda^c) = \mu^c$ and $\nu(c_1) = c$. Moreover, $\nu(xy) = \nu(x)y + (-1)^{\abs x}\nu(y)x$.
\end{prop}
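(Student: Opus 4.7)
The proof splits naturally into three pieces: establishing the Leibniz rule, identifying $\nu$ on classifying-space generators with the Serre transgression, and controlling the Künneth components to upgrade fiber-level equality to equality in $H^*(LB\Spin^c;\Z)\cong H^*(BL\Spin^c;\Z)$.

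For the Leibniz rule I argue directly from the definition $\nu=\pi_!\circ\mathrm{ev}^*$. The Künneth isomorphism $H^*(S^1\times LM;\Z)\cong H^*(S^1;\Z)\otimes H^*(LM;\Z)$ lets me write $\mathrm{ev}^*(y) = 1\otimes \bar y + \alpha\otimes \nu(y)$, where $\alpha$ generates $H^1(S^1;\Z)$ and $\bar y$ is the pullback of $y$ along evaluation at a basepoint $\mathrm{ev}_0\colon LM\to M$. Multiplying two such decompositions using the Koszul sign $(a\otimes u)(b\otimes v)=(-1)^{|u||b|}(ab)\otimes(uv)$ together with $\alpha^2=0$ gives
\begin{equation}
\mathrm{ev}^*(xy) = 1\otimes \bar x\bar y + \alpha\otimes\bigl(\nu(x)\bar y + (-1)^{|x|}\bar x\,\nu(y)\bigr),
\end{equation}
and $\pi_!$ reads off the $\alpha$-coefficient; graded commutativity on $\bar x\nu(y)$ then converts this into the form stated.

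For the two explicit identities I invoke the classical fact that for any topological group $G$, the composite $H^n(BG;\Z)\xrightarrow{\nu}H^{n-1}(LBG;\Z)\xrightarrow{\iota^*}H^{n-1}(G;\Z)$, with $\iota\colon G\simeq\Omega BG\hookrightarrow LBG$ the inclusion of based loops, coincides with the Serre spectral sequence transgression for $G\to EG\to BG$ (see, e.g., McCleary, \emph{A User's Guide to Spectral Sequences}). For $G=\Spin^c$ this gives $\iota^*\nu(c_1)=c$ and $\iota^*\nu(\lambda^c)=\mu^c$. To upgrade each to an equality in $H^*(LB\Spin^c;\Z)$, I decompose using the splitting $LB\Spin^c\simeq B\Spin^c\times\Spin^c$ from \cref{abelian_loops_split}. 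The ``pure $B\Spin^c$'' Künneth component of $\nu(x)$ vanishes because the restriction of $\mathrm{ev}$ to constant loops $S^1\times B\Spin^c\to B\Spin^c$ is simply projection, so $\mathrm{ev}^*(x)=1\otimes x$ there and $\pi_!$ kills it. Since $B\Spin^c$ is simply connected with $H^3(B\Spin^c;\Z)=0$, this forces $\nu(c_1)=c$ outright. For $\nu(\lambda^c)\in H^3$, the only remaining potentially mixed term lies in $H^2(B\Spin^c;\Z)\otimes H^1(\Spin^c;\Z)\cong\Z\langle c_1\otimes c\rangle$; I rule it out by applying the Leibniz rule to the integral identity $2\lambda^c=p_1+c_1^2$ together with $\nu(c_1^2)=2c_1\otimes c$ and the spin-case value of $\nu(p_1)$ obtained by naturality from the inclusion $B\Spin\hookrightarrow B\Spin^c$.

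The main obstacle is precisely this last step: pinning down the mixed Künneth component of $\nu(\lambda^c)$. The cleanest alternative route would be a Hopf-algebraic argument showing $\lambda^c$ is primitive for the coproduct on $H^*(B\Spin^c;\Z)$ induced from the $H$-space structure on $\Spin^c$, which would confine $\nu(\lambda^c)$ to the fiber component $H^0(B\Spin^c;\Z)\otimes H^*(\Spin^c;\Z)$ automatically and bypass the explicit Leibniz manipulation; but verifying this primitivity carefully is where the bulk of the care is needed.
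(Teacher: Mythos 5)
The paper offers no proof of this proposition at all---it is imported verbatim from Huang--Han--Duan---so I can only judge your argument on its own terms. The first two-thirds of it are fine: the Künneth/Koszul derivation of the Leibniz rule is correct (up to a sign bookkeeping point: your computation yields $\nu(x)\bar y+(-1)^{\abs x}\bar x\,\nu(y)$, and graded commutativity turns the second term into $(-1)^{\abs x\abs y}\nu(y)\bar x$, which agrees with the stated $(-1)^{\abs x}\nu(y)x$ only when $\abs x$ is even or $\abs y$ is odd---harmless here since everything in sight is even), the identification of $\iota^*\circ\nu$ with the Serre transgression is classical, and the argument that $\nu(c_1)=c$ because $H^1(B\Spin^c;\Z)=0$ forces $\nu(c_1)$ into the fiber component is correct.

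The gap is exactly where you flag it, and it is worse than a missing verification. First, ``naturality from $B\Spin\hookrightarrow B\Spin^c$'' cannot control the mixed component of $\nu(p_1)$: restriction to $LB\Spin$ lands that component in $H^2(B\Spin;\Z)\otimes H^1(\Spin;\Z)=0$, i.e.\ it kills precisely the term you are trying to compute. Second, your proposed alternative route fails outright: $\lambda^c$ is \emph{not} primitive. By the paper's own Whitney sum formula (\cref{lambda_c_whitney}) the coproduct is $\mu^*(\lambda^c)=\lambda^c\otimes 1+c_1\otimes c_1+1\otimes\lambda^c$. Running the splitting of \cref{abelian_loops_split} through $\mathrm{ev}$ (i.e.\ $\mathrm{ev}(t,m,\omega)=\omega(t)\cdot m$) gives the general formula $\nu(x)=1\otimes\sigma(x)+\sum_j \pm z_j\otimes\sigma(y_j)$, where $\sum y_j\otimes z_j$ is the reduced-reduced part of $\mu^*(x)$ and $\sigma$ is the cohomology suspension; since $p_1$ \emph{is} primitive and $\sigma$ kills decomposables, $2\lambda^c=p_1+c_1^2$ yields $2\nu(\lambda^c)=1\otimes\sigma(p_1)+2\,c_1\otimes c$, so the mixed component of $\nu(\lambda^c)$ is $c_1\otimes c$, not zero. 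In other words, your own strategy, carried out correctly, appears to \emph{contradict} the bald statement $\nu(\lambda^c)=\mu^c$. The resolution has to live in the convention by which $\mu^c\in H^3(\Spin^c;\Z)$ is promoted to a class on $BL\Spin^c$---the splitting $BL\Spin^c\simeq B\Spin^c\times\Spin^c$ is not canonical, and different choices shift $1\otimes\mu^c$ by exactly such mixed terms---and neither your argument nor the paper pins that convention down. That is where the actual content of the cited Huang--Han--Duan statement sits, and your proof does not close it.
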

\begin{cor}[{Huang-Han-Duan~\cite[Theorem 5.1]{DHH:19}}]
If $M$ is strong string$^c$ of level $2k+1$, then $LM$ has a spin$^c$ structure of level $2k+1$.
\end{cor}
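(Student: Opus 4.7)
The plan is to produce a trivialization on $LM$ of the obstruction class from \cref{lspinc_char} by transgressing a trivialization on $M$ coming from the strong string\textsuperscript{$c$} structure. First, I would unpack \cref{strong_stringc}: a strong string\textsuperscript{$c$} structure of level $(2k+1)$ on $M$ is a $(B\U_1, L^{\otimes(2k+1)})$-twisted string structure, so consists of a map $f\colon M \to B\U_1$ (giving a line bundle $\ell = f^*L$) together with a string structure on $TM \oplus \ell^{\otimes(2k+1)}$. This data induces a \spinc structure on $TM$ whose determinant line bundle $L_M$ satisfies $c_1(L_M) = (2k+1) c_1(\ell)$ and provides a trivialization of the class $\lambda(TM \oplus \ell^{\otimes(2k+1)}) \in H^4(M;\Z)$.

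Next, I would identify this trivialized class with $\lambda^c(TM) - k\, c_1(L_M)^2$, the expected obstruction at level $2k+1$. Using the Whitney sum formula for $\lambda^c$ in \cref{lambda_c_whitney}, the identity $\lambda^c(\ell') = -c_1(\ell')^2$ for a complex line bundle from \cref{cpx_lambdac}, and $\lambda(W) = \lambda^c(W)$ for $W$ spin (since the induced determinant is trivial), the hope is to obtain
\[
\lambda(TM \oplus \ell^{\otimes(2k+1)}) = \lambda^c(TM) - k\, c_1(L_M)^2
\]
in $H^4(M;\Z)$, matching the class in $H^4(B\Spin^c;\Z)$ that specifies the central extension $\widehat{L_k\Spin^c_n}$.

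Having this, I would apply the loop transgression $\nu\colon H^4(M;\Z) \to H^3(LM;\Z)$. By \cref{transgr_char_class}, $\nu(\lambda^c) = \mu^c$, $\nu(c_1) = c$, and $\nu$ obeys the Leibniz rule $\nu(xy) = \nu(x)y + (-1)^{|x|}\nu(y)x$, so setting $x = y = c_1$ yields $\nu(c_1^2) = 2\, c\, c_1$. Therefore
\[
\nu\bigl(\lambda^c(TM) - k\, c_1(L_M)^2\bigr) = \mu^c(LM) - 2k\, c(LM)\, c_1(LM).
\]
Since loop transgression carries trivializations to trivializations, the strong string\textsuperscript{$c$} structure produces a trivialization of $\mu^c(LM) - 2k\, c(LM)\, c_1(LM)$ on $LM$; by \cref{lspinc_char} this is exactly a level $(2k+1)$ \spinc structure on $LM$.

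The hard part will be the middle step. The raw Whitney sum produces a cross term $c_1(L_M) \cdot c_1(\ell^{\otimes(2k+1)}) = (2k+1)^2 c_1(\ell)^2$ together with $\lambda^c(\ell^{\otimes(2k+1)}) = -(2k+1)^2 c_1(\ell)^2$, and the extraction of exactly the factor $k$ (rather than some function of $2k+1$) in the final expression is delicate; one must be careful about which \spinc structure on $TM$ is selected by the string structure on $TM \oplus \ell^{\otimes(2k+1)}$ and use the relation $c_1(L_M) = (2k+1) c_1(\ell)$ consistently, possibly re-expressing $c_1(\ell)^2$ in terms of $c_1(L_M)^2$ with the correct normalization.
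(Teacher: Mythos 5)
Your transgression step is right and is exactly what the paper does. By \cref{transgr_char_class}, one applies $\nu$ to the class $\lambda^c(M) - k\,c_1(L)^2$ and uses the Leibniz rule to get $\mu^c(LM) - 2k\,c(LM)c_1(LM)$, then invokes \cref{lspinc_char}. The difference is in where you each start: the paper takes as given (via the remark after \cref{strong_stringc}) that the strong string\textsuperscript{$c$}-of-level-$(2k{+}1)$ condition \emph{is} the trivialization of $\lambda^c(M) - k\,c_1(L)^2$, citing the equivalence with DHH's formulation without rederiving it. You instead try to extract that condition from \cref{strong_stringc} directly, and that is where your argument has a real gap.

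Concretely, your step asserting $\lambda(TM \oplus \ell^{\otimes(2k+1)}) = \lambda^c(TM) - k\,c_1(L_M)^2$ cannot hold as stated. If $L_M = \ell^{\otimes(2k+1)}$ is the determinant line bundle of the induced \spinc structure, then by \cref{defn:lambdac} the left side is \emph{literally} $\lambda^c(TM)$, so the claimed identity would force $k\,c_1(L_M)^2 = 0$, which is false in general. Running the Whitney sum formula (\cref{lambda_c_whitney}) honestly, with either choice of determinant line bundle, produces cross-terms and $\lambda^c(\ell^{\otimes(2k+1)}) = -(2k+1)^2 c_1(\ell)^2$ terms whose coefficients are polynomials in $(2k+1)$, not the bare $k$ you need; and as you yourself note, you cannot convert $c_1(\ell)^2$ into $c_1(L_M)^2/(2k+1)^2$ inside $H^4(M;\Z)$. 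So the "middle step" you flag as delicate is not merely delicate -- as written it fails, and the equivalence between \cref{strong_stringc} and DHH's characteristic-class formulation (which the paper points to but does not spell out) requires an argument genuinely different from the direct Whitney-sum computation. Until that equivalence is established, your proof is incomplete; the paper sidesteps the issue by treating the characteristic-class formulation as the operative definition.
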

\begin{proof}
Since $M$ is strong string$^c$ of level $2k+1$, $\lambda^c(M) - kc_1(L)^2 = 0$, where $L\to M$ is the determinant line bundle of the associated \spinc structure. Thus $\nu(\lambda^c(M) - kc_1(L)^2) = 0$ in $H^3(LM;\Z)$. By \cref{transgr_char_class}, this means $\mu^c(LM) - 2c(LM)c_1(LM) = 0$, which by \cref{lspinc_char} implies $LM$ has a \spinc structure of level $2k+1$.
\end{proof}

\begin{proof}[Proof of \cref{no_loop_spinc}]
Let $M\coloneqq \CP^m\times\CP^n$ for $m,n \ge 3$; since $M$ is complex, then by \cref{cpx_to_stringh} $M$ has a \stringh structure. We will show that there is no $k\in\Z$ such that $\mu^c(LM) - 2kc(LM)c_1(LM) = 0$, so that by \cref{lspinc_char} $LM$ does not have a \spinc structure for any level. Let $x\in H^2(M;\Z)$ be the first Chern class of the tautological bundle over the first projective space factor and $y$ be the corresponding class for the second $\CP^n$ factor, so that
\begin{equation}
    p_1(M) = (m+1)x^2 + (n+1)y^2
\end{equation}
by the Whitney sum formula for $p_1$.\footnote{As we noted during the proof of \cref{lambda_c_whitney}, the first Pontrjagin class satisfies the Whitney sum formula for oriented vector bundles.} The determinant line bundle $L$ for this complex structure satisfies $c_1(L) = c_1(M) = (m+1)x+(n+1)y$, so
\begin{equation}
\begin{aligned}
\label{is_even}
    \textcolor{BrickRed}{p_1(M)} - (2k+1)\textcolor{MidnightBlue}{c_1(L)^2} &= \textcolor{BrickRed}{(m+1)x^2 + (n+1)y^2} - (2k+1)\textcolor{MidnightBlue}{((m+1)x+(n+1)y)^2}
\end{aligned}
\end{equation}
Since $2\lambda^c(V) = p_1(V)+ c_1(L)^2$ for any \spinc vector bundle $V$ with determinant bundle $L$, then $2(\lambda^c - kc_1^2) = p_1 - (2k+1)c_1^2$, and so
\begin{equation}\label{lamccpcp}
    \lambda^c(M) - kc_1(L)^2 = \frac{1}{2}\Big((m+1)x^2 + (n+1)y^2 - (2k+1)((m+1)x+(n+1)y)^2\Big).
\end{equation}
This equation takes place in $H^4(M;\Z)$ -- we are asserting that the right-hand side of~\eqref{is_even} is even.

By \cref{transgr_char_class}, we want to show that the loop transgression of~\eqref{lamccpcp} does not vanish for any $k\in\Z$, as this will imply that $LM$ does not have a \spinc structure of any level. It suffices to pull back across the standard inclusion $\CP^m\hookrightarrow \CP^m\times\CP^n$, which on cohomology sends $x\mapsto x$ and $y\mapsto 0$.
%
That is, because the loop transgression map is natural, showing that $\nu\big(-(m+1)(2km+2k+m)x^2\big) \neq 0$ would imply that~\eqref{lamccpcp} also transgresses to a nonzero class. We will show $\nu\big(-(m+1)(2km+2k+m)x^2\big) \neq 0$ in two steps: first we will show $\nu(x^2)$ is infinite-order in $H^3(L\CP^m;\Z)$, and then we will show that $-(m+1)(2km+2k+m)\ne 0$ for the values of $m$ and $k$ of interest.


We will compare the loop transgression maps on $\CP^m$ and $\CP^\infty$.
Since $m\ge 3$, the inclusion $\CP^m\hookrightarrow \CP^\infty$ is at least $7$-connected. The natural isomorphism $\pi_k(X)\simeq\pi_{k-1}(\Omega X)$ thus tells us that $\Omega\CP^m\to\Omega B\U_1$ is at least $6$-connected. For any space $X$, there is a natural fibration $\Omega X\to LX\to X$; combining these two connectedness estimates with the long exact sequence of the fibration, we learn $L\CP^m\to L B\U_1$ is also at least $6$-connected. Naturality of the loop transgression map gives us a commutative diagram
\begin{equation}\begin{tikzcd}
	{H^4(\CP^\infty; \Z)} & {H^4(\CP^m;\Z)} \\
	{H^3(L\CP^\infty;\Z)} & {H^3(L\CP^m;\Z),}
	\arrow["\cong", from=1-1, to=1-2]
	\arrow["\nu", from=1-1, to=2-1]
	\arrow["\nu", from=1-2, to=2-2]
	\arrow["\cong", from=2-1, to=2-2]
\end{tikzcd}\end{equation}
and since the maps $\CP^m\to\CP^\infty$ and $L\CP^m\to L\CP^\infty$ are at least $6$-connected, the maps on $H^4$ and $H^3$ are isomorphisms. Since $k\ne 0$, then to show $\nu(k(m+1)x^2)\ne 0$ in $H^3(L\CP^m;\Z)$, it suffices to show that the transgression map $H^4(\CP^\infty; \Z)\to H^3(L\CP^\infty;\Z)$ is injective. This we know: by \cref{transgr_char_class} (then pulling back along $\U_1\hookrightarrow\Spin^c$), $\nu(c_1^2) = 2cc_1$, which has infinite order as promised.

Finally, we show that, for $m\ge 3$ and $k\in\Z$ arbitrary, $-(m+1)(2km+2k+m)\ne 0$, so that
\begin{equation}
    \nu(-(m+1)(2km+2k+m)x^2)= -(m+1)(2km+2k+m)\nu(x^2)\ne 0.
\end{equation}
Suppose instead that $-(m+1)(2km+2k+m) = 0$; since $m\ne -1$, we can divide by $-(m+1)$ and deduce that in $\Q$,
\begin{equation}\label{nottrue}
    0 = 2k(m+1)+m.
\end{equation}
If $k = 0$, \eqref{nottrue} forces $m = 0$, which we know is false. If $k\ne 0$, \eqref{nottrue} implies that $k = -m/2(m+1)$, which cannot be satisfied if both $k$ and $m$ are integers and $m\ge 3$.
\end{proof}

\section{Orienting $\tmf_1(n)$}\label{section:orientation}
In this section we produce \stringh orientations of $\tmf_1(n)$ in \cref{all_n_stringh_or}. We start by introducing $\TMF$ and $\Tmf$ with level structure, and from there introduce $\tmf_1(n)$.

The spectrum of \term{(periodic) topological modular forms} $\TMF$ is the global sections of a sheaf of $E_\infty$-ring spectra $\mathcal{O}^{\mathit{top}}$ on the étale site of the moduli stack of elliptic curves $\mathcal{M}_{\mathit{ell}}$, that is $\TMF = \mathcal{O}^{\mathit{top}}(\mathcal{M}_{\mathit{ell}})$. 
The homotopy ring $\pi_{2*}(\TMF)$ (i.e.\ the same ring with degrees doubled) is rationally\footnote{In fact, these two rings are isomorphic after inverting $6$.} isomorphic to the ring 
\begin{equation}
   \widetilde{\MF}[\SL_2(\Z),\Z]\cong  \Z[c_4,c_6,\Delta^{\pm}]/(c^3_4-c^2_6-1728\Delta), \quad |c_4|=9, \quad |c_6|=12,\quad  |\Delta|=12\,
\end{equation}
of weakly holomorphic integral modular forms. The homotopy groups of $\TMF$ are periodic with period $576$.

The sheaf $\mathcal O^{\mathit{top}}$ extends to define a sheaf on the étale site of the Deligne-Mumford compactification $\overline{\mathcal M}_{\mathit{ell}}$ of $\mathcal M_{\mathit{ell}}$, and the global sections of $\mathcal O^{\mathit{top}}\to \overline{\mathcal M}_{\mathit{ell}}$ are a spectrum $\Tmf$ which is neither periodic nor connective, called \term{non-periodic nonconnective topological modular forms} or \term{mixed $\Tmf$}.
%
The homotopy ring of $\Tmf$ is closely related to the ring of holomorphic integral modular forms
\begin{equation}
    \MF(\SL_2(\Z), \Z)\cong\Z[c_4,c_6,\Delta]/(c^3_4-c^2_6-1728\Delta)\,.
\end{equation}
There is a map $\pi_{2*}(\Tmf)\to\MF(\SL_2(\Z), \Z)$, and after inverting $6$, this is an isomorphism \emph{but only in nonnegative degrees}. Therefore one defines the connective cover $\tmf\coloneqq\tau_{\ge 0}\Tmf$, so that there is an isomorphism $\pi_{2*}(\tmf)\cong \MF(\SL_2(\Z), \Z)\otimes\Z[1/6]$ in all degrees.

By considering moduli spaces with a little extra structure, one obtains interesting variants of $\TMF$ and $\Tmf$.
\begin{defn}[Hill-Lawson~\cite{HL16}]\label{1n_log_defn}
Let $n\ge 1$, $\mathcal M_1(n)$ denote the moduli stack of elliptic curves with a chosen point of order $n$, and $\overline{\mathcal M}_1(n)$ be the Deligne-Mumford compactification of $\mathcal M_1(n)$. The global sections of the pullback of $\mathcal O^{\mathit{top}}$ to $\mathcal M_1(n)[1/n]$, resp.\ to the log-étale site of $\overline{\mathcal M}_1(n)[1/n]$ are denoted $\TMF_1(n)$, resp.\ $\Tmf_1(n)$.
\end{defn}
Hill-Lawson also define analogous series of spectra $\TMF(n)$ and $\TMF_0(n)$, and $\Tmf(n)$ and $\Tmf_0(n)$. Prior to their work, various examples of these families of spectra were introduced by Behrens~\cite{Beh06, Beh07}, Mahowald-Rezk~\cite{MR09}, and Stojanoska~\cite{Sto12}.

Both $\TMF_1(n)$ and $\Tmf_1(n)$ are $E_\infty$-ring spectra by construction (in fact, $E_\infty$ $\TMF$-, resp.\ $\Tmf$-algebra spectra), and there is a ring spectrum map $\Tmf_1(n)\to\TMF_1(n)$. There is a rational isomorphism from $\pi_{2*}(\TMF_1(n))$ to the ring $\MF(\Gamma_1(n),\Z[\frac{1}{n}])$ of weakly holomorphic modular forms for the congruence subgroup $\Gamma_1(n)\subset\SL_2(\Z)$, also called \term{integral modular forms at level $n$}.

However, this analogy does not continue to mixed $\Tmf_1(n)$: the ring $\pi_*(\Tmf_1(n))\otimes\Q$ and the ring of holomorphic modular forms for $\Gamma_1(n)$ tensored with $\Q$ are not always isomorphic, even restricted to nonnegative degrees. This means that the connective cover of $\Tmf_1(n)$ is not always the right analogue of $\tmf$.

Fortunately, the discrepancy is not huge: the sole discrepancy is that $\pi_1(\Tmf_1(n))$ may be nonzero, and frequently it is $0$, including for all $n\le 22$ (see, for example, \cite[Remark 3.14]{Mei22}). Meier~\cite{Mei23}, following a general procedure of Lawson~\cite{lawson2015shimura} to remove $\pi_1$, constructs for all $n\ge 2$ an $E_\infty$-ring spectrum $\tmf_1(n)$ with $\pi_1(\tmf_1(n)) = 0$ and a map $\tmf_1(n)\to\Tmf_1(n)$ which is an isomorphism for $n = 0$ and $n\ge 2$, implying $\pi_{2*}(\tmf_1(n))$ is rationally isomorphic to the ring of holomorphic modular forms of level $n$ in all degrees. For this paper, $\tmf_1(n)$ always refers to Meier's construction, whether or not this is the connective cover of $\Tmf_1(n)$.
\begin{rem}
Lawson-Naumann~\cite{LN14} constructed $\tmf_1(3)$ $2$-locally as an $E_\infty$-ring spectrum before Meier's work, and identified it with $\BP\ang 2$; in this case, $\pi_1(\Tmf_1(3))$ vanishes. See also Hill-Meier~\cite{HM17}.
\end{rem}

Since topological modular forms with level structure were first systematically studied by Hill-Lawson~\cite{HL16}, it has been an open question to orient them by a Thom spectrum which is a better approximation than $\MU$ or $\MString$: see, for example~\cite[\S 1]{HL16}. Wilson~\cite{Wil15} provided some answers to this question, but does not answer it for $\tmf_1(n)$. Recently, Devalapurkar~\cite{Dev} answered this for $\tmf_1(3)$ using forthcoming work of Hahn-Senger:
\begin{thm}[{Devalapurkar~\cite[Theorem 5]{Dev}}]
There is a map of $E_\infty$-ring spectra $\sigma_D\colon \MString^h_{(2)}\to\tmf_1(3)_{(2)}$ such that the following diagram commutes:
\begin{equation}\begin{tikzcd}
	\MString_{(2)} & \tmf_{(2)} \\
	{\MString^h_{(2)}} & {\tmf_1(3)_{(2)}.}
	\arrow["\sigma", from=1-1, to=1-2]
	\arrow[from=1-1, to=2-1]
	\arrow[from=1-2, to=2-2]
	\arrow["\sigma_D", from=2-1, to=2-2]
\end{tikzcd}\end{equation}
\end{thm}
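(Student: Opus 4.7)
The plan is to exploit the equivalence $\MString^h \simeq \MString \wedge \MU$ of $E_\infty$-ring spectra established in \cref{thm:sanath}, which presents $\MString^h$ as the coproduct of $\MString$ and $\MU$ in commutative ring spectra. To produce an $E_\infty$-ring map $\sigma_D\colon \MString^h_{(2)}\to \tmf_1(3)_{(2)}$, it therefore suffices, by the universal property of the coproduct, to produce a pair of $E_\infty$-ring maps $\MString_{(2)}\to \tmf_1(3)_{(2)}$ and $\MU_{(2)}\to \tmf_1(3)_{(2)}$ and smash them together over $\Sph_{(2)}$.

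For the string side, I would compose the Ando-Hopkins-Rezk orientation $\sigma\colon\MString\to\tmf$ with the canonical $E_\infty$-ring map $\tmf\to\tmf_1(3)$ coming from Meier's construction~\cite{Mei23} of $\tmf_1(3)$ as an $E_\infty$-$\tmf$-algebra, and then $2$-localize. Because the map $B\String\to B\String^h$ corresponds, under the equivalence $B\String^h\simeq B\String\times B\U$ from the proof of \cref{thm:sanath}, to the inclusion $V\mapsto (V,0)$ into the first factor, on Thom spectra it becomes the unit inclusion $\MString\simeq\MString\wedge\Sph\to\MString\wedge\MU$. Consequently the composition $\MString_{(2)}\to\MString^h_{(2)}\xrightarrow{\sigma_D}\tmf_1(3)_{(2)}$ automatically factors as $\sigma$ followed by the map $\tmf_{(2)}\to\tmf_1(3)_{(2)}$, so the square in the statement commutes without further work.

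For the complex side, I would invoke the forthcoming work of Hahn-Senger, which at the prime $2$ supplies an $E_\infty$-ring map $\MU_{(2)}\to\tmf_1(3)_{(2)}$. Here Lawson-Naumann~\cite{LN14} identify $\tmf_1(3)_{(2)}$ with $\BP\ang 2$, so complex orientability itself is easy; the real content is in promoting a single complex orientation to a map of $E_\infty$-ring spectra, which requires simultaneous compatibility with all the power operations on $\tmf_1(3)_{(2)}$. This is the main obstacle of the proof, because for a general target spectrum even an $H_\infty$-refinement of a complex orientation can be obstructed, and it is precisely where the Hahn-Senger input is essential.

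With both ingredients in hand, the coproduct in $E_\infty$-ring spectra yields the desired map
\begin{equation}
    \sigma_D\colon \MString^h_{(2)}\simeq \MString_{(2)}\wedge \MU_{(2)}\longrightarrow \tmf_1(3)_{(2)},
\end{equation}
and the commutativity of the square in the theorem follows from the identification of $\MString\to\MString^h$ with the unit inclusion discussed above.
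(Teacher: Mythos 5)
The paper does not prove this theorem; it cites it directly from Devalapurkar~\cite[Theorem~5]{Dev}. What the paper \emph{does} prove is the generalization \cref{all_n_stringh_or} to arbitrary $n$, via \cref{not_connective_or,connective_lifting}, and the paper is explicit (see the remark following \cref{all_n_stringh_or}) that ``our proof uses completely different methods than Devalapurkar's.'' Your proposal is, in essence, the specialization of the paper's proof of \cref{all_n_stringh_or} to the case $n=3$, not a reconstruction of Devalapurkar's argument -- so it is worth being clear that what you have written is a proof, but very likely not Devalapurkar's proof, and its relationship to $\sigma_D$ is exactly the open question the paper raises.

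Comparing with the paper's route in \cref{not_connective_or,connective_lifting}: both you and the paper exploit the $E_\infty$-equivalence $\MString^h\simeq\MString\wedge\MU$ from \cref{thm:sanath} together with the universal property of the coproduct in $E_\infty$-rings, pairing the Ando--Hopkins--Rezk orientation on the $\MString$ factor with a complex $E_\infty$-orientation on the $\MU$ factor. The substantive differences are two. First, the paper uses Senger's published complex orientation~\cite[Theorem~1.7]{Sen23} (valid for all $n$), whereas you invoke the forthcoming Hahn--Senger $2$-primary orientation; these give essentially the same ingredient for $n=3$ at the prime $2$, but the paper's choice makes the argument self-contained and uniform in $n$. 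Second, you target $\tmf_1(3)_{(2)}$ directly, which works in this case because $\pi_1(\Tmf_1(3))=0$ and hence $\tmf_1(3)\simeq\tau_{\ge 0}\Tmf_1(3)$; the paper instead routes through $\Tmf_1(n)$ (where the Hill--Lawson $\tmf$-algebra structure lives) and then lifts to $\tmf_1(n)$ via the separate \cref{connective_lifting}, which is the step that makes the argument go through for general $n$ where $\pi_1(\Tmf_1(n))$ may be nonzero. Your treatment of the commutativity of the square -- identifying $\MString\to\MString^h$ with the unit inclusion into the first smash factor -- is clean and supplies a step the paper leaves implicit.

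One small caveat: the phrase ``coming from Meier's construction of $\tmf_1(3)$ as an $E_\infty$-$\tmf$-algebra'' should really be attributed to Hill--Lawson~\cite[Theorem~6.1]{HL16} (who make $\Tmf_1(n)$ a $\tmf$-algebra), combined with the identification $\tmf_1(3)\simeq\tau_{\ge 0}\Tmf_1(3)$; Meier's contribution~\cite{Mei23} is the systematic construction of $\tmf_1(n)$ when $\pi_1(\Tmf_1(n))\ne 0$, which is not needed for $n=3$.
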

We will lift this to arbitrary $n$:
\begin{thm}
\label{all_n_stringh_or}
For all $n\ge 2$, there are maps of $E_\infty$-ring spectra
\begin{equation}
	\sigma_1(n)\colon \MTString^h[1/n] \longrightarrow \tmf_1(n)
\end{equation}
such that the composition of $\sigma_1(n)$ with the complex orientation on $\MTString^h$ constructed in \cref{multiplicativity}, \cref{mualg}, is the complex orientation of $\tmf_1(n)$ constructed in Senger~\cite[Theorem 1.7]{Sen23}, and there is a commutative
square
\begin{equation}
\begin{tikzcd}
	\MTString[1/n] & \tmf[1/n] \\
	{\MTString^h[1/n]} & {\tmf_1(n)}\,.
	\arrow["\sigma", from=1-1, to=1-2]
	\arrow[from=1-1, to=2-1]
	\arrow[from=1-2, to=2-2]
	\arrow["{\sigma_1(n)}", from=2-1, to=2-2]
\end{tikzcd}
\end{equation}
\end{thm}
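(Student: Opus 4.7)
The plan is to use Devalapurkar's splitting $\MTString^h \simeq \MTString \wedge \MU$ as $E_\infty$-ring spectra (\cref{thm:sanath}) together with the universal property of the smash product in $CAlg$. Since the smash product is the coproduct for $E_\infty$-ring spectra, an $E_\infty$-ring map out of $\MTString^h$ is equivalent data to a pair of $E_\infty$-ring maps out of $\MTString$ and $\MU$. After inverting $n$ on both sides, this reduces the construction of $\sigma_1(n)$ to producing two $E_\infty$-ring maps $\alpha_n\colon \MTString[1/n] \to \tmf_1(n)$ and $\beta_n\colon \MU[1/n] \to \tmf_1(n)$.

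For $\alpha_n$, I would take the composition
\begin{equation}
    \alpha_n\colon \MTString[1/n] \xrightarrow{\sigma[1/n]} \tmf[1/n] \longrightarrow \tmf_1(n),
\end{equation}
where $\sigma$ is the Ando-Hopkins-Rezk orientation and the second map is the $E_\infty$-ring map induced from the map of moduli stacks $\mathcal M_1(n)[1/n] \to \mathcal M_{\mathit{ell}}[1/n]$ via the sheaf $\mathcal O^{\mathit{top}}$ (this is how $\Tmf_1(n)$ is defined in \cref{1n_log_defn}, and the connective-modification of Meier extends this to $\tmf_1(n)$). For $\beta_n$, I would use Senger's $E_\infty$-complex orientation $\MU \to \tmf_1(n)$ of \cite{Sen23}; because $n$ acts invertibly on $\tmf_1(n)$, this factors uniquely through $\MU[1/n]$ as an $E_\infty$-ring map.

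Applying \cref{thm:sanath} after inverting $n$ yields $\MTString^h[1/n] \simeq \MTString[1/n] \wedge \MU[1/n]$ in $CAlg$, which is the coproduct of the two factors. Thus the pair $(\alpha_n,\beta_n)$ determines a unique $E_\infty$-ring map $\sigma_1(n)\colon \MTString^h[1/n] \to \tmf_1(n)$. Both compatibility assertions in the theorem are then immediate: under the equivalence of \cref{thm:sanath}, the map $\MTString \to \MTString^h$ from \cref{multiplicativity}\eqref{stralg} is the coproduct inclusion of the first factor, so $\sigma_1(n)$ restricted along it recovers $\alpha_n$ and hence closes the commuting square with $\sigma$; similarly, the complex orientation $\MU \to \MTString^h$ from \cref{multiplicativity}\eqref{mualg} is the coproduct inclusion of the second factor, so $\sigma_1(n)$ precomposed with it recovers $\beta_n$, which is Senger's complex orientation by construction.

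The main obstacle I anticipate is ensuring that the required inputs truly live at the $E_\infty$-level: specifically, that Senger's complex orientation and the natural transformation $\tmf[1/n] \to \tmf_1(n)$ are both constructed as $E_\infty$-ring maps, rather than as maps of $E_k$-rings for some finite $k$, so that the coproduct property of the smash product can legitimately be invoked in $CAlg$. Granting these inputs from the cited references, the remaining content of the proof is formal and is essentially the observation that Devalapurkar's splitting reduces the construction to two already-available orientations.
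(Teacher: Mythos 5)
Your approach is essentially the same as the paper's: both proofs reduce the construction of $\sigma_1(n)$ to the equivalence $\MTString^h \simeq \MTString \wedge \MU$ of \cref{thm:sanath}, and then assemble an $E_\infty$-map out of the smash product from the Ando--Hopkins--Rezk orientation $\sigma$ and Senger's complex orientation. Your invocation of the universal property of the smash product as the coproduct in $\mathrm{CAlg}$ is a cleaner way of phrasing what the paper does explicitly, namely daisy-chaining $\sigma\wedge M(n)$ together with the unit map $A(n)\colon \tmf\to\Tmf_1(n)$ followed by multiplication $\mu$; unwinding the universal property produces exactly this composite, so there is no real difference.

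Where you are eliding a genuine step, though, is in the passage from $\Tmf_1(n)$ to $\tmf_1(n)$. The moduli-stack map and Hill--Lawson's $\tmf$-algebra structure naturally target $\Tmf_1(n)$, and Meier's $\tmf_1(n)$ is \emph{not} the connective cover $\tau_{\ge 0}\Tmf_1(n)$; it is defined by the pullback square in~\eqref{meier_pullback} that replaces $\tau_{\le 1}$ by $H\pi_0(\Tmf_1(n))$. You mention in parentheses that ``the connective-modification of Meier extends this to $\tmf_1(n)$,'' but that is precisely what has to be proven, and it is not automatic. The paper handles this carefully: \cref{not_connective_or} constructs the map with target $\Tmf_1(n)$, and then \cref{connective_lifting} proves a general lifting statement for a connective source $R$ with $\pi_0(R)\cong\Z$ and $\pi_1(R) = 0$, using Meier's pullback square together with Lurie's étale lifting theorem~\cite[Theorem 7.5.0.6]{HA} to produce the map into the second corner $H\pi_0(\Tmf_1(n))$ and canonically identify the two composites in $\tau_{0:1}\Tmf_1(n)$. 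Finally one verifies $\Omega_0^{\String^h}\cong\Z$ and $\Omega_1^{\String^h}\cong 0$. In your version one would apply \cref{connective_lifting} to $\tmf$ separately (using $\pi_0(\tmf)\cong\Z$, $\pi_1(\tmf)= 0$), which also works, but the point is the lift is not for free. Spell out this step — everything else in your argument is formally correct.
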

\begin{rem}
Our proof uses completely different methods than Devalapurkar's, and it would be interesting to know whether there is a $2$-local equivalence $\sigma_D\simeq\sigma_1(3)$.
\end{rem}

The $E_\infty$-ring map $\MTString \rightarrow \MTString^h$ is the one from \cref{multiplicativity}, and the map $\tmf[1/n]\rightarrow \tmf_1(n)$ is induced by the inclusion of the moduli stack of elliptic curves with a chosen point of order 3 into the moduli stack of all elliptic curves. 
%
 We prove \cref{all_n_stringh_or} by first showing it for neither-connective-nor-periodic $\Tmf_1(n)$, then lifting
to $\tmf_1(n)$.
\begin{prop}
\label{not_connective_or}
The analogue of \Cref{all_n_stringh_or}, but with $\Tmf_1(n)$ in place of $\tmf_1(n)$, is true.
\end{prop}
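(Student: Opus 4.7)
The plan is to build $\sigma_1(n)$ using the $E_\infty$-ring decomposition $\MTString^h \simeq \MTString \wedge \MU$ from \cref{thm:sanath} together with the universal property of the smash product as the coproduct in the $\infty$-category of $E_\infty$-ring spectra. Inverting $n$ in \cref{thm:sanath} yields an equivalence
\begin{equation}
	\MTString^h[1/n] \simeq \MTString[1/n] \wedge \MU[1/n]
\end{equation}
of $E_\infty$-ring spectra, so to give an $E_\infty$-ring map $\sigma_1(n)\colon\MTString^h[1/n]\to \Tmf_1(n)$ it suffices to give $E_\infty$-ring maps out of each of the two factors.

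For the $\MTString[1/n]$-factor, I would take the composition of the Ando-Hopkins-Rezk orientation $\sigma\colon \MTString\to\tmf$ with the canonical $E_\infty$-ring map $\tmf[1/n]\to\Tmf[1/n]\to\Tmf_1(n)$ induced by the forgetful map of moduli stacks $\mathcal M_1(n)[1/n]\to\mathcal M_{\mathit{ell}}[1/n]$ and the extension from $\tmf$ to $\Tmf$. For the $\MU[1/n]$-factor, I would take Senger's $E_\infty$ complex orientation $\MU\to\tmf_1(n)$ from~\cite[Theorem 1.7]{Sen23}, post-composed with the structure map $\tmf_1(n)\to\Tmf_1(n)$. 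Universality of the coproduct then produces the desired $\sigma_1(n)$.

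It remains to verify the two compatibilities claimed in the statement. The complex orientation compatibility is automatic from the construction: by definition, the composition $\MU[1/n]\to \MU[1/n]\wedge \MTString[1/n] \simeq \MTString^h[1/n]\xrightarrow{\sigma_1(n)} \Tmf_1(n)$ (where the first map smashes with the unit of $\MTString$) is precisely Senger's orientation after inverting $n$ and composing with $\tmf_1(n)\to\Tmf_1(n)$, which by construction is the complex orientation on $\MTString^h$ from \cref{multiplicativity}\eqref{mualg} followed by $\sigma_1(n)$. The square with $\MTString[1/n]$ on the top left commutes for the symmetric reason: the map $\MTString[1/n]\to\MTString^h[1/n]$ is the unit-smash map for the $\MU$-factor, so the composition $\MTString[1/n]\to\MTString^h[1/n]\to\Tmf_1(n)$ equals $\MTString[1/n]\xrightarrow{\sigma}\tmf[1/n]\to \Tmf_1(n)$, which factors through $\tmf[1/n]\to\Tmf_1(n)$ as claimed.

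The main obstacle is really packaged inside the invocations of \cref{thm:sanath} and Senger's theorem: one needs to know that the $\MU[1/n]$-algebra structure on $\MTString^h[1/n]$ obtained from the complex orientation in \cref{multiplicativity}\eqref{mualg} agrees, under the equivalence of \cref{thm:sanath}, with the smash-product $\MU[1/n]$-algebra structure on $\MTString[1/n]\wedge \MU[1/n]$, and similarly for the $\MTString$-algebra structures. This is part of what \cref{thm:sanath} provides, since the equivalence there is constructed compatibly with the canonical $\MTString$- and $\MU$-algebra structures on $\MTString^h$. Given this, the remaining verifications are formal.
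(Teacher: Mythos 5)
Your proof is correct and takes essentially the same approach as the paper: both use the equivalence $\MTString^h\simeq\MTString\wedge\MU$ from \cref{thm:sanath}, feed in the Ando-Hopkins-Rezk orientation on the $\MTString$ factor, Senger's orientation on the $\MU$ factor, and combine via the multiplication on $\Tmf_1(n)$. The paper phrases this as explicitly smashing the two maps and then composing with $A(n)\wedge\id$ and $\mu$, whereas you invoke the universal property of the smash product as the coproduct in $E_\infty$-rings; these produce the same map, so the difference is purely one of packaging. One small remark: you are a bit more explicit than the paper in checking the two commutativity claims (compatibility with the complex orientation and the square over $\MTString$), which the paper treats as obvious from the construction; for the square, you should cite that \cref{thm:sanath} identifies the unit-smash inclusion $\MTString\hookrightarrow\MTString\wedge\MU$ with the $E_\infty$-map $\MTString\to\MTString^h$ from \cref{multiplicativity}, since that is precisely the content your "by construction" appeal rests on.
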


%

\begin{proof}[Proof of \cref{not_connective_or}]
Throughout this proof we invert $n$.

We will repeatedly use the fact that if $A$, $B$, $C$, and $D$ are $E_\infty$-ring spectra, and $f\colon A\to C$ and $g\colon B\to D$ are $E_\infty$-ring
homomorphisms, then $f\wedge g\colon A\wedge B\to C\wedge D$ has a canonical $E_\infty$-ring homomorphism structure.

Specifically, use this fact to daisy-chain together the following $E_\infty$-ring maps:
\begin{enumerate}
	\item \label{stringh_MU} the $E_\infty$ equivalence $\MTString^h\simeq \MTString\wedge \MU $ we established in
	\cref{thm:sanath},
	\item\label{AHR_item} the $\sigma$-orientation $\sigma\colon\MTString\to\tmf$ constructed by
	Ando-Hopkins-Rezk~\cite[Theorem 12.3]{AHR10},
	\item\label{absm_item} the complex orientation $M(n)\colon \MU \to\tmf_1(n)\to \Tmf_1(n)$ due to Senger~\cite[Theorem 1.7]{Sen23},\footnote{Absmeier~\cite[Theorem 1]{Abs21} uses different methods to construct $E_\infty$-orientations $\MU[\zeta_n, 1/n]\to\Tmf_1(n)$, where $\zeta_n$ is a primitive $n^{\mathrm{th}}$ root of unity; we use Senger's orientation to avoid $\zeta_n$.}
	\item\label{unit_item} the unit map $A(n)\colon \tmf\to\Tmf_1(n)$ of the $E_\infty$-$\tmf$-algebra structure on
	$\Tmf_1(n)$ obtained by Hill-Lawson~\cite[Theorem 6.1]{HL16}.
\end{enumerate}
Thus, the following composition is a homomorphism of $E_\infty$-ring
spectra.
\begin{equation}
	\MTString^h \underset{\eqref{stringh_MU}}{\overset{\simeq}{\longrightarrow}} \MTString\wedge \MU 
	\underset{\text{(\ref{AHR_item}, \ref{absm_item})}}{\overset{\sigma\wedge M(n)}{\longrightarrow}} \tmf\wedge
	\Tmf_1(n) \underset{\eqref{unit_item}}{\overset{A(n)\wedge\id}{\longrightarrow}} \Tmf_1(n)\wedge\Tmf_1(n)
	\overset{\mu}{\longrightarrow} \Tmf_1(n),
\end{equation}
where the final map is multiplication.
\end{proof}

\begin{prop}
\label{connective_lifting}
Let $R$ be a connective $E_\infty$-ring spectrum with isomorphisms $\psi\colon \pi_0(R)\overset\cong\to\Z$ and $\pi_1(R) = 0$. Given a morphism $f\colon R\to \tau_{\ge 0}\Tmf_1(n)$ of $E_\infty$-ring spectra, there is a canonical lift of $f$ to a map $\widetilde f\colon R\to\tmf_1(n)$.
\end{prop}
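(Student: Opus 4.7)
The plan is to exhibit $\tmf_1(n)$ as the canonical $\pi_1$-killing modification of $\tau_{\geq 0}\Tmf_1(n)$ provided by Meier's construction~\cite{Mei23}. Let $A\coloneqq \pi_1(\Tmf_1(n))$; then the natural $E_\infty$-ring map $p\colon \tmf_1(n)\to\tau_{\geq 0}\Tmf_1(n)$ is an isomorphism on $\pi_k$ for all $k\ne 1$ and is zero on $\pi_1$, so its fiber as a map of spectra is the Eilenberg-MacLane spectrum $HA$.

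First I would construct $\widetilde f$ at the level of underlying spectra. Given the fiber sequence $HA\to\tmf_1(n)\xrightarrow{p}\tau_{\geq 0}\Tmf_1(n)$, the obstruction to lifting $f$ lies in $[R,\Sigma HA]\cong H^1(R;A)$. The hypotheses that $R$ is connective with $\pi_0(R)\cong \Z$ and $\pi_1(R)=0$ force the Postnikov truncation $R\to\tau_{\leq 1}R\simeq H\Z$ to be $2$-connected, hence an isomorphism on $H^1(\bl;A)$. Since $F_{H\Z}(H\Z,HA)\simeq HA$ as spectra, we get $H^1(H\Z;A) = \pi_{-1}(HA) = 0$, so a spectrum-level lift exists; moreover, lifts form a torsor over $H^0(R;A)\cong A$.

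Second I would upgrade this to an $E_\infty$-ring map. The cleanest route is to invoke the universal property implicit in Meier's construction: $p$ should exhibit $\tmf_1(n)$ as the universal connective $E_\infty$-ring with $\pi_1=0$ mapping to $\tau_{\geq 0}\Tmf_1(n)$, so that the induced map on mapping spaces
\begin{equation*}
    \mathrm{Map}_{E_\infty}(R,\tmf_1(n)) \longrightarrow \mathrm{Map}_{E_\infty}(R,\tau_{\geq 0}\Tmf_1(n))
\end{equation*}
is an equivalence for every such $R$. Alternatively, one can apply Goerss-Hopkins / Basterra-Mandell obstruction theory: since $p$ is essentially a square-zero extension by the $\tmf_1(n)$-module $HA$, $E_\infty$-ring lifts of $f$ are controlled by topological Andr\'e-Quillen cohomology $\mathrm{TAQ}^*(R;HA)$, which vanishes in the relevant degrees because the hypotheses on $\pi_0(R)$ and $\pi_1(R)$ reduce the relevant obstruction groups to $\Ext^*_\Z(\Z,A) = 0$. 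Either route simultaneously pins down the lift, selecting the unique point of the $A$-torsor of spectrum-level lifts that is compatible with units and multiplications.

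The main obstacle is this second step. Identifying $p$ as a square-zero extension in the $E_\infty$-sense and verifying vanishing of the $\mathrm{TAQ}$ obstructions is delicate, so I would prefer to extract the universal property directly from~\cite{Mei23, lawson2015shimura}: Lawson's construction is built precisely to take a connective $E_\infty$-ring and produce a new one with $\pi_1$ killed in a functorial way, and this functoriality is what provides both existence and canonicity of $\widetilde f$.
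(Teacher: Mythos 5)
Your proposal identifies the right shape of the problem but stops short of supplying the ingredient that actually makes the $E_\infty$-lift possible; the paper's argument hinges on Lurie's \'etale lifting theorem, which is entirely absent from your proposal.

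Your first step (spectrum-level lift) reaches the right conclusion, but be careful: $F_{H\Z}(H\Z, HA)\simeq HA$ is the function spectrum of $H\Z$-\emph{module} maps, whereas the obstruction $[R,\Sigma HA]$ is computed in plain spectra. These are different computations; it happens that $[H\Z,\Sigma HA]=0$ (e.g.\ by the universal coefficient sequence, since $H_0(H\Z;\Z)\cong\Z$ is free and $H_1(H\Z;\Z)=0$), so the conclusion survives, but the justification as written is not quite right.

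The serious gap is the second step. You propose either (a) invoking a universal property of Meier's construction that you do not prove, or (b) a TAQ/Goerss--Hopkins obstruction argument that you explicitly flag as ``delicate'' and do not carry out. Route (a) is essentially circular: asserting that $\mathrm{Map}_{E_\infty}(R,\tmf_1(n))\to\mathrm{Map}_{E_\infty}(R,\tau_{\ge 0}\Tmf_1(n))$ is an equivalence for all connective $R$ with $\pi_1(R)=0$ is a strengthening of the proposition, not a citable fact, and Meier's paper does not state it. Route (b) as written is also not correct: the relevant obstruction groups are not $\Ext^*_\Z(\Z,A)$ (which is not even zero in degree $0$), and one would need to identify $p$ as a square-zero extension and compute the actual topological Andr\'e--Quillen cohomology, which is nontrivial. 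The paper instead takes a different, concrete route: it uses Meier's pullback square of $E_\infty$-ring spectra
\begin{equation*}
\begin{tikzcd}
	{\tmf_1(n)} \arrow[r]\arrow[d] & {H\pi_0(\Tmf_1(n))} \arrow[d,"\varphi"]\\
	{\tau_{\ge 0}\Tmf_1(n)} \arrow[r,"\tau_{\le 1}"] & {\tau_{0:1}\Tmf_1(n)}
\end{tikzcd}
\end{equation*}
and reduces to producing a compatible pair of $E_\infty$-maps out of $R$. The map to the lower-left corner is $f$, the map to the upper-right corner is built from the unit $\pi_0(R)\cong\Z\to\Z[1/n]$, and the compatibility of the two compositions into $\tau_{0:1}\Tmf_1(n)$ is the heart of the argument: since $R$ is connective with $\pi_1(R)=0$, both compositions factor through $\tau_{\le 1}R\simeq H\Z$, and then Lurie's theorem (that an \'etale map on $\pi_0$ lifts uniquely to an $E_\infty$-map with contractible space of automorphisms) applied to $\Z\to\Z[1/n]$ pins down the identification up to contractible choice. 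Without this \'etale lifting step, there is no reason for the two maps into $\tau_{0:1}\Tmf_1(n)$ to agree, and no canonicity for the resulting lift.
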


\begin{proof}
%
Meier~\cite[Proposition 2.9, Lemmas 2.10 and 2.11]{Mei23} constructs a pullback square of $E_\infty$-ring spectra
\begin{equation}
\label{meier_pullback}
\begin{tikzcd}
	{\tmf_1(n)} & {H\pi_0(\Tmf_1(n))} \\
	{\tau_{\ge 0}\Tmf_1(n)} & {\tau_{0:1}\Tmf_1(n),}
	\arrow["{\tau_{\le 1}}", from=2-1, to=2-2]
	\arrow["\varphi", from=1-2, to=2-2]
	\arrow[from=1-1, to=2-1]
	\arrow[from=1-1, to=1-2]
	\arrow["\lrcorner"{anchor=center, pos=0.125}, draw=none, from=1-1, to=2-2]
\end{tikzcd}\end{equation}
and shows $\varphi$ is the unique $E_\infty$-ring map $H\pi_0(\Tmf_1(n))\to\tau_{0:1}\Tmf_1(n)$ inducing a ring isomorphism on $\pi_0$. Therefore it suffices to produce $E_\infty$-ring maps $a\colon R\to\tau_{\ge 0}\Tmf_1(n)$ and $b\colon R\to H\pi_0(\Tmf_1(n))$ and an identification of their compositions with the maps $\tau_{\le 1}$, resp.\ $\varphi$ in~\eqref{meier_pullback}:
\begin{equation}
\begin{tikzcd}
	\textcolor{BrickRed}{R} & {H\pi_0(\Tmf_1(n))} \\
	{\tau_{\ge 0}\Tmf_1(n)} & {\tau_{0:1}\Tmf_1(n).}
	\arrow["{\tau_{\le 1}}", from=2-1, to=2-2]
	\arrow["\varphi", from=1-2, to=2-2]
	\arrow["a"', color={BrickRed}, from=1-1, to=2-1]
	\arrow["b", color={BrickRed}, from=1-1, to=1-2]
\end{tikzcd}
\end{equation}
Choose $a = f$ and let $b$ be the composition
\begin{equation}
    R\overset{\tau_{\ge 0}}{\longrightarrow} H\pi_0(R)\overset{\psi}{\longrightarrow} H\Z \overset{\boldsymbol 1}{\longrightarrow} H\pi_0\Tmf_1(n),
\end{equation}
where $\boldsymbol 1$ is the unit. To provide an identification $\tau_{\le 1}\circ a \simeq \varphi\circ b$, first use that the target is $1$-truncated, so that both compositions canonically factor through $\tau_{\le 1} R$. Since $R$ is connective and $\pi_1(R) = 0$, the $0$-truncation map $\tau_{\le 1}R\to\tau_{\le 0}R \simeq H\pi_0(R)$ is an equivalence of $E_\infty$-ring spectra. Thus we without loss of generality replace $R$ with $H\pi_0(R)$.

For both $\tau_{\le 1}\circ a$ and $\varphi\circ b$, the induced map on $\pi_0$ is the localization $\Z\to\Z[1/n]$, using the specified isomorphism $\psi\colon \pi_0(R)\overset\cong\to \Z$ and Meier's identification~\cite[Lemma 2.11]{Mei23} $\pi_0(\Tmf_1(n))\cong\Z[1/n]$. As the ring homomorphism $\Z\to\Z[1/n]$ is étale,\footnote{Meier~\cite{Mei23} works in a more general setting where a $E_\infty$-ring spectrum $R$ has $\pi_0 R$ an étale extension of a localization of $\Z$. Thus he imposes étaleness as a hypothesis, while in our setting the map is always étale.} a theorem of Lurie~\cite[Theorem 7.5.0.6]{HA} shows that this map on $\pi_0$ lifts uniquely to an $E_\infty$-ring map $H\pi_0(R)\to\tau_{0:1}\Tmf_1(n)$ with a contractible space of automorphisms. Thus $\tau_{\le 1}\circ a$ and $\varphi\circ b$ are canonically equivalent up to contractible data and we may conclude.
\end{proof}
Now proving \cref{all_n_stringh_or} amounts to showing $\MTString^h$ satisfies the hypotheses of \cref{connective_lifting}.
\begin{proof}[Proof of \cref{all_n_stringh_or}]
Thom spectra of rank-zero virtual vector bundles, such as $\MTString^h$, are connective. Connectivity provides a canonical lift of the $E_\infty$-ring map $\MTString^h\to\Tmf_1(n)$ constructed in \cref{not_connective_or} to an $E_\infty$-ring map $\MTString^h\to\tau_{\ge0}\Tmf_1(n)$. Therefore to lift to $\tmf_1(n)$, it suffices to show $\Omega_0^{\String^h}\cong\Z$ and $\Omega_1^{\String^h}\cong 0$, then invoke \cref{connective_lifting}.

If $M$ is a spin$^c$ manifold of dimension $3$ or below, $\lambda^c(M) = 0$, because it is an element of $H^4(M;\Z)\cong 0$. Therefore $M$ admits a canonical string$^h$ structure: lift $\lambda^c(M)$ to $0\in\ku^4(M)$. This implies that for $k\le 2$, $\Omega_k^{\String^h}\to\Omega_k^{\Spin^c}$ is an isomorphism, and $\Omega_0^{\Spin^c}\cong\Z$ and $\Omega_1^{\Spin^c} \cong 0$.
\end{proof}
\subsubsection{Real-equivariance}
We briefly discuss a Real-equivariant generalization of \cref{all_n_stringh_or}, and as with everything, we begin with the \spinc story.

Complex conjugation defines a $\Z/2$-action on complex $K$-theory; the resulting $\Z/2$-equivariant spectrum is called \term{Real(-equivariant) $K$-theory} and denoted $\KR$~\cite{Ati66}. 
The underlying spectrum of $\KR$ is $\KU$, and the $\Z/2$-(homotopy) fixed point spectrum is $\KO$. $\KR$ is \term{cofree} (see, e.g., \cite{HZ20}), meaning that its structure as a $\Z/2$-$E_\infty$-ring spectrum is induced from a $\Z/2$-action on the spectrum $\KU$ by $E_\infty$-ring maps.

Landweber~\cite{Lan67, Lan68}, Fujii~\cite{Fuj75}, Araki~\cite{Ara79, Ara79a}, and Araki-Murayama~\cite{AM78} constructed a $\Z/2$-equivariant-ring spectrum $\MR$ whose underlying spectrum is $\MU$ with $\Z/2$-action by complex conjugation. $\MR$ also has an $E_\infty$-structure: see Hill-Hopkins-Ravenel~\cite[\S B.12]{HHR}.
\begin{defn}[\cite{AM78, Ara79}]
A \term{Real-orientation} of a $\Z/2$-ring spectrum $E$ is a homomorphism of $\Z/2$-ring spectra $\MR\to E$.
\end{defn}
Araki-Murayama~\cite[\S 7]{AM78} proved $\KR$ is Real-oriented; the Real-orientation may be chosen to be a $\Z/2$-$E_\infty$-ring map $\mathit{cf}_\R$.

Nonequivariantly, the complex orientation $\MU\to\KU$ constructed by Conner-Floyd~\cite[\S 5]{conner:1966} factors through $E_\infty$-ring maps $u\colon \MU\to\MSpin^c$ and $\widehat A\colon \MSpin^c\to\KU$; the former can be constructed similar to the methods we used in \cref{multiplicativity} and the latter is due to Joachim~\cite{Joa04}. Halladay-Kamel~\cite{HK24} recently generalized this to the Real-equivariant setting.
\begin{thm}[Halladay-Kamel~\cite{HK24}]
\label{hkthm}
There is a $\Z/2$-$E_\infty$-ring spectrum $\MSpin_\R^c$ and $\Z/2$-$E_\infty$-ring maps $u_\R\colon\MR\to\MSpin_\R^c$ and $\widehat A_\R\colon \MSpin_\R^c\to\KR$ such that
\begin{enumerate}
    \item the underlying spectrum of $\MSpin_\R^c$ is $\MSpin^c$,
    \item forgetting to underlying spectra, $\widehat A_\R$, resp.\ $u_\R$ restrict to $\widehat A$, resp.\ $u$, and
    \item the Real orientations $\widehat A_\R\circ u_\R$ and $\mathit{cf}_\R$ are equivalent.
\end{enumerate}
\end{thm}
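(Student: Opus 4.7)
My plan is to build each of $\MSpin^c_\R$, $u_\R$, and $\widehat{A}_\R$ by upgrading each ingredient of the nonequivariant factorization $\MU \xrightarrow{u} \MSpin^c \xrightarrow{\widehat A} \KU$ to the Real-equivariant setting, exploiting the fact that every geometric input (complex bundles, Clifford algebras) carries a natural $\Z/2$-action by complex conjugation.

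First I would construct $\MSpin^c_\R$ as a $\Z/2$-equivariant Thom spectrum. Using the canonical splitting $B\Spin^c \simeq B\Spin \times B\U_1$, endow $B\Spin^c$ with the $\Z/2$-action which is trivial on $B\Spin$ and complex conjugation on $B\U_1$; equivalently, view $B\Spin^c_\R$ as classifying pairs consisting of an oriented real bundle and a Real complex line bundle whose first Chern class reduces to $w_2$. The direct-sum map $B\Spin^c_\R \times B\Spin^c_\R \to B\Spin^c_\R$ is $\Z/2$-equivariant and $E_\infty$, and Thomifying via the Hill-Hopkins-Ravenel equivariant Thom spectrum functor (as used in the construction of $\MR$ itself, see~\cite[\S B.12]{HHR}) yields a $\Z/2$-$E_\infty$-ring spectrum whose underlying spectrum is $\MSpin^c$. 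Call this $\MSpin^c_\R$.

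Next I would construct $u_\R$ by observing that the map $B\U \to B\Spin^c$ sending a complex vector bundle to its canonical \spinc structure (as in \cref{multiplicativity}, part~\eqref{stringh_spinc_alg}, applied to the complex$\Rightarrow$\stringh$\Rightarrow$\spinc chain) is $\Z/2$-equivariant for complex conjugation on both sides, and compatible with direct sums. Applying the equivariant Thom spectrum functor gives $u_\R\colon \MR \to \MSpin^c_\R$ as a $\Z/2$-$E_\infty$-ring map whose underlying map is $u$. For $\widehat{A}_\R$, I would adapt Joachim's construction~\cite{Joa04} to the Real-equivariant setting: Joachim realizes $\widehat A$ via a natural transformation out of a functor built from complex Clifford algebras, and a complex vector space equipped with a real structure produces a $\Z/2$-equivariant Clifford algebra whose module categories assemble into a Real $K$-theory spectrum. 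Carrying the entire proof through $\Z/2$-equivariantly produces a $\Z/2$-$E_\infty$-ring map $\widehat{A}_\R\colon \MSpin^c_\R \to \KR$ restricting to $\widehat{A}$ underlying.

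Finally, the composition $\widehat{A}_\R \circ u_\R$ is a $\Z/2$-$E_\infty$-ring map $\MR \to \KR$, hence a Real orientation. To identify it with $\mathit{cf}_\R$, I would invoke a rigidity statement: Real orientations of $\KR$ as $\Z/2$-$E_\infty$-ring maps form a space which, as in the nonequivariant case, is determined up to contractible ambiguity by the underlying complex orientation together with the compatible datum of a Real structure; since both $\mathit{cf}_\R$ and $\widehat{A}_\R \circ u_\R$ recover Conner-Floyd's orientation underlying and are honest $\Z/2$-equivariant maps by construction, they must be equivalent. I expect the main obstacle to be Step~3: upgrading Joachim's Clifford-algebraic proof to produce an honest $\Z/2$-$E_\infty$-map (rather than just an equivariant map of spectra), since this requires that the operadic structure on Joachim's $C^*$-algebraic model of $\KU$ be strictly compatible with the complex-conjugation action at every level of coherence. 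The other steps are relatively formal once the equivariant Thom-spectrum machinery is in place.
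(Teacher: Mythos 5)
This statement is cited, not proved, in the paper: it appears with attribution to Halladay--Kamel~\cite{HK24}, and the surrounding text simply uses it as a black box. So there is no ``paper's own proof'' to compare against, and what you have written is a reconstruction of the argument that must actually appear in~\cite{HK24}. Your outline is in the right spirit and matches the broad shape of what Halladay--Kamel do: build a geometric $\Z/2$-equivariant classifying space, Thomify using the equivariant Thom-spectrum machinery of Hill--Hopkins--Ravenel, obtain $u_\R$ by Thomifying an equivariant map of $E_\infty$-spaces, and lift Joachim's operator-algebraic model of $\widehat A$ to the Real setting. You correctly flag the Clifford-algebraic upgrade as the technical heart of the argument.

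The genuine gap is the final rigidity claim. You assert that $\Z/2$-$E_\infty$ Real orientations of $\KR$ are determined up to contractible ambiguity by the underlying complex orientation plus ``the compatible datum of a Real structure,'' and then conclude $\widehat A_\R\circ u_\R\simeq \mathit{cf}_\R$ for free. But this is not a known (or even plausibly formulated) uniqueness principle: already nonequivariantly, the space of $E_\infty$-ring maps $\MU\to\KU$ is not controlled merely by the underlying complex orientation, and Joachim's orientation and the Conner--Floyd orientation have to be compared by actual work (Landweber exactness / explicit homotopies), not by a uniqueness theorem. Equivariantly the obstruction theory is harder, not easier. What the theorem in~\cite{HK24} really requires is a direct identification of $\widehat A_\R\circ u_\R$ with $\mathit{cf}_\R$ at the level of the actual constructions --- e.g.\ by chasing the Real Thom class through Joachim's model and checking agreement with Araki--Murayama's Thom class --- and you would need to supply that comparison rather than appeal to rigidity. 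Absent that, your sketch establishes the existence of \emph{some} Real orientation factoring through $\MSpin^c_\R$, but not that it is the one claimed in item~(3) of the theorem.
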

That is, Halladay-Kamel answer the question, ``what is to $\KR$ as $\MSpin^c$ is to $\KU$?''

We prove an analogue of \cref{hkthm} for topological forms with level structure in \cref{real_lift}. However, the version of the theorem stated there is slightly weaker than the naïve generalization of \cref{all_n_stringh_or}: the orientation lands in $\Tmf_1(n)_\R$, rather than $\tmf_1(n)_\R$, as aspects of the lifting argument from $\tau_{\ge 0}\Tmf_1(n)$ to $\tmf_1(n)$ are tricky to make equivariant. Moreover, we did not construct an $E_\infty$ map, only a map of $\Z/2$-ring spectra. Ultimately this is because there is not yet a construction of a $\Z/2$-$E_\infty$-ring map $\MR\to\tmf_1(n)_\R$ (see~\cite{Mei23, Sen23}). We predict that such an $E_\infty$ refinement exists.\footnote{Quinn-Zhu~\cite[Corollary 7.2.3(ii)]{QZ25} show this orientation admits an $E_\rho$ refinement, where $\rho$ is the regular representation of $\Z/2$.}

The following theorem is a combination of work of Hill-Meier~\cite{HM17} and Meier~\cite{Mei23}.
\begin{thm}\label{the_real_TMF_ors}
For all $n\ge 2$, there are $\Z/2$-$E_\infty$-ring spectra $\tmf_1(n)_\R$, $\Tmf_1(n)_\R$, and $\TMF_1(n)_\R$ whose underlying spectra are $\tmf_1(n)$, $\Tmf_1(n)$, and $\TMF_1(n)$ respectively. For $n = 3$, their $\Z/2$-fixed point spectra are $\tmf_0(3)$, $\Tmf_0(3)$, and $\TMF_0(3)$ respectively. For all $n\ge 2$, the $E_\infty$-ring spectrum maps
\begin{subequations}
\begin{equation}
    \MU[1/n]\overset{M(n)}{\longrightarrow} \tmf_1(n) \overset{}{\longrightarrow} \Tmf_1(n) \longrightarrow \TMF_1(n)
\end{equation}
lift to $\Z/2$-ring maps,
\begin{equation}
    \MR[1/n]\overset {M_\R(n)}{\longrightarrow} \tmf_1(n)_\R \overset{}{\longrightarrow} \Tmf_1(n)_\R \longrightarrow \TMF_1(n)_\R,
\end{equation}
\end{subequations}
the last two of which are $E_\infty$.
\end{thm}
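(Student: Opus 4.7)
The plan is to bootstrap from the constructions in Hill-Meier~\cite{HM17} and Meier~\cite{Mei23} of $\TMF_1(n)$, $\Tmf_1(n)$, and $\tmf_1(n)$ as (suitably derived) global sections of the sheaf $\mathcal O^{\mathit{top}}$ on moduli stacks with level structure. First I would recall that multiplication by $-1$ on an elliptic curve defines an involution $(E,P) \mapsto (E,-P)$ on $\mathcal M_1(n)$ and on its Deligne-Mumford compactification, whose quotient recovers the relevant $\Gamma_0(n)$-moduli stack. Functoriality of $\mathcal O^{\mathit{top}}$ under automorphisms of the base transports this to $\Z/2$-actions on global sections, giving $\Z/2$-$E_\infty$-ring structures on $\TMF_1(n)$ and $\Tmf_1(n)$ whose underlying spectra are as claimed. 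The identification of the $\Z/2$-fixed-point spectra with $\TMF_0(n)$ and $\Tmf_0(n)$ is then a descent statement, realized by Hill-Meier as a Galois extension.

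For the connective variant I would make Meier's defining pullback square~\eqref{meier_pullback} equivariant. The involution acts trivially on $\pi_0(\Tmf_1(n)) \cong \Z[1/n]$, so the corners $H\pi_0(\Tmf_1(n))$ and $\tau_{0:1}\Tmf_1(n)$ acquire canonical $\Z/2$-structures, and the rigidity theorem of Lurie used in the proof of \cref{connective_lifting} forces the comparison map $\varphi$ to be equivariant. Forming the pullback in $\Z/2$-$E_\infty$-ring spectra defines $\tmf_1(n)_\R$; applying $\Z/2$-fixed points commutes with this finite limit and produces the pullback square for $\tmf_0(n)$, matching underlying spectra and fixed points.

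To lift the orientations, I would start with Senger's $E_\infty$-orientation $M(n)\colon \MU[1/n] \to \tmf_1(n)$ and promote it to a $\Z/2$-ring map $M_\R(n)\colon \MR[1/n] \to \tmf_1(n)_\R$. Since $\MR$ is cofree, specifying such a lift amounts to checking that the underlying $M(n)$ intertwines the complex-conjugation actions, which follows from the fact that the formal group law classifying $M(n)$ is defined over $\pi_0 \tmf_1(n)$ and the conjugation action on $\Tmf_1(n)_\R$ is compatible with the elliptic-curve automorphism $[-1]$ that produces it. Equivariant $E_\infty$-structure for the maps $\tmf_1(n)_\R \to \Tmf_1(n)_\R \to \TMF_1(n)_\R$ then comes for free from the sheaf-theoretic construction together with the universal property of the equivariant pullback.

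The principal obstacle is promoting $M_\R(n)$ to a $\Z/2$-$E_\infty$-ring map rather than merely a $\Z/2$-ring map; the paper flags this as an open refinement, and it is why the theorem statement is asymmetric (only the last two maps are $E_\infty$). The surrounding structural maps are $E_\infty$-equivariant essentially by construction, so the bulk of the work concentrates on producing the $\Z/2$-equivariant sheaf-level objects, ensuring Meier's connective-cover pullback lifts equivariantly, and checking that Senger's orientation is compatible with conjugation at the level of underlying spectra.
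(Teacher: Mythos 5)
The paper does not actually prove this theorem: it is stated as a summary of results due to Hill--Meier~\cite{HM17} and Meier~\cite{Mei23}, with no proof environment (the $[-1]$-action on $\overline{\mathcal M}_1(n)$, cofreeness of $\Tmf_1(3)_\R$, the identification of fixed points with the $\Gamma_0(n)$-variants, and the Real orientation $M_\R(n)$ are all imported from those references). Your sketch is a reasonable reconstruction of the strategy used in those sources, but it contains a genuine error that would break the argument if carried out.

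You assert that ``Since $\MR$ is cofree, specifying such a lift amounts to checking that the underlying $M(n)$ intertwines the complex-conjugation actions.'' This is false: $\MR$ is a genuine $\Z/2$-equivariant spectrum which is \emph{not} cofree, and the paper explicitly flags this --- it remarks that $\MR^{\Z/2}$ is complicated (citing Hu--Kriz and Greenlees--Meier), which would not be the case if $\MR$ were Borel-complete. Moreover, the cofreeness device that the paper uses elsewhere (via Blumberg--Hill) reduces a genuine $\Z/2$-map to a Borel-equivariant map of underlying spectra when the \emph{target} is cofree, not the source; so even rewritten as ``since $\tmf_1(n)_\R$ is cofree'' the argument would not be available, since cofreeness is only asserted for $\Tmf_1(3)_\R$ (and $\KR$), not for the connective model $\tmf_1(n)_\R$. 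The existence of $M_\R(n)\colon \MR[1/n]\to\tmf_1(n)_\R$ as a $\Z/2$-ring map is not a formality deducible from conjugation-compatibility of the formal group law; it is a substantive result of Meier (Theorem~3.6 of~\cite{Mei22}, the reference the paper invokes when it actually uses $M_\R(n)$ in the proof of \cref{real_lift}). The remaining parts of your sketch --- the $[-1]$-action on the compactified moduli stack, functoriality of $\mathcal O^{\mathit{top}}$, the Galois-descent identification of fixed points, and lifting Meier's pullback square equivariantly --- are correct in outline and match the cited literature.
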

\begin{defn}
Let $\MString_\R^h\coloneqq \MR\wedge\MString$, where $\MString$ is given the cofree $\Z/2$-$E_\infty$-ring structure arising from the trivial action.
\end{defn}
Thus $\MString_\R^h$ is a $\Z/2$-$E_\infty$-ring spectrum whose underlying spectrum is $\MString^h$.
\begin{thm}\label{real_lift}
For all $n\ge 2$, there is a map of $\Z/2$-ring spectra
\begin{equation}
    \sigma_1(n)_\R\colon \MString_\R^h[1/n]\longrightarrow \Tmf_1(n)_\R
\end{equation}
which on underlying spectra is $\sigma_1(n)$ composed with the usual map $\tmf_1(n)\to\Tmf_1(n)$, and such that the Real orientation $M_\R(n)\colon \MR[1/n]\to\Tmf_1(n)$ factors as a Real orientation $v_\R\colon \MR\to\MString_\R^h$ followed by the usual orientation $\MString^h\to\Tmf_1(n)$. On underlying spectra, $v_\R$ is $v$.
\end{thm}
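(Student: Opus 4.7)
The strategy is an equivariant analogue of the proof of \cref{not_connective_or}, taking advantage of the definitional equivalence $\MString_\R^h = \MR\wedge\MString$ where $\MString$ carries the trivial $\Z/2$-action. I will construct two $\Z/2$-ring maps into $\Tmf_1(n)_\R$ and combine them via the ring multiplication.

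The first ingredient is the $\Z/2$-ring map $M_\R(n)\colon \MR[1/n]\to \Tmf_1(n)_\R$ of \cref{the_real_TMF_ors}. For the second, I want a $\Z/2$-ring map $A_\R(n)\colon\MString\to\Tmf_1(n)_\R$ where $\MString$ has trivial $\Z/2$-action. Nonequivariantly, the relevant composite is
\begin{equation}
    \MString \xrightarrow{\sigma} \tmf \longrightarrow \Tmf_0(n)[1/n],
\end{equation}
the Ando-Hopkins-Rezk orientation followed by the unit of the $\tmf$-algebra structure on $\Tmf_0(n)$ coming from the stack map $\mathcal M_0(n)\to\overline{\mathcal M}_{\mathit{ell}}$ (the $\Z/2$-fixed analogue of item~\eqref{unit_item} in the proof of \cref{not_connective_or}). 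Since $\Tmf_1(n)_\R$ is a cofree $\Z/2$-spectrum with $\Z/2$-fixed points $\Tmf_0(n)$, $\Z/2$-ring maps from a trivially-acted source into $\Tmf_1(n)_\R$ are equivalent to ring maps into the fixed points, so this composite promotes canonically to the desired $A_\R(n)$.

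With both ingredients in hand, define
\begin{equation}
    \sigma_1(n)_\R \colon \MR[1/n]\wedge\MString \xrightarrow{M_\R(n)\wedge A_\R(n)} \Tmf_1(n)_\R\wedge\Tmf_1(n)_\R \xrightarrow{\mu} \Tmf_1(n)_\R,
\end{equation}
which is a $\Z/2$-ring map because smashing ring maps and composing with multiplication preserve the ring structure. The Real orientation $v_\R\colon\MR\to\MString_\R^h$ is the unit of the $\MR$-algebra structure, namely $\id_\MR \wedge (\Sph\to\MString)$; its underlying spectrum map is $v$ from \cref{multiplicativity}, \cref{mualg}. The factorization $M_\R(n)\simeq \sigma_1(n)_\R\circ v_\R$ is immediate from unitality of $A_\R(n)$, and to see that the underlying map of $\sigma_1(n)_\R$ is $\sigma_1(n)$ followed by $\tmf_1(n)\to\Tmf_1(n)$ one compares directly with the nonequivariant construction in the proof of \cref{not_connective_or} together with the identification $\MTString^h\simeq \MTString\wedge \MU$ of \cref{thm:sanath}.

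The principal obstacle is that I only build a $\Z/2$-ring map rather than a $\Z/2$-$E_\infty$-map, and consequently the target must be the nonconnective $\Tmf_1(n)_\R$ rather than $\tmf_1(n)_\R$. An equivariant refinement of \cref{connective_lifting} would need an equivariant analogue of Meier's pullback square~\eqref{meier_pullback} together with an equivariant form of Lurie's étale lifting at the level of $\pi_0$-Mackey functors, which is more involved than I wish to undertake here. Moreover an $E_\infty$-refinement of the present construction cannot presently be done, since a $\Z/2$-$E_\infty$-ring orientation $\MR\to\tmf_1(n)_\R$ is not yet available in the literature (see the discussion preceding \cref{the_real_TMF_ors}).
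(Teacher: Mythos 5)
Your proposal is correct and the overall skeleton — smash together a Real orientation from $\MR$ and a $\Z/2$-promotion of the Ando-Hopkins-Rezk map from $\MString$, then compose with multiplication — is the same as the paper's. The one place you diverge is in how you make the unit map $\MString\to\Tmf_1(n)_\R$ equivariant: you route the nonequivariant composite through the fixed points $\Tmf_0(n)\simeq(\Tmf_1(n)_\R)^{\Z/2}$ and invoke the cofree adjunction (maps from a trivially-acted source into a cofree target are maps into the fixed points). The paper instead constructs a $\Z/2$-equivariant map $\Tmf[1/n]\to\Tmf_1(n)$ directly, checking equivariance at the level of the moduli stacks (the $\Z/2$-action $(C,x)\mapsto(C,-x)$ on $\overline{\mathcal M}_1(n)$ is compatible with the forgetful map to $\overline{\mathcal M}[1/n]$), and then applies Blumberg-Hill to upgrade it to a map of cofree $\Z/2$-$E_\infty$-ring spectra. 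Your route is cleaner in that it replaces the moduli-theoretic equivariance check with a formal adjunction, at the cost of relying on the identification $(\Tmf_1(n)_\R)^{\Z/2}\simeq\Tmf_0(n)$ (cited in the paper from Hill-Meier/Meier) together with cofreeness of $\Tmf_1(n)_\R$; the paper's route is more explicit about where the equivariance comes from geometrically. Both yield the same underlying map, and your observation that the $v_\R$-factorization is unitality is exactly right.
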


\begin{proof}
The proof strategy is the same as for \cref{not_connective_or}. To adapt that proof, we need the following data.
\begin{enumerate}
    \item A refinement of the complex orientation $\MU\to\Tmf_1(n)$ to a Real orientation $\MR[1/n]\to\Tmf_1(n)_\R$, which is provided by Meier~\cite[Theorem 3.6]{Mei22} (here \cref{the_real_TMF_ors}).
    \item An extension of the $E_\infty$-ring map $\sigma\colon \MString\to \tmf$ to a map between the respective cofree $\Z/2$-$E_\infty$-ring spectra associated to the trivial $\Z/2$-actions on $\MString$ and $\tmf$. By~\cite[\S 6.2.2]{BH15}, it suffices to show that $\sigma$ is equivariant for the trivial $\Z/2$-actions on its domain and codomain, which is trivially true.
    \item Lastly we need to refine the $E_\infty$-ring map $\tmf[1/n]\to\Tmf_1(n)$ to a $\Z/2$-$E_\infty$-ring map $\tmf[1/n]\to \Tmf_1(n)_\R$, where $\tmf[1/n]$ is cofree, indued from the trivial $\Z/2$-action. Without loss of generality we may replace $\tmf[1/n]$ by $\Tmf[1/n]$, then precompose with the map $\tmf[1/n]\to\Tmf[1/n]$ (which refines to $\Z/2$-spectra in the same way as in the previous bullet point). Again by~\cite[\S 6.2.2]{BH15}, it suffices to show that $\Tmf[1/n]\to\Tmf_1(n)$ is $\Z/2$-equivariant for the trivial $\Z/2$-action on $\Tmf[1/n]$ and the $\Z/2$-action on $\Tmf_1(n)$ defined in~\cite[\S 4.1]{HM17}.

    For this, we return to the moduli of elliptic curves. The $\Z/2$-action on $\Tmf_1(n)$ is the map induced on global sections of $\mathcal O^{\mathit{top}}$ from a $\Z/2$-action on (the log-étale site of) $\overline{\mathcal M}_1(n)[1/n]$, the Deligne-Mumford compactification of the modulo stack of elliptic curves $C$ with a chosen point $x$ of order $n$ (see \cref{1n_log_defn}). This $\Z/2$-action sends $(C, x)\mapsto (C, -x)$; therefore the map $\overline{\mathcal M}_1(n)[1/n]\to \overline{\mathcal M}[1/n]$ forgetting $x$ is $\Z/2$-equivariant with respect to the trivial action on $\overline{\mathcal M}[1/n]$. Taking sections of $\mathcal O^{\mathit{top}}$, we obtain the usual map $\Tmf[1/n]\to\Tmf_1(n)$, together with the fact that it is $\Z/2$-equivariant.
\end{enumerate}
With these lifts in place, the construction of the map $\MString_\R^h\to \Tmf_1(n)_\R$ proceeds just as before. At the time of writing, the Real orientation of $\Tmf_1(n)$ has not been refined to a $\Z/2$-$E_\infty$-map (see~\cite[Question 1.10]{Sen23}), so this construction is just a $\Z/2$-ring spectrum map.
\end{proof}
%
%

We would like to compare \cref{real_lift} with Halladay-Kamel's Real-equivariant lift of the Atiyah-Bott-Shapiro orientation. However, the constructions of $\MString_\R^h$ and $\MSpin_\R^c$ are difficult to relate, so we leave the comparison as a conjecture.
\begin{lem}[Hill-Meier~\cite{HM17}]\label{lambda_exists}
There is a $\Z/2$-$E_\infty$-ring map $\Lambda\colon (\Tmf_1(3)_\R)_{(2)}\to\KR_{(2)}$.
\end{lem}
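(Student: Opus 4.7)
The plan is to construct $\Lambda$ by evaluating the sheaf $\mathcal O^{\mathit{top}}$ at a cusp of the compactified moduli stack $\overline{\mathcal M}_1(3)_{(2)}$. The source $(\Tmf_1(3)_\R)_{(2)}$ is, by definition, the $\Z/2$-equivariant global sections of $\mathcal O^{\mathit{top}}$ on the log-étale site of $\overline{\mathcal M}_1(3)[1/3]$ localized at $2$, with the $\Z/2$-action from~\cite{HM17} induced by the involution $(C,x)\mapsto (C,-x)$ of an elliptic curve with a chosen point of order $3$. Over the cusp, the universal elliptic curve degenerates to a nodal cubic whose formal group is $\widehat{\mathbb G}_m$, and evaluating $\mathcal O^{\mathit{top}}$ at a formal neighborhood of this cusp yields $\KU_{(2)}$ via the standard identification of the multiplicative formal group with complex $K$-theory. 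Taking $E_\infty$-ring global sections then produces an $E_\infty$-ring map $(\Tmf_1(3))_{(2)}\to\KU_{(2)}$.

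Next I would verify this map is $\Z/2$-equivariant for the Hill-Meier action. The involution $(C,x)\mapsto (C,-x)$ extends across the Tate curve over the formal neighborhood of the cusp and restricts to the inversion automorphism $[-1]\colon \widehat{\mathbb G}_m\to \widehat{\mathbb G}_m$ on the formal group. Under the identification of $2$-local sections of $\mathcal O^{\mathit{top}}$ near the multiplicative formal group with $\KU_{(2)}$, this inversion corresponds precisely to complex conjugation on $\KU_{(2)}$, which is the Real structure giving $\KR_{(2)}$. Because the Hill-Meier $\Z/2$-$E_\infty$-ring structure on $\Tmf_1(3)_\R$ and the cofree $\Z/2$-$E_\infty$-ring structure on $\KR_{(2)}$ both arise from $\Z/2$-equivariant sheaves of $E_\infty$-rings, the evaluation-at-the-cusp map is automatically a $\Z/2$-$E_\infty$-ring morphism.

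The main obstacle is matching the Hill-Meier $\Z/2$-action on the formal neighborhood of the cusp with the complex-conjugation action defining $\KR_{(2)}$ in a way that is visibly $E_\infty$-equivariant, rather than just equivariant up to homotopy. Concretely, one must show that the $\Z/2$-involution on the Tate parameter coming from negating the level-$3$ point agrees, after evaluation of $\mathcal O^{\mathit{top}}$, with the Araki-Murayama involution on $\MR$ pushed forward to $\KR$. This comparison should reduce to the identification in~\cite{HM17} that $(\Tmf_0(3))_{(2)}$ realizes the $\Z/2$-fixed points of $(\Tmf_1(3)_\R)_{(2)}$, in parallel with $\KO_{(2)}$ being the $\Z/2$-fixed points of $\KR_{(2)}$. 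Once the fixed-point identification is in place, $\Z/2$-equivariance of $\Lambda$ is a consequence of functoriality of $\mathcal O^{\mathit{top}}$, and the lemma follows.
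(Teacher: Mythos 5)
The paper's proof is short: both $\Tmf_1(3)_\R$ and $\KR$ are \emph{cofree} $\Z/2$-$E_\infty$-ring spectra, Hill--Meier (building on Hill--Lawson) produce a nonequivariant $E_\infty$-ring map $\widetilde\Lambda\colon\Tmf_1(3)_{(2)}\to\KU_{(2)}$ that commutes up to homotopy with the two $\Z/2$-actions, and Blumberg--Hill's theorem for cofree spectra then automatically upgrades such a homotopy-equivariant map to a genuine $\Z/2$-$E_\infty$-ring map. Your first two paragraphs give a correct and useful geometric account of where $\widetilde\Lambda$ comes from (evaluation at a cusp, identification of the formal group with $\widehat{\mathbb G}_m$, and the matching of $(C,x)\mapsto(C,-x)$ with $[-1]$ on the formal group, hence with conjugation on $\KU_{(2)}$), and this is exactly what underlies the Hill--Lawson/Hill--Meier result the paper cites. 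So the \emph{input} is right.

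The gap is in how you close the argument. You correctly flag as the main obstacle that the map needs to be ``visibly $E_\infty$-equivariant, rather than just equivariant up to homotopy,'' but your proposed fix---that functoriality of $\mathcal O^{\mathit{top}}$ plus the fixed-point identification yields the coherent equivariance---does not actually address this. The cusp is a point of $\overline{\mathcal M}_1(3)$ and carries no nontrivial involution, so the $\Z/2$-action on ``$\mathcal O^{\mathit{top}}$ at the cusp'' is not supplied by sheaf functoriality; moreover, identifying that action with the conjugation defining $\KR_{(2)}$, with all the $E_\infty$-coherences, is precisely the hard thing you are trying to avoid. What makes this tractable is the observation you did not use: because both $(\Tmf_1(3)_\R)_{(2)}$ and $\KR_{(2)}$ are cofree (equivalently, of the form $F(E\Z/2_+,-)$), the Blumberg--Hill result says that a map of underlying $E_\infty$-ring spectra that is merely homotopy $\Z/2$-equivariant automatically lifts to a $\Z/2$-$E_\infty$-ring map. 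That is the step that eliminates the ``visibly $E_\infty$-equivariant'' requirement, and without invoking it your argument does not go through. Mention cofreeness, cite Blumberg--Hill \textsection 6.2.2 (or Hill--Meier, Theorem 2.4), and the rest of what you wrote becomes a correct proof.
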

Hill-Meier do not explicitly state \cref{lambda_exists} in this form, but they provide all the pieces, so we show how to assemble those pieces into a proof.
\begin{proof}
As noted above, $\KR$ is cofree, and $\Tmf_1(3)_\R$ is also cofree~\cite[\S 4.1]{HM17}. Hill-Meier (\textit{ibid.}, \S 4.2), using a theorem of Hill-Lawson~\cite[Theorem 6.2]{HL16}, show that there is an $E_\infty$-map of nonequivariant spectra $\widetilde\Lambda\colon \Tmf_1(3)_{(2)}\to\KU_{(2)}$ which is equivariant for the $\Z/2$-actions on $\Tmf_1(3)_{(2)}$ and $\KU_{(2)}$. Blumberg-Hill~\cite[\S 6.2.2]{BH15} (see also~\cite[Theorem 2.4]{HM17}) show that if $\phi\colon R\to S$ is a $\Z/2$-equivariant map of $E_\infty$-ring spectra with respect to $\Z/2$-actions by $E_\infty$-ring maps on $R$ and $S$, then $\phi$ upgrades to a $\Z/2$-$E_\infty$-ring map on the cofree $\Z/2$-$E_\infty$-ring spectra built from $R$ and $S$. Applying this to $\widetilde\Lambda$, we conclude.
\end{proof}
\begin{ques}
Throughout this question, implicitly $2$-localize.

Does there exist a $\Z/2$-$E_\infty$-ring map $\MString_\R^h\to\MSpin_\R^c$ which on underlying spectra is the map $\MString^h\to\MSpin^c$ of \cref{multiplicativity} and such that the following diagram commutes? 
%
\begin{equation}
\begin{tikzcd}
	\MR & {\MString_\R^h} && {\Tmf_1(3)_\R} \\
	& {\MSpin_\R^c} && \KR
	\arrow["{v_\R}", from=1-1, to=1-2]
	\arrow["{u_\R}"', from=1-1, to=2-2]
	\arrow["{\sigma_1(n)_\R}", from=1-2, to=1-4]
	\arrow["{\exists?}", dashed, from=1-2, to=2-2]
	\arrow["\Lambda", from=1-4, to=2-4]
	\arrow["{\widehat A_\R}", from=2-2, to=2-4]
\end{tikzcd}\end{equation}
\end{ques}
If such a map exists, it would also be nice to describe it geometrically, e.g.\ in terms of characteristic classes of $\Z/2$-equivariant vector bundles.

Another potential benefit of Real-equivariance would arise by taking fixed points. Halladay-Kamel~\cite[\S 4]{HK24} and Abdallah-Kamel~\cite{AK25} study the fixed-point spectrum $(\MSpin_\R^c)^{\Z/2}$; it appears to be an unwieldy object, but Halladay-Kamel~\cite[Proposition 4.5]{HK24} show there is an an $E_\infty$-ring map $\widetilde u\colon \MSpin\to (\MSpin_\R^c)^{\Z/2}$, so that one can form the composition
\begin{equation}\label{HK_KO}
    \MSpin\overset{\widetilde u}{\longrightarrow} (\MSpin_\R^c)^{\Z/2} \overset{(\widehat A_\R)^{\Z/2}}{\longrightarrow} (\KR)^{\Z/2}\simeq \KO,
\end{equation}
and (\textit{ibid.}, Corollary 6.14) this recovers the usual Atiyah-Bott-Shapiro orientation.

Because $(\Tmf_1(3)_\R)^{\Z/2}\simeq\Tmf_0(3)$~\cite[\S 4]{HM17}, one could try to generalize Halladay-Kamel's approach to orient $\Tmf_0(3)$. Ultimately because $\MR^{\Z/2}$ is complicated (though understood: see~\cite{HK01, GM17}), we expect $(\MString_\R^h)^{\Z/2}$ to not be easy to work with.
\begin{prop}\label{maybe_0n}
Let $\xi\colon B\to B\O$ be a tangential structure with two-out-of-three data such that there is an $E_\infty$-ring map $\psi\colon\mathit{M\xi}\to (\MString_\R^h)^{\Z/2}$. Then by forming a composition analogous to~\eqref{HK_KO}, there is a canonical orientation $\mathit{M\xi}[1/3]\to \Tmf_0(3)$.
\end{prop}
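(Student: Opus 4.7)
The plan is to imitate Halladay-Kamel's construction~\eqref{HK_KO} directly, using the \stringh Real-orientation of $\Tmf_1(n)_\R$ from \cref{real_lift} in place of the Atiyah-Bott-Shapiro orientation. First I would form the composition
\begin{equation}
\mathit{M\xi} \xrightarrow{\psi} (\MString_\R^h)^{\Z/2} \longrightarrow (\MString_\R^h[1/n])^{\Z/2} \xrightarrow{(\sigma_1(n)_\R)^{\Z/2}} (\Tmf_1(n)_\R)^{\Z/2} \simeq \Tmf_0(n),
\end{equation}
where the second arrow is $(-)^{\Z/2}$ applied to the $\Z/2$-equivariant localization $\MString_\R^h\to\MString_\R^h[1/n]$, the third arrow is $(-)^{\Z/2}$ applied to the $\Z/2$-ring map $\sigma_1(n)_\R$ of \cref{real_lift}, and the final equivalence is Hill-Meier's identification~\cite[\S 4]{HM17}.

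Next I would observe that since $\Tmf_1(n)_\R$ is $\Sph[1/n]$-linear by construction, multiplication by $n$ acts invertibly on $(\Tmf_1(n)_\R)^{\Z/2}\simeq\Tmf_0(n)$. By the universal property of localization, the composition above therefore factors uniquely through the source localization, producing the desired map $\mathit{M\xi}[1/n]\to\Tmf_0(n)$. Each constituent is a ring spectrum map -- $\psi$ by hypothesis, both applications of $(-)^{\Z/2}$ because genuine fixed points preserve ring-spectrum structures on $\Z/2$-ring spectra, and Hill-Meier's identification is a ring equivalence -- so the resulting orientation is itself canonically a ring spectrum map.

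Once \cref{real_lift} and the hypothesized $\psi$ are in hand, the construction is essentially forced, so I do not anticipate a serious obstacle. The main points that require any verification are that $(-)^{\Z/2}$ applied to a $\Z/2$-ring map yields a ring map and that the universal-property factorization respects multiplication; both are routine. The two-out-of-three hypothesis on $\xi$ is used implicitly to guarantee that $\mathit{M\xi}$ itself carries a ring-spectrum structure, so that talking about a ring map out of it is meaningful; it plays no further role in the argument.
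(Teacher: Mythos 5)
Your proof is correct and is exactly the construction the paper intends; the proposition's statement itself already prescribes ``forming a composition analogous to~\eqref{HK_KO},'' and the paper provides no separate proof beyond that pointer. Your spelled-out composition $\mathit{M\xi}\to(\MString_\R^h)^{\Z/2}\to(\MString_\R^h[1/n])^{\Z/2}\to(\Tmf_1(n)_\R)^{\Z/2}\simeq\Tmf_0(n)$, followed by factoring through $\mathit{M\xi}[1/n]$ via the universal property of localization, is precisely the analogue of the Halladay-Kamel composition, and your observation that the output is only a ring spectrum map (not $E_\infty$) is consistent with \cref{real_lift} producing $\sigma_1(n)_\R$ merely as a $\Z/2$-ring map.
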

More generally, we would obtain orientations $M\xi[1/n]\to\Tmf_1(n)^{\Z/2}$.

The most naïve generalization of Halladay-Kamel's construction leads to $\xi = \String$, and the $\String$-orientation of $\Tmf_0(3)$ is not new information. It would be interesting to understand whether a more careful use of \cref{maybe_0n} could be used to construct an orientation of $\Tmf_0(3)$ by some nontrivial $\MString$-algebra Thom spectrum, analogously to the $\String^h$-orientation of $\tmf_1(n)$. We note that Wilson~\cite[Corollary 4.16]{Wil15} has produced an orientation $\MSpin\ang{w_4}[1/3]\to\tmf_0(3)$ (hence also to $\Tmf_0(3)$), where $\Spin\ang{w_4}$ is the tangential structure which is a spin structure and a trivialization of the Stiefel-Whitney class $w_4$; we do not know whether Wilson's orientation factors through $(\MString_\R^h)^{\Z/2}[1/3]$ in \cref{maybe_0n}.

%
%
%
%
%
%
%
%
%
%

\subsection{Low-degree homotopy groups of \texorpdfstring{$\MTString^h$}{MStringh}}
In this subsection, we compute low-dimensional \stringh bordism groups, and also calculate the effects of some of the orientations $\sigma_1(n)$ of the previous section on homotopy groups.

We will recall a few facts about the Brown-Peterson spectrum $\BP$ since many of the results in the remainder of this section utilize $\BP$ and its siblings $\BP\ang n$ in their proofs. $\BP$ is obtained by localizing $\MU$ at a prime $p$, and then $\MU_{(p)}$ is equivalent to a wedge sum of suspensions of $\BP$~\cite{BP66}. This equivalence is not compatible with the ring structure, except on the lowest-degree $\BP$ summand.
\begin{thm}[{Basterra-Mandell~\cite[Theorem 1.1]{BM13}}]
The wedge-sum decomposition of $\MU_{(p)}$ into a sum of shifts of $\BP$ may be chosen so that the maps $\MU_{(p)}\leftrightarrows\BP$ splitting off the lowest-degree summand are $E_4$-algebra maps.
\end{thm}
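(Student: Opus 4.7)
The plan is to upgrade the classical Quillen splitting of $\MU_{(p)}$ using obstruction theory for operadic multiplications. The starting point is the Quillen idempotent $e\colon\MU_{(p)}\to\MU_{(p)}$, whose associated factorization $\MU_{(p)}\overset{\pi}{\to}\BP\overset{\iota}{\to}\MU_{(p)}$ satisfies $\pi\circ\iota\simeq\id_\BP$ and cuts out $\BP$ as the lowest-degree summand. We have two tasks: equip $\BP$ with an $E_4$-algebra structure that is compatible with the $E_\infty$-structure on $\MU_{(p)}$, and then arrange $\pi$ and $\iota$ to be $E_4$-algebra maps.

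First I would construct the $E_4$-structure on $\BP$ by a Postnikov-style tower, lifting an $E_1$-structure (available by older work of Lazarev and others on $A_\infty$-forms of $\BP$) successively through $E_2$, $E_3$, and $E_4$. The obstructions to each lift live in the operadic topological André-Quillen cohomology groups $\mathrm{TAQ}^{E_{k+1}}(\BP;M)$ for appropriate $\BP$-module coefficients $M$, organized by the bar spectral sequence comparing $E_k$ and $E_{k+1}$. At each stage the aim is to realize the lift by pulling it back through the $E_\infty$-structure on $\MU_{(p)}$: if the idempotent $e$ can be chosen to commute with the $E_{k+1}$-structure, then its image in the $E_{k+1}$-algebra $\MU_{(p)}$ inherits a canonical $E_{k+1}$-structure, which is exactly the desired lift. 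A parallel relative obstruction theory for maps then governs whether $\pi$ and $\iota$ can be chosen $E_{k+1}$-compatibly; because the same idempotent $e$ witnesses both structures, the two inductions can be run in tandem.

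The main obstacle is showing that all relevant $\mathrm{TAQ}$-obstructions vanish through the $E_4$ stage. This is delicate: one expects the analogous obstructions to \emph{fail} to vanish beyond $E_4$, since by work of Lawson and Senger, $\BP$ is known at $p=2$ not to admit an $E_\infty$-structure at all. So the argument cannot be a blanket vanishing statement and must use $k\le 3$ in an essential way. The natural route is to rewrite the $E_{k+1}/E_k$ operadic bar construction in terms of a tractable spectral sequence, exploit the free structure of $\BP_*\BP$ as a $\BP_*$-comodule and the known action of Milnor primitives on $H_*(\BP;\mathbb{F}_p)$, and compare stage-by-stage with the corresponding $\mathrm{TAQ}$-groups for $\MU_{(p)}$, where vanishing is automatic from the $E_\infty$-structure. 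Since $e$ is (by construction) already compatible with the multiplicative structure of $\MU_{(p)}$, the expectation is that the relative obstructions on $\BP$ pull back from trivial obstructions on $\MU_{(p)}$, producing the required $E_4$-splitting.
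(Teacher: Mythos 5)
The paper cites this result from Basterra--Mandell [BM13] as a black box and gives no proof, so the comparison is to their published argument. Your sketch has the right ingredients in outline — the Quillen idempotent, TAQ-style obstruction theory, the Lawson--Senger threshold — but there are two genuine gaps.

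The central gap is that you give no argument for why the relevant obstruction groups vanish through $E_4$. This is the entire technical content of the theorem. Your concluding sentence, that ``the relative obstructions on $\BP$ pull back from trivial obstructions on $\MU_{(p)}$,'' cannot be taken as read: if that pullback triviality were automatic it would apply at every operadic level and contradict the very non-existence results you cite. Basterra--Mandell's proof rests on a concrete computation establishing the vanishing, keyed to the structure of the $E_4$-operad specifically, and nothing in your proposal substitutes for it. Gesturing at the freeness of $\BP_*\BP$ and the action of Milnor primitives names some of the relevant input data but does not identify the mechanism.

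Second, the argument contains a circularity. You invoke ``if the idempotent $e$ can be chosen to commute with the $E_{k+1}$-structure, then its image inherits a canonical $E_{k+1}$-structure'' — but making $e$ commute with the $E_{k+1}$-structure is precisely what the obstruction theory must establish, not a hypothesis you can grant in the middle of the induction. (And once $e$ is an honest $E_n$-ring idempotent, splitting it into a retract diagram uses idempotent completeness of the $\infty$-category of $E_n$-ring spectra — this does hold, but it is a step, not a tautology.) Relatedly, Basterra--Mandell do not climb a tower $E_1\to E_2\to E_3\to E_4$ as you propose; they fix the operad at $E_4$ from the start and run the obstruction theory along a different filtration, realizing the idempotent as an $E_4$-map directly. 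The operadic-level induction you describe poses a harder problem than the one they actually solve, since it requires controlling the comparison between distinct operads rather than lifting a single map through a fixed operad's obstruction tower.
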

\begin{cor}
\label{E4_or}
Given an $E_\infty$-ring homomorphism $f\colon \MU\to E$, where $E$ is $p$-local, we can precompose with the $E_4$-ring homomorphism $\BP\to\MU_{(p)}$ to obtain an $E_4$-ring homomorphism $\widetilde f\colon \BP\to E$.
\end{cor}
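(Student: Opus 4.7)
The plan is to exhibit $\widetilde f$ as an explicit composition and then check at each stage that the relevant multiplicative structure persists. Since $E$ is $p$-local, the canonical localization map $\ell\colon \MU\to\MU_{(p)}$ is one of $E_\infty$-ring spectra, and the universal property of localization gives a factorization $f \simeq f_{(p)}\circ \ell$ for a unique (up to homotopy) $E_\infty$-ring map $f_{(p)}\colon\MU_{(p)}\to E$. In particular, $f_{(p)}$ is automatically an $E_4$-ring map, since forgetting along $E_\infty\to E_4$ is a functor of operadic structures.

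Next, I would invoke Basterra-Mandell's theorem stated just above this corollary: the wedge summand splitting $\MU_{(p)}\simeq\bigvee \Sigma^{?}\BP$ can be arranged so that the inclusion $\iota\colon\BP\to\MU_{(p)}$ of the lowest-degree summand is a map of $E_4$-ring spectra. Composing,
\begin{equation}
    \widetilde f \coloneqq f_{(p)}\circ \iota\colon \BP \longrightarrow \MU_{(p)} \longrightarrow E,
\end{equation}
is then a composition of two $E_4$-ring maps, and is therefore itself an $E_4$-ring map. This produces the desired $\widetilde f$.

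There really is no hard step here: the corollary is purely a formal consequence of (i) the universal property of $p$-localization applied to an $E_\infty$-ring map landing in a $p$-local spectrum, and (ii) the Basterra-Mandell lifting of the inclusion of the bottom $\BP$-summand to an $E_4$-ring homomorphism. The only mild subtlety worth flagging explicitly is that one must use the $E_4$-structure, and not something higher, when composing with $\iota$; the cited theorem of Basterra-Mandell is not known to produce an $E_\infty$-map, so the conclusion is genuinely only at the $E_4$-level even though $f$ itself is $E_\infty$.
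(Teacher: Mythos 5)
Your proof is correct and is essentially the intended argument; the paper states this as an immediate corollary of Basterra--Mandell without spelling out the details, and you have filled those in accurately. In particular, you correctly identify the one small point that the corollary glosses over in its phrasing: since $\BP\to\MU_{(p)}$ cannot be composed directly with $f\colon\MU\to E$, one must first use $p$-locality of $E$ and the universal property of localization to factor $f$ through an $E_\infty$-map $f_{(p)}\colon\MU_{(p)}\to E$, and only then precompose with the $E_4$-map $\iota\colon\BP\to\MU_{(p)}$.
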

It is not known whether one can strengthen this result to an $E_n$-splitting for some $n > 4$; Lawson~\cite[Remark 4.4.7]{Law18} and Senger~\cite[Theorem 1.3]{Sen17} have shown that it is not possible to do so for $n \ge 2(p+3)$.

The homotopy rings of $\BP$ and $\BP\ang n$ are
\begin{equation}
\begin{aligned}
    \BP_* &\cong \Z_{(p)}[v_1, v_2,\dotsc]\\
    \BP\ang n_* &\cong \Z_{(p)}[v_1,v_2,\dotsc, v_n].
\end{aligned}
\end{equation}
In both cases $\abs{v_i} = 2(p^i-1)$. The map $\BP\to\BP\ang n$ sends $v_i\mapsto v_i$ for $1\le i\le n$ and sends $v_i\mapsto 0$ for $i > n$.


Now we introduce the main results of this subsection. Our first result is an analogue of a theorem of Hopkins-Mahowald (unpublished) and Devalapurkar~\cite{devalapurkar2019ando}, who showed that the Ando-Hopkins-Rezk orientation $\sigma\colon\MString\to\tmf$ is surjective on homotopy groups.
\begin{prop}\label{prop:surjhomotopy}
The following maps are surjective on homotopy groups.
\begin{equation}
    \begin{gathered}
        \sigma_1(3)\colon \MString^h[1/3] \longrightarrow \tmf_1(3)\\
        \sigma_1(2)\colon \MString^h[1/2] \longrightarrow \tmf_1(2).
    \end{gathered}
\end{equation}
\end{prop}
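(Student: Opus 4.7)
The plan is to exploit the splitting $\MTString^h\simeq\MTString\wedge\MU$ from \cref{thm:sanath} to reduce the problem to surjectivity of the complex orientation on $\pi_*$. Tracing through the construction of $\sigma_1(n)$, the final clause of \cref{all_n_stringh_or} identifies the composite of the complex orientation $\MU\to\MTString^h$ of \cref{cpx_to_stringh} with $\sigma_1(n)$ as Senger's complex orientation $M(n)\colon\MU[1/n]\to\tmf_1(n)$. Consequently the image of $(\sigma_1(n))_*$ on homotopy contains the image of $M(n)_*$, and the proposition reduces to showing that $M(n)_*\colon\pi_*\MU[1/n]\to\pi_*\tmf_1(n)$ is surjective for $n=2,3$.

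To verify surjectivity of $M(n)_*$ I would argue one prime $p$ at a time. For $n=3$: at $p=2$ the key input is Lawson-Naumann's equivalence $\tmf_1(3)_{(2)}\simeq\BP\ang 2$, after which $\pi_*\BP\ang 2\cong\Z_{(2)}[v_1,v_2]$ is manifestly in the image of the composite $\pi_*\MU_{(2)}\to\pi_*\BP\to\pi_*\BP\ang 2$ via the Hazewinkel (or Araki) generators. At any prime $p\ge 5$ (the prime $3$ is already inverted), $\pi_{2*}\tmf_1(3)_{(p)}$ is torsion-free and polynomial on the Weierstrass coefficients $a_1,a_3$ of the universal $\Gamma_1(3)$-elliptic curve $y^2+a_1xy+a_3y=x^3$; because $a_1$ and $a_3$ themselves appear among the coefficients of the associated elliptic formal group law, Quillen's theorem places them in the image of $\MU_*$. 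For $n=2$ the same structural argument runs with $2\leftrightarrow 3$: at $p=3$ one uses an analogue of Lawson-Naumann due to Mahowald-Rezk / Hill-Meier, and at primes $p\ge 5$ one uses the analogous polynomial description of $\pi_{2*}\tmf_1(2)$ in terms of Weierstrass coefficients for $\Gamma_1(2)$-structures.

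The hard part is verifying that the explicit ring-theoretic generators appearing in these prime-by-prime descriptions are genuinely images of $M(n)$ on homotopy, rather than only matching up after rationalization or at the level of associated graded pieces. For $n=3$ at $p=2$ this is built into the Lawson-Naumann identification with $\BP\ang 2$ at the level of $E_\infty$-ring spectra. For the remaining cases, the point is that Senger's complex orientation is built directly from the universal level-$n$ Weierstrass presentation, so that the Weierstrass coefficients -- which generate $\pi_{2*}$ -- are precisely FGL coefficients and hence have canonical preimages in $\pi_*\MU$. Once this bookkeeping is in place, surjectivity in each case follows by assembling the local statements.
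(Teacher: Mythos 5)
Your reduction in the first paragraph is correct and clean: by the compatibility clause of \cref{all_n_stringh_or}, the composite $\MU\to\MTString^h\to\tmf_1(n)$ is Senger's orientation $M(n)$, so the image of $\sigma_1(n)_*$ contains that of $M(n)_*$. However, this reduces the proposition to a statement --- surjectivity of $M(n)_*$ --- that is both stronger than what the paper proves and harder to verify than you indicate, and the paper does not take this route at all. The paper instead proves the \emph{connectivity} statements \cref{prime2_7connected} and \cref{cor:elevenconnected}: namely that the ring map $\BP\wedge\MTString\to\tmf_1(n)_{(p)}$ induced by \cref{E4_or} is $7$-connected (for $n=3$, $p=2$) resp.\ $11$-connected (for $n=2$, $p=3$), via the $\tmf$-based Adams spectral sequence and the fact that $H^*_{\tmf}(\tmf_1(3))$ and $H^*_{\tmf}(\tmf\wedge\BP)$ agree as cyclic $\cA(2)$-modules below degree $8$. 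Combined with Lawson-Naumann's result that the generators of $\tmf_1(3)_{(2)*}$ are in degrees below $7$, surjectivity follows from being a ring map, without knowing anything specific about the complex orientation beyond its behavior on $\pi_0$. The $\MString$-factor (pushed to $\tmf$ via Ando-Hopkins-Rezk) is essential to that argument because it supplies the $\tmf$-module structure needed for the $\tmf$-Adams spectral sequence; your argument discards it entirely.

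The gap in your proposal is precisely the part you flag as ``the hard part.'' At $p=2$, the image of $\pi_*\MU_{(2)}\to\pi_*\BP\to\pi_*\BP\ang 2$ manifestly contains $v_1,v_2$, but you must show that Senger's orientation $M(3)_{(2)}$ is that composite (or at least has the same image) after transporting across the Lawson-Naumann equivalence $\tmf_1(3)_{(2)}\simeq\BP\ang 2$. Senger's orientation is not constructed that way, and no comparison is cited. You could try to argue that \emph{every} complex orientation of $\BP\ang 2$ is surjective on $\pi_*$ (by $p$-typicalizing and arguing degree by degree that the $v_i$ are hit up to units and decomposables), which I believe is true, but this is not a standard reference and needs a proof. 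Similarly, at $p\ge 5$ the claim that the Weierstrass coefficients $a_1, a_3$ appear --- up to $\Z_{(p)}^\times$-multiples, not merely rationally --- among the FGL coefficients of the elliptic formal group law obtained by expanding around the origin requires an explicit calculation with the coordinate $t=-x/y$; this is plausible but not ``immediate from Quillen's theorem.'' Until those two points are nailed down, the argument does not close. If you do want to push this route, the easiest fix is probably to replace the prime-by-prime generator-chasing with the paper's connectivity argument, which moreover is reused in the proof of \cref{bordism_ring} and so does double duty.
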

We will prove this as a consequence of \cref{cor:elevenconnected,prime2_7connected}.
\begin{proof}
We begin with $\sigma_1(3)$. In \cref{prime2_7connected}, we will establish that the map $\widetilde\sigma_1(3)\colon \BP\wedge\MString\to\tmf_1(3)_{(2)}$ obtained from \cref{E4_or} is $7$-connected. Lawson-Naumann~\cite[Theorem 1.1]{LN12a} show that the generators of the homotopy ring $(\tmf_1(3)_{(2)})_*$ are in degrees less than $7$, so since $\widetilde\sigma_1(3)$ is a map of ring spectra, it is surjective on homotopy groups in all degrees.

Since $\sigma_1(3)$ factors through $\widetilde\sigma_1(3)$, we conclude that after localizing at $2$, $\sigma_1(3)$ is surjective on homotopy groups. To finish, we argue we lift from $\Z_{(2)}$ to $\Z[1/3]$ by observing that $\tmf_1(3)_*$ is a polynomial ring over $\Z[1/3]$ on two generators in degrees $2$ and $6$, and we already observed that these generators are in the image of $\sigma_1(3)$ up to multiplication by a number prime to $6$. Thus it suffices to show that $\sigma_1(3)[1/6]$ hits the images of these generators in $\tmf_1(3)[1/6]_*$, which can easily be checked with the Atiyah-Hirzebruch spectral sequence because $\MString^h\simeq \MU\wedge\MString$ and $\tmf_1(3)$ lack torsion once $6$ is inverted.

The argument for $\tmf_1(2)$ is essentially the same, except using \cref{cor:elevenconnected}, which says that $\widetilde\sigma_1(2)$ is $11$-connected, and the fact that $\tmf_1(2)_*$ is a polynomial ring over $\Z[1/2]$ with generators in degrees below $9$.\footnote{This fact appears in Hill~\cite{Hil07}, where it is attributed to Hopkins-Mahowald (unpublished) and Behrens~\cite{Beh06}.}
\end{proof}
It would be interesting to generalize this to $\tmf_1(n)$ for $n > 3$.

\begin{thm}
\label{MU_tmf}
There is a ring isomorphism
\begin{equation}
    \pi_*(\tmf\wedge\MU)\overset\cong\longrightarrow
    \Z[a_1, a_2, a_3, a_4, a_6, e_n\mid n\ge 4],
\end{equation}
where $\abs{a_n} = 2n$ and $\abs{e_n} = 2n$.
\end{thm}
\begin{rem}
The history of \cref{MU_tmf} is a little complicated. Rezk~\cite[Proposition 20.4]{Rez07} states \cref{MU_tmf} without proof, and Hopkins-Mahowald~\cite{HopkinsMiller2014} and Bauer~\cite{bauer2008computation} use it implicitly in their discussion of the Adams-Novikov spectral sequence for $\tmf$. Mathew~\cite[Corollary 5.2]{mathew2013homology} proves \cref{MU_tmf} conditional on the ``Gap theorem,'' whose first complete proof was given by Carrick-Davies-van Nigtevecht~\cite[Theorem A]{CDvN24}.
\end{rem}
\begin{cor}\label{bordism_ring}
There is a ring isomorphism
\begin{equation}\label{bordisom}
    \Omega_*^{\String^h} \overset\cong\longrightarrow \Z[x_2, x_4, x_6, x_8, y_8, x_{10}, x_{12}, y_{12}, x_{14}, \dotsc]/(\dotsb)
\end{equation}
where $\abs{x_i} = \abs{y_i} = i$ and all generators and relations not listed are in degrees $16$ and above.
\end{cor}
\begin{proof}
The Ando-Hopkins-Rezk map $\MString\to\tmf$ is a $15$-connected ring homomorphism~\cite[Theorem 2.1]{Hil09}, so when evaluated on a connective ring spectrum such as $\MU$, it is a ring homomorphism which is an isomorphism in degrees $15$ and below.
\end{proof}
\begin{rem}\label{5tors}
Localized at a prime $p\ge 5$, we can do better: $(\Omega_*^{\String^h})_{(p)}$ is torsion-free in all degrees. This is because $p$-locally, both $\MTString$ and $\MU$ split as sums of shifts of $\BP$.\footnote{For $\MU$ this is a theorem of Brown-Peterson~\cite[Theorem 1.3]{BP66}; for $\MTString$ one combines Brown-Peterson's theorem with a calculation due to Giambalvo~\cite[Corollary 1]{Gia69}.} Therefore
\begin{equation}
    \MTString^h_{(p)} \simeq \bigvee_i\Sigma^{n_i}\BP\wedge \BP.
\end{equation}
Quillen~\cite{Qui69} showed $\pi_*(\BP\wedge\BP)$ is torsion-free (see also \cite[Theorem 3.11]{Wil82}), so $\pi_*(\MTString^h)_{(p)}$ is also torsion-free.
\end{rem}
\begin{rem}
One can construct manifold representatives for some of the generators in \cref{bordism_ring} by studying the images of the maps from $\Omega_*^\String$ and $\Omega_*^\U$ to $\Omega_*^{\String^h}$. Specifically:
\begin{itemize}
    \item The map from $\MU_*$ hits $x_2$ , $x_4$, $x_6$, $x_{12}$ and $x_{14}$ for which the first three generators have descriptions as $\CP^1$, $\CP^2$, and a Milnor hypersurface $H_{22}\amalg\CP^2$.
    \item The map from $\MTString_*$ rationally hits $y_8$ and $y_{12}$.
\end{itemize}
\end{rem}

Since (the $p$-localization of) the orientation $\sigma_1(n)\colon\MTString^h\to\tmf_1(n)$ factors through $\MTString\wedge\BP$, we have the following result.\footnote{There is a homotopy equivalence $\tmf_0(2)\simeq\tmf_1(2)$; we mostly refer to this object as $\tmf_1(2)$ to streamline our notation, but it is often called $\tmf_0(2)$ in the literature.}
\begin{prop}\label{cor:elevenconnected}
The $E_4$-orientation $\widetilde\sigma_1(2)\colon \MTString\wedge \BP\to(\tmf_1(2))_{(3)}$ obtained by smashing the $E_4$-orientation $\BP\to\tmf_1(2)_{(3)}$ from \cref{E4_or} with the string orientation of $\tmf_1(2)$ is $11$-connected.
\end{prop}
We will prove this using the Baker-Lazarev Adams spectral sequence~\cite{BL01}; for $3$-local $\tmf$-homology specifically, this spectral sequence was developed by Henriques and Hill (see~\cite{Hil07, douglas2014topological}), building on work of Behrens~\cite{Beh06} and Hopkins-Mahowald (unpublished).

For a spectrum $X$, this spectral sequence has the signature
\begin{equation}
    E_2^{s,t} = \Ext_{\cA^{\tmf}}(H^*(X;\Z/3), \Z/3) \Longrightarrow \tmf_{t-s}(X)_3^\wedge,
\end{equation}
where
\begin{equation}\label{Atmf}
    \cA^\tmf\coloneqq \Z/3\ang{\beta, \cP^1}/(\beta^2, (\cP^1)^3, \beta(\cP^1)^2\beta - (\beta\cP^1)^2 - (\cP^1\beta)^2),
\end{equation}
with a $\Z$-grading specified on $\cA^\tmf$ by $\abs\beta = 1$ and $\abs{\cP^1} = 4$. The $\cA^\tmf$-action on $H^*(X;\Z/3)$ is specified by having $\beta$ act as the Bockstein for the short exact sequence $0\to\Z/3\to\Z/9\to\Z/3\to 0$ and $\cP^1$ act as the first Steenrod power.\footnote{In other words, we have specified the $\cA^\tmf$-action by defining an algebra homomorphism from $\cA^\tmf$ to the mod $3$ Steenrod algebra. This homomorphism is \textbf{not} injective! See Henriques~\cite[\S 13.3]{douglas2014topological}.} See~\cite{Hil07, Hil09, BR21, Debray:2023tdd, BDDM24, TZ24} for additional computations with this spectral sequence.
\begin{lem}\label{BP_homology_tmf}
Define the $\cA^\tmf$-module $N_3\coloneqq \cA^\tmf/(\beta)$, so that $N_3\cong \Z/3[\cP^1]/((\cP^1)^3)$. Then there is an $\cA^\tmf$-module isomorphism
\begin{equation}
\label{BP_Atmf}
H^*(\BP;\Z/3)\cong \textcolor{BrickRed}{N_3} \oplus
    \textcolor{MidnightBlue}{\Sigma^{12}N_3} \oplus P,
\end{equation}
where $P$ is concentrated in degrees $16$ and above.
\end{lem}
Thus we can (and do) ignore $P$.
\begin{proof}
Recall that $H^*(\BP;\Z/3)\cong\Z/3[\cP^1, \cP^2, \dotsc]$. Therefore $\beta\in\cA^\tmf$ acts trivially on $H^*(\BP;\Z/3)$, and we can determine the $\cP^1$-action using the Adem relations, which in this case simplify to
\begin{equation}
    \cP^1\cP^n = (n+1)\cP^{n+1}
\end{equation}
for $n\ge 0$.
\end{proof}
In~\eqref{BP_Atmf}, $\textcolor{BrickRed}{N_3}$ is spanned by $\set{1, \cP^1, \cP^2}$ and $\textcolor{MidnightBlue}{\Sigma^{12}N_3}$ is spanned by $\set{\cP^3, \cP^4, \cP^5}$.
\begin{proof}[Proof of \cref{cor:elevenconnected}]
By construction $\widetilde\sigma_1(2)$ factors as a composition of $\MTString\wedge \BP\to\tmf\wedge\BP$, which is $15$-connected, followed by $\overline\sigma_1(2)\colon \BP\wedge\tmf\to\tmf_1(2)$, and both are $E_4$-ring spectrum maps. Therefore it suffices to show $\overline\sigma_1(2)$ is $11$-connected.

On homotopy groups, $\overline\sigma_1(2)$ is a ring homomorphism, so it sends $1\mapsto 1$ and therefore is an isomorphism on $\pi_0$. Because $\overline\sigma_1(2)$ is an isomorphism on $\pi_0$, then $\overline\sigma_1(2)^*\colon H_\tmf^0(\tmf_1(2))\to H_\tmf^0(\tmf\wedge\BP)$ is also an isomorphism. By~\eqref{BP_Atmf}, $H_\tmf^0(\tmf\wedge\BP)$ splits as $\cA^\tmf$-modules as the sum of $N_3$ and an $11$-connected summand, and by~\cite[Theorem 4.13]{mathew2013homology}, there is a $\tmf_{(3)}$-module equivalence $(\tmf_1(2))_{(3)}\simeq\tmf_{(3)}\wedge Y$ for a spectrum $Y$ with $H^*(Y;\Z/3)\cong N_3$, so $H_\tmf^*(\tmf_1(2))\cong N_3$. Thus since $\overline\sigma_1(2)^*$ is an $\cA^\tmf$-module map which is an isomorphism on $H^0$, it must map the $N_3$ summand in $H_\tmf^*(\tmf_1(2))$ isomorphically onto the $N_3$ summand from $\tmf\wedge\BP$. Then by the Baker-Lazarev Adams spectral sequence, the map is 11-connected. 
\end{proof}
\begin{rem}
Because $\pi_2(\MTString^h)\cong\Z$ but $\pi_2(\tmf_1(2))\cong\Z[1/2]$, $\sigma_1(2)$ is not $11$-connected. But \cref{cor:elevenconnected} implies that localized at $3$, $\sigma_1(2)$ is surjective on homotopy, because $\pi_*(\MTString^h)_{(3)}$ surjects onto $\pi_*(\BP\wedge\tmf)$ and $\overline\sigma_1(n)$ hits the generators of the homotopy ring of $\tmf_1(2)$.
\end{rem}
\begin{prop}
\label{prime2_7connected}
The map $\overline\sigma_1(3)\colon \BP\wedge\MTString\to \tmf_1(3)_{(2)}$ is $7$-connected.
\end{prop}
\begin{proof}[Proof]
The proof is almost exactly the same as that of \cref{cor:elevenconnected}. What makes that proof work is that the quotients of $H_\tmf^*(\tmf\wedge\BP)$ and $H_\tmf^*(\tmf_1(2))$ by all classes in degrees $12$ and above are isomorphic, cyclic $\cA^\tmf$-modules on a generator in degree $0$, so that we could lift an evidently $0$-connected map to an isomorphism on $H_\tmf^*$ in degrees $11$ and below.

For the rest of the proof, we work at $p = 2$. The analogue of $\cA^\tmf$ is $\cA(2)$, the subalgebra of the $2$-primary Steenrod algebra generated by $\Sq^1$, $\Sq^2$, and $\Sq^4$. Henriques showed that $\cA(2)$ plays the analogous role in the $2$-primary Baker-Lazarev Adams spectral sequence as $\cA^\tmf$ does at $p = 3$~\cite{douglas2014topological}.
Therefore it suffices to show that $H_\tmf^*(\BP\wedge\tmf)$ and $H_\tmf^*(\tmf_1(3))$ (this time with $\Z/2$ coefficients, not $\Z/3$ coefficients like in the previous paragraph), quotiented by all classes in degrees $8$ and above, are isomorphic cyclic $\cA(2)$-modules on a generator in degree $0$. For $\tmf_1(3)$, there is a $\tmf$-module equivalence $\tmf_1(3)_{(2)}\simeq \tmf\wedge DA(1)$ for a spectrum $DA(1)$ with $H^*(DA(1);\Z/2)\cong\cA(2)/(\Sq^1)$~\cite[Theorem 1.2]{mathew2013homology}. For $\BP\wedge\tmf$, we need to compute $H^*(\BP;\Z/2)$ as an $\cA(2)$-module. Brown-Peterson~\cite[Corollary 1.2]{BP66} show that $H^*(\BP;\Z/2)\cong\cA/(\Sq^1)$, so we are done by the observation that the inclusion $\cA(2)\to\cA$ is $7$-connected, so that $\cA(2)/(\Sq^1)\to\cA/(\Sq^1)$ is $7$-connected (since $\Sq^8$ is the lowest-degree Steenrod square not contained in $\cA(2)$).
\end{proof}

\begin{lem}[{Mathew~\cite[\S 5.2]{mathew2013homology}}]\label{thm_tmf_BP}
There is a $\tmf$-module $M$ and a $2$-local $\tmf$-module equivalence
\begin{equation}\label{eq:2equiv}
  \MU\wedge\tmf\overset\simeq\longrightarrow \tmf_1(3)\vee M.
\end{equation}
\end{lem}
Mathew does not state this explicitly; instead, he shows  (\textit{ibid.}, Corollary 5.7) that there is a map $DA(1)\to\MU$ and an ideal $I$ in $\MU_*(\tmf)$ such that the composition
\begin{equation}
    \tmf\wedge DA(1) \longrightarrow\tmf\wedge\MU\longrightarrow (\tmf\wedge\MU)/I
\end{equation}
is a $\tmf$-module equivalence. Moreover, there is a $2$-local $\tmf$-module equivalence $\tmf\wedge DA(1)\simeq\tmf_1(3)$~\cite[Theorem 1.2]{HM14} (see also~\cite[Theorem 5.8]{mathew2013homology}).
\subsection{Does $\sigma_1(n)$ split?}\label{subsection:splitting}
Anderson-Brown-Peterson~\cite{ABP67} showed that the Atiyah-Bott-Shapiro maps~\cite{ABS64} $\MSpin\to\ko$ and $\MSpin^c\to\ku$ admit $2$-local sections $\ko\to\MSpin$, resp.\ $\ku\to\MSpin^c$, and used these sections, along with higher-degree analogues, to effectively determine spin and \spinc bordism. The analogous question for the Ando-Hopkins-Rezk orientation~\cite{AHR10} $\sigma\colon\MString\to\tmf$ is a longstanding open question in homotopy theory, discussed for example in~\cite{MG95, MH02,MR09, Lau04, Lau16, LO16, LO18, LS19, devalapurkar2019ando, Abs21, Dev24, Tok24}. It therefore seems reasonable to ask:
\begin{ques}\label{splitting_ques}
Let $p = 2$ or $3$ and $p\nmid n$. Does the map $\sigma_1(n)\colon \MTString^h_{(p)}\to \tmf_1(n)_{(p)}$ have a section? What about Devalapurkar's orientation $\sigma_D$?
\end{ques}
One could also ask this question localized at a large prime (i.e.\ $p\ge 5$), where it is much easier, as both $\MString^h$ and $\tmf_1(n)$ for many $n$ are known to decompose into sums of shifts of $\BP\wedge\BP$, resp.\ $\BP$ (see the proof of \cref{5tors}, resp.\ \cite[\S 5]{Mei23}). Thus we focus on the harder primes. We think for $p = 2$, $n = 3$, and for $p = 3$, $n = 2$, \cref{splitting_ques} has an affirmative answer.

\Cref{splitting_ques} passes a few basic checks.
\begin{prop}[Devalapurkar]
If there is a section $s\colon \tmf_{(2)}\to\MString_{(2)}$ of $\sigma$, then there is a section $s'\colon\tmf_1(3)_{(2)}\to\MString^h_{(2)}$ of $\sigma_1(3)$. 
\end{prop}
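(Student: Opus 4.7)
The plan is to leverage Devalapurkar's equivalence $\MString^h_{(2)} \simeq \MString_{(2)} \wedge \MU_{(2)}$ (\cref{thm:sanath}) together with the $\tmf$-module splitting $\BP\wedge\tmf_{(2)} \simeq \bigvee_i \Sigma^{n_i}\tmf_1(3)$ of \cref{thm_tmf_BP}. Under the Devalapurkar identification, unwinding the construction in the proof of \cref{not_connective_or} (and the connective lift from \cref{connective_lifting}) writes $\sigma_1(3)$ as the composition
\[
\MString_{(2)}\wedge\MU_{(2)} \xrightarrow{\sigma\wedge M(3)} \tmf_{(2)}\wedge\tmf_1(3) \xrightarrow{A(3)\wedge\id} \tmf_1(3)\wedge\tmf_1(3) \xrightarrow{\mu} \tmf_1(3),
\]
where $M(3)\colon\MU\to\tmf_1(3)$ is Senger's complex orientation, $A(3)\colon\tmf\to\tmf_1(3)$ is the Hill-Lawson unit, and $\mu$ is the multiplication. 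Precomposing with $s\wedge\id_{\MU_{(2)}}$ and using $\sigma\circ s = \id$, this composition collapses to
\[
\phi \coloneqq \mu\circ(A(3)\wedge M(3)) \colon \tmf_{(2)}\wedge\MU_{(2)}\longrightarrow\tmf_1(3)_{(2)},
\]
the structure map making $\tmf_1(3)$ a $(\tmf\wedge\MU)$-algebra. Thus it would suffice to produce a section $\iota$ of $\phi$; then $s'\coloneqq (s\wedge\id_{\MU_{(2)}})\circ\iota$ is a section of $\sigma_1(3)$.

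To build $\iota$, I would combine Brown-Peterson's splitting $\MU_{(2)}\simeq\bigvee_j\Sigma^{m_j}\BP$ (with $m_0 = 0$) and \cref{thm_tmf_BP} to obtain a $\tmf$-module splitting
\[
\MU_{(2)}\wedge\tmf_{(2)} \;\simeq\; \bigvee_{i,j}\Sigma^{m_j+n_i}\tmf_1(3),
\]
whose $(0,0)$ summand is a copy of $\tmf_1(3)$ in degree $0$. The splitting in \cref{thm_tmf_BP} is not canonical: it is produced from a choice of $\tmf_1(3)^*(\BP)$-classes lifting a $\pi_*(\tmf_1(3))$-basis of $H^*(\BP;\pi_*\tmf_1(3))$ through the collapsed Atiyah-Hirzebruch spectral sequence. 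I would choose the lift of the bottom class to be the ring map $M(3)|_{\BP}\colon\BP\to\tmf_1(3)$, which is a legitimate lift because it sends $1\mapsto 1$. With this choice (and analogous choices on the other wedge summands of $\MU_{(2)}$), the projection onto the $(0,0)$ summand of the combined splitting is precisely $\phi$, since both are $\tmf$-module maps with the same adjoint $M(3)\colon\MU_{(2)}\to\tmf_1(3)$. Taking $\iota$ to be the inclusion of that summand then yields $\phi\circ\iota = \id$, completing the construction.

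The delicate step is the compatible choice of splitting: ensuring that \cref{thm_tmf_BP} and its extension across the $\BP$-summands of $\MU_{(2)}$ can be arranged so that the unit-summand projection is exactly $\phi$. This rests on the freedom in the AHSS lifts and on $M(3)|_{\BP}$ being a ring map, so the rest is formal manipulation of Devalapurkar's identification and the construction of $\sigma_1(3)$.
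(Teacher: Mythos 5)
Your proposal takes the same approach as the paper — precompose with $s\wedge\id_{\MU}$, use \cref{thm:sanath} and \cref{thm_tmf_BP} to split off $\tmf_1(3)$ — but you supply a precision that the paper's one-sentence proof elides. The paper asserts only that $\tmf\wedge\MU$ splits off $\MString^h_{(2)}$ and that $\tmf_1(3)$ splits off $\tmf\wedge\MU$, without checking that the retraction obtained this way is $\sigma_1(3)$ rather than some other $\tmf$-module map to $\tmf_1(3)$. You correctly identify this as the delicate point: one must choose the wedge decomposition of \cref{thm_tmf_BP} so that the projection onto the degree-zero summand agrees with $\phi=\mu\circ(A(3)\wedge M(3))$, which you achieve by taking the bottom Atiyah-Hirzebruch lift to be $M(3)|_{\BP}$. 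That verification — that a $\tmf$-module self-map of $\tmf_1(3)$ obtained from the generic splitting is a unit on $\pi_0$ does not by itself make it an equivalence (it is $\tmf$-linear, not $\tmf_1(3)$-linear) — is a genuine gap in the paper's terse proof that your argument closes. So this is the same route, carried out with more care.
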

\begin{proof}
This is immediate since $\MU  \wedge \tmf$  would split off from $\MTString^h$, and $\tmf_1(3)$ itself splits off of $\MU  \wedge \tmf$ by \cref{thm_tmf_BP}. 
\end{proof}
In addition, an affirmative answer to \cref{splitting_ques} would imply that $\sigma_1(n)$ is surjective on homotopy after $p$-completion. For $(p,n)\in\set{(2,3), (3,2)}$, we proved homotopy surjectivity unconditionally in \cref{prop:surjhomotopy}.
\begin{prop}\label{notBPmod}
There is no $2$-local section of $\sigma_1(3)$ that is a map of $\BP$-module spectra, where $\MString^h_{(2)}$ acquires its $\BP$-module structure from the $E_4$-map $\BP\to\MU_{(2)}\to\MString^h_{(2)}$ and $\tmf_1(3)_{(2)}$ acquires its $\BP$-module structure from the equivalence $\tmf_1(3)_{(2)}\simeq \BP\ang 2$.
\end{prop}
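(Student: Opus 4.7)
The plan is to exploit the cyclicity of $\BP\ang 2$ as a $\BP$-module to reduce the existence of such a section to the nonvanishing of a single homotopy class, namely the image of $v_3$ in $\pi_{14}(\MTString^h_{(2)})$.

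Suppose $s\colon\tmf_1(3)_{(2)}\simeq\BP\ang 2\to\MTString^h_{(2)}$ is a $\BP$-module section of $\sigma_1(3)$. Using the standard identification $\BP\ang 2\simeq\BP/(v_3,v_4,\ldots)$ as $\BP$-modules, we have $v_3\cdot 1=0$ in $\pi_{14}(\BP\ang 2)$, so $\pi_*\BP$-linearity of $\pi_*(s)$ yields $v_3\cdot s(1)=0$ in $\pi_{14}(\MTString^h_{(2)})$. The section condition combined with $\pi_0(\sigma_1(3))=\id_{\Z_{(2)}}$ (which holds since $3$ is a unit $2$-locally) forces $s(1)=1$. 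The problem therefore reduces to showing $v_3\cdot 1\ne 0$ in $\pi_{14}(\MTString^h_{(2)})$, where $v_3$ denotes the image of $v_3\in\pi_{14}\BP$ under the structure map $\BP\to\MU_{(2)}\to\MTString^h_{(2)}$.

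To derive the contradiction, I would rationalize. By \cref{thm:sanath} there is an $E_\infty$-equivalence $\MTString^h_{(2)}\simeq\MTString_{(2)}\wedge\MU_{(2)}$, under which the $\BP$-module structure map factors through the $\MU$-algebra unit $\MU_{(2)}\simeq\Sph\wedge\MU_{(2)}\xrightarrow{\eta_{\MTString}\wedge\id}\MTString_{(2)}\wedge\MU_{(2)}$. Since $\pi_*\MTString\otimes\Q\cong\Q[y_8,y_{12},y_{16},\ldots]$, the Künneth formula gives
\begin{equation*}
    \pi_{14}(\MTString^h)\otimes\Q \;\cong\; \pi_{14}(\MU)_\Q\,\oplus\,\bigl(\pi_8(\MTString)_\Q\otimes\pi_6(\MU)_\Q\bigr)\,\oplus\,\bigl(\pi_{12}(\MTString)_\Q\otimes\pi_2(\MU)_\Q\bigr),
\end{equation*}
and the map on rationalized homotopy lands in the first summand as the canonical inclusion $\alpha\mapsto 1\otimes\alpha$. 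The image of $v_3$ in $\MU_{(2)*}\cong\Z_{(2)}[x_1,x_2,\ldots]$ is a nonzero polynomial of degree $14$, visible from any splitting $\MU_{(2)}\simeq\bigvee_i\Sigma^{n_i}\BP$, and in particular non-torsion, so $v_3\cdot 1\ne 0$ in $\pi_{14}(\MTString^h)\otimes\Q$, hence also in $\pi_{14}(\MTString^h_{(2)})$---a contradiction.

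The main subtlety to keep track of is that the argument uses only the underlying $\BP$-module (not $E_\infty$-ring) structure on $s$, together with the identification of $\BP\ang 2$ as the $\BP$-module quotient $\BP/(v_3,v_4,\ldots)$ rather than merely as an equivalent spectrum; everything beyond these inputs is an elementary Künneth calculation and a degree count inside the polynomial ring $\MU_{(2)*}$.
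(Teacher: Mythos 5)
Your proposal is correct and takes essentially the same approach as the paper's: both rationalize and exploit $\BP_*$-linearity of the hypothetical section, deriving a contradiction from the fact that $v_3$ acts as zero on $\tmf_1(3)_{(2)*}$ (since it is a form of $\BP\ang 2$) but nontrivially on $\pi_*(\MTString^h_{(2)})\otimes\Q \cong \pi_*(\MTString)_\Q\otimes\pi_*(\MU)_\Q$. The only cosmetic difference is that you pin down the specific element $s(1)=1$ and verify $v_3\cdot 1\ne 0$ in degree $14$ via Künneth, whereas the paper argues slightly more abstractly that multiplication by $v_3$ is injective on all of $\pi_*(\MTString^h)\otimes\Q$ (because $\pi_*(\MU)_\Q$ is a free $\BP_*\otimes\Q$-module and $\pi_*(\MTString)_\Q$ is polynomial), so any $\BP_*$-linear $s$ landing in $\ker(v_3\cdot)=0$ must vanish, contradicting $\sigma_1(3)\circ s=\id$; this sidesteps the need to establish $s(1)=1$.
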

\begin{proof}
Suppose such a section existed and rationalize. That would imply the existence of a section of
\begin{equation}
    \sigma_1(3)_*\colon \pi_*(\MString^h)\otimes\Q \longrightarrow \tmf_1(3)_*\otimes\Q
\end{equation}
which is linear with respect to $\BP_*\otimes\Q\cong\Q[v_1,v_2,\dotsc]$. Since
\begin{equation}
    \pi_*(\MString^h)\otimes\Q\cong\pi_*(\MString)\otimes\Q\otimes \pi_*(\MU)
\end{equation}
and $\pi_*(\MU)\otimes\Q$ is a free $\pi_*(\BP)\otimes_{\Z_{(2)}}\Q$-module and $\pi_*(\MString)\otimes\Q$ is a polynomial algebra, $v_3$ acts injectively on $\pi_*(\MString^h\otimes\Q)$. However, since $\tmf_1(3)_{(2)}$ acquired its $\BP$-module structure by being a form of $\BP\ang 2$, $v_3$ acts as zero on $\tmf_1(3)_*\otimes\Q$. A section equivariant for the action of $v_3$ cannot carry a zero action to an injective action.
\end{proof}
\section{String$^h$ and the Diaconescu-Moore-Witten Anomaly}\label{section:stringhDMW}
We will now explain an application of \stringh structures to type IIA string theory by understanding their relationship with the Diaconescu-Moore-Witten anomaly. Let $X$ be a 10-dimensional manifold which serves as the target space for type IIA string theory. The intimate way in which type IIA and M-theory are related means that the same anomaly also manifests in M-theory on $Y=X\times S^1$, and is in a sense where it originates. In particular, the partition function for the RR-fluxes in type IIA string theory can be matched with the corresponding partition function computed in M -theory, and  we review how an anomaly arises in M-theory by looking at a certain part of its partition function. A priori the way in which the anomalies arise in M-theory and type IIA are different, but it was shown in \cite{DMW:E8} that the anomaly cancellation information is equivalent. Therefore whatever criterion on the structure of spacetime is imposed by anomaly cancellation is shared by both the M-theory and the type IIA target space.

M-theory has two types of branes: the M2 and M5 branes. On the M2 brane there is an associated 3-form field $C$ with field strength $G = dC$. The topological quantization of $G$ is given by choosing any element $\alpha \in H^4(X;\Z)$ and letting $G_{\alpha}$ be the ``mode'' of $G$ contributing to the topological sector labeled by $\alpha$. From this we can form the partition function of M-theory by considering the contributions from all $\alpha$.

%
\begin{defn}[{Diaconescu-Moore-Witten~\cite[\S 5]{DMW:2000}}]
\label{f_defn}
Let $X$ be a closed spin $10$-manifold and $\alpha\in H^4(X;\Z)$. Assume that there is a class $x\in\ku^4(X)$ such that $\tau_0(x) = \alpha$; as we observed in the proof of \cref{defns_are_equiv}, $x$ may be represented by an $\SU$-bundle, and because $B\SU_5\to B\SU$ is $11$-connected, $x$ may be represented by a rank-$5$ complex vector bundle $E\to X$ with $\SU$-structure. Then define
\begin{equation}\label{three_indices}
    f(\alpha) \coloneqq q(E\otimes\overline E) + ((\mathrm{Ind}(\Lambda^2(E)) + \mathrm{Ind}(E))\bmod 2)\in\Z/2,
\end{equation}
where $\mathrm{Ind}(V)$ denotes the index of the Dirac operator coupled to $V$ and $q$ denotes the mod $2$ index of this Dirac operator.
\end{defn}
\begin{defn}
Let $X$ be a closed spin $10$-manifold. The partition function of the topological sector of M-theory on $X$ is given by the following sum over  $\alpha \in H^4(X;\Z)$ for $G$:\footnote{We write $\sim$ rather than $=$ because of some prefactors that are gauge-invariant and thus not relevant for the present discussion. See~\cite[\S 3]{DMW:2000} for more on these terms.}
\begin{equation}\label{eq:ZMtheory}
   \mathcal{Z}_{M}\sim \sum_{\alpha \in H^4(X;\Z)} (-1)^{f(\alpha)}\exp\left(-|G_\alpha|^2\right)\,,
\end{equation}
where $|G_\alpha|^2 = \int G_\alpha \wedge {\star}\, G_\alpha$, and we are assuming the $\ku^4$ lifts chosen in \cref{f_defn} in the definition of $f$ exist.
\end{defn}


Even though the topological sector is not the full partition function of M-theory, it can already give hints at anomalies, in particular by studying the ambiguity of~\eqref{eq:ZMtheory} with respect to the existence of lifts across $\tau_0$.
\begin{defn}
The \term{$k^{\mathrm{th}}$ integral Stiefel-Whitney class} $W_k\in H^k(B\O;\Z)$ is $W_k\coloneqq\square_\Z(w_{k-1})$.
\end{defn}
Thus, for example, a \spinc structure on an oriented vector bundle $V$ is equivalent data to a trivialization of $W_3(V)$.
\begin{prop}[Diaconescu-Moore-Witten~\cite{DMW:E8}]
\label{f_well_defined}
With $X$ as above, given data of a trivialization of $W_7(X)$ the quantity $f(\alpha)$ from \cref{f_defn} is well-defined for all $\alpha\in H^4(X;\Z)$: each $\alpha$ has a $\ku$-cohomology lift, and the quantity~\eqref{three_indices} does not depend on the choice of lift. 
\end{prop}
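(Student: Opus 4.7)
The strategy is to exploit the Bockstein long exact sequence
\[
\cdots \to \ku^6(X) \xrightarrow{\beta} \ku^4(X) \xrightarrow{\tau_0} H^4(X;\Z) \xrightarrow{\square_\ku} \ku^7(X) \to \cdots
\]
coming from the cofiber sequence $\Sigma^2 \ku \to \ku \to H\Z$: existence of a $\ku^4$-lift of $\alpha$ is equivalent to $\square_\ku(\alpha) = 0$, and any two lifts differ by an element of $\beta\ku^6(X)$. I would prove existence and independence separately using these two facts.

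For existence, I would analyse $\ku^7(X)$ via its Atiyah-Hirzebruch spectral sequence. Because $X$ is ten-dimensional, only $E_\infty^{7,0}$ and $E_\infty^{9,-2}$ contribute to $\ku^7(X)$, so $\square_\ku(\alpha)$ is detected by the differential $d_3(\alpha) = \beta\,\Sq^2 r(\alpha) \in H^7(X;\Z)$ (where $r$ is mod-$2$ reduction) and a secondary operation $d_5(\alpha)$ valued in a subquotient of $H^9(X;\Z)$. Using the spin hypothesis $w_1(X) = w_2(X) = 0$ together with the Wu formula $\Sq^2 w_4(X) = w_6(X)$, so that $W_7(X) = \square_\Z w_6 = \square_\Z \Sq^2 w_4$, one rewrites both obstructions as expressions involving $W_7(X)$. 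A trivialization of $W_7(X)$ then forces them to vanish, producing a lift of every $\alpha$.

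For independence, two lifts $x, x'$ of $\alpha$ satisfy $x - x' = \beta(y)$ for some $y \in \ku^6(X)$, so it suffices to show $f$ is unchanged when the representing $\SU$-bundle is replaced by one whose $\ku^4$-class is shifted by $\beta(y)$. Representing $\beta(y)$ as a virtual $\SU$-bundle via Bott periodicity and applying the Atiyah-Singer and mod-$2$ Atiyah-Singer index theorems expresses the difference $f(x) - f(x') \in \Z/2$ as a characteristic-number pairing against the fundamental class of $X$. A direct computation using $\Ahat(X)$, the Chern character, and the identities for $\ch(\Lambda^2 E)$ and $\ch(E \otimes \bar E)$ shows that on a closed spin ten-manifold this pairing is always even, giving the required invariance.

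The main technical obstacle is the Wu- and Steenrod-algebra manipulation required to reduce the Atiyah-Hirzebruch obstructions $d_3(\alpha)$ and $d_5(\alpha)$ acting on an arbitrary $\alpha \in H^4(X;\Z)$ to expressions in $W_7(X)$; this is the heart of the Diaconescu-Moore-Witten calculation and requires a careful tracking of how the secondary cohomology operations entering the AHSS interact with the spin structure and the dimensional bound $\dim X = 10$.
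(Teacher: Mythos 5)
The paper does not give its own proof of this proposition; it is attributed to Diaconescu--Moore--Witten, and what follows Definition~\ref{DMW_str} is only a sketch of DMW's anomaly-cancellation argument, which has a rather different flavor from your proposal. That sketch never tries to show that every $\alpha$ lifts to $\ku^4(X)$; it instead expands $f(\alpha+2\gamma) - f(\alpha) = f(2\gamma) = \int_X \gamma\,\Sq^2(\gamma\bmod 2)$ using additivity and bordism invariance of $f$, then applies Stong's identity and the Wu formula to replace $\gamma$ in the $\Sq^2$ slot by $\lambda(X)$, and finally identifies $\tfrac12\int_X\gamma\,\Sq^2(\lambda\bmod 2)$ with the torsion linking pairing $\ang{\gamma, W_7(X)}$. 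The output is a \emph{numerical} statement about pairings on the top cell, not a vanishing statement about cohomology classes.

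Your existence argument has a genuine gap at exactly this point. You assert that the primary obstruction $d_3(\alpha) = \square_\Z\Sq^2(\alpha\bmod 2)\in H^7(X;\Z)$ ``can be rewritten as an expression involving $W_7(X)$'' and hence vanishes once $W_7(X)$ is trivialized. But $\square_\Z\Sq^2(\alpha\bmod 2)$ depends on $\alpha$, not just on $X$: writing $\alpha\bmod 2 = w_4(X) + y$, the Cartan formula gives $\square_\Z\Sq^2(\alpha\bmod 2) = W_7(X) + \square_\Z\Sq^2(y) + (\text{cross terms})$, and there is no reason the $y$-dependent piece vanishes when $W_7(X) = 0$. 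What Stong plus Wu actually provides is the weaker identity $\int_X\gamma\,\Sq^2(\gamma\bmod 2) = \int_X\gamma\,\Sq^2(\lambda(X)\bmod 2)$, i.e.\ an equality of characteristic numbers after cupping with $\gamma$ and integrating over the fundamental class --- precisely the form exploited in the paper's sketch --- and this does not upgrade to the class-level vanishing your AHSS argument would require. Indeed the paper itself notes after Proposition~\ref{prop:obstructlift} that a $\ku$-lift is ``not a priori clear'' in dimension~$10$, suggesting the subtlety is real. Your independence step is more plausible in outline, but the crucial claim that the Chern-character pairing attached to shifting by $\beta(y)$ is always even on a closed spin $10$-manifold is asserted without argument and is the whole of the content; DMW reach the analogous conclusion via the additivity formula for $f$ and bordism invariance rather than by a direct $\widehat A$-genus computation.
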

\Cref{f_well_defined} has the physics consequence that the partition function of the topological sector of type IIA string theory on $X$, which in general suffers a sign ambiguity, is well-defined when $W_7(X)$ is trivialized. More heuristically speaking, in order for $X$ to be a valid background of type IIA string theory, either we must have $W_7(X) = 0$ or wrap branes within submanifolds of $X$. In this paper we will consider the first option.
\begin{rem}\label{IIA_spinc}
It is possible to generalize this story to the case when $X$ is merely \spinc. In this case, the relation to M-theory is slightly changed: if $L\to X$ is the determinant line bundle of the \spinc structure, then the total space of the unit sphere bundle $S(L)$ is a closed $11$-manifold with a canonical spin structure induced from the \spinc structure on $X$, and one thinks of type IIA string theory on $X$ as a ``twisted compactification'' of M-theory on $S(L)$. In this setting, there is a generalization of \cref{f_well_defined} implying that on a closed \spinc $10$-manifold $X$, the data of a trivialization of $W_7(X)$ suffices to resolve the sign ambiguities in the partition function of the topological sector of type IIA string theory on $X$.
\end{rem}
\begin{rem}
Let us suppose that the target space of type IIA string theory has a string$^h$ structure. As a first level consistency check, we recall that a string$^h$ structure induces a spin$^c$ structure. This is consistent with the fact that the target space of type IIA string theory has a spin$^c$ structure, as we just discussed in \cref{IIA_spinc}. We exhibit the spin$^c$ structure on type IIA by observing the transformation of the gravitino field $\Psi$ in the low energy supergravity. This is a fermion that is charged under a $\U_1$-gauge symmetry, where the $\U_1$-bundle arises from dimensionally reducing away the M-theory circle. A gauge transformation $\Psi \rightarrow e^{2  \pi i q} \Psi$ reflects the spin$^c$ structure if $q$ is half integral, and it was shown in \cite{Duff:1998us,BEM:2003vb} that this is the case.
\end{rem}
\begin{defn}\label{DMW_str}
   The \textit{Diaconescu-Moore-Witten anomaly cancellation condition} is the requirement $W_7(X) = 0$. A \term{Diaconescu-Moore-Witten (DMW) structure} on a vector bundle $V$ is a \spinc structure and a trivialization of $W_7(V)$.
\end{defn}

 See \cite{Fiorenza:2019usl,Sati:2021uhj} for more on how generalized cohomology theories can be applied to M-theory from the DMW anomaly, and \cite{Freed:2019sco} for more on the tangential structure of M-theory when time-reversal is taken into account. 

The following serves as a sketch of the argument given in \cite{DMW:2000} for their anomaly cancellation condition. We start by unpacking the conditions on the function $f(\alpha)$. In particular  $f(\alpha)$ satisfies the property that 
\begin{equation}
    f(\alpha+\alpha') = f(\alpha)+f(\alpha')+\int_{X} \alpha \,\Sq^2(\alpha'\bmod 2)\,, 
\end{equation}
and $(X,\alpha)\mapsto f(\alpha)$ is a bordism invariant in $\Hom(\Omega^{\Spin}_{10}(K(\Z,4)),\Z/2)$. Suppose $\gamma\in H^4(X;\Z)$ is torsion. Then, while $|G_\alpha|^2$ is invariant under $\alpha \to \alpha +\gamma$, $f(\alpha)$ is often not invariant. Consider a specific transformation $\alpha \to \alpha +2 \gamma$; then,
\begin{equation}\label{2g_shift}
    f(\alpha+2\gamma) = f(\alpha)+f(2\gamma)+\int_X \alpha \, \Sq^2(2\gamma\bmod 2),
\end{equation}
but $2\gamma\bmod 2 = 0$ so we only need to consider the new term $f(2\gamma)$. Expanding again, we see
\begin{equation}
    f(2\gamma) = f(\gamma)+f(\gamma)+\int_{X} \gamma \, \Sq^2(\gamma\bmod 2)\,.
\end{equation}
Stong~\cite{stong} shows that $\int_X \gamma\Sq^2(\gamma) = \Sq^4(\Sq^2(\gamma))$; combining this with the Wu formula, Diaconescu-Moore-Witten~\cite[\S 6]{DMW:2000} show that
\begin{equation}
    \int_X \gamma\Sq^2(\gamma\bmod 2) = \int_X \gamma\Sq^2(\lambda(X)\bmod 2).
\end{equation}
The effect of~\eqref{2g_shift} in the partition function is given by $(-1)^{f(\alpha) + f(2\gamma)}$. As $2f(\gamma) = 0$, we only have to worry about $\int_X \gamma \Sq^2(\gamma\bmod 2)$.
\begin{lem}[{\cite[\S 6]{DMW:2000}}]
Let $\ang{\bl,\bl}\colon\mathrm{Tors}(H^4(X;\Z))\otimes \mathrm{Tors}(H^7(X;\Z))\to\Q/\Z$ denote the torsion pairing on a closed spin $4$-manifold $X$. Then for all $\gamma\in\mathrm{Tors}(H^4(X;\Z))$,
\begin{equation}\label{tors_not}
    \frac 12 \int_X \gamma \cdot \Sq^2(\lambda(X)\bmod 2)) = \ang{\gamma, \square_\Z(\Sq^2(\lambda(X)))}.
\end{equation}
\end{lem}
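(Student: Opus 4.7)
The plan is to establish \eqref{tors_not} as the specialization $\alpha = \Sq^2(\lambda(X)\bmod 2)$ of a general identity valid on any closed oriented $n$-manifold $X$: for $\gamma \in H^p(X;\Z)_{\mathrm{tors}}$ and $\alpha \in H^{n-p}(X;\Z/2)$,
\begin{equation*}
\langle \gamma, \square_\Z(\alpha)\rangle_{\Q/\Z} = \tfrac{1}{2} \int_X (\gamma \bmod 2)\cup \alpha,
\end{equation*}
where $\Z/2$ is identified with $\tfrac{1}{2}\Z/\Z \subset \Q/\Z$ via $1 \mapsto \tfrac{1}{2}$. Plugging in $p = 4$, $n = 10$, and $\alpha = \Sq^2(\lambda(X)\bmod 2)$ then recovers \eqref{tors_not}.

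The first step is a Bockstein comparison. I would consider the morphism of coefficient sequences from $0 \to \Z \xrightarrow{2} \Z \to \Z/2 \to 0$ to $0 \to \Z \to \Q \to \Q/\Z \to 0$ induced by the inclusions $\Z\hookrightarrow\Q$ and $\iota\colon \Z/2\hookrightarrow\Q/\Z$, $1\mapsto \tfrac 12$. Naturality of connecting homomorphisms in cohomology then gives $\square_\Z(\alpha) = \beta_{\Q/\Z}(\iota_*\alpha)$ in $H^{n-p+1}(X;\Z)$, where $\beta_{\Q/\Z}$ denotes the Bockstein associated to the bottom row.

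The second step invokes the classical description of the torsion linking pairing via $\Q/\Z$-lifts: for any torsion $\delta \in H^{n-p+1}(X;\Z)$, and any $\widehat\delta\in H^{n-p}(X;\Q/\Z)$ with $\beta_{\Q/\Z}(\widehat\delta) = \delta$, one has $\langle \gamma, \delta\rangle_{\Q/\Z} = \int_X \gamma\cup\widehat\delta$, where the right side is evaluated on the fundamental class via the coefficient pairing $\Z\otimes\Q/\Z\to\Q/\Z$. Applying this with $\widehat\delta = \iota_*\alpha$ and observing that $\gamma\cup\iota_*\alpha$ factors through $\tfrac{1}{2}\Z/\Z$ as $\iota_*\bigl(\tfrac{1}{2}(\bar\gamma\cup\alpha)\bigr)$ --- since $\iota_*\alpha$ takes values in this subgroup --- yields the asserted identity.

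The main obstacle is really just matching the normalization and sign conventions of the torsion linking pairing with those implicit in \cite{DMW:2000}; once those are fixed the computation is a short diagram chase. Notably, the argument uses no spin structure, no Wu formula, and no particular feature of $\Sq^2\lambda$ beyond its being a well-defined class in $H^6(X;\Z/2)$, so in fact the stronger identity $\tfrac 12 \int_X \gamma\cdot\eta = \langle\gamma,\square_\Z(\eta)\rangle$ holds for every $\eta\in H^6(X;\Z/2)$ and $\gamma\in \mathrm{Tors}(H^4(X;\Z))$.
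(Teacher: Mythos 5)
Your argument is correct. The paper states this lemma without proof, citing it to Diaconescu–Moore–Witten; their argument in \cite[\S 6]{DMW:2000} is, in essence, the same linking-form manipulation you carry out, so you have not departed from the intended proof so much as distilled it. The three steps — the map of coefficient sequences from $0\to\Z\xrightarrow{2}\Z\to\Z/2\to 0$ (with middle map $x\mapsto x/2$) to $0\to\Z\to\Q\to\Q/\Z\to0$, which identifies $\square_\Z$ with $\beta_{\Q/\Z}\circ\iota_*$; the standard formula $\ang{\gamma,\delta} = \int_X \gamma\cup\widehat\delta$ for the torsion pairing, where $\beta_{\Q/\Z}(\widehat\delta)=\delta$; and the factorization of $\gamma\cup\iota_*\alpha$ through $\bigl(\tfrac12\Z\bigr)/\Z$ — are each correct, and together they yield the claimed identity. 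Your observation that the identity is purely a statement about the torsion pairing on a closed oriented manifold, using neither the spin structure nor the Wu formula nor any property of $\Sq^2\lambda$ beyond its degree, is a genuine clarification: it makes explicit that the role of spin and the Wu formula is confined to the adjacent computation $\square_\Z(\Sq^2(\lambda(X)\bmod 2)) = W_7(X)$, not to \eqref{tors_not} itself. One cosmetic remark: the paper's statement says ``closed spin $4$-manifold,'' which is a typo for ``closed spin $10$-manifold'' (as your degree count $p=4$, $n=10$ implicitly confirms); and since the result lands in $\bigl(\tfrac12\Z\bigr)/\Z$, where $-1$ acts trivially, the sign ambiguity in the linking-form convention that you flag at the end is indeed harmless.
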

The equality~\eqref{tors_not} takes place in $\Q/\Z$: for the left-hand side, the integral is an element of $\Z/2$, so dividing by $2$ we obtain an element of $(\tfrac 12 \Z)/\Z$, which is a subgroup of $\Q/\Z$.

Finally, a direct calculation with the Wu formula and the relation $\lambda(X)\bmod 2 = w_4(X)$ shows
\begin{equation}
    \square_\Z(\Sq^2(\lambda(X)\bmod 2)) = W_7(X),
\end{equation}
so the DMW condition $W_7(X) = 0$ fixes the sign of the partition function unambiguously.\footnote{In conversation with Moore, it became known to the authors that the theory could be consistent even if the $W_7$ anomaly is non-trivial. If the partition function vanishes, that does not inherently mean that the theory itself is invalid. We plan to return to theories with nontrivial values of this anomaly in future work.}


\subsection{Relating Diaconescu-Moore-Witten anomaly cancellation with \stringh} \label{subsection:DMWStringh}
 We will show how a \stringh structure  induces the Diaconescu-Moore-Witten anomaly cancellation. 

\begin{thm}\label{prop:strinhtoW7}
Let $V\to X$ be a \stringh vector bundle. Then $W_7(V)$ admits a canonical trivialization.
\end{thm}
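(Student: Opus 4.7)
The plan is to identify $W_7(V)$ with $\tau_0(\square_\ku(\lambda^c(V)))$ in $H^7(X;\Z)$, and then transfer the canonical trivialization of $\square_\ku(\lambda^c(V)) \in \ku^7(X)$ provided by the \stringh structure (via \cref{triv_stringh_defn}) across this identification.

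The key input is a classical identification of cohomology operations: $\tau_0 \circ \square_\ku\colon H^n(-;\Z)\to H^{n+3}(-;\Z)$ equals the integral Steenrod cube $\square_\Z \circ \Sq^2 \circ \rho$, where $\rho$ denotes mod-$2$ reduction. This operation is precisely the first nontrivial $k$-invariant in the Postnikov tower of $\ku$ relative to $H\Z$, and is well-known since Atiyah's work on $K$-theory. I would then apply the right-hand side to $\lambda^c(V)$ for a \spinc vector bundle $V$ with determinant line bundle $L$. Using the Whitney sum formula, the identity $\rho(\lambda) = w_4$ on $B\Spin$, and $w_2(V) = \rho(c_1(L))$, one gets $\rho(\lambda^c(V)) = w_4(V) + \rho(c_1(L)^2)$. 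The Cartan formula, combined with the vanishing of $\Sq^1$ on mod-$2$ reductions of integer classes, reduces $\Sq^2$ of this to $\Sq^2(w_4(V))$; the Wu formula $\Sq^2(w_4) = w_6 + w_2\,w_4$ on $B\SO$ then gives
\[
    \square_\Z\bigl(\Sq^2(w_4(V))\bigr) \;=\; W_7(V) + \square_\Z(w_2\,w_4).
\]
The remaining term vanishes because the \spinc condition forces $w_2\,w_4$ to be the mod-$2$ reduction of the integral class $c_1(L)\cdot(\lambda^c(V) - c_1(L)^2)$, using $w_4 = \rho(\lambda^c(V) - c_1(L)^2)$ and $w_2 = \rho(c_1(L))$; and $\square_\Z$ kills mod-$2$ reductions of integer classes.

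Finally, I would promote this equality of cohomology classes to a canonical trivialization. A \stringh structure, by \cref{triv_stringh_defn}, is precisely a trivialization of the map $\square_\ku \circ \lambda^c(V)\colon X \to \Sigma^7\ku$. Post-composing with $\tau_0\colon \ku \to H\Z$ produces a canonical trivialization of $\tau_0(\square_\ku(\lambda^c(V)))\colon X \to K(\Z,7)$, which by the previous paragraph represents $W_7(V)$. The main obstacle is the computational step identifying $\tau_0\circ\square_\ku\circ\lambda^c$ with $W_7$ universally on $B\Spin^c$: the subtle point is that the ``mixed'' Wu term $w_2 w_4$, which generically obstructs this identification on oriented bundles, vanishes under $\square_\Z$ precisely because the \spinc hypothesis forces $w_4$ itself to lift to an integer class.
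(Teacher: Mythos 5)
Your proposal is correct and takes essentially the same approach as the paper: it identifies $\tau_0\circ\square_\ku$ with $\square_\Z\circ\Sq^2\circ\rho$ (the paper derives this from a diagram of cofiber sequences relating $\ku$ to $\tau_{\le 2}\ku$, while you cite it as classical), applies it to $\lambda^c(V)$, and transfers the trivialization by post-composing with $\tau_0$. Your treatment of the Wu-formula cross term $\square_\Z(w_2 w_4)$ is in fact more explicit than the paper's, which passes over the verification that this class vanishes on $B\Spin^c$; your observation that $w_2 w_4 = \rho\bigl(c_1(L)\cdot(\lambda^c(V)-c_1(L)^2)\bigr)$ lifts integrally cleanly fills that gap.
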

Thus the Diaconescu-Moore-Witten anomaly cancellation condition (\cref{DMW_str}) is automatically satisfied on \stringh $10$-manifolds.
\begin{proof}
We use the characterization of \stringh structures from \cref{triv_stringh_defn}: that we have trivialized $\square_\ku(\lambda^c(V))$.
    To relate \cref{triv_stringh_defn} to the $W_7(X)=0$ anomaly cancellation condition of Diaconescu-Moore-Witten we observe the following commutative diagram, whose rows are cofiber sequences:
    \begin{equation}
    \begin{tikzcd}
        \Sigma^2 \ku \arrow[r,"\beta"] \arrow[d, "\tau_{\le 2}"]  & \ku \arrow[r,"\tau_0"] \arrow[d,"\tau_{\leq 2}"]& H\Z \arrow[r,"\Box_{\ku}"] \arrow[d, Rightarrow, no head]& \Sigma^3 \ku \arrow[d,"\tau_0"]\\
\Sigma^2 H\Z \arrow[r]  &\tau_{\leq 2}\ku \arrow[r]                     & H\Z  \arrow[r,"\Box_{\Z} \Sq^2"]& \Sigma^3 H \Z \,.
    \end{tikzcd}
 \end{equation}
The top map builds $\ku$ as an extension with $\Box_{\ku}$ as the $k$-invariant, $\beta$ the Bott map, and the map down is the truncation map. 
This builds $\tau_{\leq 2}\ku$ also as an extension with $k$-invariant $\Box_{\Z} \Sq^2$. 
Extending the left most square gives a map of cofiber sequences 
\begin{equation}
    \begin{tikzcd}
        \Sigma^4 \ku \arrow[r, "="] \arrow[d,"\beta"]& \Sigma^4 \ku \arrow[d,"\beta^2"] \\
        \Sigma^2 \ku \arrow[r,"\beta"] \arrow[d,"\tau_{\leq 2}"] & \ku \arrow[d] \arrow[r,"\tau_0"] & H\Z \arrow[d] \\
        \Sigma^2 H\Z \arrow[r]& \tau_{\leq 2} \ku \arrow[r] & H\Z
    \end{tikzcd}
\end{equation}
This implies the  map $H\Z \to H \Z$ is the identity by the third isomorphism theorem and right most square commutes. On cohomology, the rightmost commuting square gives
\begin{equation}
\begin{gathered}
\begin{tikzcd}
   \arrow[r] H^4(X;\Z) \arrow[r,"\Box_{\ku}"] \arrow[d]& \ku^7(X) \arrow[d,"\tau_0"]\\
H^4(X;\Z)  \arrow[r,"\Box_{\Z} \Sq^2"]& H^7(X;\Z)\,,     
\end{tikzcd}\end{gathered}\end{equation}
where $q$ is the restriction to cohomology. For a \stringh vector bundle $V$, we we compute $\Box_{\Z} \Sq^2 (\lambda^c(V))$. We first take the mod 2 reduction of $\lambda^c$ which is $w_4(V\oplus L)$, where $L\to X$ is the determinant line bundle of $V$.  Applying the Whitney sum formula gives $w_4(V\oplus L) = w_4(V)+w_2(V)^2$, upon using the fact that $w_2(TX) = w_2(L)$. The action by $\Sq^2$ is obtained by the Wu formula, for which we get $\Sq^2(w_4(B)+w_2(B)^2) = w_2(V)w_4(V)+w_6(V)$. Applying $\Box_{\Z}$ then implies $\Box_{\Z} \Sq^2(\lambda^c(V))=W_7(V)$. The square commuting means $\tau_0 \Box_{\ku} = \Box_{\Z} \Sq^2$ and 
 that if  $\tau_0 \Box_{\ku}(\lambda^c(V))=0$ then $\Box_{\Z} \Sq^2 (\lambda^c(V))= 0$. Therefore, if we have a \stringh structure on $V$, then $W_7(V)$ is canonically trivialized.
\end{proof}
\begin{rem}
Fiorenza-Sati-Schreiber~\cite[\S 3.2]{Fiorenza:2019usl} derive the Diaconescu-Moore-Witten condition $W_7 = 0$ in another way, starting from their Hypothesis H: that the $C$-field in M-theory is quantized in twisted cohomotopy, rather than twisted cohomology (see~\cite{Fiorenza:2019usl} for more on Hypothesis H and~\cite[\S 4.2]{SS25} for an introduction). There are interesting parallels to the story presented here: rather than arising from a \stringh structure, Fiorenza-Sati-Schreiber obtain the condition $W_7 = 0$ from a twisted $\mathrm{Sp}$-structure giving rise to the needed twist of cohomotopy. Intriguingly, the appearances of cohomotopy in their work and $\tmf$ in ours may be related: because the Hurewicz map $\mathbb S\to\tmf$ is $6$-connected, these two spectra are not easy to distinguish in appearances in string theory. We thank Urs Schreiber for a helpful discussion regarding this comparison.
\end{rem}

While a string$^h$ structure always induces a $W_7=0$ condition, one can also ask the reverse question: given a manifold with DMW structure, must this lift to a \stringh structure? Is such a lift unique? This would allow us to choose a \stringh structure when studying type IIA backgrounds, and/or to argue that such a choice is uniquely defined.

\begin{thm}\label{prop:obstructlift}
Let $V\to X$ be a vector bundle with DMW structure. Then there is a class $\rho(V)\in H^9(X;\Z)$ which vanishes if $V$ is \stringh. If $X$ is a manifold of dimension $10$ or below with DMW structure, $\rho(X)$ is the complete obstruction to lifting the DMW structure to a \stringh structure.
\end{thm}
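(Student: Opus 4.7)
The plan is to apply obstruction theory to the fiber sequence
\[
\Sigma^9\ku \xrightarrow{\beta} \Sigma^7\ku \xrightarrow{\tau_0} \Sigma^7 H\Z
\]
obtained by shifting the defining cofiber sequence of $\square_\ku$ by $7$, extracting from the DMW data a class $\tilde g \in \ku^9(X)$ whose vanishing governs the existence of a \stringh lift; then to use the Atiyah-Hirzebruch spectral sequence to show that for $\dim X \le 10$ this class is completely detected by its image under $\tau_0$.

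By the universal property of the fiber, a map $f\colon X \to \Sigma^7\ku$ together with a nullhomotopy of $\tau_0\circ f$ is equivalent data to a lift $\tilde f \colon X \to \Sigma^9\ku$ with a specified homotopy $\beta \circ \tilde f \simeq f$. Applied to $f = \square_\ku\lambda^c(V)$, whose composition with $\tau_0$ is $W_7(V)$ by \cref{prop:strinhtoW7}, the DMW data on $V$ produces a well-defined class $\tilde g \in \ku^9(X)$ with $\beta(\tilde g) \simeq \square_\ku\lambda^c(V)$. Set $\rho(V) \coloneqq \tau_0(\tilde g) \in H^9(X;\Z)$. A \stringh refinement of the given DMW structure is, unpacked through the same universal property, precisely a nullhomotopy of $\tilde g$ as a map $X \to \Sigma^9\ku$; such a nullhomotopy forces $\tilde g = 0$ in $\ku^9(X)$, and hence $\rho(V) = 0$.

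Conversely, if $\dim X \le 10$ and $\rho(V) = 0$, one wants to conclude $\tilde g = 0$, whereupon running the previous paragraph backwards furnishes a \stringh lift of the DMW structure. For this I invoke the Atiyah-Hirzebruch spectral sequence $E_2^{p,q} = H^p(X;\pi_{-q}\ku) \Rightarrow \ku^{p+q}(X)$: on the antidiagonal $p+q = 9$ the groups $\pi_{-q}\ku$ vanish unless $q \le 0$ is even, yielding candidate columns $(9,0), (11,-2), (13,-4),\dots$, all but $(9,0)$ of which vanish because $\dim X \le 10$; differentials leaving $(9,0)$ land in columns with $p \ge 12$ (again killed by dimension), and any differentials entering $(9,0)$ would have sources with $q \ge 1$, which are zero on $E_2$. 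Thus the edge homomorphism $\tau_0\colon\ku^9(X) \to H^9(X;\Z)$ is an isomorphism, and the converse follows.

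The main obstacle is the careful homotopy-theoretic bookkeeping in the equivalence between nullhomotopies of $\tau_0\circ f$ and lifts along $\beta$: one must verify that a nullhomotopy of $f$ itself corresponds under this equivalence to a genuine nullhomotopy of the lift $\tilde f$, not merely to a weaker identification $\beta\tilde f \simeq 0$. This is routine once one models the fiber as an explicit homotopy pullback, but it is the step that secures the identification of $\rho(V)$ as a complete obstruction in the allowable dimension range.
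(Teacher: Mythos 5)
Your proof is correct but proceeds along a genuinely different route from the paper's. The paper works at the level of classifying spaces: it forms $B\Spin^c\langle W_7\rangle$ as the fiber of $W_7\colon B\Spin^c\to K(\Z,7)$, computes the homotopy groups of the fiber $F$ of $B\String^h\to B\Spin^c\langle W_7\rangle$ via the long exact sequence, and reads off from $\pi_8(F)\cong\Z$ (and the vanishing of $\pi_9(F)$ and of $H^{11}(X;\pi_{10}F)$ for $\dim X\le 10$) that the complete obstruction to lifting sits in $H^9(X;\Z)$. You instead work with the cofiber sequence of spectra $\Sigma^9\ku\to\Sigma^7\ku\to\Sigma^7 H\Z$, package the DMW data into a class $\tilde g\in\ku^9(X)$, set $\rho(V)=\tau_0(\tilde g)$, and then use the Atiyah--Hirzebruch spectral sequence to identify $\tau_0\colon\ku^9(X)\to H^9(X;\Z)$ as an isomorphism in the stated dimension range. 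These two computations are secretly the same — the fiber $F$ is $\Omega^\infty\Sigma^8\ku$ (as one sees by delooping the paper's diagram~\eqref{eq:stringhstring} against $B\Spin^c\langle W_7\rangle$), and the paper's obstruction class is precisely the first Postnikov truncation of your $\tilde g$ — but your version has the advantage of producing $\rho$ as an explicit characteristic class, namely $\tau_0$ of the $\ku$-theoretic lift, rather than just asserting its existence by general obstruction theory. Two small points: (1) in the AHSS paragraph, the $d_2$ differential out of $(9,0)$ lands in $p=11$, not $p\ge 12$; this group still vanishes, both because $\pi_1\ku=0$ and because $\dim X\le 10$, so the conclusion is unaffected, but the inequality as written is off by one. (2) The ``main obstacle'' you flag — that a nullhomotopy of $f$ compatible with $H$ is equivalent to a nullhomotopy of $\tilde g$, not merely a nullhomotopy of $\beta\tilde g$ — is real, but it is resolved exactly as you suggest: the pair $(f,H)$ is, by the universal property of the fiber, a single map $\tilde g\colon X\to\Sigma^9\ku$, and a homotopy of this pair to $(0,\mathrm{const})$ is a nullhomotopy of $\tilde g$; unwinding, this is a nullhomotopy $N$ of $f$ together with a homotopy $\tau_0 N\simeq H$, which is precisely a \stringh refinement of the DMW structure.
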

\begin{lem}
    Let $B\Spin^c\langle W_7 \rangle$ be the fiber of the map $B\Spin^c \xrightarrow{W_7}K(\Z,7)$. The homotopy groups up to degree 10 of $B\Spin^c\langle W_7 \rangle$ are given by:
    \begin{equation*}
        \pi_*(B\Spin^c\langle W_7 \rangle)= \{0,\,0,\,\Z,\,0,\,\Z,\,0,\Z,\,0, \Z,\,\Z/2,\,\Z/2,\ldots \}\,.
    \end{equation*}
\end{lem}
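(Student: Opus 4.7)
The natural approach is to apply the long exact sequence in homotopy to the defining fibration $B\Spin^c\langle W_7 \rangle \to B\Spin^c \xrightarrow{W_7} K(\Z, 7)$, after first recording the low-degree homotopy groups of $B\Spin^c$. Using the decomposition $B\Spin^c \simeq B\Spin \times B\U_1$ recalled in the previous lemma, together with the standard low-dimensional values of $\pi_k(\Spin)$ coming from Bott periodicity, one obtains
\begin{equation*}
    \pi_*(B\Spin^c) \;=\; \{0,\,0,\,\Z,\,0,\,\Z,\,0,\,0,\,0,\,\Z,\,\Z/2,\,\Z/2,\ldots\}
\end{equation*}
in degrees $0$ through $10$, since $B\U_1 \simeq K(\Z,2)$ contributes only a $\Z$ in degree $2$ and the nonzero homotopy of $B\Spin$ in this range sits in degrees $4$, $8$, $9$, and $10$.

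Since $K(\Z,7)$ has a single nonzero homotopy group $\pi_7 \cong \Z$, the long exact sequence immediately yields isomorphisms $\pi_k(B\Spin^c\langle W_7\rangle) \cong \pi_k(B\Spin^c)$ for $k \le 5$ and for $k \ge 9$, accounting for every entry of the claimed table except $\pi_6$, $\pi_7$, $\pi_8$. The remaining portion of the sequence reads
\begin{equation*}
    0 \to \pi_8(B\Spin^c\langle W_7\rangle) \to \Z \to 0 \to \pi_7(B\Spin^c\langle W_7\rangle) \to 0 \to \Z \to \pi_6(B\Spin^c\langle W_7\rangle) \to 0,
\end{equation*}
from which one reads off $\pi_8 \cong \Z$, $\pi_7 = 0$, and $\pi_6 \cong \Z$, matching the statement.

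There is no real obstacle here: the only structural input beyond the product decomposition of $B\Spin^c$ is the observation that $\pi_7(B\Spin^c) = 0$, which forces the map $\pi_7(B\Spin^c) \to \pi_7(K(\Z,7))$ to be zero and turns the connecting map $\pi_7(K(\Z,7)) \to \pi_6(B\Spin^c\langle W_7\rangle)$ into an isomorphism. Everything else is mechanical bookkeeping with the long exact sequence.
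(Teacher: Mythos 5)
Your proof is correct and takes essentially the same route as the paper: both apply the long exact sequence in homotopy to the fibration $B\Spin^c\langle W_7\rangle \to B\Spin^c \to K(\Z,7)$, using the splitting $B\Spin^c \simeq B\Spin\times B\U_1$ (and Bott periodicity for $B\Spin$) to supply $\pi_*(B\Spin^c)$, and the only nontrivial observation is that $\pi_7(B\Spin^c)=0$ makes the connecting map $\Z = \pi_7(K(\Z,7))\to\pi_6(B\Spin^c\langle W_7\rangle)$ an isomorphism. The paper simply cites its earlier figure for $\pi_*(B\Spin^c)$ rather than re-deriving it.
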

\begin{proof}
    This follows immediately from studying the homotopy long exact sequence for $B\Spin^c\langle W_7 \rangle \rightarrow B\Spin^c \rightarrow K(\Z,7)$, and using the homotopy groups in \cref{fig:homotopyBStringh}.
\end{proof}

\begin{proof}[Proof of Proposition \ref{prop:obstructlift}]
     Let the space $F$ be the fiber of the map $f:B\String^h \to B\Spin^c\langle W_7\rangle$. For a map $X\to  B\Spin^c\langle W_7\rangle$, we want to quantify the first obstruction to lifting against the map $f$. This will be given by a cohomology class $H^{n+1}(X;\pi_n(F))$. The long exact sequence in homotopy groups for the fiber sequence $F \rightarrow B\String^h\rightarrow B\Spin^c\langle W_7\rangle$ is given in \cref{fig:homotopyofF}. We see that the only homotopy group that contributes to the obstruction for manifolds $X$ in the degrees we are considering is $\pi_8(F) = \mathbb{Z}$. Therefore the obstruction class is in $H^9(X;\Z)$. 
 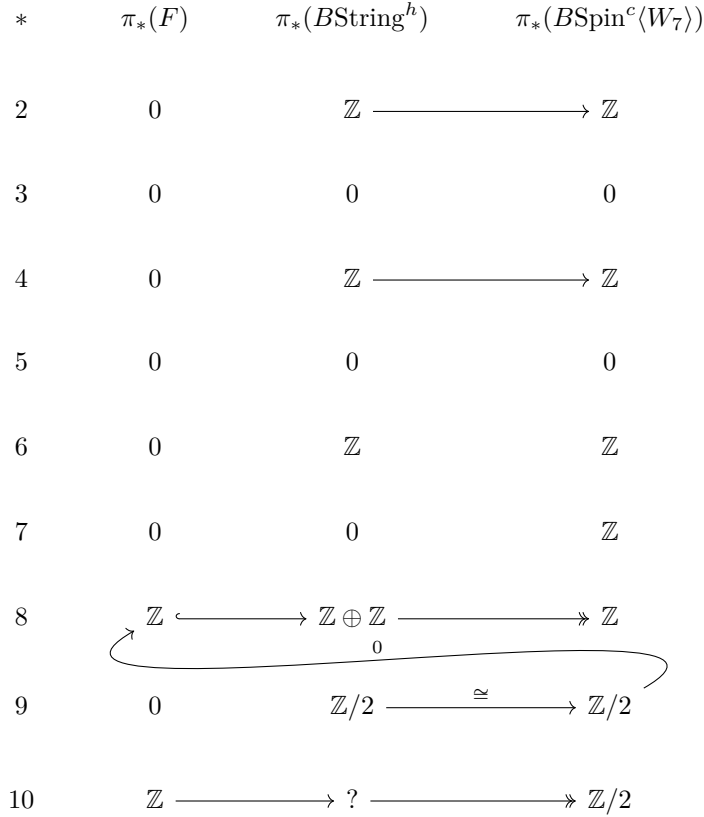
\begin{figure}[h!]
    \centering
\begin{tikzcd}
	{*} & {\pi_*(F)} & {\pi_*(B\String^h)} & {\pi_*(B\Spin^c\langle W_7\rangle)} \\
	2 & 0 & {\Z}\arrow[r] & \Z \\
	3 & 0 & 0 & 0 \\
	4 & 0  & \Z \arrow[r] & \Z \\
	5 & {0} & 0 & 0 \\
	6 & 0 & \Z & \Z \\
	7 & {0} & {0} & \Z \\
	8 & \Z  \arrow[r,hook] & {\Z\oplus \Z} \arrow[r, two heads] & \Z\\
    9 &  0 &\Z/2  \arrow[r,"\cong"]  &\Z/2 \arrow[from=9-4,to=8-2,in=-150, out=30,"0",swap] \\
    10 & \Z \arrow[r] & ? \arrow[r,two heads] &\Z/2 
\end{tikzcd}
    \caption{Homotopy Long Exact Sequence for computing the homotopy groups of $B\String^h$ in degrees up to 10.}
    \label{fig:homotopyofF}
\end{figure}
\end{proof}
If an obstruction class in $H^n$ trivializes, the choices of trivializations live in a torsor for $H^{n-1}$. We summarize the implications below:
\begin{itemize}
    \item For manifolds of dimension $5$ and below, a \spinc structure lifts uniquely to a DMW structure. On $6$-manifolds, such a lift exists but may not be unique, as $H^6(X,\Z)$ is not necessarily trivial.
    \item For spin$^c$ manifolds $X$ that are dimension $7$ and below, the obstruction for a SHW structure to lift to a string$^h$ structure vanishes, and there is a unique lift of a DMW structure to a \stringh structure. In dimension 8, the obstruction vanishes but the choice of lift may be nonunique.
    \item For spin$^c$ manifolds in dimensions 9 and 10, it is thus far unclear whether a DMW structure always lifts to a string$^h$ structure. Diaconescu-Moore-Witten~\cite[(5.3)]{DMW:E8} claim that $\square_\Z(\Sq^2(a\bmod 2))=0$ gives a lift of $a \in H^4(X;\Z)$ to $K$-theory and hence a string$^h$ structure on $X$. We see here that it is not a priori clear that one obtains a $K$-theory lift in dimension 10, but it is possible in dimension 9. It is still possible that the obstruction vanishes on closed DMW $10$-manifolds.
\end{itemize}

\begin{thm}\label{prop:dim9}
Let $M$ be a closed 9 dimensional \spinc manifold. Every DMW structure on $M$ lifts to a \stringh structure.
\end{thm}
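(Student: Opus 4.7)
The plan is to translate the lifting problem into two concrete cohomological assertions and prove both using the Atiyah--Hirzebruch spectral sequence. By \cref{triv_stringh_defn}, a \stringh lift of a DMW structure on $M$ amounts to a nullhomotopy of $\square_\ku(\lambda^c(M)) \in \ku^7(M)$ whose image under $\tau_0$ is the prescribed trivialization of $W_7(M) = \tau_0\square_\ku(\lambda^c(M))$. This decomposes into (i) showing that $\square_\ku(\lambda^c(M))$ vanishes as a class in $\ku^7(M)$, so that some \stringh lift exists, and (ii) showing the forgetful map from the $\ku^6(M)$-torsor of \stringh lifts to the $H^6(M;\Z)$-torsor of DMW structures is surjective, equivalently that $\tau_0\colon \ku^6(M) \to H^6(M;\Z)$ is surjective.

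Both assertions rest on a single rationality observation: the cofiber sequence $\Sigma^2\ku\to\ku\to H\Z$ splits after rationalization, since $\ku_\Q \simeq \bigvee_{n\ge 0} \Sigma^{2n} H\Q$, so $\square_\ku$ is rationally zero and therefore takes torsion values on finite CW complexes. For (i), I would set up the Atiyah--Hirzebruch spectral sequence $E_2^{p,q} = H^p(M;\ku^q(\pt)) \Rightarrow \ku^{p+q}(M)$; since $\dim M = 9$ and $\ku^q(\pt) = 0$ unless $q \le 0$ is even, only the bidegrees $(7,0)$ and $(9,-2)$ contribute to total degree $7$, with values $H^7(M;\Z)$ and $H^9(M;\Z)\cong\Z$ (the latter using orientability of $M$). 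The only $d_3$ targeting $(9,-2)$ comes from $H^6(M;\Z)$ and factors through mod-$2$ cohomology, landing in $2$-torsion which vanishes inside $\Z$; remaining differentials vanish for dimensional reasons. This yields a short exact sequence
\[
0 \to \Z \to \ku^7(M) \xrightarrow{\tau_0} H^7(M;\Z) \to 0.
\]
The DMW hypothesis puts $\square_\ku(\lambda^c(M))$ in the $\Z$-subgroup, and torsion-ness forces it to be zero.

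For (ii), exactness in the cofiber-sequence long exact sequence
\[
\ku^6(M) \xrightarrow{\tau_0} H^6(M;\Z) \xrightarrow{\square_\ku} \ku^9(M)
\]
reduces surjectivity of $\tau_0$ on $\ku^6(M)$ to vanishing of $\square_\ku\colon H^6(M;\Z) \to \ku^9(M)$. An analogous AHSS computation collapses to give $\ku^9(M)\cong H^9(M;\Z)\cong\Z$, again torsion-free, so the torsion-valued $\square_\ku|_{H^6(M;\Z)}$ must vanish. I expect the main technical obstacle to be the AHSS bookkeeping in (i) --- specifically, identifying $\ker\tau_0 \subseteq \ku^7(M)$ with the filtration piece $E_\infty^{9,-2}$ and confirming that the $d_3$ differential out of $H^6(M;\Z)$ lands in $2$-torsion --- but both are classical properties of $\ku$-theory.
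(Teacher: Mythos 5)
Your proposal is correct, but it takes a genuinely different route from the paper. Both arguments reduce to showing $\square_\ku(\lambda^c(M)) = 0$ in $\ku^7(M)$, which is equivalent to lifting $\lambda^c(M)$ from $H^4(M;\Z)$ to $\ku^4(M)$. The paper works on the $\ku^4$ side: it runs $\lambda^c$ through the Atiyah--Hirzebruch spectral sequence and checks that the two possible differentials out of bidegree $(4,0)$ vanish --- the $d_3$ landing on $W_7(M)$ is killed by the DMW hypothesis, and the $d_5$ landing in $E_5^{9,-4}$ is killed via \cref{lem:splitoff}, the splitting-off of the top cell of a closed $\ku$-oriented manifold. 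You work on the $\ku^7$ side: $\tau_0\square_\ku(\lambda^c) = W_7 = 0$ places $\square_\ku(\lambda^c)$ in $\ker\tau_0$, your AHSS bookkeeping identifies $\ker\tau_0$ with $E_\infty^{9,-2}\cong H^9(M;\Z)\cong\Z$, and the rational splitting $\ku_\Q\simeq\bigvee_n\Sigma^{2n}H\Q$ shows $\square_\ku$ is torsion-valued, hence zero. Your rationality-plus-torsion-freeness argument plays exactly the role that \cref{lem:splitoff} plays for the paper; both ultimately use Poincaré duality for the closed oriented $9$-manifold. One further difference worth noting: your part (ii) --- that $\tau_0\colon\ku^6(M)\to H^6(M;\Z)$ is surjective, again via $\ku^9(M)\cong\Z$ and the torsion-ness of $\square_\ku$ --- makes explicit why \emph{every} trivialization of $W_7$, not merely some, is hit by a trivialization of $\square_\ku(\lambda^c)$. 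The paper's written proof stops at exhibiting a \stringh structure on $M$ and does not spell out this torsor-surjectivity step, so your argument is the more complete one on that score.
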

\begin{lem}\label{lem:splitoff}
    If $M$ is a closed, $R$-oriented $n$-manifold, for $R$ a ring spectrum, then the map $\Sigma^n R\to R \wedge M_{+}$ induced by the top cell of $M$ splits off as a direct summand.
\end{lem}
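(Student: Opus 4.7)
The plan is to obtain the retraction from Poincaré duality in $R$-(co)homology, which for a closed $R$-oriented $n$-manifold $M$ is assembled out of Atiyah duality together with the Thom isomorphism coming from the orientation. Atiyah duality gives a Spanier--Whitehead duality equivalence $D(\Sigma^\infty M_+)\simeq \Sigma^\infty M^{-TM}$, and the chosen $R$-orientation realizes an $R$-module equivalence $R\wedge M^{-TM}\simeq \Sigma^{-n}R\wedge M_+$ via its Thom class. Combining these produces the Poincaré duality equivalence of $R$-modules
$$R\wedge M_+\;\simeq\;\Sigma^n R\wedge D(\Sigma^\infty M_+).$$

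I would then construct the candidate retraction by smashing $\Sigma^n R$ with the map $D(\iota)\colon D(\Sigma^\infty M_+)\to D(S^0)=S^0$ Spanier--Whitehead dual to the basepoint inclusion $\iota\colon S^0\to \Sigma^\infty M_+$:
$$r\colon R\wedge M_+\;\simeq\;\Sigma^n R\wedge D(\Sigma^\infty M_+)\xrightarrow{\;\mathrm{id}\wedge D(\iota)\;}\Sigma^n R.$$
Write $\tau\colon \Sigma^n R\to R\wedge M_+$ for the top-cell map, which by definition represents the $R$-fundamental class $[M]_R\in R_n(M)$. Since $r\circ\tau$ is an $R$-module self-map of the free rank-one $R$-module $\Sigma^n R$, it is homotopic to the identity as soon as it acts as $1$ on $\pi_n(\Sigma^n R)\cong \pi_0 R$, so it is enough to compute this one element.

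To compute it, I would unwind the definitions to recognize that $(r\circ\tau)_*(1)$ is the Kronecker pairing $\langle r,\tau\rangle\in\pi_0 R$. Under the Poincaré duality isomorphism $R^n(M)\cong R_0(M)$, the class of $r$ corresponds to the image of $1\in\pi_0 R\cong R_0(\mathrm{pt})$ under the basepoint map $R_0(\mathrm{pt})\to R_0(M)$, because $r$ is obtained from the dual of $\iota$. Naturality of the pairing under $\mathrm{pt}\hookrightarrow M$ reduces the Kronecker pairing to the tautological pairing on a point, producing $1\in\pi_0 R$.

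The main technical obstacle is bookkeeping: one must check that the equivalence $R\wedge M_+\simeq \Sigma^n R\wedge D(\Sigma^\infty M_+)$ really does send the top-cell map $\tau$ to the $\Sigma^n$-shift of the unit $R\to R\wedge D(\Sigma^\infty M_+)$ dual to the collapse $\Sigma^\infty M_+\to S^0$, so that the pairing above evaluates to $1$ rather than to some other element of $\pi_0 R$. Once this identification is in hand, the splitting is formal, and the argument needs nothing more of $R$ than what is already implicit in the hypothesis that $M$ is $R$-oriented, so the lemma holds for any (homotopy) ring spectrum.
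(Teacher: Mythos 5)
Your proof is correct and follows essentially the same route as the paper: both pass through the $R$-module Atiyah--Poincar\'e duality equivalence $R\wedge M_+\simeq \Sigma^n R\wedge D(\Sigma^\infty M_+)$ and reduce the splitting of the top cell to the (trivial) splitting of the bottom cell of the dual, realized by the basepoint inclusion and collapse map. The paper simply asserts this correspondence of cells, whereas you spell out the retraction and verify it via the Kronecker pairing; that extra bookkeeping is sound but not a different argument.
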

\begin{proof}
The bottom cell of any space always splits off stably, by the inclusion of the basepoint followed by the crush map. If $M$ is $R$-oriented, Atiyah-Poincaré duality identifies $R\wedge M_+$ with its $(\dim(M))$-shifted ($R$-module) Spanier-Whitehead dual $R\wedge \Sigma^{-\dim(M)} D(M)_+$; the top cell of $D(M)$ corresponds to the bottom cell of $M$, hence splits off, and therefore the top cell of $R\wedge M_+$ does as well.
\end{proof}
As a result, the $p=9$ column of the Atiyah-Hirzebruch spectral sequence computing $\ku^*(M)$ for a closed \spinc $9$-manifold $M$ splits off as a direct sum, which prohibits any differentials or extensions involving this column.

\begin{proof}[Proof of Proposition \ref{prop:dim9}]
Consider the $\ku$-cohomology Atiyah-Hirzebruch spectral sequence for $M$, and suppose that $\lambda^c \in E_2^{4,0} \cong H^4(M;\Z)$ be such that $\Box_{\Z} \Sq^2(\lambda^c)=0$. If $\lambda^c$ survives to the $E_\infty$ page of the $\ku^*(M)$ spectral sequence then $\lambda^c$ has a $K$-theory lift to $\ku^4(M)$. The homotopy groups of $\ku$ start in degree 0 given by $\Z$, and by Bott periodicity are $\Z$ in each negative even degree. 
    The only differentials that $\lambda^c$ can admit, and within the range of degrees that we are considering, are $d_3$ and $d_5$:
    \begin{itemize}
        \item The $d_3$ differential is given by $\Box_{\Z} \circ \Sq^2\circ \text{mod}\, 2$, which maps $\lambda^c$ to $W_7(M)$. But since this class is trivial by assumption, $d_3$ vanishes.
        \item The $d_5$ differential maps $\lambda^c$ to $E_5^{9,-4}$, but by \cref{lem:splitoff} $E_2^{9,-4}$ must split off as a direct sum and therefore cannot be killed by a differential, hence $d_5$ vanishes.
    \end{itemize}
     Thus, $\lambda^c$ survives to $E^{4,0}_{\infty}$ which means it has a $\ku$-cohomology lift, and $M$ has a string$^h$ structure.
\end{proof}

\subsection{Applications of \stringh for type IIA compactifications}\label{subsection:applications}
Consider a compactification of type IIA string theory down to dimension $d<10$. We know that imposing Diaconescu-Moore-Witten's anomaly cancellation condition $W_7 = 0$ resolves a sign ambiguity in the partition function, but it is a priori possible that the compactified theory has an anomaly $\alpha$ of some other provenance. This anomaly is a unitary $(d+1)$-dimensional invertible field theory of manifolds equipped with a DMW structure and possibly a map to a space $X$ (e.g.\ $X = BG$ if we have a background gauge field for the group $G$), so by work of Freed-Hopkins~\cite{Freed:2016rqq} and Grady~\cite{Grady:2023sav}, $\alpha$ is classified in terms of the bordism groups $\Omega_{k}^{\Spin^c\ang{W_7}}(X)$ for $k= d+1,d+2$.

There is a standard procedure to compute bordism groups of manifolds with a trivialized characteristic class such as DMW-structures (see, for example,~\cite[\S 3.3.2]{BDDM24}): first, use the Serre spectral sequence to study $H^*(B\Spin^c\ang{W_7})$, then use that cohomology as input to the Adams or Atiyah-Hirzebruch spectral sequence. This is thus quite a bit more complicated than just computing \spinc bordism.

In this subsection, we will use \stringh bordism to simplify the bordism computations underlying anomaly cancellation of these IIA compactifications. Specifically, we will lift from DMW-structures to \stringh structures, and show that in dimensions $d\le 8$, this loses no information about the anomaly. We will also see that the Atiyah-Hirzebruch and Adams spectral sequences for \stringh bordism are relatively straightforward after our work in the previous section. See \cite{Debray:2022wcd,Debray:2023tdd,Tachikawa:2021mvw, TY:2021mby} for more examples of anomaly cancellations in compactifications of supergravity and heterotic string theory.
%
%
We highlight here how using string$^h$ affects the computations in different dimensions: 
\begin{itemize}
    \item If $M$ is a manifold of dimension $5$ or below, every \spinc structure on $M$ lifts uniquely to a \stringh structure. Therefore in dimensions $5$ and below, lifting to a \stringh structure does not buy us anything new over computing with \spinc. 
    \item For manifolds in dimensions $6\le d\le 9$, \spinc and DMW structures are not equivalent, and every DMW structure lifts to a \stringh structure. Therefore given a $d$-dimensional field theory on manifolds with a DMW structure (and perhaps also some background fields), the anomaly is trivializable as an IFT of DMW manifolds if and only if it is trivializable as a \stringh theory. Because \stringh bordism is easier to compute than DMW bordism, as we will see 
    in a few examples below, this can assist in anomaly cancellation computations. 
    \item In dimension 10, because we do not know whether every DMW structure lifts to a \stringh structure, we do not know whether restricting to \stringh manifolds loses information with regards to anomaly cancellation.
\end{itemize}

The real highlight of when \stringh leads to simplifications is when we are concerned with  anomalies of a compactified theory that has some Lie group global symmetry. Using the change of rings from $\mathcal{A}$ to $\mathcal{E}(2)$ that is afforded to us by the orientation $\sigma_1(3)$, the Adams spectral sequence can be used to compute these bordism groups in low degrees.

\begin{rem}
    If we suppose a naïveness to string$^h$ as
    well as the $W_7=0$ condition
    and only considered spin$^c$ structures for target space manifolds, then we will start to see the difference after degree 6. After this degree is when string$^h$ bordism begins to have more free summands than in spin$^c$ and thus there could potentially be more perturbative anomalies to check. 
\end{rem}

\begin{example}\label{ex:even}
Consider any theory that arises as a compactification of type IIA in dimension 9 or below, with a $G$ symmetry where $G$ is a Lie group of the type $\U_n, \SU_n$, or $\mathrm{Sp}_n$ for $n>1$. The bordism groups relevant for anomaly cancellation will be  $\Omega^{\Spin^c\langle W_7\rangle}_{*}(BG)$, and by the above discussion $\Omega_*^{\String^h}$ maps surjectively onto the DMW bordism groups in dimensions $*\le 9$. Since the homology and $\Omega_*^{\String^h}$ are both concentrated in even degrees in the range relevant to this example, the Atiyah-Hirzebruch spectral sequence that computes $\Omega^{\String^h}_{*}(BG)$ therefore collapses on the $E_2$-page in the degrees relevant to this example. Thus the Anderson dual $(I_{\Z}\Omega^{\String^h})^*(BG)$, which classifies the unitary invertible field theories with this structure, is free and concentrated in odd degrees. This implies that any theory with a \stringh structure and a $G$ global symmetry has no global anomalies to cancel, and once the perturbative anomalies are cancelled then the theory is anomaly-free.
\end{example}

\begin{example}\label{ex:spin}
  Consider any theory that arises as a compactification of type IIA in dimension 9 or below, with a $\U_1$ global symmetry. If  one is interested in computing $\Omega^{\Spin^c\langle W_7\rangle}_*(B\U_1)$, the situation is much more complicated from the point of view of the Atiyah-Hirzebruch spectral sequence because the low degree homology classes for $B\U_1$ are more nontrivial. However, this is where being able to lift to string$^h$ pays off. Since the homology of $B\U_1$ is in even degrees, and by \cref{bordism_ring} the homotopy groups of $\MTString^h$ are also torsion-free and concentrated in even degrees in this range. Therefore the Atiyah-Hirzebruch spectral sequence for $\Omega^{\String^h}_*(B\U_1)$ collapses on the $E_2$ page and the anomalies share the same properties as in \cref{ex:even}. 
 \end{example}

\begin{example}\label{ex:simplyconnectedLie}
    Let $G$ by a connected, simple, simply connected Lie group. Since the reduced homology of $G$ begins in degree $4$, and  $\Omega^{\String^h}_*\to \Omega^{\Spin^c}_*$ is an isomorphism in degrees $< 6$ we see by the  Atiyah-Hirzebruch spectral sequence that the first place where the two groups $\Omega^{\String^h}_*(BG)$ and $\Omega^{\Spin^c}_*(BG)$ can potentially begin to differ is in bi-degree $(p,q)=(4,6)$. Therefore
     $\Omega^{\String^h}_*(BG)\to \Omega^{\Spin^c}_*(BG)$ is an isomorphism in degrees $\leq 9$. In \cref{appthm}, we prove that the groups $\Omega^{\Spin^c}_*(BG)$ are torsion-free in degrees $9$ and below, so the same is true for \stringh bordism. Similarly to the previous two examples, this suffices to detect anomalies manifolds with DMW structures and principal $G$-bundles in dimensions $9$ and below, so we learn that anomalies for these symmetries therefore share the same features as in \cref{ex:even}.
     
     In higher degrees we predict it will be easier to use the Adams spectral sequence, where one can take advantage of a change-of-rings result to work over $\cE(2)$, since the complications with using the Atiyah-Hirzebruch spectral sequence build up very quickly in higher degrees. We leave the details to future work. 
\end{example}

%
%
%

\appendix

\section{} \label{appendix:computations}
The purpose of this appendix is to prove the following theorem.
\begin{thm}\label{appthm}
Let $G$ be a connected, simple, simply connected Lie group. Then $\ku_*(BG)$ is torsion-free in degrees $10$ and below.
\end{thm}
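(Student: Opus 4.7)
I would run the Atiyah--Hirzebruch spectral sequence $E^2_{p,q} = H_p(BG;\pi_q\ku)\Rightarrow \ku_{p+q}(BG)$, using that $\pi_q\ku = \Z$ for $q\ge 0$ even and vanishes otherwise, so the $E^2$-page is concentrated in bidegrees with $q\ge 0$ and even. I would then argue case by case using the Cartan--Killing classification of connected, simple, simply connected compact Lie groups.

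For the groups with polynomial integral cohomology, namely $G = SU_n$, $Sp_n$, and the low-rank spin groups handled by the exceptional isomorphisms $\Spin_3\cong SU_2$, $\Spin_5\cong Sp_2$, $\Spin_6\cong SU_4$, $H_*(BG;\Z)$ is torsion-free and concentrated in even degrees, generated by the duals of Chern or symplectic Pontrjagin classes. The entire $E^2$-page then lives in bidegrees with both coordinates even, every differential vanishes for parity reasons, and $\ku_*(BG)$ is torsion-free in \emph{every} degree.

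For the remaining groups, namely $\Spin_n$ with $n\ge 7$ and the exceptionals $G_2, F_4, E_6, E_7, E_8$, the integral cohomology of $BG$ carries torsion already in degrees $\le 10$. For instance, $W_7\in H^7(B\Spin_n;\Z)$ yields, by universal coefficients, a $\Z/2$ summand of $H_6(BG;\Z)$, and analogous low-primary torsion classes appear for the exceptional groups via the classical computations of Borel and Toda. I would compute $H_*(BG;\Z)$ through degree $10$ explicitly, using that the map $B\Spin\to K(\Z,4)$ classified by $\lambda$ is $7$-connected, so $H_{\le 7}(B\Spin;\Z)$ agrees with the well-understood integral homology of $K(\Z,4)$, together with standard references for the exceptional groups, and then track the AHSS differentials $d_3,d_5,\ldots$ and filtration quotients in total degree $\le 10$.

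The technical heart, and the principal obstacle, is a hidden additive extension: the torsion classes that survive to $E^\infty$, such as the $\Z/2\subset E^\infty_{6,0}$ for $\Spin_n$, cannot be cancelled by any AHSS differential because the dimensionally available sources and targets all vanish in the relevant range. To conclude one must show each such torsion class is absorbed into a non-split extension with an adjacent free quotient $E^\infty_{p',q'}\cong\Z$, of the form $0\to\Z\xrightarrow{2}\Z\to\Z/2\to 0$, producing a single free $\Z$-summand in place of $\Z\oplus\Z/2$. My preferred strategy would be to compare with $\ku^*(BG)$, which via the Atiyah--Segal completion theorem admits a power-series description on fundamental characters and is in particular torsion-free, and to transfer the non-splitting through the universal coefficient spectral sequence relating $\ku^*(BG)$ and $\ku_*(BG)$, using the $\ku^*$-module action of the Bott class to detect and force these extensions.
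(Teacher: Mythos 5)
Your first step---running the $\ku$-homology Atiyah--Hirzebruch spectral sequence and observing that it collapses for $\SU_n$, $\Sp_n$, and the low-rank $\Spin_n$ because $H_*(BG;\Z)$ is torsion-free and even---is exactly what the paper does (\cref{easy_appendix}), as is the use of Borel's results to settle the odd primes $p\ge 5$ (and partially $p=3$). You have also correctly identified the genuine crux for $\Spin_{\ge 7}$ and the exceptional groups: the $\Z/2$ at $E^\infty_{6,0}$, Ext-dual to $W_7\in H^7(B\Spin;\Z)$, really does survive all AHSS differentials, so the theorem can only hold because of a hidden additive extension. Recognizing that this is the obstruction, rather than hoping for a differential, is the right instinct.

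However, your proposed mechanism for resolving that extension has an unaddressed gap. The Atiyah--Segal completion theorem controls the \emph{periodic} theory $K^*(BG)\cong R(G)^\wedge_I$; it does not by itself say $\ku^*(BG)$ is torsion-free. Since $BG$ is infinite-dimensional and the fiber of $\ku\to\KU$ is coconnective, $\ku^*(BG)$ and $K^*(BG)$ are related in a non-trivial way and torsion-freeness does not transfer formally. Furthermore, even granting torsion-freeness of $\ku^*(BG)$, the universal coefficient spectral sequence over $\ku_*=\Z[\beta]$ (which has global dimension $2$) would place any $p$-torsion of $\ku_*(BG)$ in $\Ext^2_{\ku_*}$, and one would still need to rule out a $d_2$ differential or $E_\infty$-page extension absorbing it before concluding. ``Transfer the non-splitting through the UCT SS using the Bott action'' names a plausible idea, but as stated it is an assertion, not an argument, and it is where your proof stops being a proof.

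The paper instead takes a route that never has to confront a hidden extension. Working prime-by-prime, it invokes Adams' splitting $\ku_{(p)}\simeq\bigvee_i\Sigma^{2i}\ell$ with $H^*(\ell;\Z/p)\cong\cA\otimes_{\cE(1)}\Z/p$, so the $\ell$-based Adams spectral sequence has $E_2=\Ext_{\cE(1)}(H^*(BG;\Z/p),\Z/p)$ over the small exterior algebra $\cE(1)=\ang{Q_0,Q_1}$. In the degrees needed, $H^*(BG;\Z/p)$ decomposes as an $\cE(1)$-module into even shifts of $\Z/p$ and the module $\uQ$, and $\Ext_{\cE(1)}$ of each consists entirely of $h_0$-towers in even stems. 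This forces collapse for parity reasons and simultaneously settles the extension questions: an $h_0$-tower on $E_\infty$ can only assemble to a free $\Z_{(p)}$-summand. This is why the paper trades the AHSS for the Adams spectral sequence at exactly the point where your proposal runs into trouble.
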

This is an ingredient in our anomaly cancellation result in \cref{ex:simplyconnectedLie,,ex:spin}; however, it requires different techniques than we used in that section, so we have siloed it off here.
\begin{lem}\label{easy_appendix}\hfill
\begin{enumerate}
    \item\label{classical} \Cref{appthm} is true for $G = \mathrm{Sp}_n$, $\SU_n$, and (for $n\le 6$) $\Spin_n$.
    \item\label{exceptional} If we localize at a prime $p > 5$, the theorem is true for all $G$ in the statement of \cref{appthm}. Localized at $p = 5$, the theorem is true for all such $G$ except perhaps $E_8$, and localized at $p = 3$, the theorem is true for all such $G$ except perhaps $F_4,E_6,E_7$, and $E_8$.
\end{enumerate}
\end{lem}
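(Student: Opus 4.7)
The plan is to run the Atiyah-Hirzebruch spectral sequence
\begin{equation*}
E^2_{p,q} = H_p(BG;\pi_q\ku) \Longrightarrow \ku_{p+q}(BG),
\end{equation*}
exploiting that $\pi_*\ku\cong\Z[\beta]$ with $\lvert\beta\rvert = 2$ is concentrated in even degrees. The driving observation is that whenever $H_*(BG;\Z)$ (respectively $H_*(BG;\Z_{(p)})$) is torsion-free and concentrated in even degrees, the $E^2$-page lives in even total degrees, every differential vanishes by parity, and the abutment $\ku_*(BG)$ (respectively $\ku_*(BG)_{(p)}$) is an iterated extension of torsion-free abelian groups, hence itself torsion-free. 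This actually yields torsion-freeness globally, which is strictly stronger than what \cref{appthm} demands.

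For part~\eqref{classical} I would invoke Borel's theorem to identify $H^*(BSU_n;\Z)\cong\Z[c_2,\dots,c_n]$ and $H^*(B\mathrm{Sp}_n;\Z)\cong\Z[q_1,\dots,q_n]$ as polynomial algebras on even-degree generators, immediately giving torsion-free, even-concentrated cohomology. For $\Spin_n$ with $3\le n\le 6$ I would use the exceptional isomorphisms $\Spin_3\cong SU_2$, $\Spin_4\cong SU_2\times SU_2$, $\Spin_5\cong\mathrm{Sp}_2$, and $\Spin_6\cong SU_4$ to reduce to the $SU$/$\mathrm{Sp}$ case. Universal coefficients then promotes torsion-free even cohomology to torsion-free even homology, and the spectral sequence argument above concludes.

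For part~\eqref{exceptional} I would cite the classical classification of torsion primes for compact, connected, simply connected, simple Lie groups (Borel; see also Mimura--Toda): the integral cohomology $H^*(BG;\Z)$ has $p$-torsion only in the cases $p=2$ for $\Spin_n$ with $n\ge 7$, $G_2$, $F_4$, $E_6$, $E_7$, $E_8$; $p=3$ for $F_4, E_6, E_7, E_8$; $p=5$ for $E_8$; and no $p$-torsion appears for any such $G$ when $p\ge 7$. For every pair $(G,p)$ outside this list, $H^*(BG;\Z_{(p)})$ is torsion-free, and Borel's structure theorem then identifies it as a polynomial algebra on even-degree generators. Running the spectral sequence argument with $\ku_{(p)}$ in place of $\ku$ shows $\ku_*(BG)_{(p)}$ is torsion-free, and enumerating the pairs $(G,p)$ not excluded produces exactly the three statements claimed in the lemma.

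The main obstacle is essentially bookkeeping: locating a clean, citable reference for the list of torsion primes of each exceptional group together with the accompanying polynomial-algebra structure at good primes. The spectral sequence collapse itself is routine once the parity input is in hand. It is worth flagging that this lemma deliberately leaves open the truly hard cases of \cref{appthm}, namely $\ku_*(BG)$ at the actual torsion primes of $G$ (for instance $E_8$ at $p=2,3,5$, or $G_2,F_4,E_6,E_7$ at $p=2$); these are the cases where the degree restriction $*\le 10$ becomes essential and where one would have to pivot to a Serre- or Adams-spectral-sequence argument tracking the low-dimensional $\Z/p$ cohomology generators explicitly.
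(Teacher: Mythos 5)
Your argument is correct and follows essentially the same route as the paper: both proofs run the $\ku$-homology Atiyah--Hirzebruch spectral sequence, note that when $H_*(BG;\Z)$ (or $H_*(BG;\Z_{(p)})$) is torsion-free and even-concentrated the $E_2$-page lives in even total degree so all differentials vanish, and conclude that the abutment is torsion-free; the paper invokes Borel~\cite{Bor61} for the same list of good primes you cite. The only cosmetic differences are that you spell out the exceptional isomorphisms for $\Spin_n$, $n\le 6$, and the extension argument for torsion-freeness of the abutment, both of which the paper leaves implicit.
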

\begin{proof}
For part~\eqref{classical}, let $G$ be $\mathrm{Sp}_n$, $\SU_n$, or (for $n\le 6$) $\Spin_n$ and set up the Atiyah-Hirzebruch spectral sequence
\begin{equation}
    E^2_{p,q} = H_p(BG; \ku_q) \Longrightarrow \ku_{p+q}(BG).
\end{equation}
For these choices of $G$, $H_*(BG;\Z)$ is torsion-free and concentrated in even degrees. Since $\ku_*$ is also torsion-free and concentrated in even degrees, the spectral sequence collapses to imply the first part of the lemma statement.

The proof of part~\eqref{exceptional} is similar except for using the $\ku_{(p)}$-homology Atiyah-Hirzebruch spectral sequence, whose input is the $\Z_{(p)}$-homology of $BG$. Assume $p\ge 7$, or $p = 5$ and $G\ne E_8$, or $p = 3$ and $G\not\in\set{F_4, E_6, E_7, E_8}$. Borel~\cite[Théorèmes B et 2.5]{Bor61} shows that for these choices of $G$ and $p$, $H^*(BG;\Z)$ lacks $p$-torsion and is concentrated in even degrees, so the Atiyah-Hirzebruch spectral sequence collapses as in the previous paragraph.
\end{proof}
The Atiyah-Hirzebruch-style proof of \cref{easy_appendix} does not generalize nicely to the remaining cases of \cref{appthm}, so we use the Adams spectral sequence. Choose a prime $p$ and let $\cA$ denote the $p$-primary Steenrod algebra, the $\Z$-graded noncommutative $\Z/p$-algebra consisting of natural transformations $H^*(\bl;\Z/p)\to H^{*+n}(\bl;\Z/p)$ that commute with the suspension functor. Then the Adams spectral sequence has signature
\begin{equation}\label{Adams_E2}
    E_2^{s,t} = \Ext_\cA(H^*(X;\Z/p), \Z/p) \Longrightarrow \pi_{t-s}^s(X)_p^\wedge,
\end{equation}
where $\pi_*^s$ denotes stable homotopy groups and $(\bl)_p^\wedge$ denotes $p$-completion.
\begin{defn}
Let $Q_i\in\cA$ denote the $i^{\mathrm{th}}$ Milnor primitive; thus $Q_0$ is the Bockstein operator for $0\to\Z/p\to\Z/p^2\to\Z/p\to 0$ and $Q_1$ is the commutator of $Q_0$ and $\Sq^2$ (if $p = 2$) or $\cP^1$ (if $p > 2$).

Let $\cE(1)\coloneqq\ang{Q_0, Q_1}\subset\cA$; the Adem relations imply $\cE(1)$ is an exterior algebra on $Q_0$ and $Q_1$.
\end{defn}
\begin{thm}[{Adams~\cite[\S 16]{Ada74}}]
\label{adams_summand}
For each prime $p$, there is a spectrum $\ell$ with the following properties.
\begin{enumerate}
    \item $\ku_{(p)} \simeq \ell\vee \Sigma^2\ell\vee\dotsb\vee\Sigma^{2(p-2)}\ell$, or if $p = 2$, $\ku_{(2)}\simeq\ell$.
    \item There is an $\cA$-module isomorphism $H^*(\ell;\Z/p)\cong \cA\otimes_{\cE(1)}\Z/p$.
\end{enumerate}
\end{thm}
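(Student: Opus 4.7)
The plan is to produce $\ell$ as an Adams-operation summand of $\ku_{(p)}$ and identify its mod $p$ cohomology by comparison with the truncated Brown--Peterson spectrum $\BP\ang{1}$.

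At $p = 2$ set $\ell = \ku_{(2)}$, so the wedge decomposition is trivial and only the cohomology identification remains. At odd $p$, the stable Adams operations $\psi^k$ for $k$ prime to $p$ assemble, after $p$-localization, into an action of $(\Z/p)^\times$ on $\ku_{(p)}$ (using that $\psi^k$ depends on $k$ only through its Teichm\"uller lift once we localize at $p$). Since $(\Z/p)^\times$ is cyclic of order $p-1$ and $p-1$ is a unit in $\Z_{(p)}$, a choice of generator $g$ yields orthogonal idempotents
$$e_i \coloneqq \tfrac{1}{p-1} \sum_{j=0}^{p-2} g^{-ij}\,\psi^{g^j} \in \mathrm{End}(\ku_{(p)}), \qquad i = 0, \dotsc, p-2,$$
which sum to the identity. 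Let $\ell_i$ be the image summand of $e_i$. On $\pi_{2k}(\ku_{(p)}) = \Z_{(p)} u^k$ the operation $\psi^g$ acts by $g^k$, so $\pi_*(\ell_i)$ is concentrated in degrees $2k$ with $k \equiv i \pmod{p-1}$. Multiplication by the Bott element $u$ carries the $g^{i-1}$-eigenspace to the $g^i$-eigenspace, giving a weak equivalence $\Sigma^2 \ell_{i-1} \simeq \ell_i$; setting $\ell \coloneqq \ell_0$ we obtain $\ku_{(p)} \simeq \bigvee_{i=0}^{p-2} \Sigma^{2i}\ell$.

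For the cohomology, one computes $\pi_*(\ell) \cong \Z_{(p)}[v_1]$ with $|v_1| = 2(p-1)$ at odd $p$ (and $|v_1| = 2$ at $p = 2$), which matches $\pi_*(\BP\ang{1})$. One then constructs an equivalence $\ell \simeq \BP\ang{1}_{(p)}$ by obstruction theory on the space of ring maps, after which the desired $\cA$-module identification reduces to the classical calculation $H^*(\BP\ang{1};\Z/p) \cong \cA \otimes_{\cE(1)} \Z/p$. The latter is obtained by observing that the bottom class of $\BP\ang{1}$ is annihilated by both $Q_0$ (because $\pi_0 = \Z_{(p)}$ is torsion-free and the bottom class lifts to integral cohomology) and $Q_1$ (because of the $v_1$-structure on the Postnikov tower), combined with a Poincar\'e-series count against $\cA /\!/ \cE(1)$ using the Milnor basis of $\cA$.

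The main obstacle is to verify that the Adams-operation idempotents produce a genuine splitting of spectra rather than merely an additive homotopy splitting, and that the Bott shift $\ell_i \simeq \Sigma^2\ell_{i-1}$ is compatible with it. Both amount to careful bookkeeping with Adams operations on $p$-local complex $K$-theory, as in Adams' original treatment; a modern route is to note that $\ku_{(p)}$ is an $E_\infty$-ring with Adams operations realized as $E_\infty$-ring maps, so the idempotents lift to the point-set level after passing to $p$-completion, and then one descends to $p$-local spectra using that the relevant pieces are already defined over $\Z_{(p)}$. With the splitting in hand, the cohomology comparison with $\BP\ang{1}$ is essentially forced by the agreement of homotopy rings together with standard obstruction theory for $p$-local, connective, complex-oriented spectra.
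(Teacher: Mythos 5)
The paper does not prove this statement; it is quoted from Adams~\cite[\S 16]{Ada74}, so there is no in-house argument to compare against. Your outline follows the classical route (eigenspace idempotents built from stable Adams operations, Bott shifts between eigen-summands, identification of the summand's cohomology with $\cA\otimes_{\cE(1)}\Z/p$), which is the right strategy and essentially Adams' own.

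There is, however, one genuine problem with the central formula. The operations $\psi^{g^j}$ on $\ku_{(p)}$ do \emph{not} assemble into an action of $(\Z/p)^\times$: the stable $\psi^k$ acts on $\pi_{2n}$ by $k^n$, so $\psi^{g^{p-1}}$ acts by $g^{(p-1)n}$, which is congruent to $1$ mod $p$ but not equal to $1$; in particular $\psi^k$ does not ``depend on $k$ only through its Teichm\"uller lift'' after mere $p$-localization (the Teichm\"uller lift lives in $\Z_p^\times$, not $\Z_{(p)}^\times$). Consequently your finite average $e_i=\tfrac{1}{p-1}\sum_j g^{-ij}\psi^{g^j}$ acts on $\pi_{2n}$ by $\tfrac{1}{p-1}\sum_j g^{j(n-i)}$, which is only \emph{congruent} to $0$ or $1$ mod $p$ rather than equal to it, so the $e_i$ are not idempotent and not orthogonal. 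The standard repairs are exactly the ones you gesture at in your last paragraph but do not carry out: either (a) work $p$-adically, where $\psi^\alpha$ is defined for $\alpha\in\Z_p^\times$ by continuity and the restriction to the Teichm\"uller subgroup $\mu_{p-1}$ gives an honest $(\Z/p)^\times$-action whose averaging idempotents do work, then descend to $\ku_{(p)}$ via the arithmetic fracture square (the rational splitting by eigenvalues of $\psi^g$ is immediate, and the two splittings glue); or (b) follow Adams and construct the idempotents directly in the ring of degree-zero stable operations $\KU^0_{(p)}\KU_{(p)}$, which is large enough to contain elements acting by exactly $0$ or $1$ on each $\pi_{2n}$. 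Either way, the finite sum of $\psi$'s should be demoted from ``the construction'' to ``the motivation.'' Secondary, smaller point: deducing $\ell\simeq \BP\ang 1_{(p)}$ from an isomorphism of homotopy rings plus ``obstruction theory'' is not automatic (isomorphic homotopy rings do not give an equivalence); the usual shortcut is to avoid $\BP\ang 1$ entirely and compute $H^*(\ell;\Z/p)$ from the cofiber sequence $\Sigma^{2p-2}\ell\xrightarrow{v_1}\ell\to H\Z_{(p)}$, using $H^*(H\Z_{(p)};\Z/p)\cong\cA/\cA Q_0$ and a Poincar\'e-series count, which delivers $\cA\otimes_{\cE(1)}\Z/p$ directly.
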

The first part of \cref{adams_summand} implies that, if we can prove $\ell_*(BG)$ is torsion-free in degrees $10$ and below, then we have proven \cref{appthm}. The second part of \cref{adams_summand} allows us to simplify the Adams spectral sequence calculating $\ell$-homology: if one plugs in $X = \ell\wedge Y$ to~\eqref{Adams_E2}, the spectral sequence simplifies to
\begin{equation}\label{ell_Adams}
    E_2^{s,t} = \Ext_{\cE(1)}^{s,t}(H^*(Y;\Z/p), \Z/p) \Longrightarrow \ell_{t-s}(Y)_p^\wedge.
\end{equation}
The $\ell_*$-module structure on $\ell_*(Y)$ manifests in this spectral sequence through the action of the algebra $\Ext_{\cE(1)}(\Z/p, \Z/p)$ on the $E_2$-page of~\eqref{ell_Adams}. Explicitly, this algebra is~\cite[\S 2.1]{BG03}
\begin{equation}\label{extE1}
    \Ext_{\cE(1)}^{*,*}(\Z/p, \Z/p) \cong \Z/p[h_0, v_1]
\end{equation}
with $h_0\in\Ext^{1,1}$ and $v_1\in\Ext^{1,2p-2}$. The action of $h_0$ on the $E_\infty$-page of~\eqref{ell_Adams} lifts to detect multiplication by $p$ on $\ell_*(Y)$.

On the $E_2$-page of~\eqref{ell_Adams}, an \term{$h_0$-tower} is a free $\Z/p[h_0]$-module of rank $1$.
\begin{lem}\label{even_good}
Suppose $Y$ is a CW complex with finitely many cells in each dimension and the $E_2$-page of the spectral sequence~\eqref{ell_Adams} for $Y$ consists solely of $h_0$-towers in even $(t-s)$-degrees as long as $t-s\le N$. Then in degrees $k\le N-1$, $\ell_k(Y)$ is torsion-free.
\end{lem}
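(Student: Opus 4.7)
The plan is to show the spectral sequence~\eqref{ell_Adams} degenerates in the relevant strip and then extract torsion-freeness from the $h_0$-tower structure. First I would observe that each Adams differential $d_r\colon E_r^{s,t}\to E_r^{s+r,t+r-1}$ decreases the total degree $t-s$ by one, hence reverses its parity. By hypothesis, within the strip $t-s\le N$ the $E_2$-page vanishes in every odd total degree and is a direct sum of $h_0$-towers in every even total degree. For a class in total degree $k\le N-1$, any outgoing differential lands in degree $k-1\le N-2$ and any incoming one originates in degree $k+1\le N$; both target degrees lie within the controlled strip, and one of them is odd. Thus all differentials to and from total degree $k$ vanish, and $E_\infty^{s,s+k}=E_2^{s,s+k}$ for every $k\le N-1$.

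Next I would resolve the extension problem by invoking that $h_0$-multiplication on $E_\infty$ detects multiplication by $p$ on $\ell_*(Y)_p^\wedge$. For $k$ odd and $k\le N-1$, $E_\infty=0$ in total degree $k$, so $\ell_k(Y)_p^\wedge=0$. For $k$ even and $k\le N-1$, the Adams filtration on $\ell_k(Y)_p^\wedge$ has filtration quotients $F^s/F^{s+1}\cong E_\infty^{s,s+k}$, and these all lie in the single total degree $k$, which is in the controlled strip. Because each $h_0$-tower is infinite, the associated graded is free over $\Z/p[h_0]$, which implies that $p$ acts injectively on $\ell_k(Y)_p^\wedge$; combined with $p$-adic completeness, this exhibits $\ell_k(Y)_p^\wedge$ as a free $\Z_p$-module. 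Pulling this back through $p$-completion (using finite-type of $Y$ so $p$-completion detects $p$-torsion) shows $\ell_k(Y)$ has no $p$-torsion.

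The main obstacle is the extension argument in the second paragraph. Hidden $p$-power extensions are the standard subtlety of the Adams spectral sequence, and the key feature of the hypothesis that averts them is the \emph{infiniteness} of each $h_0$-tower: a finite tower of length $n$ would encode a cyclic $\Z/p^n$-summand rather than a free $\Z_p$-summand. Since our hypothesis supplies infinite towers in the relevant range, and since every filtration quotient of $\ell_k(Y)_p^\wedge$ with $k\le N-1$ is visible in the controlled strip, no such $p$-power torsion can appear and the spectral sequence resolves cleanly.
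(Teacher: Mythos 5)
Your proof is correct and follows essentially the same strategy as the paper: use the parity of $t-s$ together with the fact that Adams differentials change that parity to deduce $E_\infty=E_2$ in the relevant range, then use the infiniteness of the $h_0$-towers (and finite-type of $Y$) to rule out hidden $p$-power extensions. The paper states the extension step more tersely, working directly with finitely generated $\Z_{(p)}$-modules rather than spelling out the passage through $\ell_k(Y)_p^\wedge$ and $\Z_p$-freeness, but the content is the same.
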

For any connected Lie group $G$, there is a choice of $BG$ which is a CW complex with finitely many cells in each dimension, so this hypothesis does not worry us.
\begin{proof}
By assumption, $t-s$ is even for all nonzero classes on the $E_2$-page with $t-s\le N$; since Adams differentials change the parity of $t-s$, this forces all differentials in that range to vanish. Then, all extension questions for multiplication by $p$ in that range are resolved by the $h_0$-action: since the $E_\infty$-page for $t-s\le N-1$ is a direct sum of $h_0$-towers, there can be no hidden extensions by $p$ in this range. Since $Y$ has finitely many cells in each dimension, $\ell_k(Y)$ is a finitely generated $\Z_{(p)}$-module for each $k$, so we conclude that, in the range claimed, $\ell_k(Y)$ is a free $\Z_{(p)}$-module of finite rank.
\end{proof}
\begin{rem}\label{Zp_even}
Because $\abs{v_1}$ is even, \eqref{extE1} implies $\Ext_{\cE(1)}(\Z/p, \Z/p)$ consists of $h_0$-towers in even degrees.
\end{rem}
\begin{lem}[{Adams-Priddy~\cite[\S 3]{AP76}}]\label{uQ_even}
Up to multiplication by a unit in $(\Z/p)^\times$, there is a unique nonzero $\cE(1)$-module map $f\colon \Sigma^{-1}\cE(1)\to \Sigma^{-1}\Z/p$. Given any such map, let $\uQ\coloneqq \ker(f)$; the isomorphism type of $\uQ$ does not depend on $f$. Moreover, there is an $\Ext_{\cE(1)}(\Z/p, \Z/p)$-equivariant isomorphism $\Ext^{s,t}_{\cE(1)}(\uQ, \Z/p)\cong \Ext_{\cE(1)}^{s+1, t+1}(\Z/p, \Z/p)$, so $\Ext_{\cE(1)}(\uQ, \Z/p)$ consists of $h_0$-towers in even degrees.
\end{lem}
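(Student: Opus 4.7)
The plan is to realize $\uQ$ as the kernel of a surjection from the free rank-one $\cE(1)$-module $\Sigma^{-1}\cE(1)$ onto the (desuspension of the) augmentation module $\Sigma^{-1}\Z/p$, and then exploit the long exact sequence in $\Ext$.

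First I would dispatch the uniqueness of $f$ and the well-definedness of $\uQ$. Because $\Sigma^{-1}\cE(1)$ is free of rank one over $\cE(1)$, a $\cE(1)$-module map out of it is determined by the image of the degree $-1$ generator $\Sigma^{-1}\!\cdot 1$, which must land in the one-dimensional $\Z/p$-vector space $(\Sigma^{-1}\Z/p)_{-1}$. Hence $\Hom_{\cE(1)}(\Sigma^{-1}\cE(1),\Sigma^{-1}\Z/p)\cong\Z/p$, giving uniqueness of $f$ up to a unit in $(\Z/p)^\times$. Replacing $f$ by a nonzero scalar multiple does not alter its kernel, so $\uQ$ is well-defined.

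Next, the short exact sequence of $\cE(1)$-modules
\begin{equation}
    0\longrightarrow\uQ\longrightarrow\Sigma^{-1}\cE(1)\overset{f}{\longrightarrow}\Sigma^{-1}\Z/p\longrightarrow 0
\end{equation}
yields, upon applying $\Ext_{\cE(1)}(\bl,\Z/p)$, a long exact sequence. Since $\cE(1)$ is a finite-dimensional exterior Hopf algebra and therefore a Frobenius (hence self-injective) algebra, the free module $\Sigma^{-1}\cE(1)$ is both projective and injective, killing $\Ext^{s,t}_{\cE(1)}(\Sigma^{-1}\cE(1),\Z/p)$ for all $s\ge 1$. The connecting homomorphism thus becomes an isomorphism for $s\ge 1$,
\begin{equation}
    \delta\colon\Ext^{s,t}_{\cE(1)}(\uQ,\Z/p)\overset{\cong}{\longrightarrow}\Ext^{s+1,t}_{\cE(1)}(\Sigma^{-1}\Z/p,\Z/p)\cong\Ext^{s+1,t+1}_{\cE(1)}(\Z/p,\Z/p),
\end{equation}
where the final step is the standard suspension identification shifting internal degree by $1$. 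The $s=0$ portion of the long exact sequence I would handle by directly matching dimensions of the $\Hom$ terms, which is a bookkeeping exercise.

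For the $\Ext_{\cE(1)}(\Z/p,\Z/p)$-equivariance, I would invoke the standard fact that $\delta$ is given by Yoneda multiplication with the extension class in $\Ext^{1,1}_{\cE(1)}(\Sigma^{-1}\Z/p,\uQ)$, which commutes with the right Yoneda action of $\Ext_{\cE(1)}(\Z/p,\Z/p)$ on both sides by associativity of Yoneda composition. The consequence about $h_0$-towers in even degrees then follows from \eqref{extE1} and \cref{Zp_even}, since the bijection $(s,t)\mapsto(s+1,t+1)$ preserves $t-s$. The only potentially subtle point is the $s=0$ edge of the long exact sequence, but the heart of the argument lives entirely in the self-injectivity of $\cE(1)$, which is completely formal.
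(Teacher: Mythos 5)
The paper does not actually prove this lemma --- it is stated with a citation to Adams--Priddy~\cite[\S 3]{AP76} --- so there is no internal proof to compare against; your argument stands or falls on its own. And it stands: the identification of $\Hom_{\cE(1)}(\Sigma^{-1}\cE(1),\Sigma^{-1}\Z/p)$ with $\Z/p$ via freeness gives uniqueness of $f$ up to scalar, and the long exact sequence in $\Ext$ applied to $0\to\uQ\to\Sigma^{-1}\cE(1)\to\Sigma^{-1}\Z/p\to0$, together with vanishing of $\Ext^{s\ge 1}_{\cE(1)}(\Sigma^{-1}\cE(1),\Z/p)$ and the suspension shift, produces the desired isomorphism; equivariance via Yoneda associativity is the standard argument. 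Two small notes. First, the appeal to self-injectivity (Frobenius property) of $\cE(1)$ is a red herring: the free module $\Sigma^{-1}\cE(1)$ appears as the \emph{first} argument of $\Ext$, so plain projectivity already kills $\Ext^{s\ge 1}$; no injectivity is needed anywhere in the proof. Second, the bidegree you assign to the extension class is off: the short exact sequence has degree-zero maps, so its class lives in $\Ext^{1,0}_{\cE(1)}(\Sigma^{-1}\Z/p,\uQ)$, not $\Ext^{1,1}$; the extra $+1$ in the internal degree only appears after you apply the suspension identification $\Ext^{*,t}(\Sigma^{-1}\Z/p,\Z/p)\cong\Ext^{*,t+1}(\Z/p,\Z/p)$. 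Neither slip affects the argument. The $s=0$ edge you wave off does indeed close: $\Ext^{0,t}(\Sigma^{-1}\cE(1),\Z/p)$ is nonzero only at $t=-1$, where it is hit isomorphically by $f^*$, so the connecting map is an isomorphism at $s=0$ as well.
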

For any $\cE(1)$-module $M$, let $M_{\ge d}$ denote the $\cE(1)$-submodule generated by homogeneous classes in degrees $d$ and greater.
\begin{defn}
We say that two $\cE(1)$-modules $M$ and $N$ are \term{isomorphic up to degree $d$}, denoted $M\cong_{<d}N$, if there is an $\cE(1)$-module isomorphism $M/M_{\ge d}\overset\cong\to N/N_{\ge d}$.
\end{defn}
\begin{prop}
Let $Y$ be a CW complex with finitely many cells in each dimension. Suppose that $H^*(Y;\Z/p)$ is isomorphic up to degree $d$ to a direct sum of copies of $\Sigma^{2m_i}\Z/p$ and $\Sigma^{2n_j}\uQ$ for various $m_i,n_j$. Then for $k\le d-2$, $\ell_*(Y)$ has no $p$-torsion.
\end{prop}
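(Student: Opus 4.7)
The plan is to apply the $\ell$-Adams spectral sequence~\eqref{ell_Adams} and verify the hypothesis of \cref{even_good} in a sufficient range. Write $M \coloneqq H^*(Y;\Z/p)$ and $M_0\coloneqq\bigoplus_i\Sigma^{2m_i}\Z/p\oplus\bigoplus_j\Sigma^{2n_j}\uQ$. The strategy is to identify $\Ext_{\cE(1)}^{s,t}(M,\Z/p)$ with $\Ext_{\cE(1)}^{s,t}(M_0,\Z/p)$ in an appropriate range of bidegrees, then use the known structure of the latter to conclude it is a sum of $h_0$-towers in even $(t-s)$-degrees, and finally invoke \cref{even_good}.

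The main technical input is a standard connectivity estimate: for any $\cE(1)$-module $N$ concentrated in internal degrees $\ge d$, the augmentation ideal $I\cE(1)$ is $1$-connected (starting with $Q_0$ in degree $1$), so a minimal free $\cE(1)$-resolution $P_\bullet \to N$ has $P_s$ generated in internal degrees $\ge d + s$, which forces $\Ext_{\cE(1)}^{s,t}(N,\Z/p) = 0$ whenever $t - s < d$. Applying this vanishing to $M_{\ge d}$ and $(M_0)_{\ge d}$, and combining the long exact sequences in Ext coming from $0 \to M_{\ge d} \to M \to M/M_{\ge d} \to 0$ and the analogue for $M_0$ with the hypothesis $M/M_{\ge d}\cong M_0/(M_0)_{\ge d}$, yields a natural isomorphism
\begin{equation}
    \Ext_{\cE(1)}^{s,t}(M,\Z/p)\;\cong\;\Ext_{\cE(1)}^{s,t}(M/M_{\ge d},\Z/p)\;\cong\;\Ext_{\cE(1)}^{s,t}(M_0,\Z/p)
\end{equation}
throughout the range of $(t-s)$ governed by the vanishing.

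By \cref{Zp_even}, $\Ext_{\cE(1)}(\Z/p,\Z/p)\cong\Z/p[h_0,v_1]$ consists of $h_0$-towers in even $(t-s)$-degrees, and by \cref{uQ_even} the same holds for $\Ext_{\cE(1)}(\uQ,\Z/p)$. Because the shifts $2m_i$ and $2n_j$ are even, shifting by them preserves this property, so $\Ext_{\cE(1)}(M_0,\Z/p)$, and hence the $E_2$-page of~\eqref{ell_Adams}, is a sum of $h_0$-towers in even $(t-s)$-degrees throughout the relevant range. Applying \cref{even_good} gives torsion-freeness of $\ell_k(Y)_p^\wedge$ in the claimed range, and the hypothesis that $Y$ has finitely many cells per dimension makes $\ell_k(Y)$ a finitely generated $\Z_{(p)}$-module, so torsion-freeness of the $p$-completion descends to $\ell_k(Y)$ itself. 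The most delicate point is the bookkeeping of the exact range: the Ext comparison is cleanest for $t-s \le d-2$, and reaching the stated conclusion $k \le d-2$ requires combining this with a parity argument at $(t-s) = d-1$, using that any classes contributed there by the boundary map must live in even $(t-s)$-degree and therefore cannot support hidden $h_0$-extensions into $(t-s) = d-2$.
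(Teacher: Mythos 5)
Up to your final paragraph, the argument is essentially the paper's, step for step: the minimal-resolution connectivity bound $\Ext_{\cE(1)}^{s,t}(M_{\ge d},\Z/p)=0$ for $t-s<d$, the long exact sequence comparison yielding $\Ext^{s,t}(M)\cong\Ext^{s,t}(M/M_{\ge d})\cong\Ext^{s,t}(M_0)$ for $t-s\le d-2$, the invocation of \cref{Zp_even,uQ_even}, and the appeal to \cref{even_good}. All of that is right, and so is the remark that finite generation lets torsion-freeness pass from the $p$-completion to $\ell_k(Y)$ itself.

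The closing paragraph, where you try to pass from the isomorphism range $t-s\le d-2$ to the stated conclusion $k\le d-2$, does not hold up. The clause ``any classes contributed there by the boundary map must live in even $(t-s)$-degree'' is vacuous as written: the column $t-s=d-1$ has a single fixed parity, so the parity of $(t-s)$ cannot single out a subset of its classes. And what actually threatens the conclusion at $t-s=d-1$ is a nonzero Adams \emph{differential} landing in column $d-2$, not a hidden $h_0$-extension; those live within a fixed $(t-s)$-column and are already handled inside the proof of \cref{even_good}. The way to close the gap is to observe that $M/M_{\ge d}\cong M_0/(M_0)_{\ge d}$ is \emph{itself} a direct sum of even shifts of $\Z/p$ and $\uQ$ (a short case analysis of how the cutoff $d$ meets each $\Sigma^{2n_j}\uQ$ summand), so $\Ext^{s,t}(M/M_{\ge d},\Z/p)$ is concentrated in even $t-s$ in all bidegrees. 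Since $\Ext^{s,t}(M/M_{\ge d},\Z/p)\to\Ext^{s,t}(M,\Z/p)$ remains surjective at $t-s=d-1$ (surjectivity only needs $\Ext^{s,t}(M_{\ge d})=0$ for $t-s<d$), one gets $\Ext^{s,t}(M,\Z/p)=0$ for every odd $t-s\le d-1$. If $d-1$ is odd, \cref{even_good} applies with $N=d-1$; if instead $d-2$ is odd, $\ell_{d-2}(Y)$ vanishes outright. Either way $\ell_k(Y)$ is $p$-torsion-free for all $k\le d-2$.
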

\begin{proof}
Use the long exact sequence in Ext associated to the short exact sequence $0\to M_{\ge n}\to M\to M/M_{\ge n}\to 0$ to show the map $M\to M/M_{\ge d}$ induces an isomorphism in Ext in degrees $d-2$ and below. Therefore for the purpose of calculating $\ell_*(Y)$ in degrees $d-2$ and below, we may replace $H^*(Y;\Z/p)$ with a sum of shifts of $\Z/p$ and $\uQ$ by even degrees. The result then follows from \cref{even_good} and the observations in \cref{Zp_even,uQ_even} that $\Ext_{\cE(1)}(\Z/p, \Z/p)$ and $\Ext_{\cE(1)}(\uQ, \Z/p)$ consist of $h_0$-towers in even degrees.
\end{proof}
Thus to prove \cref{appthm}, it would suffice to prove the following assertions.
\begin{prop}
The following are isomorphisms of $\cE(1)$-modules up to degree $12$.
\begin{itemize}
    \item For $G = G_2$, $F_4$, $E_6$, $E_7$, $E_8$, and $\Spin_7$, $\widetilde H^*(BG;\Z/2)\cong_{<12} \Sigma^4\uQ\oplus \Sigma^8 \Z/2$.
    \item For $G = \Spin_8$ and $\Spin_9$, $\widetilde H^*(BG;\Z/2)\cong_{<12} \Sigma^4\uQ\oplus \Sigma^8\Z/2\oplus\Sigma^8 \Z/2$.
    \item For $n\ge 10$, $\widetilde H^*(B\Spin_n;\Z/2)\cong_{<12}\Sigma^4\uQ\oplus \Sigma^8\Z/2\oplus\Sigma^8 \uQ$.
    \item For $G=F_4,E_6,E_7$, and $E_8$,  $\widetilde H^*(BG;\Z/3)\cong_{<12} \Sigma^4\uQ\oplus \Sigma^8\Z/3$.
    \item $\widetilde H^*(BE_8;\Z/5)\cong_{<12} \Sigma^4 \Z/5$.
\end{itemize}
\end{prop}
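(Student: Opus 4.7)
The plan is to verify each bullet by a case-by-case computation of $\widetilde H^*(BG;\Z/p)$ as an $\cE(1)$-module in degrees $\le 11$. The required inputs are: (i) the underlying graded vector space structure of $\widetilde H^*(BG;\Z/p)$ in this range, which is classical and may be extracted from Borel, Borel-Araki, Quillen, Kono-Mimura, and Mimura-Toda; and (ii) the $Q_0$- and $Q_1$-actions on the generators, computed from $Q_0 = \square_p$ (the mod-$p$ Bockstein) and $Q_1 = \Sq^3 + \Sq^2\Sq^1$ at $p = 2$ (respectively $Q_1 = \cP^1 Q_0 - Q_0\cP^1$ at odd primes), combined with the Cartan and Wu formulas.

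In every case the $\Sigma^4\uQ$ summand arises from the integral generator $\bar\lambda \in H^4(BG;\Z/p)$, the mod-$p$ reduction of the class $\lambda$ from \cref{pont}: since $\lambda$ lifts to integral cohomology we have $Q_0\bar\lambda = 0$, while $Q_1\bar\lambda \ne 0$ for all of the $(G,p)$ under consideration. The nonvanishing of $Q_1\bar\lambda$ can be verified uniformly by observing that if it were zero, then $\bar\lambda$ would survive the $d_3$-differential in the mod-$p$ Atiyah-Hirzebruch spectral sequence for $\ku^*(BG)$ and lift to $\ku^4(BG)$, contradicting the known $\ku^*(BG)$ computations at these primes. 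In the Spin case the verification is particularly direct: $Q_1 \bar\lambda = \Sq^3 w_4 = w_7 \ne 0$ by the Wu formula. At $p = 5$ with $G = E_8$, the two nontrivial classes of $\Sigma^4\uQ$ besides $\bar\lambda$ sit in degrees $12$ and $13$, which fall outside the truncation, so $\Sigma^4\uQ \cong_{<12} \Sigma^4\Z/5$, and it suffices to check that $H^k(BE_8;\Z/5)$ vanishes for $0<k<4$ and $4<k<12$, a consequence of Kono-Mimura's description.

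For the remaining summands we enumerate the additional generators in degrees $8 \le k \le 11$. At $p = 3$ for $G \in \{F_4, E_6, E_7, E_8\}$ and at $p = 2$ for $G \in \{G_2, F_4, E_6, E_7, E_8, \Spin_7\}$, the only extra generator in this range is $\bar\lambda^2 \in H^8$, on which both $Q_0$ and $Q_1$ vanish by the Leibniz rule ($Q_1 \bar\lambda^2 = 2\bar\lambda\,Q_1\bar\lambda = 0$ at $p=2$; vanishing by degree reasons at $p=3$), producing a $\Sigma^8\Z/p$ summand. For $\Spin_8$ and $\Spin_9$ there is an additional polynomial generator in degree $8$ (the Euler class for $\Spin_8$, and its Stiefel-Whitney analog for $\Spin_9$) which is killed by $Q_0$ and $Q_1$ in our range, contributing a second $\Sigma^8\Z/2$. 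For $\Spin_n$ with $n \ge 10$ the independent class $w_8 \in H^8(B\Spin_n;\Z/2)$ satisfies $Q_1 w_8 \ne 0$, producing a class in degree $11$ that upgrades the second $\Sigma^8\Z/2$ to a $\Sigma^8\uQ$.

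The main obstacle will be the careful Steenrod computation in the Spin case: one must correctly identify the polynomial generators of $H^*(B\Spin_n;\Z/2)$ in degrees $\le 11$ and establish the dichotomy $Q_1 w_8 = 0$ for $n \le 9$ versus $Q_1 w_8 \ne 0$ for $n \ge 10$. This requires iterated Wu formula calculations together with the relations $w_2 = 0$ and $w_3 = \Sq^1 w_2 = 0$ that hold in $B\Spin$, as well as the truncation $w_i = 0$ for $i > n$. As a cross-check, the resulting $\ell$-homology computation must agree in the overlap with the rational calculation and with the cases already handled in \cref{easy_appendix}.
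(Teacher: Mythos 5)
Your overall strategy---compute the $\cE(1)$-module structure directly from the Wu formula and the Leibniz rule, with $\bar\lambda$ generating $\Sigma^4\uQ$ because $Q_0\bar\lambda = 0$ and $Q_1\bar\lambda\neq 0$---is sound and is essentially what the paper does by appeal to the literature (Lee--Tachikawa at $p=2$, Toda/Borel/Araki at $p=3,5$). However, your enumeration of $\cE(1)$-module generators in degrees $8$--$11$ at $p=2$ is incomplete. For $G_2,F_4,E_6,E_7,E_8$ the cohomology $H^*(BG;\Z/2)$ has a nonzero class $\bar\lambda y_6\in H^{10}$ (with $y_6$ the degree-$6$ element of the $\Sigma^4\uQ$). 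Since $H^9(BG;\Z/2)=0$ and $Q_1 y_7 = \Sq^3 y_7 + \Sq^2\Sq^1 y_7 = 0$ in these cases, $\bar\lambda y_6$ is a new $\cE(1)$-module generator, missed by your ``the only extra generator in this range is $\bar\lambda^2$.'' Indeed, one checks $Q_0(\bar\lambda y_6)=\bar\lambda y_7$ and $Q_1(\bar\lambda y_6)=y_6 y_7$, so $\bar\lambda y_6$ generates a free rank-one $\cE(1)$-submodule, whose $\Ext_{\cE(1)}$ is a lone $\Z/2$ at $(s,t-s)=(0,10)$ that is not part of an $h_0$-tower and hence is not controlled by the ``$h_0$-towers in even degree'' mechanism. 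Likewise for $\Spin_7$: by Quillen, $H^*(B\Spin_7;\Z/2)\cong\Z/2[w_4,w_6,w_7,e_8]$ has the extra polynomial generator $e_8$ (the Euler class of the $8$-dimensional spin representation), so $H^8$ is two-dimensional and $\Spin_7$ should pattern with $\Spin_8,\Spin_9$, not the exceptional groups. Finally, your dichotomy for $Q_1 w_8$ has the wrong boundary: by the Wu formula $Q_1 w_8 = \Sq^3 w_8 \equiv w_{11}\pmod{w_1,w_2,w_3}$, and $w_{11}=0$ in $B\Spin_{10}$, so $Q_1 w_8 = 0$ for all $n\le 10$, not just $n\le 9$; the transition occurs at $n=11$.

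To be fair, these same discrepancies are visible in the statement being proved: the claimed decompositions do not match the $\Z/2$-dimensions of $H^*(BG;\Z/2)$ in degrees $8$, $10$, and $11$ for several of the listed groups. So a complete proof would need to do more than you propose: either augment the decompositions by the additional summands in degrees $\le 11$ (including the free $\cE(1)$-piece generated by $\bar\lambda y_6$ and the trivial pieces from classes such as $e_8$, $w_{10}$) and then argue separately that these do not contribute $p$-torsion in the relevant range---which would require resolving potential hidden $2$-extensions, input that the $E_2$-page alone does not provide---or else settle for a truncation at a degree lower than $12$ and a correspondingly weaker conclusion in \cref{appthm}. As written, your proof (like the stated proposition) does not supply the missing generators or the extension argument.
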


\begin{proof}
 In \cite{Lee:2022spd}, the authors compute the $\cA(1)$-module structure on $H^*(BG;\Z/2)$ in the degrees we need for $G = \Spin_n$, $G_2$, $F_4$, $E_6$, $E_7$, and $E_8$, from which the $\cE(1)$-module structures in the theorem statement follow.

The assertion for $H^*(BF_4;\Z/3)$ follows from the products and Steenrod operations given by Toda~\cite[Theorems I, II, III]{Tod73}. The cohomology  $H^*(BE_6;\Z/3)$, can be computed from \cite[Théorème 2.3]{Bor61} where the author computes $H^*(E_6;\Z/3)$. The cohomology  $H^*(BE_7;\Z/3)$ can be computed from \cite[Theorem 8]{araki1961differential} and $H^*(BE_8;\Z/3)$ can be computed from \cite[Theorem 9]{araki1961differential}, where the author computes $H^*(E_7;\Z/3)$ and $H^*(E_8;\Z/3)$ respectively together with Steenrod powers on the generators; the Kudo transgression theorem~\cite{Kud56} determines the Steenrod powers in the cohomology of the classifying spaces.
The assertion for $BE_8$ at $p = 5$ follows from Borel~\cite[Théorème 2.3]{Bor61} calculating $H^*(E_8;\Z/5)$ together with a quick transgression argument.
\end{proof}

\subsection*{Data availability statement}
The authors declare that there is no additional associated data to this paper.

\bibliographystyle{alpha}
\bibliography{references}
\end{document}